\newcolumntype{C}[1]{>{\centering\let\newline\\\arraybackslash\hspace{0pt}}m{#1}}
\newcommand{\draft}[1]{}
		\newtheorem{thm}{Theorem}[section]
		\newtheorem{theoremletter}{Theorem}
		\newtheorem{corollaryletter}[theoremletter]{Corollary}
		\newtheorem{cor}[thm]{\scshape{Corollary}}
		\newtheorem{prop}[thm]{\scshape{Proposition}}
		\newtheorem{lemma}[thm]{\scshape{Lemma}}
		\newtheorem{claim}[thm]{\scshape{Claim}}
		\newtheorem{question}[thm]{\scshape{Question}}
		\newtheorem{questionletter}[theoremletter]{Question}
		\newtheorem{conj}[thm]{\scshape{Conjecture}}
		\newtheorem{remark}[thm]{\scshape{Remark}}
		\newtheorem{obs}[thm]{\scshape{Observation}}
		\newtheorem{example}[thm]{\scshape{Example}}
		\newtheorem*{thm*}{Theorem}
		\newtheorem*{cor*}{Corollary}
		\theoremstyle{definition}
		\newtheorem{definition}{Definition}[thm]
		\def\={\cong}
		\def\mc{\mathcal}
		\def\mbb{\mathbb}
		\DeclarePairedDelimiter{\ceil}{\lceil}{\rceil}
  \def\={\cong}
  \def\mc{\mathcal}
  \def\mbb{\mathbb}
		\newcommand{\subg}[1]{\langle #1 \rangle}
	\newcommand{\enum}[2]{\begin{enumerate}[\hspace*{0.5cm}#1] #2 \end{enumerate}}
		\newcommand{\frp}[0]{\ast}
		\newcommand{\F}[0]{\mathbb{F}}
		\newcommand{\Z}[0]{\mathbb{Z}}
		\newcommand{\nin}[0]{\notin}
		\newcommand{\bvee}[3]{ \bigvee\limits_{#1}^{#3}#2}
		\newcommand{\bwedge}[3]{\bigwedge\limits_{#1}^{#3}#2}
		\newcommand{\dis}[1]{d(*, #1 *)}
		\title{On the positive theory of groups acting on trees}
		\author{Montserrat Casals-Ruiz\footnote{Ikerbasque - Basque Foundation for Science and Matematika Saila,  UPV/EHU,  Sarriena s/n, 48940, Leioa - Bizkaia, Spain; Contact: \texttt{montsecasals@gmail.com}},\quad \quad Albert Garreta\footnote{\small{Matematika Saila,  UPV/EHU,  Sarriena s/n, 48940, Leioa - Bizkaia, Spain;} Contact: \texttt{garreta.a@gmail.com}},\quad \quad Javier de la Nuez Gonz\'alez\footnote{Matematika Saila,  UPV/EHU,  Sarriena s/n, 48940, Leioa - Bizkaia, Spain; Contact: \texttt{javier.delanuez@ehu.eus}\newline
	
	The authors were supported by the ERC grant PCG-336983, the Basque Government Grant IT974-16 and the Spanish Government grant MTM2017-86802-P.}
		}
		\date{}
\begin{document}
		\maketitle
		\tikzset{
			middlearrow/.style={
			decoration={
			markings, mark= at position 0.5 with {\arrow{#1}} ,
			},
			postaction={decorate}
		}
	}
	\tikzset{
		endarrow/.style={
		decoration={
		markings, mark= at position 1 with {\arrow{#1}} ,
		},
		postaction={decorate}
	}
}
\tikzset{->-/.style={decoration={
  markings,
  mark=at position .5 with {\arrow{>}}},postaction={decorate}}}

\begin{abstract}
In this paper we study the positive theory of groups acting on trees and show that under the presence of weak small cancellation elements, the positive theory of the group is trivial, i.e. coincides with the positive theory of a non-abelian free group. Our results apply to a wide class of groups, including non-virtually solvable fundamental groups of 3-manifold groups, generalised Baumslag-Solitar groups and almost all one-relator groups and graph products of groups. It follows that groups in the class satisfy a number of algebraic properties: for instance, their verbal subgroups have infinite width and, although some groups in the class are simple, they cannot be boundedly simple. 

In order to prove these results we describe a uniform way for constructing (weak) small cancellation tuples from (weakly) stable elements. This result of interest in its own is fundamental to obtain corollaries of general nature such as a quantifier reduction for positive sentences or the preservation of the non-trivial positive theory under extensions of groups.
\end{abstract}

\tableofcontents

\newgeometry{total={5.5in, 8.5in}}

\section{Introduction}
  
  Laws (identities), surjectivity of word maps, finite verbal width and bounded simplicity are examples of classical algebraic properties which are widely studied in groups and can actually be addressed in the common framework of the positive theory of the group.
  
  The positive theory of a group is the fragment of the elementary theory composed by the set of first-order sentences that do not contain negations. In other words, it is the set of sentences of the form:
  $$
  \forall x_1 \exists y_1 \dots \forall x_n \exists y_n \ \bigvee\limits_{i=1, \dots, r} \left( \bigwedge\limits_{j=1, \dots, s} \ \Sigma_{i,j}(x_1, y_1, \dots, x_n,y_n)=1 \right)
  $$
  satisfied by the group, where $\Sigma_{i,j} \in F(x_1, y_1, \dots, x_n,y_n)$.

  During the last decades, the model-theoretic study of groups has proven to be the motor for developing techniques and establishing new connection between different areas. One of the most remarkable achievements in the model-theoretic study of groups is the solution to Tarski problems for free groups. The first key step in the study of the elementary theory of a free group was precisely the study of its positive theory. In \cite{merzlyakov1966positive}, Merzlyakov proved an implicit function theorem for positive sentences: if a free group satisfies a positive sentence $\forall x_1 \exists y_1, \dots, \forall x_n \exists y_n \ \Sigma(x_1, \dots, x_n, y_n)=1$, then one can find words in variables $x_i$ which witness the truth of the sentence, that is there exist algebraic expressions $w_i(x_1, \dots, x_i)$, $i=1, \dots, n$, called formal solutions, such that
  $$
  \Sigma(x_1, w_1(x_1), \dots, x_n, w_n(x_1, \dots, x_n))=1
  $$ in the free group $F \ast F(x_1, \dots, x_n)$. As a consequence, all non-abelian free groups have the same positive theory and every positive sentence satisfied by a non-abelian free group is satisfied by every group. In view of this, we say that a group $G$ has \emph{trivial positive theory} if its positive theory coincides with the one of non-abelian free groups, i.e. $Th^+(G)=Th^+(F_2)$.
  
  For the general theory of a non-abelian free groups, Sela showed in \cite{sela2006diophantineV} that this theory admits quantifier elimination to the Boolean algebra of $\forall\exists$-formulas and furthermore, Kharlampovich-Miasnikov proved that this quantifier elimination can be performed algorithmically, see \cite{kharlampovich2006elementary}. However, for general $\forall \exists$-sentences one does not have an implicit function theorem and this makes the validation process for $\forall \exists$-sentences in the free group substantially more complicated than for positive sentences.
  
  Merzlyakov's result on the positive theory of non-abelian free groups have been generalised from free groups to free products of groups, \cite{sacerdote1973almost,sela2010diophantine}, hyperbolic groups \cite{sela2009diophantineVII, heilthesis}, right-angled Artin groups \cite{casals2010elements,diekert2004existential}, graph products \cite{diekert2004existential}, HNN-extensions and free products with amalgamation over finite groups \cite{lohrey2006positive} and for torsion-free acylindrically hyperbolic groups \cite{jonathanthesis}.
  
  \bigskip
  
  In this paper we study the positive theory of groups acting on trees. More precisely, we prove the following theorem.
  
  \begin{theoremletter}[Theorem \ref{thm:characterisation}]
  	If a group $G$ acts minimally on a simplicial tree and the action on the boundary neither fixes a point nor is 2-transitive, then its positive theory is trivial, that is the group $G$ satisfies exactly the same set of positive sentences as a non-abelian free group. In particular, the positive theory is decidable: there is an algorithm that given a positive sentence in the language of groups determines whether or not the sentence is satisfied in the group.
  \end{theoremletter}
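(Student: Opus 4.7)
The plan is to establish the two inclusions $Th^+(F_2)\subseteq Th^+(G)$ and $Th^+(G)\subseteq Th^+(F_2)$ separately. The first is Merzlyakov's formal solution theorem: any positive sentence satisfied by a non-abelian free group admits formal witnesses, given by words in the universally quantified variables, so it is satisfied in every group. The nontrivial direction is the converse, which by contrapositive reduces to showing that every positive sentence failing in $F_2$ admits an explicit witness of its failure in $G$.

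The route to such witnesses is the production of weak small cancellation tuples in $G$. First, I would translate the hypotheses on the action into the existence of a weakly stable element. The minimality condition rules out global fixed points; the absence of a fixed boundary point rules out the lineal situation in which all hyperbolic axes share an endpoint; the failure of 2-transitivity eliminates exotic homogeneous configurations (such as those arising for $p$-adic $\mathrm{SL}_2$-type actions) in which the action is too symmetric for a free-like pair of hyperbolic elements to emerge. Under these combined assumptions a standard Bass--Serre analysis yields a pair of hyperbolic elements whose four boundary endpoints are distinct, from which a weakly stable element can be extracted by passing to a suitable high power (possibly after conjugation).

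Once a weakly stable element is available, I would invoke the paper's main technical construction, which converts weakly stable elements into weak small cancellation tuples of any prescribed length, and then apply the quantifier reduction for positive sentences announced in the abstract. The quantifier reduction replaces satisfaction of a positive sentence in $G$ by a statement about the behaviour of small cancellation tuples; substituting such a tuple for the universal variables tightens the resulting system of equations on the existential variables to one whose solvability in $G$ is governed by the same combinatorics as in $F_2$, so a failure in $F_2$ transfers. The main obstacle is the first, group-theoretic step: verifying that the stated boundary hypotheses really are enough to produce a weakly stable element. The conditions are sharp, each is essential, and handling the non-2-transitive condition in particular demands a careful analysis of overlap patterns of hyperbolic axes in $T$. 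Once this is in place, the small cancellation machinery and the quantifier reduction, which are the paper's main technical contributions, do the rest, and decidability of $Th^+(G)$ follows from the known decidability of $Th^+(F_2)$.
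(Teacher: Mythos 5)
Your first inclusion ($Th^+(F_2)\subseteq Th^+(G)$ via Merzlyakov) is fine and matches the paper. The problem is in the second direction: you propose to derive a \emph{weakly stable} element directly from the boundary hypotheses (``a pair of hyperbolic elements whose four boundary endpoints are distinct, from which a weakly stable element can be extracted by passing to a suitable high power''), and then feed that into the paper's machinery (Corollary~\ref{cor:uniformWSC}) to get weak small cancellation tuples. That intermediate step is not established in the paper and is not a routine Bass--Serre argument. Having four distinct endpoints says nothing about how large the overlap $|A(h)\cap gA(h)|$ can be relative to $tl(h)$ while $g\notin E(h)$; the obstruction is precisely the size of segment stabilizers, which the boundary hypotheses do not obviously control, and high powers do not help (weak $\lambda$-stability of $h^n$ is $\lambda/n$-stability of $h$, which is \emph{harder}, not easier). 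Indeed Section~\ref{sec:openquestions} explicitly leaves open whether weak small cancellation implies weak stability, which strongly suggests the authors could not extract a weakly stable element from the weakly acylindrical hypotheses either.

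The paper's actual route sidesteps weak stability entirely. It imports the trichotomy of Iozzi--Pagliantini--Sisto (Theorem~\ref{thm:1SistoCo}): under the stated boundary hypotheses the action admits a \emph{good labeling} $(k,f)$ with all words $u_{\bar e}(a,b)$ realized as labels of $o$-geodesics. Corollary~\ref{c: labellings and wsc} then manufactures weak $N$-small cancellation $m$-tuples directly by choosing independent exponent tuples $\bar e^1,\dots,\bar e^m$ and reading the translations whose orbit segments carry the labels $u_{\bar e^j}(a,b)$; Lemma~\ref{l: gl intersection} is the combinatorial engine that bounds overlaps. This combinatorial construction is doing the work that you assign to a hypothetical weakly stable element. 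After that, your appeal to quantifier reduction and Theorem~\ref{l: small cancellation solutions} is the same as the paper's (via Theorem~\ref{thm: wscitp}), and your observation that decidability follows from decidability of $Th^+(F_2)$ is correct. To fix the argument, replace the ``weakly stable element'' step with the good-labeling argument, or supply an independent proof that weakly acylindrical actions admit weakly stable elements --- the latter would be a new result that the paper does not claim.
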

  
  This result applies to many classes of groups for which we can provide a precise criterion for the triviality of their positive theory.
  
  \begin{corollaryletter}[Corollary \ref{cor:summaryex}]
  	Non-virtually solvable fundamental groups of closed, orientable, irreducible 3-manifolds; non-solvable generalised Baumslag-Solitar groups; (almost all) non-solvable one-relator groups; and graph products of groups whose underlying graph is not complete have trivial positive theory.
  \end{corollaryletter}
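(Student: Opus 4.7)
The plan is to verify, for each family listed, that the group admits an action on a simplicial tree satisfying the hypotheses of Theorem~\ref{thm:characterisation}---minimality together with a boundary action that neither fixes a point nor is 2-transitive---so that the conclusion follows immediately.

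For graph products on a non-complete graph $\Gamma$: since $\Gamma$ is not complete, some vertex $v$ is not adjacent to every other vertex, and the standard decomposition
\[
G_\Gamma = G_{\Gamma\setminus\{v\}} \ast_{G_{\mathrm{link}(v)}} G_{\mathrm{star}(v)}
\]
is a nontrivial amalgam whose Bass--Serre tree $T$ carries a minimal action of $G_\Gamma$. Combining a hyperbolic element obtained from $v$ with one supported on a vertex non-adjacent to $v$ yields independent pairs of attracting--repelling ends, ruling out both a fixed boundary point and 2-transitivity. For generalised Baumslag--Solitar groups, the defining graph-of-groups structure provides the tree action directly, and non-solvability excludes precisely the degenerate cases ($\mathbb{Z}$, $\mathbb{Z}^2$, Klein-bottle group, $\mathrm{BS}(1,n)$) in which the boundary action becomes elementary. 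For almost all non-solvable one-relator groups, the Magnus--Moldavanskii hierarchy realises the group as an HNN extension over a Magnus subgroup of a simpler one-relator group, furnishing the required tree action.

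For fundamental groups of closed, orientable, irreducible 3-manifolds with non-virtually-solvable $\pi_1$, the argument is guided by geometrisation. If the JSJ decomposition is nontrivial, its Bass--Serre tree gives the action. If the manifold is Seifert fibered, non-virtual-solvability of $\pi_1$ forces the base 2-orbifold to be hyperbolic, and one pulls back a splitting of the orbifold fundamental group along the central extension with kernel generated by the regular fiber. If the manifold is closed hyperbolic, Agol's virtual fibering theorem provides a finite-index subgroup which is an HNN extension of a closed surface group; this subgroup satisfies the hypotheses of Theorem~\ref{thm:characterisation}, and trivial positive theory transfers to the full group via the preservation-under-extensions principle announced in the abstract.

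The main obstacle is verifying non-2-transitivity of the boundary action in each setting. Two-transitivity on the boundary is extremely restrictive: it forces the tree to be bi-regular and the action to be highly symmetric, which is incompatible with the combinatorial structure of the vertex and edge groups in the splittings above. Non-solvability in each item supplies the required supply of independent hyperbolic elements whose existence breaks any potential 2-transitivity, so in every case the hypotheses of the main theorem are met.
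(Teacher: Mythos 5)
Your overall strategy---verify that each family admits a tree action satisfying the hypotheses of Theorem~\ref{thm:characterisation}---is the right framework, and your treatment of graph products is close in spirit to the paper's. However, there is a genuine error in your handling of closed hyperbolic $3$-manifolds, and the one-relator and generalised Baumslag--Solitar cases are sketched too lightly to be reliable.

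The $3$-manifold step is incorrect. You invoke Agol's virtual fibering theorem to produce a finite-index subgroup $H\leq G$ with trivial positive theory, and then claim ``trivial positive theory transfers to the full group via the preservation-under-extensions principle.'' This is not what Theorem~\ref{lem:extensionpreservation} says. That theorem states that if \emph{both} $N$ and $Q$ have \emph{non}-trivial positive theory, then so does an extension $1\to N\to G\to Q\to 1$. In your setting $N=H$ has \emph{trivial} positive theory, so the theorem gives no information about $G$. What you need is precisely the converse direction (trivial positive theory of a finite-index subgroup implies trivial positive theory of the ambient group), and the paper explicitly lists this as an \emph{open} question in Section~\ref{sec:openquestions} (closure under finite extensions / commensurability invariance). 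The paper's actual argument for $3$-manifolds is different and avoids this trap entirely: it cites Wilton's Lemma~2.4 in~\cite{wilton2010profinite}, which gives a dichotomy between $M$ being finitely covered by a torus bundle (forcing $\pi_1(M)$ virtually solvable, which is excluded) and the JSJ decomposition being $4$-acylindrical. In the latter case the JSJ tree directly furnishes weakly stable elements, with no need to pass to a finite-index subgroup; and when the JSJ decomposition is trivial (closed hyperbolic manifold), the conclusion follows from Sela's result on torsion-free hyperbolic groups~\cite{sela2009diophantineVII} rather than from Theorem~\ref{thm:characterisation}.

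Your treatments of one-relator and GBS groups are also materially looser than the paper's. The Magnus--Moldavanskii hierarchy realises a one-relator group as an HNN extension, but the resulting tree action can fix a boundary point (ascending HNN extensions of free groups do exactly this), so it does not automatically verify the hypotheses of Theorem~\ref{thm:characterisation}. The paper instead uses the Minasyan--Osin trichotomy (Proposition~4.20 of~\cite{minasyan2015acylindrical}), which cleanly separates the acylindrical case from the $s$-normal and ascending-HNN cases and deals with the latter two via largeness, hyperbolicity, and the GBS corollary. Similarly, for GBS groups, non-solvability alone does not make the defining graph-of-groups tree weakly acylindrical; the paper adapts Button's argument for SQ-universality~\cite{button2016nonhyperbolic} and relies on hyperbolic quotients to close out the remaining cases. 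You would need to confront these degenerate cases explicitly rather than assert that non-solvability ``excludes precisely the degenerate cases.''
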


  As we mentioned, the triviality of the positive theory has many nice algebraic implications.
  
  \begin{corollaryletter}[Corollary \ref{cor:summaryex}]
  The following families of groups do not have   verbal subgroups of finite width, and they are not boundedly simple:
  	non-virtually solvable fundamental groups of closed, orientable, irreducible 3-manifolds; non-solvable generalised Baumslag-Solitar groups; (almost all) non-solvable one-relator groups; and graph products of groups whose underlying graph is not complete.
  \end{corollaryletter}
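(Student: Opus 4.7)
Both assertions will be derived from the fact — established in the preceding statement — that the groups in the list have trivial positive theory, i.e.\ $Th^+(G)=Th^+(F_2)$. The key meta-observation is that each of ``the verbal subgroup $w(G)$ has width at most $n$'' and ``$G$ is boundedly simple with bound $n$'' is expressible by a positive first-order sentence (or a collection of such), and the corresponding sentences are known to fail in a non-abelian free group; triviality of the positive theory then transports the failure to $G$.

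I begin with verbal width. Fix a non-trivial word $w=w(x_1,\dots,x_k)$ and an integer $n\geq 1$; for each $m>n$ and each sign pattern $\varepsilon\in\{\pm 1\}^m$ consider the sentence
$$
\forall\bar{x}^{(1)},\dots,\bar{x}^{(m)}\ \exists\bar{y}^{(1)},\dots,\bar{y}^{(n)}\ \bigvee_{\delta\in\{\pm 1\}^n}\ w(\bar{x}^{(1)})^{\varepsilon_1}\cdots w(\bar{x}^{(m)})^{\varepsilon_m}=w(\bar{y}^{(1)})^{\delta_1}\cdots w(\bar{y}^{(n)})^{\delta_n},
$$
which is plainly positive; a group $H$ has $w$-width $\leq n$ exactly when it satisfies all such sentences. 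Taking $w=[x_1,x_2]$ and invoking the classical fact that the commutator subgroup of a non-abelian free group has infinite width, one finds, for every $n$, some $m$ and $\varepsilon$ for which the displayed sentence fails in $F_2$. Triviality of the positive theory of $G$ then forces the same failure in $G$, exhibiting a verbal subgroup of infinite width.

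The bounded simplicity argument is parallel. The property ``$G$ is boundedly simple with bound $n$'' is expressed by the positive sentence
$$
\forall g,h\ \exists a_1,\dots,a_n\ \Bigl(\,h=1\ \vee\ \bigvee_{\varepsilon\in\{\pm 1\}^n}\ g=a_1^{-1}h^{\varepsilon_1}a_1\cdots a_n^{-1}h^{\varepsilon_n}a_n\,\Bigr),
$$
in which $h=1$ appears as an atomic (hence positive) formula. Choosing $h=[x_1,x_2]$ and $g$ to be a long product of commutators in $F_2$ witnesses, for every $n$, the failure of this sentence in $F_2$ (again by infinite commutator width); the same triviality of $Th^+(G)$ transports the failure to $G$, ruling out bounded simplicity with any uniform bound.

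The one delicate point in the plan is checking that these natural encodings genuinely remain inside the positive fragment — in particular, that the sign-pattern disjunctions and the separate treatment of $h=1$ introduce no hidden negation. Once this syntactic bookkeeping is done, the argument reduces to invoking the classical results on infinite verbal/commutator width in non-abelian free groups.
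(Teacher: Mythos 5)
Your strategy — encode the algebraic properties as families of positive sentences and use $Th^{+}(G)=Th^{+}(F_{2})$ to transfer failure from the free group — is exactly the one the paper takes for granted (cf.\ the discussion of verbally parabolic groups in the final section). The verbal-width encoding is sound: since $w(1,\dots,1)=1$, any product of at most $n$ values of $w^{\pm 1}$ can be padded to a product of exactly $n$ values, so the conjunction of your sentences over all $m>n$ and $\varepsilon$ is indeed equivalent to ``$w$-width $\leq n$'', and the failure of one of them in $F_{2}$ (Rhemtulla) transfers to $G$.

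The bounded-simplicity encoding needs a small repair. Your $\phi_{n}$ asks that $g$ be a product of \emph{exactly} $n$ conjugates of $h^{\pm 1}$, but the only generic padding move is inserting $h\cdot h^{-1}$, which changes the length by $2$; so parity can obstruct passing from ``at most $n$'' to ``exactly $n$'', and a group could be boundedly simple with some bound while falsifying every $\phi_{N}$ of the wrong parity. Consequently ``$G$ falsifies every $\phi_{n}$'' does not directly yield ``$G$ is not boundedly simple''. The fix is to disjoin over all lengths $j\leq n$:
\[
\psi_{n}\ \equiv\ \forall g,h\ \exists a_{1},\dots,a_{n}\ \Bigl(h=1\ \vee\ \bigvee_{j\leq n}\ \bigvee_{\varepsilon\in\{\pm 1\}^{j}}\ g=\textstyle\prod_{i=1}^{j}a_{i}^{-1}h^{\varepsilon_{i}}a_{i}\Bigr),
\]
which is still a positive sentence and now genuinely expresses ``boundedly simple with bound $n$''. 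Each $\psi_{n}$ fails in $F_{2}$ — taking $h=[x_{1},x_{2}]$ and $g=x_{1}$, which does not even lie in $[F_{2},F_{2}]$, already suffices, so Rhemtulla's theorem is not needed for this half — and triviality of $Th^{+}(G)$ then gives that $G$ falsifies every $\psi_{n}$. With this small correction the proof is complete and matches the implicit argument the paper relies on.
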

  
  Related results can be found in
\cite{bestvina2019verbal,myasnikov2014verbal} and references thereof.
  
  \bigskip
  
  The key ingredient in the proof of Mezlyakov's result on the existence of formal solutions for free groups and for the later generalisations to other groups is small cancellation.
  
  Small cancellation has been extensively used in geometric group theory: to prove embeddability results, for instance that every countable group can be embedded into a 2-generated simple group \cite{schupp1976embeddings}; to produce examples of groups with exotic properties: infinite Burnside groups, Tarski and Ol'shanskii monsters, hyperbolic groups with wild subgroups given by the Rips' construction and finitely presented groups not admitting a uniform embedding into a Hilbert space \cite{gromov2003random}; to provide a model for random finitely presented groups; to find non-trivial quasi-morphisms, etc. 
  
  In our work, we establish a new relation between a weaker version of small cancellation\footnote{Our version of weak small cancellation is closely related, and in some settings equivalent, to the ones introduced in \cite{iozzi2014characterising, bestvina2015constructing}).} and the triviality of the positive theory. This new approach allows us to address a wider class of groups which do not necessarily contain small cancellation elements (for instance, Baumslag-Solitar groups). 
  
  Informally speaking, a tuple is small cancellation if the minimal tree $M$ spanned by the elements of the tuple is ``almost" independent from all its translates by elements of the group, i.e. it intersects its coset $hM$ by the action of any element $h$ in a ``small" segment; in this way, a tuple is weak small cancellation if the minimal tree $M$ is almost ``dynamically independent" from all its translates, i.e. if it intersects its coset $hM$ by the action of any element $h$ in a ``long" segment, then the segment is the same and the element $h$ acts as the identity on it (we do not allow for interval exchange transformations on the dynamics of the minimal tree), see Definition \ref{d: small cancellation}.
  
  The relation between weak small cancellation and the positive theory is established in the following theorem. 
  
  \begin{theoremletter}[Theorem \ref{thm: wscitp}]
  	If $G$ acts irreducibly and minimally on a tree and contains weak small cancellation elements, then its positive theory is trivial.
  \end{theoremletter}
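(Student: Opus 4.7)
The plan is to adapt Merzlyakov's argument: given a positive sentence $\varphi$ satisfied by $G$, produce words in the universally quantified variables that witness the existentially quantified ones identically in a non-abelian free group. Merzlyakov's original theorem already yields $\mathrm{Th}^+(F_2)\subseteq \mathrm{Th}^+(G)$, so such ``formal solutions'' give the reverse inclusion and hence triviality of $\mathrm{Th}^+(G)$. Inducting on the number of $\forall\exists$-blocks and absorbing earlier existential witnesses into the parameter tuple, the core case reduces to a sentence of the form $\forall\bar x\,\exists\bar y\;\Sigma(\bar x,\bar y)=1$.

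\textbf{Test tuple.} Using the uniform construction of (weak) small cancellation tuples from weakly stable elements announced in the abstract, I produce from the given weak small cancellation elements a tuple $\bar c$ in $G$ of length $|\bar x|$ satisfying the weak small cancellation condition of \cref{d: small cancellation}: the minimal invariant subtree $M$ spanned by $\bar c$ meets any non-trivial translate $hM$ in at most a long segment on which $h$ acts trivially. Choosing the translation lengths of the $c_i$ large compared to the constants associated to the action, a ping-pong argument on the tree then shows that $\langle\bar c\rangle$ is free of rank $|\bar x|$, so any identity in the $c_i$ lifts canonically to $F(\bar x)$.

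\textbf{Rigidity.} Specializing $G\models\varphi$ at $\bar x=\bar c$ produces $\bar d\in G^{|\bar y|}$ with $\Sigma(\bar c,\bar d)=1$. The decisive step is to show that each $d_j$ lies in $\langle\bar c\rangle$ and can be read off as an explicit reduced word $w_j(\bar x)\in F(\bar x)$. I would argue geometrically on the tree: tracking successive prefixes of $\Sigma(\bar c,\bar d)$ acting on $M$, the relation $\Sigma(\bar c,\bar d)=1$ forces long matched configurations between $M$ and its translates; weak small cancellation forces the matching elements to act trivially on the shared subtree, and this rigidity propagates letter by letter along $\Sigma$ and uniquely determines each $d_j$ as a word in the $c_i$. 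Freeness of $\langle\bar c\rangle$ then lifts $w_j(\bar c)$ to $w_j(\bar x)\in F(\bar x)$ with $\Sigma(\bar x,\bar w(\bar x))=1$ in $F(\bar x)$, which is the desired formal solution and hence verifies $\varphi$ in $F_2$.

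\textbf{Main obstacle.} The genuinely novel difficulty is the rigidity step. Under classical small cancellation, long overlaps between $M$ and its translates are forbidden outright, which makes the peeling of letters essentially automatic. Under weak small cancellation, long overlaps are allowed as long as they are dynamically inert, so one cannot argue purely combinatorially from cancellation patterns in $\Sigma(\bar c,\bar d)=1$. I would expect to circumvent this by selecting $\bar c$ with translation lengths and minimal subtree dominating the complexity of $\Sigma$, so that any overlap long enough to hide a non-trivial contribution of $\bar d$ automatically falls in the weak small cancellation regime where $h$ acts trivially. Handling multiple alternations then requires at each stage a small cancellation tuple extending and compatible with the previously extracted witnesses, a property whose availability is precisely the uniformity built into the construction announced in the abstract.
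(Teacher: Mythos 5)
Your high-level strategy (Merzlyakov-style formal solutions obtained from a small-cancellation test tuple) is the right one and is what the paper does, but there are two concrete gaps in the rigidity step that would make the argument as sketched fail.

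First, the ping-pong claim that $\langle \bar c\rangle$ is free of rank $|\bar x|$ is not available under \emph{weak} small cancellation. By Definition~\ref{d: small cancellation}(c), a nontrivial $g\in G$ is allowed to have a long overlap $[*,a_i*]\cap g[*,a_i*]$, provided it acts as the identity on it; in particular segment stabilizers need not be trivial, and there is no ping-pong setup available. The paper never uses freeness of the test subgroup, and the actual formal solution it builds (Theorem~\ref{l: small cancellation solutions}) does not come from identifying $\langle \bar c\rangle$ with $F(\bar x)$.

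Second, the claim that each witness $d_j$ lies in $\langle\bar c\rangle$ and is read off as a reduced word is too strong and is not what is proved (nor needed). The paper subdivides the segments $B_j=[*,b_j*]$ as $B_{j,0}\hat B_{j,1}B_{j,1}\cdots$ and factors $b_j=b_{j,r_j}\hat b_{j,r_j}\cdots\hat b_{j,1}b_{j,1}$, where only the pieces $\hat b_{j,\ell}$ are conjugates of the $\hat a_i$; the ``gap'' elements $b_{j,\ell}$ are generically not in $\langle\bar c\rangle$ and are instead absorbed into an auxiliary diophantine condition $\exists v\,\Theta(v,w)=1$ which, in the non-parametric case, becomes trivially satisfiable via $v=1$ (Observation~\ref{o: collapse}). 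The letter-by-letter propagation you gesture at is precisely the content of Lemma~\ref{l: main_Merzlyakov_lemma} and Corollary~\ref{cl: main Merzlyakov claim} (existence of a non-degenerate interval $\hat A_i$ stable under all trajectory pushes) together with the band/graph argument, and none of that is automatic. Finally, your induction on $\forall\exists$-blocks ``absorbing earlier witnesses into parameters'' requires the parametric Merzlyakov statement over a growing parameter tuple, which the paper only proves under \emph{strong} small cancellation over $c$ (Theorem~\ref{l: small_cancellation_parameters}); $G$ itself is only assumed to have weak small cancellation tuples. The paper sidesteps this by first proving the quantifier reduction (Theorem~\ref{l: quantifier reduction}) purely formally, working in $G_\Theta\ast F(t,t')$ with its Bass--Serre tree (trivial edge stabilizers, hence stable elements and genuine small cancellation tuples via Corollary~\ref{c: small cancellation for free actions}), and only then applies Theorem~\ref{l: small cancellation solutions} to the resulting $\forall\exists$-sentence in $G$. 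Your sketch conflates these two regimes.
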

  
  \medskip
  
  The connection between Theorem A and Theorem D is established in \cite{iozzi2014characterising}, where the authors show that if a group $G$ acts minimally on a simplicial tree with vertex degree at least 3 and the action on the boundary neither fixes a point nor is 2-transitive, then the group $G$ admits what they call a ``good labeling"; they then proceed by showing that in this case, there are segments with small cancellation features from which they build non-trivial quasi-morphisms and prove that the second bounded cohomology of the group is infinite dimensional. In our case, we show that the existence of a good labeling and more precisely the existence of segments with small cancellation features is equivalent to having weak small cancellation elements. The relation between these two very different notions, triviality of the positive theory and infinite dimensional second bounded cohomology, seems to go further. In \cite{bestvina2015constructing}, the authors introduce the notion of WWPD element and show that for groups acting on quasi-trees, the existence of WWPD elements implies that the group has infinite dimensional second bounded cohomology. In the context of groups acting on trees, we prove that the existence of a WWPD element, which in our context coincides with the notion of weakly stable element, see Definition \ref{denf:weakly stable}, also implies that the group has weak small cancellation elements and so trivial positive theory. In a forthcoming work, we intend to keep exploring this connection and generalise our results on the positive theory to groups acting on more general spaces under the presence of WWPD elements.
  
  In the case of groups acting on hyperbolic spaces, the existence of a WPD element is equivalent to the existence of a small-cancellation tuple, \cite{hull2013small}. We show that, in fact, this relation can be established in a uniform way, i.e. given a stable element, one can uniformly describe a small cancellation tuple dominating an arbitrary tuple $c$, see Definition \ref{d: small cancellation}. However, in general, we do not know if the equivalence between stable element and small cancellation holds when one weakens these two properties, see Section \ref{sec:openquestions}. In any case, we prove that given a weakly stable element, there is again a uniform way to determine a weakly small cancellation tuple.
  
  \begin{theoremletter}[Corollary \ref{cor:uniformWSC}]
  	For all $N,m \geq 1$, there exist two $m$-tuple of words $w^{sc}, w^{sc}{}'\in\F(x,y)^{m} $ with the following property. Suppose we are given an action of a group $G$ on a tree $T$, two elements $g,h$ such that the subgroup $\subg{g,h}$ acts irreducibly on $T$, $h$ is hyperbolic and stable (resp. weakly stable) and $d(A(h),A(g))< tl(h)$. Then either $w(g,h)^{sc}$ or $w(g,h)^{sc}{}'$ is $N$-small cancellation (respectively, weakly $N$-small cancellation). 
  
  \end{theoremletter}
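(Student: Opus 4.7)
The plan is to exhibit the required word tuples explicitly by carrying out, inside the free group $\F(x,y)$, the abstract construction that produces a (weak) small cancellation tuple from a hyperbolic (weakly) stable element together with an auxiliary element displacing its axis. The hypotheses already furnish all the geometric ingredients: $h$ is hyperbolic with axis $A(h)$, the inequality $d(A(h),A(g)) < tl(h)$ ensures that a bounded-length word in $g$ and $h$ can move a definite portion of $A(h)$ strictly off itself, and the irreducibility of $\subg{g,h}$ prevents $g$ from preserving $A(h)$. My first step would be to fix a constant $K = K(N,m)$ large enough so that any segment of $A(h)$ of length $K \cdot tl(h)$ triggers the (weak) stability property of $h$ (this constant can be read off from the small cancellation inequalities that $N$ encodes).

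Next, I would pick a universal ``displacement'' word $\gamma(x,y) \in \F(x,y)$ whose value $\gamma(g,h)$ conjugates $A(h)$ onto a translate meeting $A(h)$ in a segment short compared to $K \cdot tl(h)$, and define the candidate tuple entry-by-entry by
\[
c_i(x,y) := y^{K(m+i)}\, \gamma(x,y)\, y^{K}\, \gamma(x,y)^{-1}\, y^{-K(m+i)} \qquad (i = 1, \dots, m),
\]
so that the axes of the elements $c_i(g,h)$ lie along $A(h)$ at prescribed, widely-spaced offsets dictated by the geometry of $h$. A direct check using (weak) stability of $h$ shows that any group element whose action on the convex hull of the axes of the $c_i(g,h)$ produces an overlap longer than the stability threshold must either preserve the hull setwise (in the stable case, giving $N$-small cancellation) or act trivially on the overlap (in the weakly stable case, giving weak $N$-small cancellation). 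This is precisely the uniform content already extracted from the abstract existence result earlier in the paper.

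The reason two candidate tuples are needed is that there are two natural choices for the displacement word $\gamma$, for instance $\gamma_1 := x$ and $\gamma_2 := y x y^{-1}$ (or a similar offset), corresponding to two possible positions of the projection of $A(g)$ onto $A(h)$ modulo the fundamental domain of $h$. The bound $d(A(h), A(g)) < tl(h)$ guarantees that at least one of the two translates $\gamma_1(g,h) A(h) \cap A(h)$ and $\gamma_2(g,h) A(h) \cap A(h)$ is short enough to serve; the other choice might produce an overlap that is too long and would fail the small cancellation condition. Substituting each $\gamma_j$ into the template above yields the two $m$-tuples $w^{sc}$ and $w^{sc}{}'$, and the hypotheses ensure that at least one of them returns a (weakly) $N$-small cancellation tuple upon substitution of $(g,h)$.

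The main obstacle I expect is keeping the construction truly uniform: the choices of exponents, displacement word and case distinctions must all be encoded in finitely many words depending only on $N$ and $m$, never on $G$, $T$, or the particular $(g,h)$. The most delicate point is the weakly stable case, where a long overlap only forces a trivial action on the overlap rather than preservation of $A(h)$; propagating this weaker information through the conjugation template requires careful bookkeeping of which segments of $A(h)$ are actually translated by each $c_i(g,h)$, but the strategy is fully dictated by the abstract (weak) stability-to-(weak) small cancellation correspondence established earlier.
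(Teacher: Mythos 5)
Your overall strategy—build a preparatory word pair $(g',h')$ from $(g,h)$ and then feed it into a fixed recipe for turning a stable hyperbolic element into a small cancellation tuple—is the same one the paper uses, and your explanation of why two candidate tuples are needed (two sign choices for how the translate of $A(h)$ sits relative to $A(h)$) matches the paper's reason for taking $W^{sc}=w^{sc}(yy^x,y^4)$ versus $W^{sc}{}'=w^{sc}(y(y^{-1})^x,y^4)$. But the concrete template you write down does not produce a small cancellation tuple, and the gap is not cosmetic.

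Your candidate entries $c_i(x,y) = y^{K(m+i)}\,\gamma\,y^{K}\,\gamma^{-1}\,y^{-K(m+i)}$ are conjugates of $h^{K}$: writing $u_i = h^{K(m+i)}\gamma(g,h)$ you have $c_i(g,h) = u_i h^{K} u_i^{-1}$, so $A(c_i) = h^{K(m+i)}\gamma(g,h)A(h)$ and $tl(c_i) = K\,tl(h)$ for every $i$. Since $\gamma(g,h)\notin E(h)$, these axes are $m$ translates of the single line $\gamma(g,h)A(h)$ pushed along $A(h)$ to mutually distant positions, spaced roughly $K\,tl(h)$ apart. This breaks both conditions (b) and (c) of Definition~\ref{d: small cancellation}. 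For (b): $\dis{c_i} = 2d(*,A(c_i)) + K\,tl(h)$, and $d(*,A(c_i))$ grows roughly linearly in $i$, so the displacements cannot all lie within a factor $\tfrac{N+1}{N}$ of $K\,tl(h)$. For (c): for any $*$ on or near $A(h)$, the geodesics $[*,c_i*]$ and $[*,c_j*]$ with $i\ne j$ both run along $A(h)$ out to the nearer of the two axes before branching off, so they share an initial segment of length on the order of $K(m+\min(i,j))\,tl(h)$—vastly larger than the small cancellation threshold $\tfrac{1}{N}\min_i tl(c_i) = \tfrac{K}{N}tl(h)$—already with $g = 1$. The "direct check using stability" you invoke is not a proof: stability tells you nothing helpful here because the failure is produced by the identity element.

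What is missing is precisely the machinery of Proposition~\ref{l: from good pair to small cancellation}: the $m$-tuple $w^{sc}_i(x,y) = x y^{k_{i,1}} x y^{k_{i,2}} \cdots x y^{k_{i,K}}$ with $K = K(N,m)$ alternating blocks, whose exponents $k_{i,j} = K^2 + Ki + j$ are all distinct, growing, and well separated. All $m$ of these elements share the base point $*\in A(g')\cap A(h')$, so their displacements agree up to a bounded factor, and the combinatorial structure of the exponent sequence is what forces any long translate overlap to collapse (match up block by block, then exponent by exponent, then index by index), via the stability of $h'$. Your template has only a single power of $y$ in each $c_i$, so there is no such cascade. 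Once Proposition~\ref{l: from good pair to small cancellation} is in hand, the corollary is the short case analysis: show that one of $(hh^{g},h^4)$ or $(h(h^{-1})^{g},h^4)$ satisfies its hypotheses (coherent intersection with $A(h^4)$, non-membership in $E(h^4)$, and a translation length bound via Lemmas~\ref{l: chiswell2} and~\ref{l: chiswell3}), which uses the assumption $d(A(h),A(g))<tl(h)$; then set $W^{sc}(x,y)=w^{sc}(yy^x,y^4)$ and $W^{sc}{}'(x,y)=w^{sc}(y(y^{-1})^x,y^4)$.
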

  
  \medskip
  
  We believe that these uniformity results, although technical, are interesting in their own and have proven to be useful tools to obtain other results.  Keeping in mind this further applications, the results are presented in a bit more generality than strictly needed for this paper. One of the results we obtained is an effective quantifier reduction for positive sentence in all groups. This reduction states that in order to check if a group has trivial positive theory, it is sufficient to determine it for positives $\forall \exists$-sentences. More precisely, we prove the following theorem.
  
  \begin{theoremletter}[Theorem \ref{l: quantifier reduction}]
  	Given any non-trivial positive sentence $\phi$, one can effectively describe a non-trivial positive $\forall\exists$-sentence $\phi'$ such that $\phi$ implies $\phi'$ in the theory of groups, that is for any group $G$, if $G \models \phi$, then $G \models \phi'$. In particular, if a group has non-trivial positive theory, it must satisfy some non-trivial positive $\forall\exists$-sentence.
  \end{theoremletter}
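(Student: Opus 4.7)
The plan is to obtain $\phi'$ from $\phi$ not by the crude ``push all $\forall$'s to the front'' construction (which would weaken $\phi$ too much and typically fails to preserve non-triviality), but by \emph{specialising} each inner universal block to a uniformly-constructed small-cancellation word in the previously introduced variables. Write $\phi = \forall \bar x_1 \exists \bar y_1 \cdots \forall \bar x_n \exists \bar y_n \, \Psi(\bar x, \bar y)$, and for each $i \geq 2$ invoke Corollary~\ref{cor:uniformWSC} to produce fixed words $\alpha_i = \alpha_i(\bar x_1, \bar y_1, \ldots, \bar y_{i-1})$ guaranteed to yield small-cancellation tuples (the two candidate words $w^{sc}, w^{sc}{}'$ are absorbed into a positive disjunction in the matrix). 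Define
$$\phi' \;=\; \forall \bar x_1 \; \exists \bar y_1 \, \bar y_2 \cdots \bar y_n \; \Psi\!\left(\bar x_1, \bar y_1, \alpha_2, \bar y_2, \ldots, \alpha_n, \bar y_n\right).$$
This is a positive $\forall\exists$-sentence, and the implication $\phi \Rightarrow \phi'$ in any group $G$ is immediate: from a winning strategy for the existential player in $\phi$, play $\bar y_1$ against $\bar x_1$, set $\bar x_2 := \alpha_2$ to obtain $\bar y_2$, and iterate.

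To show $\phi'$ remains non-trivial, I argue by contradiction: assume $F_2 \models \phi'$. Merzlyakov's formal solution theorem produces words $\bar y_i^\flat(\bar x_1) \in F(\bar x_1)$ satisfying $\Psi(\bar x_1, \bar y_1^\flat, \alpha_2(\bar x_1, \bar y_1^\flat), \ldots) = 1$ in $F_2 * F(\bar x_1)$. By the construction of the $\alpha_i$'s via Corollary~\ref{cor:uniformWSC}, each specialised value $\alpha_i(\bar x_1, \bar y_1^\flat, \ldots, \bar y_{i-1}^\flat)$ is a small-cancellation tuple in $F_2 * F(\bar x_1)$ relative to the remaining data. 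One then performs a Merzlyakov-style \emph{specific-to-generic} transfer from the innermost block $\alpha_n$ outwards, successively promoting the formal identity above into an identity in which each $\alpha_i$ is replaced by a fresh formal variable $\bar x_i$. The final outcome is a stratified family $\bar y_i^\sharp(\bar x_1, \ldots, \bar x_i)$ of formal witnesses for $\phi$ itself, so $F_2 \models \phi$, contradicting the hypothesis.

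The main obstacle is precisely this specific-to-generic transfer: one must lift a free-group identity verified only for a specific (small-cancellation) substitution to an identity verified for a fresh formal variable. This is where the uniformity of Corollary~\ref{cor:uniformWSC} is decisive, providing both sufficiently large small-cancellation constants and uniform control of the $\alpha_i$'s across the possible values of the $\bar y_i^\flat$. A further combinatorial difficulty is coordinating the iterated transfer so that the dependencies of the resulting stratified formal solutions are compatible, which is what forces the innermost-to-outermost order in which the $\alpha_i$'s are ``de-specialised''. Effectivity of the construction of $\phi'$ is then immediate from the effectivity of Corollary~\ref{cor:uniformWSC} and of Merzlyakov's theorem.
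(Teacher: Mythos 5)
Your high-level strategy --- specialise each $\forall$-block to a word that evaluates to a small-cancellation tuple, then use the formal-solutions-from-small-cancellation machinery (Theorem~\ref{l: small_cancellation_parameters} / Corollary~\ref{c: iterated formal solutions}) to ``de-specialise'' a putative formal solution for $\phi'$ into one for $\phi$ --- is indeed the idea behind the paper's proof. However, the specific tool you invoke does not do the job, and this is a genuine gap rather than a stylistic difference.

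The transfer step requires the substituted tuples $a_j=\alpha_j(\dots)$ to be $N$-small cancellation \emph{over} all previously encountered data $(c,a_i,b_i)_{i<j}$ (see the hypothesis of Corollary~\ref{c: iterated formal solutions}); this ``over $c$'' condition, which forces the displacement of the small-cancellation tuple to dominate $d(*,c_j*)$, is essential and not a convenience. Corollary~\ref{cor:uniformWSC} produces small-cancellation tuples from a (weakly) stable hyperbolic element, but says nothing about dominating an external parameter tuple. The correct tool is Corollary~\ref{c: small cancellation for free actions} (built on Theorem~\ref{l: general small cancellation}), which is specifically engineered to output $N$-small-cancellation tuples \emph{over a given $c$}, at the cost of requiring an action with trivial edge stabilizers and an irreducibly-acting input pair. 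That corollary is also what lets the paper produce, by recursion, the words $\tau_1,\tau_2(\dots),\dots,\tau_r$ with the cascading ``over'' property needed before Corollary~\ref{c: iterated formal solutions} can be applied to the Bass--Serre tree of $G_{\Theta}\ast\F(t,t')$. Without the ``over'' condition your $\alpha_i$'s give you nothing you can feed into the parametric Merzlyakov machinery.

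Two further, smaller points. First, keeping $\forall\bar x_1$ as the outer universal block is problematic: in the paper all $x_1,\dots,x_r$ are substituted by the $\tau_j$'s and two \emph{fresh} universal variables $t,t'$ are introduced precisely so that $\subg{t,t'}$ acts irreducibly on the relevant Bass--Serre tree (after Merzlyakov, this subgroup is genuinely free of rank $2$). Your $\bar x_1$ might be a single variable, in which case the natural acting group is cyclic and the irreducibility hypotheses fail; moreover $a_1$ (the values of $\bar x_1$) would have to be small cancellation itself to apply Corollary~\ref{c: iterated formal solutions}, and free generators do not have large translation length. Second, the two candidates $W^{sc},W^{sc}{}'$ from the uniform-construction lemma must be handled with a \emph{conjunction} (each with its own existential witness), not a disjunction in the matrix: a formal solution for a disjunction may pick the disjunct whose tuple is \emph{not} small cancellation, making the de-specialisation impossible. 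You should also first pass through Lemma~\ref{l: no disjunctions} to put $\phi$ into disjunction-free form before any of this begins, which you omit.
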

  In the same way, uniformity results are key in proving that the property of having non-trivial positive theory is closed under extensions.
  
  \begin{theoremletter}[Theorem \ref{lem:extensionpreservation}]
  	The class $\mathcal C$ of groups with non-trivial positive theory is closed under extensions, i.e. if $N,Q \in \mathcal C$ and there is an exact sequence $1 \to N \to G \to Q \to 1$ for $G$, then $G\in \mathcal C$.
  \end{theoremletter}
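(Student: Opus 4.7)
The plan is to use Theorem F to reduce the problem to non-trivial positive $\forall\exists$-sentences. Assume (ignoring disjunctions, which add only notational overhead) that
$\phi_N \equiv \forall \bar x \exists \bar y : V(\bar x, \bar y) = 1$ holds in $N$ but not in $F_2$, and
$\phi_Q \equiv \forall \bar u \exists \bar v : W(\bar u, \bar v) = 1$ holds in $Q$ but not in $F_2$. The idea is to build a non-trivial positive sentence for $G$ by composing $\phi_Q$ with $\phi_N$: use $k$ disjoint instances of $\phi_Q$ to produce a tuple of $N$-elements of the length expected by $V$, then apply $\phi_N$. Concretely,
\begin{equation*}
\psi \equiv \forall \bar u^{(1)} \exists \bar v^{(1)} \,\cdots\, \forall \bar u^{(k)} \exists \bar v^{(k)} \,\exists \bar y : V\bigl(W(\bar u^{(1)}, \bar v^{(1)}), \ldots, W(\bar u^{(k)}, \bar v^{(k)}), \bar y\bigr) = 1.
\end{equation*}
The alternating quantifier structure is chosen to constrain the dependencies of Merzlyakov's formal solutions.

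That $G \models \psi$ is routine: for each $i$, apply $\phi_Q$ in $Q$ to the image of $\bar u^{(i)}$ to obtain $\bar v^{(i)} \in G$ with $W(\bar u^{(i)}, \bar v^{(i)}) \in N$, then apply $\phi_N$ in $N$ to the resulting $N$-valued tuple to produce $\bar y$. For non-triviality I argue by contradiction: if $F_2 \models \psi$, Merzlyakov yields formal solutions $\bar v^{(i)}, \bar y$ in $F(\bar u^{(1)},\ldots,\bar u^{(k)})$ such that the body of $\psi$ is an identity in that free group. If some $W(\bar u^{(i)}, \bar v^{(i)})$ is trivial there, the retraction $\bar u^{(j)} \mapsto 1$ ($j \ne i$) extracts a formal solution for $\phi_Q$ in $F$, contradicting $\phi_Q$'s non-triviality. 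Otherwise every $W(\bar u^{(i)}, \bar v^{(i)})$ is non-trivial, and the disjointness of the blocks $\bar u^{(i)}$ together with the free-product decomposition $F(\bar u^{(1)},\ldots,\bar u^{(k)}) = \ast_i F(\bar u^{(i)})$ should force the tuple to freely generate a rank-$k$ subgroup. The homomorphism $F(\bar x) \to F(\bar u^{(1)},\ldots,\bar u^{(k)})$ sending $x_i \mapsto W(\bar u^{(i)}, \bar v^{(i)})$ is then an embedding, and pulling the identity back along it yields a formal solution for $\phi_N$ in $F$, contradicting $\phi_N$'s non-triviality.

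The hard part will be making the free-product argument go through in the non-trivial case. Merzlyakov places each $\bar v^{(i)}$ in $F(\bar u^{(1)}, \ldots, \bar u^{(i)})$, not in $F(\bar u^{(i)})$ alone, so a priori the elements $W(\bar u^{(i)}, \bar v^{(i)})$ do not sit in a single free factor and the free-product normal form need not apply directly. I expect to handle this through an iterative retraction procedure that successively zeroes out earlier blocks $\bar u^{(j)}$ (for $j < i$) while exploiting the non-triviality of $\phi_Q$ to keep each retracted $W(\bar u^{(i)}, \bar v^{(i)})$ non-trivial, so that after the full reduction the tuple really does lie block-diagonally in $\ast_i F(\bar u^{(i)})$. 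Alternatively, one can try to invoke the uniform constructions of Theorem E and the quantifier reduction of Theorem F to rigidify the formal-solution analysis into the required block form.
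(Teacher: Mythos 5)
Your overall plan shares the same skeleton as the paper's: reduce to $\forall\exists$-sentences by the quantifier-reduction theorem, build a composed positive sentence, verify it in $G$ by lifting $\phi_Q$-witnesses and applying $\phi_N$ inside the kernel, and then argue non-triviality by analyzing a hypothetical Merzlyakov formal solution. Your Case 1 (retract away the other $\bar u^{(j)}$'s when some $W(\bar u^{(i)},\bar v^{(i)})$ is trivial) is exactly the paper's corresponding step. But the difficulty you flag in Case 2 is a genuine gap, not a bookkeeping issue, and "iterative retraction" cannot close it.

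Concretely: knowing that each $W(\bar u^{(i)},\bar v^{(i)})$ is non-trivial in $F(\bar u^{(1)},\ldots,\bar u^{(i)})$ does not imply the tuple $\bigl(W(\bar u^{(i)},\bar v^{(i)})\bigr)_{i}$ freely generates a free \emph{factor} of $F(\bar u)$, which is what you need in order to retract $\bar y$ back onto a word in these $k$ elements and obtain a formal solution for $\phi_N$. Free subgroups of a free group need not be free factors, and even freeness of the tuple is not automatic since $\bar v^{(i)}$ mixes earlier blocks $\bar u^{(j)}$, $j<i$, into later slots. The retraction killing $\bar u^{(j)}$ for $j\neq i$ does not isolate the $i$-th block: it sends $W(\bar u^{(j)},\bar v^{(j)})$ to the identity for $j<i$ and to $W(1,\rho(\bar v^{(j)}))$ for $j>i$, so the resulting identity has the wrong shape $V(1,\ldots,1,W(\bar u^{(i)},\cdot),\ast,\ldots,\ast,\bar y)=1$ rather than $V(x_1,\ldots,x_k,\bar y')=1$. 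Abelianization, or retraction onto a maximal free factor containing the tuple, runs into the same problem; there is no reason the $W$-outputs are primitive or pairwise independent in the required sense. Your own example with a commutator ($V=[x_1,x_2]$) makes this tangible: a formal solution only tells you the two $W$-outputs commute in $F(\bar u)$, i.e.\ lie in a common cyclic subgroup, and that is a far cry from freely generating a rank-$2$ free factor.

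The paper sidesteps the free-product normal form entirely. Instead of $k$ disjoint copies of $\phi_Q$, it uses a single copy of $\Sigma_Q(s,x,y)$, two extra $\forall$-variables $u,u'$, and plugs the resulting tuple into the uniform small-cancellation words $\tau\in\mathcal{T}$ from Corollary \ref{c: small cancellation for free actions}, feeding $\tau(\Sigma_Q(s,x,y),u,u',t)$ into the universal slot of $\phi_K$. The point of those words is precisely this: in the Bass--Serre tree of $G_\psi\ast\F(u,u',x)$, as soon as one component of $\Sigma_Q(s,x,\alpha)$ is non-trivial (the analogue of your Case 2), some $\tau$-image is an $N$-small cancellation tuple over $t$. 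One then invokes the parametric Merzlyakov theorem (Theorem \ref{l: small_cancellation_parameters}), whose hypothesis is ``small cancellation over $t$'' rather than ``free factor,'' to extract a formal solution for $\phi_K$ relative to a diophantine condition compatible with $\Pi_K\neq 1$, contradicting the non-triviality of $\phi_K$. So your ``alternatively, invoke the uniform constructions'' remark is in fact the only route that works: the free-product argument has to be replaced by the Section~\ref{sec:uniform small cancellation} machinery, and the sentence you build must carry the $\tau$-words and the auxiliary $u,u'$ so that small cancellation, not freeness, is what non-triviality of $\Sigma_Q$ delivers. The paper also carries out the argument in the broader setting of generic almost positive sentences; your positive-only version is the special case where $\Pi_K$ and $\Pi_Q$ are empty.
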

  
  Another corollary from the uniformity is that (most) groups acting on trees have uniform exponential growth, recovering results from \cite{de2000free}.
  
  Last but not least, in the forthcoming work \cite{graphproductspreprint} we use these results in as essential way to prove the interpretability of the graph and the vertex groups in a graph product of groups with vertex groups having non-trivial positive theory.
  
  \bigskip
  
  It is an easy observation that if a group $G$ quotients onto a group $H$, then the positive theory of $G$ is contained in the positive theory of $H$. In particular, all large groups, that is, groups for which a finite index subgroup possess a homomorphism onto a non-abelian free group, have trivial positive theory since they project to virtually free groups (non virtually abelian). In view of this, one may wonder if there is a relation between having trivial positive theory and being large or more, generally, SQ-universal. As we already pointed out, if the group is large, then its positive theory is trivial. However, the question for SQ-universal groups is not known. More precisely, we ask the following
  
  \begin{questionletter}
  	Does every SQ-universal group have trivial positive theory?
  \end{questionletter}
  
  On the hand, the converse is far from being true. Indeed, in the preprint \cite{simplegrouppreprint}, the authors provide with an uncountable family of finitely generated simple groups with trivial positive theory and so with all verbal subgroups of infinite width. This result generalised Muranov's construction of a finitely generated simple group with infinite commutator width, \cite{muranov2010finitely}. Furthermore, it also exhibits a clear contrast with the behaviour of the class of finite simple groups for which for each word there is a uniform bound on the width of the corresponding verbal subgroup.
  
  As we mentioned, weak small cancellation is the key tool to prove that a group has trivial positive theory but also to show the existence of non-trivial quasi-morphisms. This bring us to the following question:
  
  \begin{questionletter}
  	Is there are group with trivial positive theory and finite-dimensional second bounded cohomology? Is there a group with infinite dimensional second bounded cohomology and non-trivial positive theory? 
  \end{questionletter}
  
  Although it would be really surprising if indeed there is a relation between these two very different properties, the construction of an example that satisfies one but not the other one would require new techniques, which we believe could be of interest in their own.
  
  Another consequence of using weak small cancellation elements to prove that the positive theory is trivial is that all these groups contain free groups and so have exponential growth. Hence, we ask the following:
  
  \begin{questionletter}
  	Does every group with trivial positive theory have exponential growth?
  \end{questionletter}
  
  In Section \ref{sec:openquestions}, we discuss these and some other open questions.

\section{Preliminary results on actions on trees}\label{s: preliminaries}

  In this subsection we fix some terminology and provide some basic results regarding isometries and group actions on trees. The reader is referred to Chapter 3 of \cite{chiswell2001introduction} for a rigorous and detailed presentation of the subject at hand. Unless stated otherwise, throughout the whole paper, except in Section \ref{sec: characterisation}, $T$ denotes a real tree with distance metric $d$.

 \subsubsection*{Basics}
 
  An isometry $g$ of a simplicial tree $T$ that does not invert edges is either \emph{elliptic} if it fixes a vertex (also called \emph{point}), or \emph{hyperbolic} otherwise. In the first case there exists a unique maximal subtree $A(g)$ of $T$ that is fixed point-wise by $g$, and $g$ may be seen as `rotating' around $A(g)$. In the second case there exists an infinite line $A(g)$ that is preserved by $g$, and $g$ acts on $T$ as a translation along $A(g)$. Regardless of whether $g$ is elliptic or hyperbolic we call $A(g)$ the \emph{axis} of $g$. The \emph{translation length} of an isometry $g$, denoted $tl(g)$, is  defined as the infimum of the displacements by $g$ of a point, i.e.\ $tl(g)=\inf_{v\in T} d(v, gv)$.
  
   In this paper we work with groups $G$ that cat on trees by isometries on a tree $T$, i.e.\ groups that embed in the group of isometries of $T$.  In this case, given a vertex $v$ and two elements from a group $g,h$ we wrote $gh \cdot v$ (or simply $ghv$) to refer to the vertex $g\cdot(h\cdot v)$.

   We next present several basic results regarding tree isometries. In order to match the notation introduced above, we denote the composition of two isometries $g,h$ by $gh$, and we agree that $h$ acts before $g$. 
  
  The following basic lemma is fundamental for our paper. It will be used extensively without referring to it.
  
  \begin{lemma}[Lemma 1.7, Chapter 3, \cite{chiswell2001introduction}]\label{l: chiswell1}
  	If $g,h$ are both isometries of $T$, then
  	\begin{enumerate}
  		\item $A(g^h) = h^{-1}A(g)$.
  		\item $A(g^{-1})= A(g)$.
  		\item If $n$ is an integer then $tl(g^n) = n tl(g)$ and $A(g^n)\subseteq A(g)$. If $n\neq 0$ and $tl(g)>0$ then $A(g^n) =A(g)$.
  	\end{enumerate}
  \end{lemma}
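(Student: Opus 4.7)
The plan is to work case by case on whether $g$ is elliptic or hyperbolic, using only the definition of $A(g)$ as the fix set (elliptic case) or translation axis of minimum displacement (hyperbolic case), combined with the elementary fact that the map $v\mapsto d(v,gv)$ determines both $tl(g)$ and $A(g)$.

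For (1), first I would handle the elliptic case: observe that $g^h v = v$ if and only if $gh(v) = h(v)$, i.e.\ if and only if $hv\in A(g)$, i.e.\ $v\in h^{-1}A(g)$. Thus the fix-point set of $g^h$ equals $h^{-1}A(g)$, which is a (non-empty) subtree since $h$ is an isometry, so this is the axis of $g^h$. For the hyperbolic case, since $h$ is an isometry,
\[ d(v, g^h v) = d(v, h^{-1}ghv) = d(hv, ghv), \]
and as $v$ ranges over $T$ so does $hv$. Therefore the infimum of the left-hand side equals $tl(g)>0$, hence $g^h$ is hyperbolic with $tl(g^h)=tl(g)$, and the set of minimizers (which is $A(g^h)$) is exactly the preimage $h^{-1}A(g)$ of the set of minimizers of $g$.

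For (2), the elliptic case is immediate because $g$ and $g^{-1}$ fix the same points. For the hyperbolic case, setting $w=g^{-1}v$ gives $d(v,g^{-1}v)=d(gw,w)=d(w,gw)$, so the displacement functions of $g$ and $g^{-1}$ have the same image and the same minimizers; hence $tl(g^{-1})=tl(g)$ and $A(g^{-1})=A(g)$.

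For (3), once $g$ is hyperbolic the standard description of the action on $A(g)$ as a translation by $tl(g)$ along a bi-infinite line gives immediately that $g^n$ is a translation by $n\cdot tl(g)$ (with sign) along the same line for every $n\neq 0$, so $A(g^n)=A(g)$ and $tl(g^n)=|n|tl(g)$. When $g$ is elliptic, $g^n$ fixes $A(g)$ pointwise, so $tl(g^n)=0=n\cdot tl(g)$ and $A(g^n)\supseteq A(g)$; combined with the hyperbolic case, the inclusion $A(g^n)\subseteq A(g)$ in the statement becomes a consequence of the fact that an element with $tl(g)>0$ cannot have $g^n$ elliptic unless $n=0$, ensuring $g^n$ stays hyperbolic and its axis coincides with the line already preserved by $g$. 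The only subtlety I anticipate is making precise, in the hyperbolic case, that $g$ acts on its axis exactly as a Euclidean translation; this follows from the uniqueness of geodesics in a real tree and the fact that $A(g)$ is characterized as $\{v : d(v,gv)=tl(g)\}$, which propagates to $g^n$ by iteration of the identity $d(v,g^n v)=\sum_{i=0}^{n-1}d(g^i v, g^{i+1}v)$ along the axis.
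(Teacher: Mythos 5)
The paper does not prove this lemma: it cites it to Chiswell's \emph{Introduction to $\Lambda$-trees} (Lemma 1.7, Chapter 3) and uses it freely. There is therefore no in-paper argument to compare against; what you have produced is a self-contained proof from first principles, and it is essentially the standard one: classify $g$ as elliptic or hyperbolic, work with the displacement function $v\mapsto d(v,gv)$, and use that $A(g)$ is its set of minimizers. Parts (1) and (2) are handled cleanly and correctly, including the change of variable trick $w=g^{-1}v$ for (2) and the identity $d(v,g^hv)=d(hv,ghv)$ for (1).

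One point worth flagging, which you noticed but did not call out explicitly. Your analysis of the elliptic case of (3) gives $A(g)\subseteq A(g^n)$ (since $g^n$ fixes everything $g$ fixes), which is the \emph{opposite} inclusion to the one printed in the lemma. That printed inclusion $A(g^n)\subseteq A(g)$ is in fact false for general elliptic $g$: take $g$ of order $2$ and $n=2$, so that $A(g^n)=A(1)=T$. The statement in Chiswell's book reads $A(g)\subseteq A(g^n)$ for $n>0$ (with equality when $tl(g)>0$ and $n\neq 0$), so the version reproduced in this paper has the inclusion reversed. Your proof derives the correct direction but then tries to reconcile it with the misprinted one by restricting to the hyperbolic case; it would have been cleaner to simply observe that the inclusion in the statement is stated backwards. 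A second, more minor point: $tl(g^n)=n\,tl(g)$ only makes sense for $n\geq 0$; for general integer $n$ one needs $tl(g^n)=|n|\,tl(g)$, which you wrote correctly. Neither issue reflects a flaw in your reasoning; they are transcription errors in the cited statement that the proof attempt rightly exposes.
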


  The following  remark can be verified in a straightforward way with the help of Item 1 of Lemma \ref{l: chiswell1}.
  
  \begin{remark}\label{r: alb_remark_1}
  	Let $g,h$ be two isometries  such that $A(g)$ and $A(h)$ are disjoint. Then
  	\begin{enumerate}
  		\item both sets are mutually disjoint from $A(h^{g})$,
  		\item the path from $A(h^{g})$ to $A(h)$ has nonempty intersection with $A(g)$. 
  	
  	\end{enumerate}
  \end{remark}
  
  \subsubsection*{The axis of the composition of two isometries}

  The following  lemmas and their corresponding diagrams provide valuable information regarding the product of two isometries. 
  
  By \emph{bridge} between two axes $A(g)$ and $A(h)$ we mean the shortest path connecting them.

  \begin{lemma}[Lemma 2.2 of Chapter 3 \cite{chiswell2001introduction}]\label{l: chiswell2}
  	Let $g,h$ be two elements acting on a tree which are not inversions. Suppose $A(g) \cap A(h) =\emptyset$. Let $[p,q]$ be the bridge between $A(g)$ and $A(h)$, with $p \in A(g)$ and $q \in A(h)$. Then $[p,q]\subseteq A(gh)$, and
  	$$
  	tl(gh) = tl(g) + tl(h) + 2d(A(g), A(h)).
  	$$
  	Moreover, $A(gh) \cap A(h) = [q, h^{-1}q]  = tl(h)$ and $A(gh)\cap A(g) =[p,gp] = tl(g)$, segments of length $tl(h)$ and $tl(g)$, respectively. See Figure \ref{ffff2} where we have depicted $A(gh)$ in case both $g$ and $h$ are hyperbolic.
  \end{lemma}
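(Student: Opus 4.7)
The plan is to compute the displacement of $gh$ at every point of the bridge $[p,q]$, observe that it is constant and positive, deduce that $[p,q]\subseteq A(gh)$, and then read off the intersections with $A(g)$ and $A(h)$ from an explicit fundamental domain for $gh$ on its axis.

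For the main computation, fix $x\in[p,q]$ and trace the geodesic from $x$ to $ghx$ by composing its two halves. First, $[x,hx]$ is built as $[x,q]\cup[q,hq]\cup[hq,hx]$, using that the projection of $x$ onto $A(h)$ is $q$ (by the bridge property). Next, the projection of $hx$ onto $A(g)$ is $p$, so applying $g$ moves it to $gp$, and one assembles the whole geodesic
\[ [x,ghx]=[x,p]\cup[p,gp]\cup[gp,gq]\cup[gq,ghq]\cup[ghq,ghx]. \]
At each join the direction switches from along one convex subtree to off it, so no backtracking can occur and the concatenation is genuinely a geodesic. Summing the lengths gives $d(x,ghx) = tl(g) + tl(h) + 2d(p,q)$, independent of $x\in[p,q]$.

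To promote constant displacement on $[p,q]$ to membership in the axis, invoke the standard fact that a non-inversion isometry $\alpha$ of a tree satisfies $d(y,\alpha y)=tl(\alpha)+2d(y,A(\alpha))$, where $A(\alpha)$ is the fixed subtree in the elliptic case ($tl(\alpha)=0$) or the axis in the hyperbolic case. A nondegenerate segment of a tree cannot lie at constant positive distance from a convex subtree---otherwise the projections of its endpoints would force the connecting geodesic to enter the subtree, contradicting positive distance there---so the positive constant $\tau=tl(g)+tl(h)+2d(p,q)$ forces $gh$ to be hyperbolic with $tl(gh)=\tau$ and $[p,q]\subseteq A(gh)$.

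The fundamental domain $[p,ghp]=[p,gp]\cup[gp,gq]\cup[gq,ghq]\cup[ghq,ghp]$ for $gh$ on $A(gh)$ shows that the axis contains $[p,gp]\subseteq A(g)$ and leaves $A(g)$ at both $p$ (along the bridge) and $gp$ (along $g[p,q]$); as the intersection of two lines in a tree is convex and connected, $A(gh)\cap A(g)=[p,gp]$, of length $tl(g)$. Applying $(gh)^{-1}$ to $[p,ghp]$ gives the adjacent domain $[(gh)^{-1}p,p]$, which traverses $A(h)$ precisely on $[h^{-1}q,q]$, whence $A(gh)\cap A(h)=[q,h^{-1}q]$, of length $tl(h)$. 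The main obstacle throughout is the direction bookkeeping at each join, which relies on the bridge property that $[p,q]$ meets $A(g)$ only at $p$ and $A(h)$ only at $q$.
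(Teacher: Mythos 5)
The paper states this lemma as a citation to Chiswell's book and does not supply its own proof, so there is no in-paper argument to compare against; your task was effectively to supply the missing proof, and what you wrote is a correct one. The geodesic decomposition
\[
[x,ghx]=[x,p]\cup[p,gp]\cup[gp,gq]\cup[gq,ghq]\cup[ghq,ghx]
\]
for $x\in[p,q]$ is the right one, and the no-backtracking checks at $p$, $gp$, $gq$, $ghq$ are exactly the points one must verify; they all reduce to the bridge property together with the observation that if $g$ (resp.\ $h$) fixed the germ of the bridge at $p$ (resp.\ $q$) it would fix a nondegenerate initial subsegment, contradicting minimality of $d(A(g),A(h))$. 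Summing lengths gives the constant value $tl(g)+tl(h)+2d(p,q)$, and the passage from ``constant positive displacement on a nondegenerate segment'' to ``the segment lies on $A(gh)$ and $tl(gh)$ equals that constant'' via the displacement formula $d(y,\alpha y)=tl(\alpha)+2d(y,A(\alpha))$ is sound: the distance to the characteristic set, restricted to a segment, is piecewise linear with slope $\pm 1$ off the set and $0$ on it, so it can only be constant if it is identically zero. The final fundamental-domain argument for $A(gh)\cap A(g)=[p,gp]$ and $A(gh)\cap A(h)=[q,h^{-1}q]$ is also correct; one small wording nit is that $A(g)$ need not be a line when $g$ is elliptic, but your argument only uses that it is convex, and the intersection of convex subtrees is convex, so the reasoning goes through unchanged (with $[p,gp]$ degenerate and of length $tl(g)=0$ in the elliptic case, as required).
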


  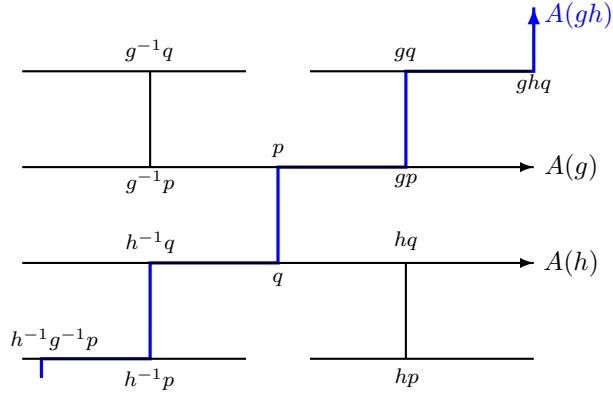
\begin{figure}[ht]
  	\begin{center}
  		\begin{tikzpicture}[scale=0.85]
  			\tikzset{near start abs/.style={xshift=1cm}}

  			\draw[color=blue,very thick,endarrow={latex}] (0.3, -0.3)-- (0.3,0) -- (2,0)-- (2,1.5) --(4,1.5) -- (4,3) -- (6,3)-- (6,4.5) -- (8,4.5)-- (8,5.5) node[xshift=0.6cm, yshift=-0.1cm] {$A(gh)$};
  			
  			\draw[thick,endarrow={latex}] (0, 1.5) -- (8,1.5) node[xshift=0.5cm] {$A(h)$} node[midway, below] {\footnotesize{$q$}};
  			\draw[thick,endarrow={latex}, yshift=1.5cm] (0, 1.5) -- (8,1.5) node[xshift=0.5cm] {$A(g)$} node[midway, above] {\footnotesize{$p$}};
  			
  			\node at (0.5, 0.3) {\footnotesize{$h^{-1}g^{-1}p$}};
  			\draw[thick] (0, 0)-- (3.5,0) node[midway, below, xshift=0.2cm] {\footnotesize{$h^{-1}p$}};
  			\draw[thick, xshift=4.5cm] (0, 0)-- (3.5,0) node[midway, below, xshift=-0.2cm] {\footnotesize{$hp$}};
  			\draw[thick, yshift=4.5cm] (0, 0)-- (3.5,0) node[midway, above, xshift=0.2cm] {\footnotesize{$g^{-1}q$}};
  			\draw[thick, yshift=4.5cm, xshift=4.5cm] (0, 0)-- (3.45,0) node[midway, above, xshift=-0.2cm] {\footnotesize{$gq$}};
  			
  			\draw[thick] (6, 0)-- (6,1.5) node[yshift=0.3cm] {\footnotesize{$hq$}} node[yshift=1.1cm] {\footnotesize{$gp$}};
  			\draw[thick] (2, 4.5)-- (2,3) node[yshift=-0.2cm] {\footnotesize{$g^{-1}p$}} node[yshift=-1cm] {\footnotesize{$h^{-1}q$}};
  			
  			\node at (8,4.3) {\footnotesize $ghq$};
  			
  		\end{tikzpicture}
  	\end{center}
  	\caption{The axis of $gh$ under the assumptions of Lemma \ref{l: chiswell2}, assuming $g$ and $h$ are hyperbolic. We have also depicted the image of the points $p$, $q$ under different  action compositions of the isometries $g^{\pm 1}, h^{\pm 1}$.} \label{ffff2}
  \end{figure}

  Two isometries $g, h$ are said to meet \emph{coherently} if $A(g) \cap A(h) \neq \emptyset$ and if both are hyperbolic then the translation direction of $g$ and $h$ coincides on $A(g) \cap A(h)$ (see page 100 in \cite{chiswell2001introduction}). The following is the analogue of Lemma \ref{l: chiswell2} for the case when $g$ and $h$ meet coherently.
  
  \begin{lemma}[Lemma 3.1 of Chapter 3 \cite{chiswell2001introduction}]\label{l: chiswell3}
  	Assume $g,h$ are hyperbolic isometries which meet coherently.  Then $gh$ meets both $g$ and $h$ coherently, $tl(gh)=tl(g) + tl(h)$,  $|A(gh)\cap A(h)| = |A(g) \cap A(h)| +tl(h)$, and  $|A(gh)\cap A(g)| = |A(g) \cap A(h)| + tl(g)$. See Figure \ref{f: fff} for a depiction of $A(gh)$.
  
  \end{lemma}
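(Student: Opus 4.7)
The plan is to adapt the broken-path construction from Lemma~\ref{l: chiswell2} to the coherent setting, exploiting coherence to align all relevant sub-paths in a common direction. Set $I := A(g) \cap A(h)$, and let $+$ denote the common translation direction on $I$ provided by coherence. The strategy is to exhibit an explicit bi-infinite geodesic $L$ on which $gh$ acts as a translation of length $tl(g) + tl(h)$, then identify $L = A(gh)$.

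For the upper bound on $tl(gh)$, I would pick a point $p$ at the negative-end extremum of $I$ (or any point of $I$ if $I$ is unbounded in that direction). Since $p \in A(h)$, $d(p, hp) = tl(h)$. To compute $d(hp, ghp)$, use that $g$ translates $A(g)$ by $tl(g)$ and preserves off-axis distances: if $\pi$ denotes the closest-point projection onto $A(g)$, then $ghp = g(hp)$ has projection $g(\pi(hp))$ at the same off-axis distance as $hp$. A case analysis on whether $hp \in I$ (so $\pi(hp) = hp$) or $hp$ overshoots the positive endpoint $b$ of $I$ (so $\pi(hp) = b$) yields $d(p, ghp) = tl(g) + tl(h)$ in either subcase, hence $tl(gh) \le tl(g) + tl(h)$.

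To upgrade to equality and identify the axis, I would iterate: for each $n$, the geodesic from $(gh)^n p$ to $(gh)^{n+1}p$ has length $tl(g) + tl(h)$ and, thanks to coherence, consecutive such geodesics concatenate without backtracking at the junctions $(gh)^n p$. Hence the orbit $\{(gh)^n p : n \in \mathbb{Z}\}$ lies on a common bi-infinite geodesic $L$, $gh$ translates $L$ by $tl(g) + tl(h)$, so $L \subseteq A(gh)$ with matching translation length, forcing $L = A(gh)$ and $tl(gh) = tl(g) + tl(h)$. Coherence of $gh$ with each of $g$ and $h$ is then immediate since $L$ inherits the $+$-direction on $I$.

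Finally, inspecting a fundamental domain of $L$ at $p$ shows that, going in the positive direction, $L$ traverses $I$ and then continues for an additional length $tl(g)$ along $A(g)$ past $b$ (before exiting $A(g)$ for good), and symmetrically going in the negative direction $L$ continues for $tl(h)$ along $A(h)$ past the negative endpoint $a$. This yields $|A(gh) \cap A(g)| = |I| + tl(g)$ and $|A(gh) \cap A(h)| = |I| + tl(h)$. The main obstacle I expect is the case $\max(tl(g), tl(h)) > |I|$, where the broken path $p \to hp \to ghp$ is no longer itself a geodesic; one must replace the direct broken-path argument by the projection-based computation, and the collinearity of the $(gh)^n$-translates requires a careful analysis of how $g$ and $h$ act on the branches hanging off $A(g)$ and $A(h)$ at the endpoints of $I$.
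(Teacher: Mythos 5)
This lemma appears in the paper only as a citation to Chiswell's book; the paper supplies no proof of its own, so there is nothing internal to compare against. Your outline is the standard textbook argument and is correct. The step you defer as requiring ``careful analysis of how $g$ and $h$ act on the branches'' --- that consecutive segments $[(gh)^np,(gh)^{n+1}p]$ concatenate without backtracking --- is exactly where coherence is used, and it is quickest to settle at the single junction $p$ and then propagate by $gh$-equivariance: with $p$ the extremal point of $I=A(g)\cap A(h)$ on the side from which the common translation direction enters $I$, one has $g^{-1}p\in A(g)\setminus A(h)$ with $\mathrm{pr}_{A(h)}(g^{-1}p)=p$, hence $(gh)^{-1}p=h^{-1}g^{-1}p$ hangs off $h^{-1}p$, so $[(gh)^{-1}p,\,p]$ arrives at $p$ along $A(h)$ from the negative side, while $[p,\,ghp]$ leaves $p$ along $A(h)$ in the positive direction; these germs at $p$ are distinct, so there is no backtracking at $p$, and equivariance under $gh$ does the rest. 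With that single junction verified, everything else you sketch --- the two-case computation giving $d(p,ghp)=tl(g)+tl(h)$, the identification $L=A(gh)$, and reading off $|A(gh)\cap A(g)|=|I|+tl(g)$ and $|A(gh)\cap A(h)|=|I|+tl(h)$ from where $L$ exits the two axes (at $g\cdot b$ in the positive direction and at $h^{-1}p$ in the negative direction, with $b$ the other endpoint of $I$) --- goes through as stated, including in the regime $\max(tl(g),tl(h))>|I|$ that you flag as the delicate one.
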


  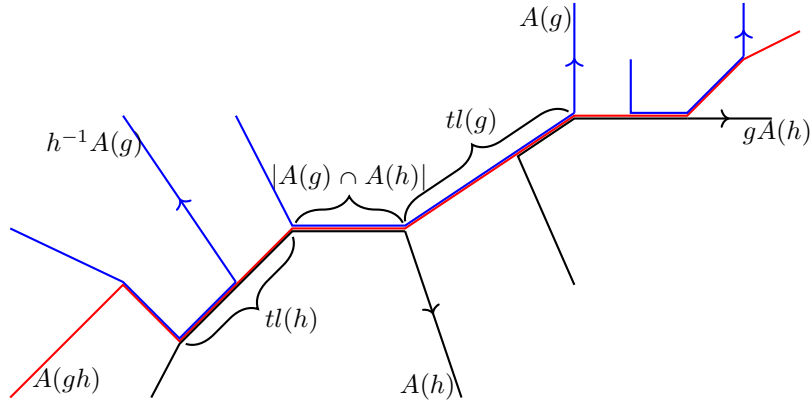
\begin{figure}[ht]
  	\begin{center}
  		\begin{tikzpicture}[scale=0.75]
  			\tikzset{near start abs/.style={xshift=1cm}}
  			
  			\draw[color=red, thick] (0,0) -- (2,2) -- (3,1) -- (5,3) -- (7,3) --(10,5)--(12,5)--(13,6)--(14,6.5);
  			\node (xx) at (1,0.3) {\color{red}{$A(gh)$}};

  			\draw[color=blue,thick] (0,3) -- (2,2.05)-- (3,1.05) -- (4,2.05);
  			\draw[color=blue,thick,middlearrow={>}] (4,2.05) -- (2,5);
  			\node (xx) at (1.5,4.5) {\color{blue}{$h^{-1}A(g)$}};

  			\draw[thick] (2.5,0) -- (3,0.95)-- (5,2.95) --(7,2.95);
  			\draw[thick,middlearrow={>}] (7,2.95) -- (8,0);
  			\node (xx) at (7.4,0.2) {{$A(h)$}};
  			
  			\draw [thick,decorate,decoration={brace,amplitude=10pt},xshift=-4pt,yshift=0pt]
  			(5.15,2.85) -- (3.25,0.95) node [black,midway,xshift=0.7cm, yshift=-0.4cm]
  			{$tl(h)$};
  			\draw [thick,decorate,decoration={brace,amplitude=10pt},xshift=-4pt,yshift=0pt]
  			(5.2,3.1) -- (7.1,3.1) node [black,midway,yshift=0.6cm] {$|A(g)\cap A(h)|$};
  			
  			\draw[color=blue,thick] (4,5) -- (5,3.05)-- (7,3.05) --(10,5.05);
  			\draw[color=blue,thick,middlearrow={>}] (10,5.05) -- (10,7);
  			\node (xx) at (9.5,6.7) {\color{blue}{$A(g)$}};

  			\draw [thick,decorate,decoration={brace,amplitude=10pt},xshift=0pt,yshift=5pt]
  			(7,2.95) -- (9.9,4.9) node [black,midway,xshift=-0.2cm, yshift=0.6cm]
  			{$tl(g)$};
  			
  			\draw[color=blue,thick] (11,6) -- (11,5.05)-- (12,5.05) --(13,6.05);
  			\draw[color=blue,thick,middlearrow={>}] (13,6.05) -- (13,7);

  			\draw[thick] (10,2) -- (9,4.28)-- (10,4.95) --(12,4.95);
  			\draw[thick,middlearrow={>}] (12,4.95) -- (13.5,4.95);
  			\node (xx) at (13.6,4.7) {{$gA(h)$}};

  		\end{tikzpicture}
  	\end{center}
  	\caption{The axis of $gh$ (in red) under the assumptions of Lemma \ref{l: chiswell3}.  The black and blue lines are translates of $A(h)$ and of $A(g)$, respectively.} \label{f: fff}
  \end{figure}

  For convenience, in Figure \ref{f: hyp_elliptic} we depict the axis of $A(gh)$ for  $g$ and $h$ two isometries of the tree $T$ such that $g$ is elliptic and $h$ is hyperbolic,  $A(g)\cap A(h)\neq \emptyset$, and additionally $g$ does not invert a subsegment of $A(h)$ and  $g$ does not pointwise fix $A(h)$.
  
    	\begin{figure}[ht]
  		\begin{center}
  			\begin{tikzpicture}[scale=0.78]
  				\tikzset{near start abs/.style={xshift=1cm}}
  				
  			   \draw[thick] (-3,0)-- (0,0) -- (2,0) -- (6,0);
  			   \draw[->, thick] (2,0) -- (6,0);
  			   \draw[thick] (4,0) -- (4, 5);
  			   \draw[thick] (-1,0) -- (-1,5);
  			   \draw[thick] (4,4) -- (1,4);
  			   \draw[thick] (3, 4) -- (3, 2);
  			   \draw[thick] (2, 4) -- ( 2, 2);
  			    \node at (3.5, 2.1) {\footnotesize $kA(h)$};
  			    \draw[thick] (4,1.5) -- (1,1.5);
  			    \node at (2.5,1.1) {$(ghg)^{-1}A(h)$};
  			    \node[color=blue] at (0.3,0.3) {$A(gh)$};
  			    \node at (6,-0.5) {$A(h)$};
  			    \node at (6,-0.5) {$A(h)$};
  			    \node at (5,4.5) {$g^{-1}A(h)$};
  			   \draw[color=blue, very thick] (-2,-1) -- (-2,0) -- (4,0) -- (4,4) -- (2, 4) -- (2,2);
  			   \draw[color=blue, very thick, middlearrow={>}] (2,4) -- (2,2);
  			  \draw [thick,decorate,decoration={brace,amplitude=7pt},yshift=-1pt] (4,0) -- (-1,0) node [midway,yshift=-0.6cm] {$A(g)$};

  			\end{tikzpicture}
  		\end{center}
  		
  		\caption{The axis of $A(gh)$ (in blue) when $g$ is hyperbolic, $h$ is elliptic, $A(g)\cap A(h)\neq \emptyset$, and $g$ neither fixes $A(h)$ nor inverts an edge in $A(h)$. Here $k$ denotes the element $(hg)^{-2}g^{-1}$.} \label{f: hyp_elliptic}
  	\end{figure}

\medskip
  We will also need the following observation.
  
  \begin{remark}\label{r: axis_of_product}
  	Let $g$ and $h$ be two hyperbolic elements.
  	If $A(g) \cap A(h) = \emptyset$ then $A(gh)$ contains the path connecting $A(g)$ and $A(h)$. Otherwise $A(gh)$ contains $A(g) \cap A(h)$. In both cases $|A(g) \cap A(gh)| = tl(g)$.
  	
  	If $g$ is hyperbolic and $h$ is elliptic then $A(g) \cap A(gh)$ is finite.
  \end{remark}

	 We next provide information regarding the translation length of the product of two isometries.

     \begin{remark}\label{r: alb_remark_2} 
     	Let $g,h$ be two isometries of $T$. Then
     	\begin{enumerate}
     		\item $tl(xy) = tl(x) + tl(y) + 2d(A(x), A(y))$ if $A(x)$ and $A(y)$ do not intersect.
     		\item $tl(xy)= tl(x) + tl(y)$ if $A(x)$ and $A(y)$ intersect coherently.
     		\item $tl(xy) = tl(x) + tl(y) - 2|A(x) \cap A(y)|$ if $0< |A(x)\cap A(y)|\leq \min\{tl(x), tl(y)\}$, and  $A(x), A(y)$ do not intersect coherently.
     	\end{enumerate}
     \end{remark}

 \subsubsection*{Action dynamics for a set of isometries}
 
The action of a group $G$ on $T$ is called \emph{abelian} if $tl(gh)\leq tl(g) + tl(h)$ for all $g,h\in G$. It is called \emph{dihedral} if it is not abelian and $tl(gh)\leq tl(g) + tl(h)$ for all hyperbolic pair of elements $g, h\in G$. Finally, the action is said to be \emph{irreducible} if it is neither abelian nor dihedral. We refer to \cite[Chapter 3]{chiswell2001introduction} for further details and alternative characterisations of these notions.
   
  \begin{lemma}\label{l: lema dihedral action}
  	 Let $G$ be a group generated by two isometries $g,h$ of the tree $T$. Assume that $A(g)\cap A(h)=\emptyset$.
  	Then the action of $G$ on $T$ is dihedral if and only if both $g$ and $h$ are elliptic and both $g^2$ and $h^2$ fix
  	the segment $[p,q]$ between $A(g)$ and $A(h)$. Otherwise, the action is irreducible.
  \end{lemma}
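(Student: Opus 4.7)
The plan is to split on whether at least one of $g, h$ is hyperbolic or both are elliptic, and in each case work with the translation-length definitions of dihedral/abelian actions. If both $g, h$ are hyperbolic, the pair $(g, h)$ itself violates the dihedral inequality via Lemma \ref{l: chiswell2}: the formula $tl(gh) = tl(g) + tl(h) + 2d(A(g), A(h))$ gives a strict excess over $tl(g) + tl(h)$ because the axes are disjoint. If exactly one is hyperbolic, say $g$, I would apply Remark \ref{r: alb_remark_1} (swapping the roles of $g$ and $h$ in its statement) to produce the conjugate $g^h$, whose axis $h^{-1}A(g)$ is disjoint from $A(g)$; then $(g, g^h)$ is a hyperbolic pair witnessing the same failure. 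In either case the action is not abelian either, so it must be irreducible.

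When both $g, h$ are elliptic, note first that $tl(gh) = 2d(p,q) > 0 = tl(g) + tl(h)$ by Lemma \ref{l: chiswell2}, so the action is never abelian and must be either dihedral or irreducible. For the ``dihedral $\Rightarrow$ $g^2, h^2$ fix $[p,q]$'' direction, I would use the standard fact that a dihedral action on a tree stabilises a unique line $\ell$ (Chapter 3 of \cite{chiswell2001introduction}). Both $g, h$ preserve $\ell$, and the bridge from each of $A(g), A(h)$ to $\ell$ being preserved by the corresponding isometry forces its endpoint on $\ell$ to lie in the fixed subtree. Choosing $p' \in A(g) \cap \ell$ and $q' \in A(h) \cap \ell$, the tree geodesic $[p', q']$ lies in $\ell$ and contains the bridge $[p, q]$; hence $[p, q] \subset \ell$. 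Neither $g|_\ell$ nor $h|_\ell$ can be the identity (else $A(g) \cap A(h) \ne \emptyset$), so they are reflections at $p'$ and $q'$ respectively, and $g^2, h^2$ act trivially on $\ell$, in particular on $[p, q]$.

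For the converse, assuming $g^2, h^2$ fix $[p, q]$, I would exhibit an invariant line by arguing that $\ell := A(gh)$ is such a line. The hypothesis gives the key identities $g^{-1}q = gq$ and $h^{-1}p = hp$, and also forces $g$ to swap the direction $d_{pq}$ at $p$ with $d^*_g := g(d_{pq})$ — these being precisely the two local directions of $A(gh)$ at $p$, since $gq \in A(gh)$ lies at distance $d(p,q)$ from $p$ in direction $d^*_g$. A symmetric statement holds for $h$ at $q$. I would then verify $A(gh) = A(hg)$, which is equivalent to $g$ preserving $A(gh)$ because $A(hg) = g^{-1}A(gh)$. The main tool is that $(gh)(hg) = gh^2g$ is elliptic, fixing $p$ via the hypothesis $h^2 p = p$; this, via Remark \ref{r: alb_remark_2} item 3 applied to the hyperbolic pair $(gh, hg)$ whose axes both contain $[p, q]$, constrains the configuration of the two axes. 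Combined with the matching of local directions at the endpoints of their overlap (and iterating with conjugates such as $h^{-1}g^2h$, which fixes $[hp, q]$, if further germ information is needed), the two axes must coincide, making $\ell$ invariant under $\langle g, h\rangle$.

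The main obstacle is this converse step: promoting the local hypothesis that $g^2, h^2$ fix the segment $[p, q]$ to the global invariance of $A(gh)$. The hypothesis determines the reflective behaviour of $g, h$ on the germs of $A(gh)$ at $p, q$, but consistency along the entire line requires the length-comparison argument and a careful handling of how $g^2$ acts on points such as $hp$ and beyond. Once invariance is established, the non-abelian observation immediately upgrades the action to dihedral.
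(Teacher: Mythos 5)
Your treatment of the case in which at least one of $g,h$ is hyperbolic, and of the forward implication (dihedral $\Rightarrow$ both elliptic and $g^2,h^2$ fix $[p,q]$), is sound; you produce a hyperbolic pair with disjoint axes directly via Lemma \ref{l: chiswell2} and Remark \ref{r: alb_remark_1}, whereas the paper uses the pair $g$, $gh$ and Remark \ref{r: axis_of_product}, but these are interchangeable.

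The genuine gap is in the converse, which you correctly flag as the main obstacle but do not close. Your proposed use of Remark \ref{r: alb_remark_2}(3) on the pair $(gh,hg)$, with $(gh)(hg)=gh^2g$ elliptic, at best tells you that the overlap $|A(gh)\cap A(hg)|$ equals $tl(gh)$ (and only when the overlap is small enough for that formula to apply); it does not force $A(gh)=A(hg)$, and the germ information at $p,q$ controls $g$ only on the bounded piece $g[p,q]\cup[p,q]$ of $A(gh)$. The deeper problem is that ``$g^2$ fixes $[p,q]$'' says nothing about $g^2$ beyond a bounded neighbourhood of the bridge, so you cannot conclude that $g$ preserves the whole axis $A(gh)$. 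In fact the implication appears to fail as stated: in $G=D_4\ast_{\mathbb{Z}_2}\mathbb{Z}_4$ with $D_4=\langle r,s\rangle$, $C=\langle s\rangle$ a non-central involution, $\mathbb{Z}_4=\langle b\rangle$, and the identification $s=b^2$, take $g=rs$ and $h=b$. Then $\langle g,h\rangle=G$, both generators are elliptic, $A(g)=\{v_A\}$ and $A(h)=\{v_B\}$ are disjoint singletons with bridge the single edge $[v_A,v_B]$, and both $g^2=1$ and $h^2=s\in C$ fix that edge pointwise; yet the Bass--Serre tree has degree-$4$ vertices of type $A$ and the action is minimal and edge-transitive, so the orbit of $[p,q]$ is the whole branching tree, not a line, and the action is irreducible rather than dihedral. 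The paper's own proof of this direction is the single assertion that the orbit of $[p,q]$ is a bi-infinite line, which the same example contradicts. Note that where the lemma is actually invoked, inside the proof of Lemma \ref{l: multiple conjugation}, one has the stronger fact that $g^2$ and $h^2$ fix the entire line $A(h^cg)$ pointwise, not just the bridge, and that is what the argument really needs; the hypothesis in the lemma statement should presumably be strengthened to this.
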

  \begin{proof}
  First note that if the action on $T$ is not irreducible, then both $g,h$ are elliptic. Indeed, if for example $g$ is hyperbolic, then $g$ and $gh$ are two hyperbolic elements whose axes have finite intersection (see Remark \ref{r: axis_of_product}), which forces the action of $\langle g, h\rangle$ on $T$ to be irreducible.
  	
  Moreover, since $[p,q]$ is contained in  $A(gh)$ (by Remark \ref{r: axis_of_product}) but it is not contained in  $A(g)$ nor $A(h)$ by hypothesis, we have that $A(g)$ and $A(h)$ do not contain $A(gh)$. This means the only possibility left is that $g$ and $h$ both act as  on the bi-infinite line $A(gh)$ by inversion. In particular, $g^2$ and $h^2$ fix $[p,q]$.
  	
  	Vice-versa, if $g^2$ and $h^2$ fix $[p,q]$, then it is easy to check that the union of the points in the orbit of $[p,q]$ under the action of $\langle h,g \rangle$ is a bi-infinite line on which both $g$ and $h$ act by a rotation.
  \end{proof}

  \begin{lemma}
  	\label{c: irreducibility of powers}Let $p$ and $q$ be coprime integers and assume we are given a two-generated group $\subg{g,h}$ acting irreducibly
  	on  $T$. Then there is $(s,t)\in\{p,q\}\times \{p,q\}$ such that the subgroup $\subg{g^{s},h^{t}}$ also acts irreducibly on $T$.
  \end{lemma}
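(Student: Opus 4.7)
The plan is to proceed by case analysis on whether each of $g, h$ is elliptic or hyperbolic, using throughout the equivalent reformulation that $\langle g, h\rangle$ acts irreducibly if and only if it contains two hyperbolic elements with disjoint axes, and the dichotomy from Lemma \ref{l: lema dihedral action}.

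Case 1 (both hyperbolic) is immediate: since $A(g^s) = A(g)$ and $A(h^t) = A(h)$ for every nonzero $s, t$, the pair $\{A(g^s), A(h^t)\}$ coincides with $\{A(g), A(h)\}$. Irreducibility of a two-generator subgroup of hyperbolic elements depends only on this pair of axes (the axes must be distinct and share no end at infinity, otherwise $\langle g, h\rangle$ would preserve a line or fix an end), and this condition transfers unchanged to $\langle g^s, h^t\rangle$. In this case no use of coprimality is needed; any $(s, t)$ works.

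In Case 2 (exactly one of $g, h$ hyperbolic, say $h$), the failure of irreducibility of $\langle g^s, h^t\rangle$ reduces to $\langle g^s, h^t\rangle$ preserving a line or fixing an end. Since $h^t$ is hyperbolic with axis $A(h)$ and endpoints $\xi_\pm$, a preserved line would have to be $A(h)$ itself with $g^s$ fixing it pointwise, while a common fixed end would have to be one of the $\xi_\pm$. For each such obstruction $X \in \{A(h) \text{ pointwise},\, \xi_+,\, \xi_-\}$, the set $\{n \in \mathbb{Z} : g^n \text{ stabilizes } X\}$ is a subgroup of $\mathbb{Z}$, and irreducibility of $\langle g, h\rangle$ prevents it from containing $1$. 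By coprimality of $p, q$, the subgroup $\langle g^p, g^q\rangle$ equals $\langle g\rangle$, so the obstruction subgroup cannot contain both $p$ and $q$. Combining this with the structural observation that fixing both $\xi_+$ and $\xi_-$ is equivalent to fixing $A(h)$ pointwise (absent inversions), one does the necessary case-bookkeeping over the four candidate pairs $(s, t) \in \{p, q\}^2$ to locate a pair that avoids all obstructions.

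Case 3 (both elliptic) invokes Lemma \ref{l: lema dihedral action}: irreducibility of $\langle g, h\rangle$ forces $A(g) \cap A(h) = \emptyset$ with not both $g^2$ and $h^2$ fixing the bridge $[p,q]$. We want analogously $A(g^s) \cap A(h^t) = \emptyset$ and not both $g^{2s}, h^{2t}$ fixing the bridge, so that the same lemma delivers irreducibility of $\langle g^s, h^t\rangle$. Each potential obstruction — enlarged fixed subtree of $g^s$ reaching $A(h^t)$, or $g^{2s}$ fixing the bridge — again corresponds to membership of $s$ or $t$ in a proper subgroup of $\mathbb{Z}$, and coprimality of $p, q$ with the pigeonhole over the four pairs $(s, t)$ produces a working choice.

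The main obstacle I anticipate is the bookkeeping in Case 2: in principle the three obstructions on $s$ are nominally independent, while coprimality only tells us each individually is avoided by at least one of $p, q$; so the heart of the argument is exploiting the non-independence (fixing both endpoints equals fixing the axis) and the interplay with the choice of $t$ in the four-element grid $\{p, q\}^2$, which is why the statement is phrased with both exponents free to vary rather than just one.
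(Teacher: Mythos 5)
Your proposal takes a genuinely different route from the paper. The paper's proof turns entirely on the single clean observation that for an elliptic element $g$ and coprime $p,q$ one has $A(g^{p})\cap A(g^{q}) = \mathrm{Fix}(g^{p})\cap\mathrm{Fix}(g^{q}) = \mathrm{Fix}(g)=A(g)$; the hyperbolic--elliptic case is then dispatched in one line (not both $A(g^{p}), A(g^{q})$ can contain $A(h)$), and the elliptic--elliptic case via Helly's theorem: if $A(g^{s})\cap A(h^{t})\neq\emptyset$ for all four pairs $(s,t)\in\{p,q\}^{2}$, then the four convex sets $A(g^{p}),A(g^{q}),A(h^{p}),A(h^{q})$ pairwise meet, hence by Helly share a point, which lies in $A(g)\cap A(h)$ and gives a global fixed vertex, contradicting irreducibility. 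Your proposal replaces this with an enumeration of ``obstruction subgroups of $\mathbb{Z}$'' and a pigeonhole argument.

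There are two concrete gaps. First, in your Case 3, the condition ``$A(g^{s})\cap A(h^{t})\neq\emptyset$'' is \emph{not} a subgroup condition on $s$ (nor on $t$). Indeed, $\{s : \mathrm{Fix}(g^{s})\cap A(h^{t})\neq\emptyset\}$ need not be closed under taking $\gcd$: if $s_{1}$ and $s_{2}$ both lie in this set, the corresponding witnesses sit in $\mathrm{Fix}(g^{s_{1}})\cap A(h^{t})$ and $\mathrm{Fix}(g^{s_{2}})\cap A(h^{t})$, but $\mathrm{Fix}(g^{\gcd(s_{1},s_{2})})\cap A(h^{t}) = \mathrm{Fix}(g^{s_{1}})\cap \mathrm{Fix}(g^{s_{2}})\cap A(h^{t})$ can easily be empty. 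So the ``membership in a proper subgroup'' framing, which is what makes coprimality actionable, simply does not apply to this obstruction, and ``pigeonhole over the four pairs'' has nothing to hold on to. This is precisely the point where Helly's theorem (for the family of convex fixed trees) is doing nontrivial work that your sketch silently skips.

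Second, in your Case 2 the list of obstructions is incomplete: besides ``$g^{s}$ fixes $A(h)$ pointwise'' and ``$g^{s}$ fixes $\xi_{+}$ or $\xi_{-}$,'' there is the dihedral obstruction in which $g^{s}$ preserves $A(h)$ setwise but inverts it (so $g^{s}\in E(h)\setminus K(h)$); such $g^{s}$ fixes neither end of $A(h)$ and does not fix $A(h)$ pointwise, yet $\langle g^{s},h\rangle$ is dihedral, not irreducible. Your text explicitly says a preserved line forces $g^{s}$ to fix it pointwise, which is false. Finally, even in the cases you do list, ``one does the necessary case-bookkeeping'' is an unfilled gap: you have (at least) three nominally independent proper subgroups of $\mathbb{Z}$ and only two candidate exponents $p,q$, so coprimality alone does not finish, and the interdependencies you gesture at ($S_{+}\cap S_{-}\subseteq S_{E}$, $2S_{E}\subseteq K(h)$, etc.) need to actually be deployed. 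The paper sidesteps all of this by reducing directly to the intersection identity $A(g^{p})\cap A(g^{q})=A(g)$, which you do not invoke.
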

  \begin{proof}
  	If both $g$ and $h$ act hyperbolically on $T$ the result is clear. Suppose now that, for instance, $g$ is elliptic. The fact that $p$ and $q$ are coprime implies that $A(g^{p})\cap A(g^{q})=Fix(g)$. If $h$ is hyperbolic, the latter implies that for some $s\in\{p,q\}$ and any $t\in\{p,q\}$ we have that $A(g^{s})\nsupseteq A(h^{t})=A(h)$, which implies that $\subg{g^{s},h^{t}}$ acts irreducibly on $T$. Suppose now $h$ is elliptic as well. If we had $A(g^{s})\cap A(h^{t})\neq\emptyset$ for all $(s,t)\in\{p,q\}^{2}$, then every pair of sets in the family $\{A(g^{q}),A(g^{p}),A(h^{q}),A(h^{p})\}$ has a non-empty intersection. Since they are all convex, Helly's theorem implies that the intersection of all of them must be convex as well, but
  	\begin{align*}
  		A(g^{p})\cap A(g^{q})\cap A(h^{p})\cap A(h^{q})=(A(g^{p})\cap A(g^{q}))\cap(A(h^{p})\cap A(h^{q}))=A(g)\cap A(h)
  	\end{align*}
  	so in that case the action of $\subg{g,h}$ on $T$ would have a fixed vertex: a contradiction.
  \end{proof}

  \newcommand{\lc}[0]{of smaller complexity }

  We end this preliminary section with some miscellaneous results which will be needed during the paper. The first one of them can be proved by using Remark \ref{l: chiswell1}.

  Given a convex subset $A\subset T$ we denote by
  $pr_{A}$ the map that sends each point $x\in T$ to
  the unique closest point $a\in A$. Notice that given another convex set $B\subset T$ either
  $d(A,B)>0$ and $pr_{A}(B)$ is a single point or 
  $pr_{A}(B)=A\cap B$. 
  
  \begin{remark}\label{r: alb_remark_11}
  	Let $g,h,k$ be isometries such that $A(h)\cap A(k)=\emptyset$ and at least one of the following conditions is satisfied:
  	\begin{enumerate}
  	    \item \label{option1} $A(g)\cap A(k)=\emptyset$ and $A(g)$ and $A(h)$ project to the same point of $A(k)$
  	    \item \label{option2} The diameter of $pr_{A(k)}(A(g)\cup A(g))$ is strictly less than $tl(k)$.
  	\end{enumerate}
   Then $A(g^k)\cap A(h) = \emptyset$ and the path between $A(h)$ and $A(g^k)$ intersects $A(k)$.
	A particular case setting is illustrated in Figure \ref{f: f7}.

  	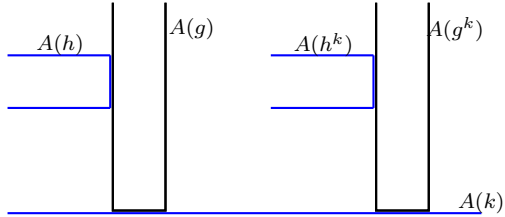
\begin{figure}[ht]
  		\begin{center}
  			\begin{tikzpicture}[scale=0.7]
  				\tikzset{near start abs/.style={xshift=1cm}}
  				
  				\draw[color=blue, thick] (0,0) -- (9,0);
  				
  				\draw[color=blue,thick] (0,2) -- (1.95,2);
  				\draw[color=blue,thick] (1.95,2) -- (1.95,3);
  				\draw[color=blue,thick] (0,3) -- (1.95,3);
  				
  				\draw[color=blue,thick] (5,2) -- (6.95,2);
  				\draw[color=blue,thick] (6.95,2) -- (6.95,3);
  				\draw[color=blue,thick] (5,3) -- (6.95,3);

  				\draw[thick] (2,4) -- (2,0.05) -- (3,0.05) -- (3,4);
  				\draw[thick] (2,4) -- (2,0.05) -- (3,0.05) -- (3,4);
  				
  				\draw[thick] (7,4) -- (7,0.05) -- (8,0.05) -- (8,4);
  				\draw[thick] (7,4) -- (7,0.05) -- (8,0.05) -- (8,4);
  				
  				\node (xx) at (1,3.2) {\color{blue}{\footnotesize $A(h)$}};
  				\node (yy) at (6,3.2) {\color{blue}{\footnotesize $A(h^k)$}};
  				\node (zz) at (9,0.2) {\color{blue}{\footnotesize $A(k)$}};
  				
  				\node (tt) at (3.5,3.5) {\footnotesize $A(g)$};
  				\node (tt) at (8.5,3.5) {\footnotesize $A(g^k)$};
  				
  			\end{tikzpicture}
  		\end{center}
  		\caption{Scenario described in Remark \ref{r: alb_remark_11}, assuming both $g$ and $h$ are hyperbolic.} \label{f: f7}
  	\end{figure}
 
  \end{remark}
  
  \subsubsection*{(Weak) stability}
  
  The next definition plays a key role in this paper. We refer to the introduction and to Section \ref{s: section 4} for context and motivation. 
  
  Given a hyperbolic element $h\in G$, denote the collection of all elements $g\in G$ preserving $A(h)$ set-wise and point-wise by $E(h)$ and $K(h)$, respectively.

  \begin{definition}\label{denf:weakly stable}
  	We say that the hyperbolic element $h$ is \emph{weakly stable} (\emph{weakly} $\lambda$-\emph{stable}) if for all $g\in G$ if $|Ax(h)\cap gAx(h)|> tl(h)$ ( $>\lambda\cdot tl(h)$ ), then $g\in E(h)$. 
  	
  	We say that the hyperbolic element $h$ is ($\lambda$-)\emph{stable} if it is weakly ($\lambda$-)stable and $K(h)=\{1\}$.

  \end{definition}

  The properties defined in the next lemma will only be used at one specific technical point later on.
  
  \begin{lemma}
  	\label{l: forms of stability} Let $G$ be a group acting on a simplicial tree $T$. Consider the following three conditions, parametrized by $\lambda>0$, on a hyperbolic element $h$.
  	\enum{i)}{
  		\item [$FS(\lambda)$] If an element fixes a subsegment of $A(h)$ of length bigger than $\lambda tl(h)$ then it has to fix the whole $A(h)$.
  		\item [$AI(\lambda)$] Given another hyperbolic element $g$ with $tl(g)\leq tl(h)$
  		either $|A(g)\cap A(h)|\leq\lambda tl(h)$ or $g\in E(h)$.
  		\item [$WS(\lambda)$] $h$ is weakly $\lambda$-stable.
  	}
  	The following implications hold:
  	\begin{align*}
  		FS(\lambda)&\Rightarrow AI(\lambda+2), WS(\lambda+2)\\
  		WS(\lambda)&\Rightarrow FS(\lambda)
  	\end{align*}

  \end{lemma}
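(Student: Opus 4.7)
The plan is to treat the three implications one at a time, beginning with the simplest. For $WS(\lambda)\Rightarrow FS(\lambda)$: if an element $g$ fixes a subsegment $[a,b]\subseteq A(h)$ pointwise with $|[a,b]|>\lambda\,tl(h)$, then its endpoints lie in $gA(h)$, which forces $[a,b]\subseteq A(h)\cap gA(h)$. Weak $\lambda$-stability then yields $g\in E(h)$, so $g$ preserves the line $A(h)$ setwise; an isometry of $\mathbb R$ fixing two distinct points must be the identity, so $g$ in fact fixes all of $A(h)$ pointwise.

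For $FS(\lambda)\Rightarrow AI(\lambda+2)$ and $FS(\lambda)\Rightarrow WS(\lambda+2)$, my approach would be to handle both uniformly: given the hypothesised $g$, form the conjugate $h':=ghg^{-1}$, which is hyperbolic with axis $gA(h)$ and the same translation length as $h$, and consider the common segment $J:=A(h)\cap gA(h)$. In the $WS$ case the bound $|J|>(\lambda+2)tl(h)$ is given directly. In the $AI$ case one first has to pass from $I:=A(g)\cap A(h)$ to $J$: using that $g$ acts on $A(g)$ as translation by $tl(g)\leq tl(h)$, one gets $|I\cap gI|\geq |I|-tl(g)\geq |I|-tl(h)$, and since $I\cap gI\subseteq A(h)\cap gA(h)=J$ this gives $|J|>(\lambda+1)tl(h)$.

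On $J$ both $h$ and $h'$ translate by $tl(h)$, so they are either coherent or incoherent. In the coherent case I would put $k:=h^{-1}h'=h^{-1}ghg^{-1}$; the point is that whenever $x$ and $h(x)$ both lie in $J$ one has $h'(x)=h(x)$, hence $k(x)=x$. In the incoherent case the analogous computation with $k:=hh'=hghg^{-1}$ produces a fixed point. Either way $k$ fixes a subsegment of $A(h)$ of length at least $|J|-tl(h)>\lambda\,tl(h)$, and $FS(\lambda)$ then forces $k$ to fix all of $A(h)$, so $k\in K(h)\subseteq E(h)$. Since $h\in E(h)$, cancellation in the expressions for $k$ gives $ghg^{-1}\in E(h)$, i.e.\ $ghg^{-1}$ preserves $A(h)$ setwise. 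Because the axis of a hyperbolic isometry is contained in every invariant convex subtree, $gA(h)=A(ghg^{-1})\subseteq A(h)$; equality of lines then gives $g\in E(h)$, which is exactly the conclusion of both $AI(\lambda+2)$ and $WS(\lambda+2)$.

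The main technical hurdle is the length bookkeeping that dictates the $+2$ loss in the parameter: one unit of $tl(h)$ is absorbed in passing from $|I|$ to $|J|$ (only needed for the $AI$ implication), and a second unit is absorbed in passing from $|J|$ to the length of the fixed subsegment of $k$. The split into coherent and incoherent sub-cases is routine once one notices that the correct element to produce the fixed point is $h^{-1}h'$ in one case and $hh'$ in the other.
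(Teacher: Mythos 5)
Your proof is correct, and it takes a genuinely different route from the paper's.

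For $FS(\lambda)\Rightarrow AI(\lambda+2)$, the paper invokes a cited ``well-known fact'' that the commutator $[g,h]$ of two hyperbolic elements with large axial overlap fixes a subsegment of $A(g)\cap A(h)$ of length $|A(g)\cap A(h)|-tl(g)-tl(h)$, and then argues by contradiction via $FS(\lambda)$. For $FS(\lambda)\Rightarrow WS(\lambda+2)$, the paper gives a separate argument, splitting into an orientation-preserving case (where it normalises $g$ to $g'=h^k g$ with $tl(g')\le tl(h)$ and feeds $g'$ back into $AI$) and an orientation-reversing case (where it passes through the auxiliary element $h^{g'}h=g'^{-1}(hg'h)$). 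You instead unify both implications: you reduce directly to the overlap $J=A(h)\cap gA(h)$ (computing the extra $tl(h)$ loss in the $AI$ case via $|I\cap gI|\ge|I|-tl(g)$), replace the black-box commutator fact by a transparent comparison of the two translations $h$ and $h'=ghg^{-1}$ on $J$, and observe that $h^{-1}h'$ (coherent case) or $hh'$ (incoherent case) fixes a subsegment of length $>\lambda\,tl(h)$, after which $FS(\lambda)$ and the inclusion $A(ghg^{-1})=gA(h)\subseteq A(h)$ finish both claims at once. Your route is more self-contained and does not pass through $AI$ to obtain $WS$; the paper's route is more modular but leans on a cited fact and on a finer case distinction in the $WS$ step. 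The $WS(\lambda)\Rightarrow FS(\lambda)$ direction is the same in both (the paper only says it is ``clear''; your argument via a setwise-preserving isometry of a line with two fixed points is the intended one).
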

  \begin{proof}
  	Let us start with $FS(\lambda)\Rightarrow AI(\lambda+2)$.
  	Pick $g$ hyperbolic with $tl(g)\leq tl(h)$ and assume that $|A(g)\cap A(h)|>(\lambda+2) tl(h)$.

  	Recall the well-known fact valid for any Bass-Serre action on a tree, see \cite{chiswell2001introduction}: For any two hyperbolic elements $h,g$ with $|A(g)\cap A(h)|\geq tl(g)+tl(h)$ the commutator $[g,h]$ must fix a subsegment of $A(g)\cap A(h)$ of length $|A(g)\cap A(h)|-tl(g)-tl(h)$
  	and no other points on either $A(g)$ or $A(h)$. This fact implies that the commutator $[g,h]$ fixes a segment of length $(\lambda+2)tl(h)-tl(g)=tl(h)$ and since by assumption $tl(g)\le tl(h)$, the commutator fixes a segment of length at least $\lambda tl(h)$. Since by assumption $h$ satisfies the condition $FS(\lambda)$, it implies that $[g,h]$ fixes the whole $A(h)$, which is a contradiction (since the commutator only fixes points in the intersection $A(g) \cap A(h)$).
  	
  	Let us now see $FS(\lambda)\Rightarrow WS(\lambda+2)$. Suppose that we are given $g$ sending a subsegment $J$ of length at least $(\lambda+2)tl(h)$ of $A(h)$ into $A(h)$. Assume first that $g$ does not revert the orientation of $J$. Then for some value $k\in\Z$ the element $g'=h^{k}g$ displaces $J$ by a distance of at most $tl(h)$ along $A(h)$, which implies that $tl(g')\leq tl(h)$ and
  	$A(g')\cap A(h)\geq\lambda tl(h)$. Properties $FS(\lambda)$ and $AI(\lambda+2)$ then imply that $g'$ preserves $A(h)$ set-wise and, consequently, so does $g$.
  	
  	Assume now that the partial map of $A(h)$ restricting the action of $g$ inverts the orientation. Then, for some value $k\in\Z$ there exists a sub-segment of length at least $\lambda\cdot tl(h)$ such that the two elements $g'=gh^{k}$ and $hg'h$ act on $J'$ as a flip over the same vertex. This implies that $h^{g'}h=g'^{-1}(hg'h)$ must fix said $J'$ point-wise and hence the whole $A(h)$, by $FS(\lambda)$. This means that the hyperbolic element $h^{g'}$ has to preserve $A(h)$ set-wise,
  	which can only occur if $g'$ already does, as $A(h^{g'})=g'^{-1}A(h)$.
  	
  	The second implication is clear.
  \end{proof}

    We will also need the following well-known fact. We include a proof for completeness:
  \begin{lemma}
  	\label{l: one basepoint}Let $G$ be a group acting on a real tree $T$ and assume we are given elements $c_{1},c_{2},\dots, c_{k}$ of $G$ and $K>0$ with the property that for all $1\leq j<j'\leq k$ we have $tl(c_{j}),tl(c_{j'}),tl(c_{j}c_{j'})\leq K$.
  	Then there is a point $*\in T$ such that $d(c_{j}*,*)\leq 2K$ for all $1\leq j\leq k$.
  \end{lemma}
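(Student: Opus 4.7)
The plan is to find a common point in a family of ``approximate fixed point'' sets, one per $c_j$, by invoking Helly's theorem for convex subsets of a tree. For each $j$, I would define
$$
C_j := \{x \in T : d(c_j \cdot x, x) \leq 2K\}.
$$
Using the standard identity $d(c_j x, x) = tl(c_j) + 2 d(x, A(c_j))$, valid for any non-inverting isometry when $A(c_j)$ is interpreted as the axis (if hyperbolic) or the fix set (if elliptic), each $C_j$ is the closed $(K - tl(c_j)/2)$-neighborhood of $A(c_j)$. Since $tl(c_j) \leq K$, this radius is non-negative, so $C_j$ is a non-empty closed convex subtree of $T$.

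The main step is to check that $C_j \cap C_{j'} \neq \emptyset$ for every pair $j < j'$. If $A(c_j) \cap A(c_{j'}) \neq \emptyset$ this is immediate: any point of the intersection is displaced by at most $tl(c_i) \leq K$ under $c_i$, for $i \in \{j, j'\}$. Otherwise, let $[p,q]$ denote the bridge between the two characteristic subtrees and set $\ell := d(A(c_j), A(c_{j'}))$. The plan is to apply item (1) of Remark \ref{r: alb_remark_2} (essentially Chiswell's Lemma \ref{l: chiswell2}) to deduce
$$
K \ \geq\ tl(c_j c_{j'}) \ =\ tl(c_j) + tl(c_{j'}) + 2\ell \ \geq\ 2\ell,
$$
hence $\ell \leq K/2$. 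The midpoint $x$ of $[p,q]$ then satisfies $d(x, A(c_j)) = d(x, A(c_{j'})) = \ell/2$, so
$$
d(c_j x, x) = tl(c_j) + \ell \leq K + K/2 \leq 2K,
$$
and likewise for $c_{j'}$, giving $x \in C_j \cap C_{j'}$.

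With pairwise intersection in hand, Helly's theorem for finite collections of convex subsets of a tree (the same principle invoked in the proof of Lemma \ref{c: irreducibility of powers}) produces a common point $* \in \bigcap_{j=1}^{k} C_j$, and any such $*$ meets the conclusion. The mild obstacle to watch for is the applicability of Remark \ref{r: alb_remark_2}(1) to pairs in which one or both isometries are elliptic; this is not a genuine issue, because Chiswell's product-length formula holds for any pair of non-inverting isometries with disjoint characteristic subtrees once $A(\cdot)$ is read as the fix set in the elliptic case.
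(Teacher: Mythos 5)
Your proof is correct and follows essentially the same route as the paper: define convex neighbourhoods of the characteristic sets $A(c_j)$, show pairwise nonempty intersection via the product-length formula $tl(c_jc_{j'}) = tl(c_j)+tl(c_{j'})+2d(A(c_j),A(c_{j'}))$, and conclude by Helly's theorem. The only cosmetic difference is that you take the full approximate-fixed-point set $C_j$ (radius $K-tl(c_j)/2$) while the paper uses the uniform $K/2$-neighbourhood $E_j\subseteq C_j$; both choices work and the argument is otherwise identical.
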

  \begin{proof}
  For each $1\leq j\leq k$ let $E_{j}$ be the $\frac{K}{2}$-neighbourhood of $A(c_{j})$ in $T$. Since
  $tl(c_{j})\leq K$ we have $d(c_{j}*,*)\leq 2K$ for any $*\in E_{j}$. We claim that the intersection $E_{i}\cap E_{j}$ is non-empty for any $1\leq i<j\leq k\}$. Indeed, otherwise $d(A(c_i),A(c_j))> K$, which implies that $tl(c_{i}c_{j})>2K$.
  By Helly's theorem the intersection $\bigcap_{j=1}^{k}E_{j}$ is non-empty and so it suffices to take the point $*$ in this intersection. 
  \end{proof}

  \newcommand{\sgg}[0]{subgraph group } 
  \newcommand{\sggs}[0]{subgraph group } 
  \newcommand{\FF}[0]{\mathcal{F}}
  \newcommand{\HF}[0]{\hat{\mathcal{F}}}
  \newcommand{\hf}[0]{\hat{\mathcal{F}}}
  \newcommand{\hh}[0]{\mathcal{H}}
  \newcommand{\HH}[0]{\mathcal{H}}
  \newcommand{\wpar}[0]{\mathcal{W}^{part}}
  \newcommand{\wdom}[0]{\mathcal{W}^{dom}}
  \newcommand{\U}[0]{\mathcal{U}}
  \newcommand{\V}[0]{\mathcal{V}}
  \newcommand{\W}[0]{\mathcal{W}}
  \newcommand{\ff}[0]{\mathcal{F}}

\section{Weakly small-cancellation and formal solutions} \label{sec:formal solutions}
  
  \newcommand{\traj}[0]{t} 
  \newcommand{\ttraj}[0]{\bar{t}} 
  \newcommand{\Traj}[0]{Traj} 
  \newcommand{\push}[0]{p} 
  \newcommand{\Push}[0]{P} 
  \newcommand{\Pushes}[0]{Pushes}
  \newcommand{\bad}[0]{Br}
  \newcommand{\Hull}[0]{Hull}

  The goal of this section is, given a group  admitting an irreducible action on a tree, to establish a relation between having weak cancellation elements and the triviality of its positive theory. Recall that that we say that the positive theory of a group is trivial if it coincides with the positive theory of a non-abelian free group. This terminology is motivated by the fact that the positive theory of any group contains at least the positive theory of a free non-abelian group.
  
  The triviality of the positive theory will follow from the existence of formal solutions for positive sentences, see Definition \ref{defn:formal solution}. Roughly speaking, given a sentence $\phi$ of the form $\forall x \exists y \Sigma(x,y)=1$, where $\Sigma$ is a system of equations, a formal solution is an algebraic description of the variables $y$'s as words in the variables $x$, i.e. $y=w(x)$, such that the words $\Sigma(x,w(x))$ are trivial in the free group $F(x)$. This formal description of the variables $y$ in function of the variables $x$ provides a ``generic" witness for the validity of the sentence it implies its triviality.
  
 After defining formal solutions, we introduce the other key notion of the section: weak small cancellation tuples. Roughly speaking, a tuple of elements is $N$-small cancellation if the translation length of each element in the tuple is long (at least $N$), all elements of the tuple have similar translation lengths and the minimal tree they span intersects any of its proper translates in a segment of length of the order $\frac{1}{N}$, see Definition \ref{d: small cancellation}. A weak small cancellation tuple allows for longer intersections between the minimal tree $T$ and some of its translates $gT$, but only as long as $g$ acts as the identity on the intersection. We say that a group has weak small cancellation elements if there are tuples of weak $N$-small cancellation elements for arbitrary big $N$.
  
  The first goal of the section, see Theorem \ref{l: small cancellation solutions}, is to show that if a group $G$ acts on a tree satisfies a positive $\forall\exists$-sentence $\phi$ and has weakly small cancellation elements, then there exist formal solutions for the sentence and therefore, in particular, $\phi$ is trivial. As a corollary, we obtain that if a group acting on a tree has weak small cancellation elements, then the only $\forall \exists$-sentences that it satisfies are the trivial ones.
  
  Later in Section \ref{sec:quantifier reduction}, we prove that if a group satisfies a non-trivial positive sentence, then it actually satisfies a non-trivial $\forall\exists$-sentence, see Theorem \ref{l: quantifier reduction}. Combining this fact with the  first main result of this section on the triviality of positive $\forall\exists$-sentences, we deduce that groups acting on trees with weakly small cancellation elements have trivial positive theory.
  
  The second main goal of the present section is to prove a parametric version of the aforementioned result for positive formulas, that is, given a positive $\forall \exists$-formula $\phi(z)$ with free variables $z$, there exist formal solutions (relative to a finite set of diophantine conditions), see Definition \ref{defn:formal solution}, such that if a group $G$ acting on a tree with small cancellation elements satisfies the sentence $\phi(c)$, for some $c\in G^{|z|}$, then the group satisfies one of the diophantine conditions and so the formula $\phi$ admits a formal solution relative to this diophantine condition, see Theorem \ref{l: small_cancellation_parameters}. As a corollary, we deduce that groups acting on trees with small cancellation elements have trivial positive theory, see Corollary \ref{c: iterated formal solutions}. This Corollary will be the key tool to prove the quantifier reduction in Theorem \ref{l: small_cancellation_parameters}.
    
  \bigskip    
  \begin{definition}[Formal solution relative to a diophantine condition]\label{defn:formal solution}
  
  Let $\psi(w)$ be a positive formula in the language of groups \emph{without constants} (all formulas in this paper use no constants ---besides the identity element):
  \begin{align*}
  \psi(w)\equiv\forall x^{1}\exists y^{1}\forall x^{2}\dots\forall x^{m}\exists y^{m}\,\bvee{j=1}{\Sigma_{j}(w,x^{1},y^{1},x^{2}, \dots, x^{m},y^{m})=1}{k},
  \end{align*}
  where $w$, $x^i$, and $y^i$ are tuples of variables for all $1\leq i\leq m$ and $\Sigma_j$ is a conjunction of atomic terms (i.e.\ a conjunction of words), for all $1\leq j \leq k$. Let $\phi(w)$ be a diophantine condition (i.e.\ a positive existential formula) of the form $\exists v \Pi(v,w)=1$ for some system of equations $\Pi(v,w)$.
  
  We next introduce the notion of formal solution of $\psi(w)$ relative to $\phi(w)$. Intuitively, this is a substitution of the $y$'s by words on $v$, $w$, and the $x$'s, so that under this substitution some  word in $\Sigma_j$ (for all $j=1, \dots, k$) is trivial if all words in $\Pi$ are. Formally,
  
  \medskip
  
  By \emph{formal solution} to $\psi(w)$ \emph{relative to} $\phi(w)$ we
  mean a tuple $\alpha=(\alpha^{1},\alpha^{2},\dots,\alpha^{m})$, where
  $\alpha^{l}\in\F(v,w,x^{1},x^{2},\dots, x^l)^{|y^{l}|}$ for all $1 \leq l \leq m$ such that for some $1\leq j\leq k$ all words in the tuple
  \begin{align*}
  \Sigma_{j}(w,x^{1},\alpha^{1}(v,w,x^{1}),x^{2},\alpha^{2}(v,w,x^{1},x^{2}), \dots,\alpha^{m}(v,w,x^{1},x^{2}, \dots, x^{m}))
  \end{align*}
  are in the normal closure of the words of $\Pi(v,w)$ in
  $\F(v,w)\frp\F(x^{1},x^{2},\cdots x^{m})$.

  One can understand formal solutions in terms of homomorphisms between formal groups.
  Consider the groups given by the following presentations:
  \begin{align*}
  G_{\Pi}(v,w)=&\subg{v,w\,|\,\Pi(v,w)=1} \\
  G_{\Sigma_{j}}(w)=&\subg{v,w,x^{1},y^{1},x^{2},y^{2},\dots, x^{m},y^{m}\,|\,\Sigma_{j}(w,x^{1},y^{1},x^{2},y^{2},\dots, x^{m},y^{m})=1}
  \end{align*}
  
  Formal solutions relative to $\phi$ are equivalent to homomorphisms $f:G_{\Sigma}\to G_\Pi *\subg{x^1,x^2, \dots, x^m}$ that restrict to the identity on each of the elements named by a variable from the tuple $w$ or from any of the tuples $x^{1}, \dots, x^m$, such that $f(y^{l})$ is contained in the group generated by $v,w,x^{1},x^{2}, \dots, x^{l}$, for all $1\leq l \leq m$.
   \end{definition}
  
  Notice that the existence of such $\alpha$ implies that $G\models\psi(a)$ for any group $G$ and any tuple $a\in G^{|w|}$ such that $G\models\phi(a)$. The classical notion of formal solution  \cite{merzlyakov1966positive,sela2006diophantineII} is recovered by taking $\Pi = 1$, in which case we will speak of $\alpha$ simply as a \emph{formal solution}.
  
  \newcommand{\br}[0]{\mathcal{P}^{*}}
  \newcommand{\hl}[0]{\hat{A}}
  
  We will denote the collection of all variables in the tuples $x^1, \dots, x^{m}$ simply by $x$. A similar notation will be used for $y^1, \dots, y^m$ and $y$.
  
  We next introduce one of the key concepts of the paper - weak small cancellation elements.

  \begin{definition}
  \label{d: small cancellation section3}
  Fix an action of a group $G$ on a simplicial tree $T$. Consider a tuple of elements $a=(a_{1},\cdots, a_{m})\in G^{m}$.
  Given $N>0$, we say that $a$ is \emph{weakly $N$-small cancellation} (in $T$) if the following holds for some base point $*\in VT$ 
  \enum{(a)}{
  \item \label{SCA} $\dis{a_{i}}>N$ for all $i$,
  \item \label{SCB} $\dis{a_{j}}\leq\frac{N+1}{N}\min_i tl(a_{i})$ for all $i, j$,  and
  \item \label{SCC}
  For all $g\in G,\epsilon\in\{1,-1\}$ and $1\leq i,j\leq m$  the condition $|[*,a_{i}*]\cap g[*,a_{j}*]|\geq\frac{1}{N}\min_i tl(a_{i})$ cannot hold unless $i=j$, in which case $g$ acts like the identity on  $[*,a_{i}*]\cap g[*,a_{i}*]$.
  } 
  Given $c_{1},c_{2},\cdots c_{k}\in G$, we say that a tuple $a$ is weakly $N$-small cancellation \emph{over} $c_{1},c_{2},\cdots, c_{k}$ if $*$ can be chosen in such a way that
  $\min_{i}\dis{a_{i}}>N \cdot \dis{c_{j}}$, for all $1\leq j\leq k$. We say that $(a_{1},a_{2},\dots a_{m})$ is \emph{$N$-small cancellation} (over $c$)
  if all the above applies and we additionally require that $g$ can only be equal to the identity in the last alternative of Item (\ref{SCC}).
  \end{definition}

  We shall prove the following two results.
  
  \begin{thm}[Non-parametric version]
  \label{l: small cancellation solutions}\
  Given an $\forall\exists$ positive sentence $\psi \equiv \forall x \exists y \, \Sigma(x,y)=1$, there is some $N>0$ (depending only on the syntactic length of $\psi$) such that if $G \models \Sigma(a,b)=1$ and $a\in G^{|x|}$ is weakly $N$-small cancellation with respect to the action of $G$ on a tree $T$, then there exists a formal solution $\alpha(x)$ for $\psi$. In particular, $G$ does not satisfy any non-trivial positive $\forall\exists$-sentence.
  \end{thm}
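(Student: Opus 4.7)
The plan is to prove this via a Merzlyakov-style implicit function theorem, using the $G$-action on $T$ in place of the free group's Cayley graph. First I would choose $N$ to be a linear function of the total syntactic length $L$ of $\Sigma$, large enough that the cancellation analysis below remains tight across all letters of every atomic equation.

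Suppose $a\in G^{|x|}$ is weakly $N$-small cancellation with basepoint $*$ and $\Sigma(a,b)=1$ in $G$. For each atomic word $w(x,y)=1$ in $\Sigma$, the element $w(a,b)$ fixes $*$, so successively applying the letters of $w(a,b)$ to $*$ produces a closed loop $\gamma\subset T$. One decomposes $\gamma$ into the geodesic segments contributed by each letter: each letter $a_i^{\pm 1}$ contributes a segment of length between $tl(a_i)$ and $\tfrac{N+1}{N}\min_i tl(a_i)$, isometric via a translate to $[*,a_i^{\pm 1}*]$; each letter $b_j^{\pm 1}$ contributes a segment of \emph{a priori} unknown shape.

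The crux is condition (\ref{SCC}) of Definition \ref{d: small cancellation section3}: two ``$a$-segments'' of $\gamma$ can overlap (along some translate) in more than $\tfrac{1}{N}\min_i tl(a_i)$ only if they come from the same index $i$ and the translating element lies in the pointwise stabiliser $K(a_i)$. Since $\gamma$ closes up, every $a$-segment must eventually be traversed in reverse, and the small-cancellation constraint forces this cancellation to occur against $b$-segments rather than against other $a$-segments. Reading off which formal $x$-word the $j$-th $b$-segment ``imitates'' yields a candidate word $\alpha_j(x)\in\F(x)$.

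The main obstacle is \emph{consistency}: the same variable $y_j$ may occur many times throughout $\Sigma$, and one must verify (i) that every occurrence yields the same $\alpha_j$ up to free reduction, and (ii) that after the substitution $y\mapsto\alpha(x)$ the word $w(x,\alpha(x))$ reduces to the empty word in $\F(x)$ --- not merely to a product of $K(a_i)$-elements. This is arranged by choosing $N$ sufficiently larger than $L$ so that the $O(1/N)$ slack per syllable cannot accumulate to $\tfrac{1}{N}\min_i tl(a_i)$ over the at most $L$ letters of $w$, forcing any two readings of $\alpha_j$ to coincide literally as reduced words in $\F(x)$. The ``in particular'' clause is then immediate: under the standing hypothesis that $G$ admits weakly $N$-small cancellation tuples for every $N$, any positive $\forall\exists$-sentence satisfied by $G$ must admit a formal solution and is hence trivial.
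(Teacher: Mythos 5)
Your plan is the right strategy in spirit---trace the closed loop $\gamma$ in $T$, decompose it into letter-segments, and use the small-cancellation axioms to read off what each $b_j$-segment must imitate. This is indeed the Merzlyakov-style band-complex argument that the paper carries out. But the consistency step, which you correctly flag as the main obstacle, hides a difficulty that your proposed fix does not address: the obstruction is branch points, not accumulating slack.

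The convex hull of the orbit points $v_r = Z_1\cdots Z_r(a,b)\cdot *$ is a finite tree carrying a uniformly bounded set $\bad$ of bad points (branch points plus the $v_r$ themselves; Remark \ref{r: boundedly many branch points}). Sliding a sub-segment of $A_i$ across a chain of $b$-bands---what the paper formalizes as a trajectory $\ttraj$ with partial map $\Push_{\ttraj}^*$---is only well-defined on intervals that never properly contain a pre-image of a bad point (Remark \ref{r: 4_sept_19}). Different trajectories trim the interval in different, a priori incompatible, ways; this is a combinatorial phenomenon in $T$ that is not controlled by the $O(1/N)$ bounds in Definition \ref{d: small cancellation section3}(a), (b), however large $N$ is. The paper's remedy is the sieve Lemma \ref{l: main_Merzlyakov_lemma}, which produces a single non-degenerate interval $\hat{A}_i\subset A_i$ of length $\geq L/N$ avoiding every bad pre-image under every trajectory. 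The sieve could in principle cut indefinitely, since trajectories have unbounded length; what bounds it is a pigeon-hole on (branch point, adjacent edge) pairs combined with weak small cancellation, which forces two trajectories cutting at the same pair to yield the same image of $\hat{A}_i$ (Figure \ref{f: dib_6_sept_19}). This forces $N$ to be of order $|\bad|^M$ for $|\bad|,M$ bounded in terms of $|S|$---exponential, not linear, in the syntactic length. Relatedly, you implicitly assume every piece of a $b$-segment is a translate of some $\hat{A}_i$; the paper has intermediate segments $B_{j,\ell}$ that are not, and these are absorbed into the formal solution only through the auxiliary variables $v$ and the subsequent trivialization of the diophantine condition $\Theta$ via Observation \ref{o: collapse}. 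Without these two ingredients, the claim that ``any two readings of $\alpha_j$ coincide literally as reduced words in $\F(x)$'' is unsupported.
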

  
  \begin{thm}[Parametric version]\label{l: small_cancellation_parameters} 
  Given an $\forall\exists$ positive formula $\psi(w)=\forall x\exists y\,\Sigma(w,x,y)=1$, there is some $N>0$ and a finite collection $\mathcal{D}$ of diophantine conditions on free variable $w$ with the following properties:
  \enum{(i)}{
  \item For each $\Delta\in \mathcal{D}$ there exists a formal solution $\alpha_{\Delta}$ to $\psi(w)$ relative to $\Delta$.
  \item \label{item witness} For any action of a group $G$ on a tree $T$ and tuples $c\in G^{|w|},a\in G^{|x|},b\in G^{|y|}$ such that $a$ is $N$-small cancellation over $c$ and $\Sigma(c,a,b)=1$ there is a diophantine condition $\exists v\Theta(w,v)$ in $\mathcal{D}$ and $d\subset G$ such that $\Theta(c,d)=1$ holds in $G$ .
  }
  \end{thm}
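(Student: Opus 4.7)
The plan is to extend the argument for the non-parametric version (Theorem \ref{l: small cancellation solutions}) by tracking the precise algebraic interaction between any witness tuple $b$ and the parameter tuple $c$, and then recording this interaction as a finite menu of diophantine conditions.

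I would begin by fixing $\psi(w) = \forall x\exists y\,\Sigma(w,x,y)=1$, choose $N$ at least as large as the threshold supplied by Theorem \ref{l: small cancellation solutions} applied to $\Sigma$, and consider an arbitrary witness $\Sigma(c,a,b)=1$ with $a$ being $N$-small cancellation over $c$. The point of demanding the small cancellation to be \emph{over} $c$ is that each $tl(a_i)$ dwarfs every $tl(c_j)$ and, more importantly, the segments $[\ast,a_i\ast]$ at the base point furnishing the small cancellation cannot share long subsegments with any translate involving the $c_j$. In particular, any axis of a word in $a,c$ decomposes canonically into long ``$a$-pieces'' and short ``$c$-pieces'', and the intersections between consecutive pieces are controlled.

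Next I would analyze each component $b_k$ by tracking the path $[\ast, b_k\ast]$ in $T$ and its interaction with the axes of the $a_i$ and $c_j$. Using the dynamical lemmas of Section \ref{s: preliminaries} (Lemmas \ref{l: chiswell2}, \ref{l: chiswell3}, and Remarks \ref{r: alb_remark_1}, \ref{r: alb_remark_11}), the small cancellation hypothesis forces a canonical combinatorial normal form for $b_k$ as an alternating product: long pieces that are words in the $a_i$'s (conjugated by previously read pieces) and short ``auxiliary'' pieces that are constrained to lie in definable subsets built from $c$ (stabilisers of axes, elements meeting $A(c_j)$ coherently, short commutators, etc.). Each such combinatorial type $\Delta$ gives a syntactic template $\alpha_\Delta(v,w,x)$: the $a$-pieces are replaced by the corresponding words in $x$, the $c$-pieces by words in $w$, and the auxiliary pieces by new variables $v$, while the algebraic constraints satisfied by these auxiliary pieces in $G$ become a diophantine system $\Theta_\Delta(v,w)=1$.

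To conclude, I would verify two things. First, that $\Sigma(w,x,\alpha_\Delta(v,w,x))$ lies in the normal closure of $\Theta_\Delta(v,w)$ in $\F(v,w)\frp \F(x)$: this is the Merzlyakov-style step, and it follows by the same small cancellation rigidity argument that drives Theorem \ref{l: small cancellation solutions}, now carried out in the free product where the $a$-pieces behave like a free basis relative to the normal closure of $\Theta_\Delta$. Second, that the total number of combinatorial types $\Delta$ is finite, bounded in terms of the syntactic length of $\Sigma$ and of $N$: this is forced because the number of long $a$-pieces in any reduced decomposition of $b_k$ is bounded by the length of the solution witnessing the axiom, which the induction (together with Lemma \ref{l: one basepoint} to normalise a common base point) keeps under control.

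The main obstacle I foresee is the case analysis underlying the finiteness of $\mathcal{D}$: one must enumerate the configurations in which an auxiliary piece can sit between two long $a$-pieces and show that each one is captured by a diophantine condition on $(w,v)$. The small cancellation of $a$ over $c$ is indispensable here, as it rules out pathological overlaps between translates of $A(a_i)$ and translates of $A(c_j)$ and so restricts the admissible ``gluing patterns'' to a finite list. Once that combinatorial bookkeeping is in place, the verification that each $\alpha_\Delta$ is a formal solution relative to $\Theta_\Delta$ is essentially a parametric rerun of the argument for Theorem \ref{l: small cancellation solutions}.
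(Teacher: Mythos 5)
Your overall strategy — decompose $b_k$ into long $a$-pieces and short auxiliary pieces, package the long pieces as words in $x$, replace the auxiliary pieces by new variables $v$, and record the constraints they satisfy as a diophantine condition $\Theta_\Delta(v,w)$ — is indeed the skeleton of what the paper does (there the $a$-pieces become the segments $\hat{B}_{j,\ell}$ and the auxiliary pieces the $B_{j,\ell}$, with $\Theta$ read off from loops in a quotient graph $\bar\Gamma$). But as written the proposal has a genuine gap in the step you yourself flag as the obstacle: finiteness of $\mathcal{D}$.

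Concretely, you claim the number of long $a$-pieces in a reduced decomposition of $b_k$ is ``bounded by the length of the solution witnessing the axiom.'' But the tuple $b$ is an arbitrary tuple of group elements, not a word: there is no a priori bound on how many translates of the $\hat{A}_i$ can occur inside $[\ast,b_k\ast]$, so the naive count of $a$-pieces is unbounded in terms of $\Sigma$ alone, and the raw system $\Theta$ you would read off this decomposition can have unboundedly many constraints. The paper circumvents this not by bounding the number of pieces, but by bounding the number of \emph{branch points} and \emph{vertices} $v_r$ in the convex hull $\Hull(\mathcal V)$ (Remark~\ref{r: boundedly many branch points}), which in turn bounds the number of ``singular'' components $C_i$ of $\bar\Gamma$ (Lemma~\ref{l: bounded_num_singular_C_i}); the non-singular components contribute only relations of the form $X=1$ or $X=Y$, which are eliminated by substitution to yield a system $\Theta'$ whose total size is uniformly bounded in terms of $|S|$. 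Nothing in your proposal plays the role of this elimination step, and without it the collection $\mathcal D$ is not finite.

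A second, smaller issue: the ``combinatorial normal form'' for $b_k$ and the Merzlyakov-style verification that each $\alpha_\Delta$ is a genuine formal solution modulo $\Theta_\Delta$ both rest on a uniformity statement that you assert but do not supply — namely that the pieces $\hat{A}_i$ can be chosen once and for all so that under every admissible sequence of pushes they either stay disjoint from branch points or get mapped onto each other by elements acting as the identity on them. This is the content of Lemma~\ref{l: main_Merzlyakov_lemma} and Corollary~\ref{cl: main Merzlyakov claim}, and it is precisely where the small cancellation over $c$ is used; the sieve argument proving it (shrinking intervals and applying a pigeon-hole bound controlled by $|Br|$) is not a routine rerun of the non-parametric case and needs to be carried out. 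Also note that the paper does not derive the parametric version from the non-parametric one as you propose; it proves both at once, since the non-parametric theorem by itself only handles the case $c=1$ and would not give you a decomposition relative to a non-trivial $c$.
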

  
  Notice that, unlike the above statement, Theorem \ref{l: small cancellation solutions} only requires weak small cancellation in its hypotheses.

  The rest of the section, except the last Corollary \ref{c: iterated formal solutions}, is devoted to the proof of these two theorems. 
   The  proof has two main steps. The goal of the first one, Corollary \ref{cl: main Merzlyakov claim}, is the following.  The property of a tuple being small cancellation is a generalization of the classical notion of being a tuple of Merzlyakov words. With this in mind, given an action of a group $G$ on a tree $T$ and tuples $a,b,c$ where $a$ is (weak) small-cancellation, we construct a band complex where the underlying skeleton segments are identified with segments from $T$. Then for each $a_i \in a$ we find a subinterval $\hat{A_i}$ of $A_i=[*,a_i*]$ (where $*$ is a fixed base-point), such that when iteratively sliding it across the band complex, $\hat{A}_i$ spans a simplicial band sub-complex with constant horizontal width which never intersects the subspaces spanned by $C_j=[*,c_j*]$ or $\hat{A}_k$, for all $j$ and all $k\neq i$.
  
  On the second step, we use the interval $\hat{A_i}$ to construct a pair $(\Theta(v,w),\alpha(x,v,w))$, where  $\alpha(x,w,v)$ is a formal solution to $\psi(w)$ relative to the diophantine condition $\exists v\,\,\Theta(v,w)$. Furthermore, we prove the existence of a bound $M$ depending only on the positive sentence $\psi(w)$ such that the size of $\Theta(v,w)$ may be assumed to be bounded by $M$ and so there are finitely many possible such diophantine conditions.
  
  \medskip

  \newcommand{\ww}[0]{S^{*}}
  
  First note that if in the formula $\psi(w)$ we replace $w$ by a tuple $1$ of identity elements, then any diophantine condition $\exists v \Theta(1,v)
  \in \mathcal D$ is trivially satisfied in any group, since it suffices to take $v$ to be a tuple of identity elements, and clearly $\Theta(c,d)=1$. This observation allows us to consider the  statement of Theorem \ref{l: small cancellation solutions} as a particular case of Theorem \ref{l: small_cancellation_parameters} under the weaker condition on weakly $N$-small cancellation and the stronger condition that the tuple $c$ is trivial. Hence we prove Theorems \ref{l: small cancellation solutions} and  \ref{l: small_cancellation_parameters} assuming the weak small cancellation condition and a generic tuple $c$, and  in some key parts which concern only the latter we shall assume non-weak small cancellation.

  We fix the following notation. Given a tuple of elements $v=(v_1, \dots, v_n)$,  we shall denote $v^{\pm 1} = (v_1, \dots, v_n, v_1^{-1}, \dots, v_{n}^{-1})$, and furthermore we define $v_{n + i} =_{def} v_{i}^{-1}$, $i=1, \dots, n$. Under this notation we have $v^{\pm 1}= (v_1, \dots, v_{2n})$. Note that an expression such as $v_j\in v^{\pm 1}$ now refers to an element from $v$ or from $v^{-1}$.   We say that $v_j \in v^{\pm 1}$ is \emph{positive} if $j\leq n$ and otherwise we say $v_j$ is \emph{negative}. 
  
  Throughout this section we let $T$ be a $G$-tree and we. fix a base point $*\in T$.
   For simplicity, we will assume that the system of equations $\Sigma$ consists of a unique equation $S=1$ (see Remark \ref{r: more than one eq} for an explanation of the general case), where
  $$S(w,x,y)=Z_{1}\cdots Z_{|S|},$$ with $Z_{r}\in x^{\pm 1}\cup y^{\pm 1}\cup w^{\pm 1}$.   For $1\leq r \leq |S|$ define $$S_{[r]}= Z_{1}Z_{2}\cdots Z_{r-1}Z_{r}^{\tau_r},$$ where  $\tau_r= 0$ if $Z_{r}$ is positive (so $S_{[r]}= Z_{1}Z_{2}\cdots Z_{r-1}$ in this case), and otherwise $\tau_r= 1$. We set  $S_{[0]}=1$ and further let $v_{r}=Z_{1}Z_2 \dots Z_r(c,a,b)\cdot*$, for $0\leq r\leq |S|$. Of course we have $v_{|S|} = *$.   By $Hull(v_0, \dots, v_{|S|}) = \Hull(\mc{V})$ we denote the convex hull in $T$ of the vertices $v_i$. 
  
  We fix $a,b,c$ to be three tuples of elements such that $\Sigma(c,a,b)=1$ and $a$ is $N$-small cancellation or weakly $N$-small cancellation over $c$, where  $N$ is a sufficiently large positive integer that will be determined during the proof.

  For each \emph{positive} $a_i \in a$, $b_j\in b$ and $c_k\in c$, let $A_{i}$, $B_{j}$, and $C_{k}$ be the segments $[*,a_{i}\cdot*]$, $[*,b_{j}\cdot*]$, and $[*,c_{k}\cdot *]$, respectively.
  For each $1\leq r \leq |S|$ by $Seg(Z_r)$ we denote the segment $A_i, B_j$, or $C_k$ depending on whether $Z_{r}\in x, Z_r\in y,$ or $Z_r \in z$, respectively. 
  We denote  $\mathcal{ABC}$ the collection of all segments $A_i, B_j, C_k$.
  For each $1\leq r\leq |S|$ we denote by $\push_{r}$
  the action of the element $S_{[r]}(c,a,b)$  
  on $T$. By $\push_{r}^{*}$ we denote the restriction of $\push_{r}$ on $Seg(Z_r)$. 
  Notice that the image of $\push_r^{*}$ is the segment $[v_{r-1}, v_{r}]$. We orient $[v_{r-1}, v_{r}]$ from $v_{r-1}$ to $v_r$ if $Z_r$ is positive, and vice-versa if $Z_r$ is negative.  This is motivated by the fact that in the first case we have $v_{r-1} = S_{[r]}(c,a,b)*, v_r =S_r Z_r(c,a,b)*$, and in the second case $v_{r-1} = S_{[r]}(c,a,b)Z_r(c,a,b)^{-1}*, v_r =S_r*$.

  \newcommand{\bbr}[0]{\mathcal{L}}
  
  Next consider the set of segments $\mathcal{ABC}\cup p^*(\mathcal{ABC})$, where $p^*(\mathcal{ABC})$ is defined as  $\{\push_r^*(Seg(Z_r)) \mid r= 1, \dots, |S|\}$. We construct a band complex $\mathcal{B}$ by attaching, for each $1\leq r\leq |S|$, a band $\mathcal{B}_r$  from $Seg(Z_{r})$ to $\push_r(Seg(Z_{r}))$. All bands are attached so that $*$ can be pushed vertically to $S_[r]*$ and the other endpoint $(r=1, \dots, |S|$).  (see Figure \ref{dibu2019-4}).
  Then the maps $\push_{r}^*$ and $\push_r^{*}{}^{-1}$  can be seen as maps that push segments vertically along the band, in the sense that a segment $s$ is pushed vertically along the band into the segment $S_r(c,a,b)(s)$. We call each map $\push_r^*$ a ``push'' (hence its notation).  See Figure \ref{dibu2019-4}. 
  
  \begin{figure}[ht]
  \begin{center}
  \begin{tikzpicture}[scale = 1.15]
  	\tikzset{near start abs/.style={xshift=1cm}}
  	
  	\draw[thick] (4.5,1) -- (6.5,1);
  	\draw[thick] (4.5,1) -- (2.5,1);
  	
  	\draw[thick,color=red, middlearrow={>}]  (2.5,1) -- (0,4);
  	\draw[thick, color=blue, middlearrow={>}] (2.5,1)--(4.5,4);
  	\draw[thick, color=red, middlearrow={>}] (4.5,1) -- (2,4);
  	\draw[ thick, color=blue, middlearrow={>}]  (4.5,1)--(6.5,4);
  	\draw[thick] (-0.5,4)--(1,4) -- (3,4) -- (4,4)--(6,4)-- (7.5,4);
  	  	\filldraw (7.5,4) circle (0pt) node[align=left, above, xshift=0.3cm] {$T$};
  	
  	\filldraw (4.5,1) circle (1pt) node[align=left, below, xshift=0.1cm] {$*$};
  	\filldraw (4.5,4) circle (1pt) node[align=left, above, xshift=0.3cm] {$S_{[s]}a_i*$};
  	\filldraw (0,4) circle (1pt) node[align=left, above, xshift=-0.3cm] {$S_{[r]}a_i*$};
  	\filldraw (6.5,4) circle (1pt) node[align=left, above, xshift=0.2cm] { $S_{[s]}*$};
  	\filldraw (2,4) circle (1pt) node[align=left, above, xshift=-0.3cm] {$S_{[r]}*$};
  	\filldraw (2.5,1) circle (1pt) node[align=left, left, xshift=0cm] {$a_i*$};
  	\filldraw (6.5,1) circle (1pt) node[align=left, below, xshift=-0.2cm] {$b_j*$};

  	\draw [thick,decorate,decoration={brace,amplitude=5pt},xshift=0pt,yshift=-0.1cm]
  	(4.4,1) -- (2.6,1) node [black,midway,xshift=-0.6cm, yshift=-0.4cm]
  	{\footnotesize $A_i=Seg(Z_r)=Seg(Z_s)$};
  	
  	\draw [color=red, thick,decorate,decoration={brace,amplitude=5pt},xshift=-0.2pt,yshift=-0.05cm] (1.9,4) -- (0.2,4);
  	\node[color=red] at (1.6,3.55){\footnotesize $p_r^*(Seg(Z_r))$};
  	\draw [color=blue, thick,decorate,decoration={brace,amplitude=5pt},xshift=0pt,yshift=-0.05cm]
  	(6.4,4) -- (4.6,4) node [midway,xshift=-0.2cm, yshift=-0.4cm] {\footnotesize $p_s^*(Seg(Z_s))$};
  	
  \end{tikzpicture}
  \end{center}
  \caption{Two bands $\mc{B}_r$ and $\mc{B}_s$. Here we have assumed that $Z_r, Z_s \in  \{a_i,a_i^{-1}\}$.}
  \label{dibu2019-4}
  \end{figure}
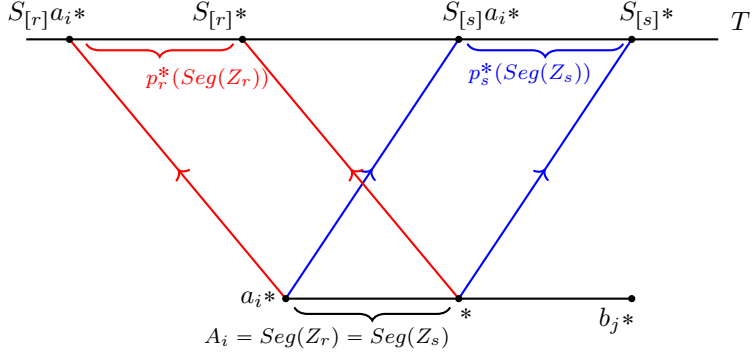

  We are interested in those points of the segments $A_i$ that can be pushed vertically along bands in a way that each time they encounter a segment from $\mathcal{ABC}$, this is of the form $B_j$ for some $b_j\in b$. 
  To formalize this idea we introduce the notion of \emph{trajectory}. Given $x_i \in x$, let $\Traj(x_i)$ be the collection of tuples of the form
  $(\traj_{1}, \traj_{2},\dots, \traj_{2q+1})$ for some $q\geq 0$, where $Z_{\traj_{1}}=x_i$ or $Z_{\traj_1}=x_i^{-1}$, and for any $1\leq k \leq q$ we have $Z_{\traj_{2k}}=Z_{\traj_{2k+1}}$ or $Z_{\traj_{2k}}=Z_{\traj_{2k+1}}^{-1}$, and $Z_{\traj_{2k}}, Z_{\traj_{2k+1}}\in y$. Note that $\Traj(x_i) = \Traj(x_i^{-1})$.
  For such a tuple $\bar{\traj}$ let
  \begin{align}
  \Push_{\bar{\traj}}=(\push_{\traj_{2q+1}}, \push_{\traj_{2q}}^{-1}, \push_{\traj_{2q-1}}, \dots, \push_{\traj_3},\push_{\traj_{2}}^{-1}, \push_{\traj_{1}}),\nonumber \\
  \Push_{\bar{\traj}}^*=(\push^{*}_{\traj_{2q+1}}, (\push_{\traj_{2q}}^*)^{-1}, \push_{\traj_{2q-1}}^*,\dots, \push_{\traj_{3}}^*,(\push^{*}_{\traj_{2}})^{-1},\push^{*}_{\traj_{1}}),\nonumber\\
  S_{[\ttraj]} =  (S_{[\traj_{2q+1}]}) (S_{[\traj_{2q}]})^{-1}(S_{[\traj_{2q-1}]})\dots (S_{[\traj_{3}]}) (S_{[\traj_{2}]})^{-1} (S_{[\traj_{1} ]}),\label{e: 4_oct_19}
  \end{align}
  Each $\Push_{\ttraj}$ and $\Push_{\ttraj}^*$  as above induces a  map and a partial map defined on the band complex, namely  $\push_{\traj_{2q+1}} \circ \push_{\traj_{2q}}^{-1} \circ \dots \circ \push_{\traj_{2}}^{-1}\circ \push_{\traj_{1}}$, and similarly for $\Push_{\ttraj}^*$. These maps are determined by the action of $S_{[\ttraj]}$ on $T$.  Given a point
  $v$ in the band complex, we write $\Push_{\ttraj}(v)$ or $\Push_{\ttraj}^*(v)$ to mean the image of $v$ by $\Push_{\ttraj}$ or $\Push_{\ttraj}^*$ (the latter only whenever defined). Notice that for two different $\ttraj, \ttraj' \in \Traj(x_i)$ we always have  $P_{\ttraj} \neq P_{\ttraj'}$, even though the maps induced by $P_{\ttraj}$ and $P_{\ttraj'}$ may be the same (this is the reason why we defined $P_{\ttraj}$ as a tuple of pushes, rather than as the composition of the pushes itself). Abusing the notation, we treat $P_{\ttraj}$ indistinctly as a tuple of pushes and as its underlying  map. The same observation applies for $P_{\ttraj}^*$.
  
  We let $dom(\Push_{\bar{\traj}}^*)$ be the domain of the partial map induced by $\Push_{\bar{\traj}}^*$, i.e.\ the set of points $v$ in the band complex $\mc{B}$ where $\Push_{\bar{\traj}}^*(v)$ is  defined. Note that  $\Push_{\bar{\traj}}^*$  is  defined only on those segments $s \subseteq A_i$ that can be successively pushed along $\mathcal{B}$ in a way that segments from  $\mathcal{ABC}$ are encountered in the order determined by the trajectory $\bar{\traj}$ (see Figure \ref{dibu2019-3}), i.e.\ as $s$ is pushed by $\Push_{\bar{\traj}}^*$, $s$ encounters the following segments from  $\mathcal{ABC}\cup \push^*(\mathcal{ABC})$ in the following order: $$Seg(Z_{\traj_1}), \push_{\traj_1}(Seg(Z_{\traj_1})), Seg(Z_{\traj_2})= Seg(Z_{\traj_3}), \push_{\traj_3}(Seg(Z_{\traj_3})), \dots$$ $$\dots, Seg(Z_{\traj_{2q}}) = Seg(Z_{\traj_{2q+1}}), \push_{\traj_{2q+1}}(Seg(Z_{\traj_{2q+1}})).$$
  A consequence of this observation is the following
  \begin{remark}\label{r: 4_sept_19}
  A segment $s\subseteq A_i$ is contained in $dom(\Push_{\ttraj})$ if and only if $s$ does not properly contain the preimage by $\Push_{\ttraj}$ of some vertex from $\{v_0, v_1, \dots, v_{|S|}\}$. Intuitively, if and only if $s$ never properly contains one of the vertices $v_0, v_1, \dots, v_{|S|}$  as $s$ is pushed by the maps of $\Push_{\ttraj}$. 
  \end{remark}

  \begin{figure}[ht]
  \begin{center}
  \begin{tikzpicture}[scale=1.2]
  	\tikzset{near start abs/.style={xshift=1cm}}
  	
  	\draw[thick] (0,0) -- (8,0);
  	\draw[thick] (1.5,3) -- (11,3);
  	\draw[thick] (4,0) -- (4,3);

  	\draw[thick,color=blue,middlearrow={>}] plot [smooth, tension=1] coordinates { (1,0) (1.5,1.5) (2.5,3)};
  	
  	\draw[thick,color=blue] plot [smooth, tension=1] coordinates { (4,0) (3.7,0.8) (4,1.5)};
  	
  	\draw[thick,dashed] plot [smooth, tension=1] coordinates { (2.5,0) (3,2) (4,3)};
  	
  	\draw[very thick, color=blue] (2.45,2.95) --(3.95,2.95)--(3.95,1.45);

  	\draw[thick,color=red,middlearrow={>}] plot [smooth, tension=1] coordinates { (7,0) (6.5,1.5) (5.5,3)};
  	
  	\draw[thick,color=red] plot [smooth, tension=1] coordinates { (4,0) (4.2,0.75) (4,1.5)};
  	
  	\draw[thick,dashed] plot [smooth, tension=1] coordinates { (5.5,0) (5,2) (4,3)};
  	
  	\draw[very thick, color=red] (5.55,2.95) --(4.05,2.95)--(4.05,1.45);

  	\draw[thick] plot [smooth, tension=1] coordinates {  (4,0)  (6.2,1.5)  (7,3) };

  	\draw[thick] plot [smooth, tension=1] coordinates { (7,0) (9,1.5) (10,3)};
  	
  	\filldraw (1,0) circle (1pt) node[below] {$a_i*$};
  	\filldraw (2.5,0) circle (1pt);
  	\filldraw (4,0) circle (1pt) node[below] {$*$};
  	\filldraw (5.5,0) circle (1pt);
  	\filldraw (7,0) circle (1pt) node[below] {$b_j*$};

  	\filldraw (2.5,3) circle (1pt);
  	\filldraw (4,3) circle (1pt);
  	\filldraw (5.5,3) circle (1pt);
  	
  	\filldraw (7,3) circle (1pt);
  	\filldraw (10,3) circle (1pt);
  	\filldraw (4,1.5) circle (1pt);
  	
  	\draw [color=blue,thick,decorate,decoration={brace,amplitude=7pt},xshift=-4pt,yshift=-1pt] (4.05,0)-- (1.2,0)  node [midway,yshift=-0.6cm] {$Seg(Z_{t_1})$};

  	\draw [color=red, thick,decorate,decoration={brace,amplitude=7pt},xshift=2.9cm,yshift=-1pt] (4.05,0) -- (1.2,0) node [midway,yshift=-0.6cm] {$Seg(Z_{t_2}) = Seg(Z_{t_3})$};
  	\draw [thick,decorate,decoration={brace,amplitude=7pt},yshift=1pt] (2.6,0) -- (3.9,0) node [midway,yshift=0.4cm, xshift=-0.1cm] {$dom(\Push_{\ttraj}^*)$};
  	
  	\draw [color=blue,thick,decorate,decoration={brace,amplitude=7pt},yshift=-1pt] (3.9,1.5) -- (2.5,2.9) node [midway,xshift=-0.2cm, yshift=-0.7cm] {\footnotesize$\push_{t_1}^*(Seg(Z_{t_1}))$};
  	\draw [color=red,thick,decorate,decoration={brace,amplitude=7pt},yshift=-1pt] (5.5,2.9)--(4.1,1.5) node [midway,xshift=0.7cm, yshift=-0.7cm] {\footnotesize$\push_{t_2}^*(Seg(Z_{t_2}))$};
  	\draw [thick,decorate,decoration={brace,amplitude=7pt},yshift=-1pt] (9.9,3) -- (7.1,3) node [midway,yshift=-0.4cm] {\footnotesize$\push_{t_3}^*(Seg(Z_{t_3}))$};
  	
  \end{tikzpicture}
  \end{center}
  \caption{A depiction of the domain of a hypothetical map $
  \Push_{\ttraj}^*=p_{t_3}^{*}\circ p_{t_2}^*{}^{-1}\circ p_{t_1}^*$. The blue, red, and black bands indicates how $p_{t_1}^*$, $p_{t_2}^*$, and. $p_{t_3}^*$ act, respectively, on $Seg(Z_{t_1})$, $Seg(Z_{t_2})$, and $Seg(Z_{t_3})$.} \label{dibu2019-3}
  \end{figure}
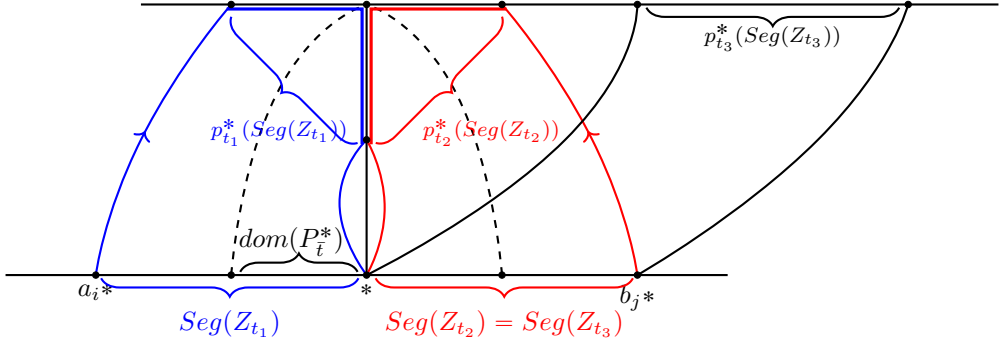

  Denote by $\bad$  the collection of all  points of $\mathcal Hull(\mc{V})\subset T$ that either belong to $\{v_0, \dots, v_{|S|}\}$ or that have degree in $Hull(\mc{V})$ strictly more than $2$ (note this may differ from the degree  in $T$).
  
  \begin{remark}
  \label{r: boundedly many branch points} The size of $\bad$ is uniformly bounded in terms of the equation $S$, i.e.\ there exists an integer $M>0$ such that $|\bad|\leq M$ and $M$ depends only on $|S|$.
  \end{remark}
  \newcommand{\A}[0]{A}
  \newcommand{\B}[0]{B}

  Given $\ttraj\in\Traj(x_i)$, let 
  $A_{i}^{Br}(\ttraj)$ be the collection of subsegments of $A_i$  disjoint from the pre-image of $\bad$ by $\Push_{\ttraj}^*$.

  A key step for proving Theorems \ref{l: small cancellation solutions} and \ref{l: small_cancellation_parameters} is provided by the following lemma, whose proof we postpone for now. This result uses heavily the property of weak $N$-small cancellation.
  
  \begin{lemma}\label{l: main_Merzlyakov_lemma}
  There exists $N_0>0$ such that if $N> N_0$, then for all $x_i \in x$ there exists an  open interval $\hat{A}_i \subseteq A_i$ with the properties $$\hat{A}_i\in \bigcap_{\ttraj\in\Traj(x_i)}A_{i}^{Br}(\ttraj), \quad \text{and} \quad |\hat{A}_i|>\frac{L}{N_0}$$
  where $L$ is the minimum length of the intervals $\{[*, a_i*]\}$ for all $a_i \in a^{\pm 1}$. In particular $\hat{A}_i$ is non-degenerate.
  \end{lemma}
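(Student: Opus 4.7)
Fix $x_i$ and set
\begin{align*}
\mathcal F_i \;:=\; A_i \,\cap\, \bigcup_{\ttraj \in \Traj(x_i)} (\Push_\ttraj^*)^{-1}(\bad).
\end{align*}
The goal is to show that $\mathcal F_i$ leaves an open subinterval of $A_i$ of length $> L/N_0$. Each $\Push_\ttraj^*|_{A_i}$ is the restriction to $A_i$ of a single isometry of $T$ (namely the action of $S_{[\ttraj]}(c,a,b)$), so its preimage of the finite set $\bad$ meets $A_i$ in at most $|\bad| \le M$ points, with $M$ depending only on $|S|$ by Remark~\ref{r: boundedly many branch points}. The single-trajectory estimate is therefore trivial; the difficulty is that $\Traj(x_i)$ is infinite.

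\noindent The main input is weak $N$-small cancellation, used to localize the infinite union. For any $\ttraj$, pushing a point $p\in A_i$ across $\mathcal{B}_{\traj_1}$ lands it in the translate $\push_{\traj_1}(A_i)$, itself a translate $gA_i$ of an $a$-segment. For the rest of $\Push_\ttraj^*$ to be defined at $p$, this image must successively lie in translates of $y$- and $x$-segments dictated by $\ttraj$. By Definition~\ref{d: small cancellation section3}(\ref{SCC}), either the overlap $A_i \cap g A_i$ has length $< \tfrac{1}{N}\min_j tl(a_j) \le L/N$, in which case the domain of $\Push_\ttraj^*$ on $A_i$ lies in the $L/N$-neighbourhood of one endpoint of $A_i$, or $g$ fixes the overlap pointwise and the corresponding partial map agrees with the identity on the relevant subsegment. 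The same dichotomy applies at every subsequent $x$-band crossing (with condition (b) ensuring the bound $L/N$ is uniform across the various $a_j$), while at each $y$- or $c$-band crossing the translates of $B_j$ or $C_k$ involved impose no new contribution to the interior of $A_i$ (their intersection with the relevant translate of $A_i$ being controlled again by (\ref{SCC}) up to the rigid alternative).

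\noindent Iterating inductively on the trajectory length $q$, every point of $\mathcal F_i$ either coincides with a point in the finite set $\bad\cap A_i$ (of size $\leq M$), having been ``rigidly propagated'' through a chain of overlaps on which each intermediate isometry acts trivially, or lies within distance $L/N$ of one of the endpoints $\{*, a_i\cdot *\}$. Consequently $\mathcal F_i$ is contained in an $O(ML/N)$-neighbourhood of $\{*, a_i\cdot *\}$ together with at most $M$ distinguished interior points, and choosing $N_0$ proportional to $M$ (and thus depending only on $|S|$) leaves in $A_i\setminus \mathcal F_i$ an open interval of length $> L/N_0$; we declare this to be $\hat A_i$.

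\noindent\textbf{Main obstacle.} The delicate step is the inductive control in the second paragraph: making precise the ``rigid propagation vs.\ endpoint-localization'' dichotomy for arbitrarily long compositions of pushes. One must verify that the rigid case does not manufacture genuinely new preimages in the interior of $A_i$ (rather, it reroutes them back to preimages already counted at an earlier stage of the trajectory), and that the endpoint-localized contributions from successive steps, though potentially many, all stay trapped in the same $O(L/N)$-collar. This rests on a careful bookkeeping of how overlaps of translates of $A_i$ nest under iterated application of weak small cancellation; the tight interplay of Definition~\ref{d: small cancellation section3}(\ref{SCB}) and (\ref{SCC}) is what prevents the collar widths from compounding into a significant fraction of $L$.
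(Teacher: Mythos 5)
The plan hinges on a structural claim about the full bad set $\mathcal F_i=\bigcup_{\ttraj}(\Push_\ttraj^*)^{-1}(\bad)\cap A_i$, namely that it is contained in an $O(ML/N)$-collar of the endpoints together with at most $M$ interior points. This is a strictly stronger statement than what the lemma asserts, and it is not established: you flag the ``rigid propagation vs.\ endpoint-localization'' dichotomy as the main obstacle and leave it as a bookkeeping exercise, but that dichotomy is precisely where the content lies. In fact the claimed structure of $\mathcal F_i$ is not obviously true. Different trajectories whose domains on $A_i$ overlap in a set of length $\geq L/N$ are forced by condition (\ref{SCC}) only to have the corresponding partial isometry $S_{[\ttraj]}^{-1}S_{[\ttraj']}$ act as the identity on that overlap; this does not immediately identify their preimages of $\bad$ over the rest of $A_i$, so $\mathcal F_i$ can in principle accumulate unboundedly many interior points (coming from trajectories with nearly disjoint domains or domains overlapping by less than $L/N$).

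The paper avoids this issue by never attempting to describe $\mathcal F_i$ globally. It runs a greedy nested-interval sieve $\tau_0\supseteq\tau_1\supseteq\cdots$ over an enumeration of $\Traj(x_i)$ respecting the prefix order, and shows that the number of genuinely shrinking steps is bounded in terms of $|S|$ alone. The bound comes from a pigeonhole on the pair (vertex of $\bad$, adjacent edge of $\Hull$, start/end) at which the cut happens, followed by a weak small cancellation argument: if the same pair $(u_*,e_*)$ witnesses two cuts at stages $\ell_1<\ell_2$, then on the surviving interval $\tau_{\ell_2+1}$ both $\Push_{\ttraj_{\ell_1}}$ and $\Push_{\ttraj_{\ell_2}}$ are defined, their images overlap on a subsegment of length $\geq L/N$, and weak small cancellation forces $S_{[\ttraj_{\ell_1}]}^{-1}S_{[\ttraj_{\ell_2}]}$ to act as the identity there, meaning $\Push_{\ttraj_{\ell_1}}(\tau_{\ell_2+1})=\Push_{\ttraj_{\ell_2}}(\tau_{\ell_2+1})$ contains no point of $\bad$ in its interior, contradicting the need to cut at stage $\ell_2$. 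So it is the nesting of the sieve, not any structure of the full set $\mathcal F_i$, that controls the loss of length. Your proposal identifies the right ingredients (Remark~\ref{r: boundedly many branch points}, Definition~\ref{d: small cancellation section3}(\ref{SCB}) and (\ref{SCC})) but attempts a global covering estimate where a local/nested argument is what actually closes the loop; as written, the key step is asserted rather than proved.
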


  \begin{cor}
  \label{cl: main Merzlyakov claim} Let $N_0$ and $\hat{A}_i$ ($1\leq i \leq |x|$) be the integer and the intervals given  by  the previous Lemma \ref{l: main_Merzlyakov_lemma}. Assume $N\geq N_0$. Then for any $x_i\in x^{\pm 1}$ and for any $\ttraj \in\Traj(x_i)$ either $\hl_{i}$ is contained in $dom(\Push_{\ttraj}^*)$ or disjoint from it. Moreover, the following conditions hold:
  \enum{(a)}{
  \item \label{cond mux} Let $Z_k \in x^{\pm 1}$ and let $\ttraj\in\Traj(x_i)$. Then one of the following holds:
  \enum{(i)}{
  	\item $Z_k = x_i$ or $Z_k =x_i^{-1}$, $dom(\push_k^*{}^{-1}\circ \Push_{\ttraj}^*)$ contains $\hl_i$, and $\push_k^*{}^{-1}\circ \Push_{\ttraj}^*$ is the identity map when restricted on $\hat{A}_i$. Equivalently, $S_{[k]}^{-1} S_{[\Push_{\ttraj}]}$ acts as the identity element on  $\hat{A}_i$. Furthermore, if $a$ is small cancellation but  not weak small cancellation then  $S_{[k]}^{-1} S_{[\Push_{\ttraj}]}=1$.
  	\item The domain of $\push_k^*{}^{-1}\circ \Push_{\ttraj}^*$ is degenerate.
  }
  \item \label{cond muw} For any $Z_k\in z^{\pm 1}$ and any $\ttraj \in \Traj(x_i)$, the domain of $\push_k^*{}^{-1}\circ \Push_{\ttraj}^*$ is degenerate.
  }
  \end{cor}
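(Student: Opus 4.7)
The plan is to deduce everything from two properties of $\hat{A}_{i}$ supplied by Lemma \ref{l: main_Merzlyakov_lemma}: first, $|\hat{A}_{i}| > L/N_{0}$, and second, $\hat{A}_{i}$ is disjoint from the $\Push_{\ttraj}^{*}$-preimage of $\bad$ for every $\ttraj \in \Traj(x_{i})$. The opening dichotomy follows directly from Remark \ref{r: 4_sept_19}: the set $dom(\Push_{\ttraj}^{*}) \cap A_{i}$ is a connected subsegment whose interior boundary is contained in the $\Push_{\ttraj}^{*}$-preimage of $\{v_{0},\ldots,v_{|S|}\} \subseteq \bad$, and since $\hat{A}_{i}$ is an open interval avoiding every such preimage, it cannot straddle a boundary point and must lie wholly inside or wholly outside the domain.

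For part (\ref{cond mux}) the same reasoning applies to the domain of $\push_{k}^{*}{}^{-1}\circ \Push_{\ttraj}^{*}$: any additional interior boundary in $A_{i}$ arises from $\Push_{\ttraj}^{*}$-preimages of the endpoints $v_{k-1}$ and $v_{k}$ of $\push_{k}(Seg(Z_{k}))$, which are again in $\bad$. So $\hat{A}_{i}$ is either contained in the composed domain or disjoint from it. In the ``contained'' subcase the composition is the restriction of the group element $g := S_{[k]}^{-1} S_{[\Push_{\ttraj}]}$ and maps $\hat{A}_{i}$ isometrically into $Seg(Z_{k})$, giving
\begin{equation*}
|A_{i} \cap g^{-1} Seg(Z_{k})| \;\geq\; |\hat{A}_{i}| \;>\; L/N_{0} \;\geq\; \tfrac{1}{N}\min_{i} tl(a_{i})
\end{equation*}
as soon as $N \geq N_{0}$. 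Clause (\ref{SCC}) of Definition \ref{d: small cancellation section3} then forces $Seg(Z_{k}) = A_{i}$, i.e.\ $Z_{k} = x_{i}^{\pm 1}$, and makes $g^{-1}$ (hence $g$) act as the identity on the overlap, in particular on $\hat{A}_{i}$. Under the strict small cancellation hypothesis the closing clause of the same definition upgrades this to $g = 1$.

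It remains to show that in the ``disjoint'' subcase of (\ref{cond mux}) and throughout (\ref{cond muw}) the composed domain is degenerate. For (\ref{cond muw}), the over-$c$ clause of Definition \ref{d: small cancellation section3} supplies $|Seg(Z_{k})| = tl(c_{k'}) < \tfrac{1}{N}\min_{i} tl(a_{i}) < |\hat{A}_{i}|$, so any non-degenerate component of the composed domain is strictly shorter than $\hat{A}_{i}$; the dichotomy then places such a component disjoint from $\hat{A}_{i}$, while its length is too small to engage the weak small cancellation threshold. The remaining disjoint subcase of (\ref{cond mux}) closes analogously by invoking the refined content of Lemma \ref{l: main_Merzlyakov_lemma}: $\hat{A}_{i}$ is essentially the unique component of $A_{i}$ minus the $\Push_{\ttraj}^{*}$-preimage of $\bad$ long enough to trigger a non-vacuous application of the weak small cancellation clause, so any other component must collapse to a point. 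This final piece of bookkeeping, which depends on the precise calibration of constants inside Lemma \ref{l: main_Merzlyakov_lemma}, is the main technical obstacle of the proof.
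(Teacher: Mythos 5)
Your opening dichotomy and your treatment of the ``contained'' case of~(a) follow the same route as the paper: $\hat{A}_{i}$ avoids the $\Push_{\ttraj}^{*}$-preimage of $\bad$, hence (by Remark~\ref{r: 4_sept_19}) lies wholly inside or wholly outside $dom(\Push_{\ttraj}^{*})$, and the same argument extends to the composed domain because the extra boundary points $v_{k-1},v_{k}$ are again in $\bad$. Once $\hat{A}_{i}\subseteq dom(\push_{k}^{*-1}\circ\Push_{\ttraj}^{*})$, feeding the inclusion $g\hat{A}_{i}\subseteq Seg(Z_{k})$ (with $g=S_{[k]}^{-1}S_{[\ttraj]}$) into clause~(\ref{SCC}) of Definition~\ref{d: small cancellation section3} correctly yields $j=i$ and the identity action, and the non-weak small cancellation hypothesis upgrades this to $g=1$. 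This is the same computation the paper performs, only phrased in terms of $g^{-1}Seg(Z_{k})$ rather than $\Push_{\ttraj}^{*}(\hat{A}_{i})\cap p_{k}^{*}(A_{j})$. (One minor slip: $|Seg(Z_{k})|=d(*,c_{k'}*)$, not $tl(c_{k'})$; the inequality you want still holds because the over-$c$ clause bounds $d(*,c_{j}*)$ directly.)

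Where you go astray is in the last paragraph. You correctly observe that $\hat{A}_{i}$ must be \emph{disjoint} from the composed domain (via the extended dichotomy plus the length bound $|\hat{A}_{i}|>L/N_{0}\geq L/N>|Seg(Z_{k})|$ in case~(b), and the failure of clause~(\ref{SCC}) in the remaining sub-case of~(a)), but you then worry about upgrading this to the literal statement that the full domain is degenerate, and you try to do so by asserting that $\hat{A}_{i}$ is ``essentially the unique'' long component of $A_{i}$ minus $(\Push_{\ttraj}^{*})^{-1}(\bad)$. That assertion is not part of Lemma~\ref{l: main_Merzlyakov_lemma} and is false in general: the sieve in that lemma merely produces \emph{some} long component avoiding the bad preimages for all trajectories simultaneously; other components of $A_{i}\setminus(\Push_{\ttraj}^{*})^{-1}(\bad)$ for a single $\ttraj$ can certainly be long, and even a short one need not collapse to a point. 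So this route cannot be made to work, and it is not what the paper does either. The resolution is that no such upgrade is needed: the content of~(a)-(ii) and of~(b) that is actually established by the paper's proof, and that is the only thing invoked downstream (in the orientation claim after the corollary and in Observation~\ref{o: collapse}, both of which only ever intersect the relevant domains with $\hat{A}_{i}$), is precisely the statement that the composed domain meets $\hat{A}_{i}$ degenerately. You have already proved that. The ``final piece of bookkeeping'' you flag as the main technical obstacle is a red herring.
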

  
  \begin{proof}

Let $x_i \in x^{\pm 1}$  and let $\hat{A}_i$ be the  interval given by  Lemma \ref{l: main_Merzlyakov_lemma}.
  
  Let $\ttraj \in \Traj(x_i)$. Then $\hat{A}_i \in A_i^{Br}(\ttraj)$, and so $\hat{A}_i$ is either contained in $dom(P_{\ttraj}^*)$ or disjoint from it, due to Remark \ref{r: 4_sept_19}. Now let $Z_k \in x^{\pm 1}$ with $Z_k = x_j$.
  Assume that
  \begin{equation}\label{e: main_coro_2}
  P_{\ttraj}^*(\hat{A}_i) \cap p_k^*(A_j) \neq \emptyset.
  \end{equation}
  Then since the endpoints of $p_k^*(A_j)$ belong to $Br$, by Lemma \ref{l: main_Merzlyakov_lemma} we have that  $P_{\ttraj}^*(\hat{A}_i) \subseteq p_k^*(A_j)$. From the  small cancellation condition it follows that $j=i$ and that $\push_k^*{}^{-1}\circ \Push_{\ttraj}^* $ acts as the identity on $\hat{A}_i$ (recall that, in general, the image of pushes $\Push_{\ttraj}^*$ and $\push_k^*$ coincides with the image of the actions given  by the  group elements \eqref{e: 4_oct_19}). This proves Item (a) of the corollary, and a similar argument yields Item (b).

  Now suppose $a$ is small cancellation (and not just weak small cancellation). Then since $p_k^*{}^{-1}\circ \Push_{\ttraj}^*$ acts as the identity map on $\hat{A}_i$, by construction we have that the element $S_{[k]}^{-1}S_{[\ttraj]} \in G$ acts as the identity on $\hat{A}_i$. Since $|\hat{A}_i|\geq L/N$ the small cancellation condition now implies that $S_{[k]}^{-1}S_{[\ttraj]}=1$. 
  \end{proof}
  
  We now  prove Theorems \ref{l: small cancellation solutions} and \ref{l: small_cancellation_parameters} assuming  Lemma \ref{l: main_Merzlyakov_lemma} and its Corollary  \ref{cl: main Merzlyakov claim}. Fix $N\geq N_0$, where $N_0$ is the integer given by Lemma \ref{l: main_Merzlyakov_lemma}.
  For any  $x_i \in x^{\pm 1}$ and $y_j \in y^{\pm 1}$, let $\Pushes(x_i, y_j)$ be the set 
  $$\Pushes(x_i, y_j) =_{def} \{\push_k^{-1}\circ \Push_{\ttraj} \mid \ttraj \in \Traj(x_i), \ Z_{k}=y_{j}  \}.$$
  We let $\Pushes(x_i, y_j)^*$ be the set consisting in the maps from $\Pushes(x_i,y_j)$ restricted on $A_i$.
  We claim that the images of $\hat{A}_i$ of two maps $\push_k^*{}^{-1}\circ\Push_{\ttraj}^*$ and  $\push_{k'}^*{}^{-1}\circ\Push_{\ttraj'}^*\in \Pushes(x_i, y_j)^*$  either are the same and have the same orientation, or have degenerate intersection (i.e.\ the intersection is a single point or empty). Indeed, if not then $\push_k^*{}^{-1}\circ \Push_{\ttraj}$ has nondegenerate domain, but the map $$(\push_{k'}^{*}{}^{-1}\circ\Push_{\ttraj'}^{*}{})^{-1} \circ (\push_k^{*}{}^{-1}\circ\Push_{\ttraj}^{*}{}),$$
  which is of the form $\push_{k''}^{*}{}^{-1}\circ \Push_{\ttraj''}^{*}{}$ for some $k''$ such that $Z_{k''}=x_i$ and some $\ttraj'' \in Traj(x_i)$ 
  does not act as the identity on $\hat{A}_i$,  contradicting Item a)  of Corollary \ref{cl: main Merzlyakov claim} (see  Figure \ref{dibu2019-1}).

  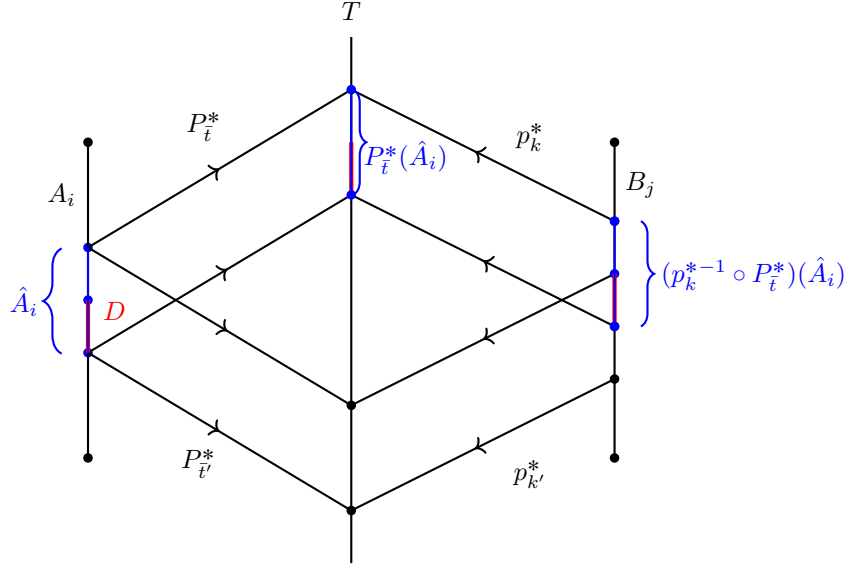
\begin{figure}[ht]
  \begin{center}
  \begin{tikzpicture}[scale =0.7]
  	\tikzset{near star abs/.style={xshift=1cm}}
  	  	\filldraw[thick, color=blue]  (0,4) circle (2pt); 
  	  	\filldraw[thick, color=blue]  (0, 5) circle (2pt); 	  	
  	  	\filldraw[thick]  (0, 8) circle (2pt);
  	  	 	\filldraw[thick]  (5, 3) circle (2pt); 
  	  	\filldraw[thick]  (5, 1) circle (2pt);
  	  	\filldraw[thick]  (10, 3.5) circle (2pt); 
  	  	\filldraw[thick]  (10, 8) circle (2pt); 
  	  	\filldraw[thick, color=blue]  (0, 6) circle (2pt); 
  	\draw[thick, middlearrow={<}] (5,9) -- (10,6.5);
  	  	  	\node at (-0.5,7) {$A_i$};

  	\node at (5,10.5) {$T$};
  	\draw[thick,middlearrow={>}] (0,6) -- (5,9);
  	\draw[thick,middlearrow={<}] (5,3) -- (0,6) ;

  	\draw[thick, middlearrow={>}] (10,5.5) -- (5,3);
  	\draw[thick, middlearrow={>}] (10,4.5) -- (5,7);
  	  	\draw[thick, middlearrow={<}] (5,7) -- (0,4);
  	\draw[thick, middlearrow={>}] (0,4) -- (5,1);
  	\draw[thick, middlearrow={<}] (5,1)
  	-- (10,3.5);
  	\draw[thick] (0,2) -- (0,8) ;
  	\filldraw[thick]  (0, 2) circle (2pt);
  	\draw[thick] (5,0) -- (5,10);
  	\draw[thick] (10,2) -- (10,8);
  	  	\filldraw[thick]  (10, 2) circle (2pt);

  	  	 	\filldraw[thick]  (10, 4.5) circle (2pt); 
  	  	\filldraw[thick, color=blue]  (10, 5.5) circle (2pt); 

  	\draw[ultra thick,color=red] (0,4) -- (0,5);
  	\draw[ultra thick,color=red] (5,7) -- (5,8);
  	\draw[thick, decorate, decoration={brace,amplitude=5pt}, xshift=1pt, color=blue]  (5,9) -- (5,7) node [midway, xshift=0.7cm, yshift=-0.2cm] {$\Push_{\ttraj}^*(\hat{A}_i)$};
  	 	\filldraw[thick, color=blue]  (5, 9) circle (2pt); 
  	  	\filldraw[thick, color=blue]  (5, 7) circle (2pt); 

  	\draw[ultra thick,color=red] (10,4.5) -- (10,5.5);
  	\draw[thick, color=blue] (5,7) -- (5,9);
  	\node at (2.2, 8.3) {$\Push_{\ttraj}^*$};
  	\node at (8.4, 8.1) {$p_{k}^*{}$};
  	\node at (8.4, 1.7) {$p_{k'}^*$};
  	\node at (2.1, 1.9) {$\Push_{\ttraj'}^*{}$};
  	  	\draw [thick,decorate,decoration={brace,amplitude=7pt},yshift=-0.5pt, color=blue] (-0.5,4) -- (-0.5,6) node [midway,xshift=-0.5cm] {$\hat{A}_i$};
  	  	\draw[color=blue, thick] (0,4) -- (0, 6);
  	\draw[thick, decorate, decoration={brace, amplitude = 5pt}, xshift=0.5cm,color=blue] (10, 6.5) -- (10, 4.5) node [midway,xshift=1.5cm] {$(p_k^{*}{}^{-1}\circ \Push_{\ttraj}^*)(\hat{A}_i)$};
  	\node at (10.5, 7.2) {$B_j$};
  	\draw[color=blue, thick] (10, 6.5) -- (10, 4.5);
  	   	\filldraw[thick]  (10, 6.5) circle (2pt);

  	\filldraw[thick, color=blue]  (10, 6.5) circle (2pt); 
  	  	\filldraw[thick, color=blue]  (10, 4.5) circle (2pt); 

  	\node[color=red] at (0.5, 4.8) {$D$};
  \end{tikzpicture}
  \end{center}
  \caption{It cannot happen that two maps of the form $\push_{k'}^{*}{}^{-1}\circ\Push_{\ttraj'}^{*}{}$ and  $\push_k^{*}{}^{-1}\circ\Push_{\ttraj}^{*}{}$ send $\hat{A}_i$ to different segments, as this would contradict Corollary \ref{cl: main Merzlyakov claim}. For the same reason they also cannot send $\hat{A}_i$ to the same segment with opposite orientations. Here $D$ denotes the domain of $p_{k'}^*{}^{-1}\circ\Push_{\ttraj'}^* \circ p_{k}^*{}^{-1}\circ\Push_{\ttraj}^*$, and the red segments denote the image of $D$ along the bands in the picture.} 
  \label{dibu2019-1}
  \end{figure}

  \newcommand{\ha}[0]{\hat{a}}
  \newcommand{\hx}[0]{\hat {x}}
  \newcommand{\hal}[0]{\hal{\alpha}}

  Thus the set \begin{equation}\label{e: 20march19}Im(Pushes^*)= \{im(\rho) \mid \rho \in Pushes(x_i,y_j)^*, x_i \in x^{\pm 1},\ y_j \in y^{\pm 1}\}\end{equation} determines a well-defined subdivision of each segment $B_j$ into subsegments. More precisely, $B_j$ is the following concatenation of subsegments: 
  
  \begin{equation}\label{e: 20march19-2}
  B_j = B_{j,0} \hat{B}_{j,1} B_{j,1} \dots  B_{j,r_j} \hat{B}_{jr_j} B_{j,r_j}
  \end{equation}
  where each $\hat{B}_{j,\ell}$ belongs to $Im(Pushes^*)$ and, conversely, any subsegment of $B_j$ which belongs to $Im(Pushes^*)$  coincides with one of the segments $\hat{B}_{j,\ell}$. The other segments $B_{j,\ell}$ are possibly degenerate, i.e.\ a single point $(\ell = 0,\dots, r_j)$.
  
 We claim that we can assume  that the action of $G$ on $T$ is cocompact. Indeed,  the small  cancellation property of a tuple $a$ is preserved after collapsing all edges of $T$ that do not lie in the union of the translates of the minimal tree of the group $\subg{a}$. The result of this collapse is a tree with finitely many orbits of edges.
 
 In views of the paragraph above, let $\delta$ be the diameter of $T/G$. For each $\hat{A}_i$ let $\hat{A}_i^e$ be a segment in $T$ containing $\hat{A}_i$, having its endpoints in the orbit of $*$, and of minimal length with these properties. Then the endpoints of $\hat{A}_i^e$ are at distance at most $\delta$ from the endpoints of $\hat{A}_i$. Let $a_i', a_i''$ and $\hat{a}_i$ be elements such that  $a_i'$ sends $*$ to the starting point of $\hat{A}_i^e$, $\hat{a}_i$ sends the starting point of $\hat{A}_i^e$ to the endpoint of  $\hat{A}_i^e$, and  $a_i''$ sends the endpoint of $\hat{A}_i^e$ to $a_i*$. Then $a_i^{-1} a_i'' \hat{a}_i a_i'$ stabilizes $*$, and multiplying $a_i'$ on the right by an appropriate element (i.e.\ $(a_i^{-1}a_i''\hat{a}_i a_i')^{-1}$) in the stabilizer of $*$  (and keeping the notation $a_i'$) we obtain $a_i = a_i'' \hat{a}_i a_i'$. For each segment $A_i$, let $A_{i}', A_{i}''$ be subintervals of $A_i$ such that $A_i$ is the concatenation of $A_{i}'$ and $\hl_i$ and $A_{i}''$. Furthermore, let $A_i^{e}{}'$, $A_i^{e}{}''$ be paths connecting $*$ (the starting point of $A_i$) with $\hat{A}_i^e$ and the ending point of $\hat{A}_i^e$ with $a_i*$ (the ending point of $A_i$). See Figure \ref{f: B_jle}.
  
  Let $\rho \in Im(\Pushes)$ and  $x_i \in x$ be such that $\rho(\hat{A}_i)=\hat{B}_{j, \ell}$ and $\rho= p_k^{-1}\circ P_{\ttraj}$ for some $\ttraj \in \Traj(x_i, y_j)$ and $Z_k \in y^{\pm 1}$, $1\leq k \leq |S|$. We define
  $$
  \hat{B}_{j,\ell}^e =_{def} (p_k^{-1}\circ P_{\ttraj})(\hat{A}_i^{e}).
  $$
  Notice that neither $\hat{A}_i^{e}$ nor  $\hat{B}_{j,\ell}^e$ is  necessarily contained in $A_i$ or in $B_j$, respectively.

  Observe also that $\hat{B}_{j, \ell}^e$ contains $\hat{B}_{j,\ell}$ and the endpoints of $\hat{B}_{j, \ell}^e$ are at distance at most $\delta$ from the endpoints of $\hat{B}_{j,\ell}$.

  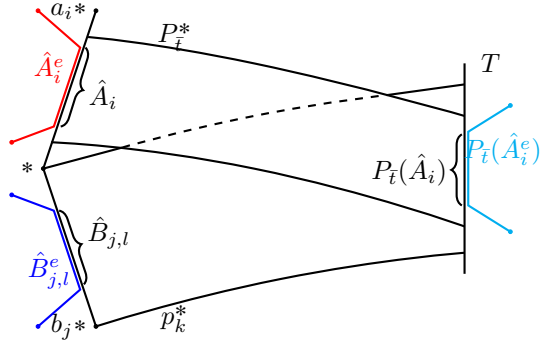
\begin{figure}[ht]
  \begin{center}
  \begin{tikzpicture}[scale=0.7]
  	\tikzset{near star abs/.style={xshift=1cm}}
  	
  	\draw[thick] (8,1) -- (8,5);
  	\node at (8.5,5) {$T$};
  	\draw[thick] (1,0) -- (0,3) -- (1,6);
  	
  	\draw[thick] plot [smooth, tension=1] coordinates {(1,0) (4.5,0.9) (8,1.4)};
  	
  	\draw[thick] (0,3) -- (1.5,3.4);
  	\draw[thick] (6.55,4.4) -- (8,4.6);
  	\draw[thick,dashed] plot [smooth, tension=1] coordinates {(1.5,3.4) (4,4) (6.55,4.4)};

  	\draw[thick] plot [smooth, tension=1] coordinates {(0.15,3.5) (4,3) (8,1.9)};
  	\draw[thick] plot [smooth, tension=1] coordinates {(0.85,5.5)  (4,5) (8,4)};
  	\node at (2.5,5.5) {$\Push_{\ttraj}^*$};
  	\node at (2.5,0.15) {$\push_{k}^*$};

  	\filldraw (1,0) circle (1pt) node [left] {$b_j*$};
  	\filldraw (0,3) circle (1pt) node [left] {$*$};
  	\filldraw (1,6) circle (1pt) node [left] {$a_i*$};
  	
  	\draw[thick, xshift=-0.1cm, color=blue] (0,0) -- (0.8,0.7) -- (0.3,2.2) -- (-0.5,2.5) node[xshift=0.5cm,yshift=-1cm] {$\hat{B}_{j,l}^e$};
  	\filldraw[color=blue, xshift=-0.1cm] (0,0) circle (1pt);
  	\filldraw[color=blue, xshift=-0.1cm] (-0.5,2.5) circle (1pt);
  	
  	\draw[thick, xshift=-0.1cm, color=red] (0,6) -- (0.8,5.3) -- (0.3,3.8) -- (-0.5,3.5) node[xshift=0.5cm,yshift=1cm] {$\hat{A}_{i}^e$};
  	\filldraw[color=red, xshift=-0.1cm] (0,6) circle (1pt);
  	\filldraw[color=red, xshift=-0.1cm] (-0.5,3.5) circle (1pt);
  	
  	\draw[thick, xshift=0.07cm,color=cyan] (8.8,4.2) -- (8,3.7) -- (8,2.3) -- (8.8,1.8) node[xshift=-0.1cm,yshift=1.1cm] {$\Push_{\ttraj}(\hat{A}_{i}^e)$};
  	\filldraw[color=cyan, xshift=0.07cm] (8.8,4.2) circle (1pt);
  	\filldraw[color=cyan, xshift=0.07cm] (8.8,1.8) circle (1pt);
  	
  	\draw [thick,decorate,decoration={brace,amplitude=4pt},xshift=0,yshift=0cm] (0.3,2.2) -- (0.77,0.8) node [black,midway,xshift=0.5cm, yshift=0.2cm] {$\hat{B}_{j,l}$};
  	
  	\draw [thick,decorate,decoration={brace,amplitude=4pt},xshift=0,yshift=0cm] ((0.8,5.3) -- (0.33,3.8) node [black,midway,xshift=0.4cm, yshift=-0.1cm] {$\hat{A}_{i}$};
  	
  	\draw [thick,decorate,decoration={brace,amplitude=4pt},xshift=-0.05cm,yshift=0cm] (8,2.3) -- (8,3.65) node [black,midway,xshift=-0.7cm, yshift=0cm] {$\Push_{\ttraj}(\hat{A}_{i})$};
  	
  \end{tikzpicture}
  \end{center}
  \caption{Depiction of  segments $\hat{A}_i^e$ and $\hat{B}_{j,\ell}^e$.} \label{f: B_jle}
  \end{figure}

  Let
  $$
  b_{j,0}, \hat{b}_{j,1}, b_{j,1}, \hat{b}_{j, 2}, \dots, \hat{b}_{j,r_j}, b_{j,r_j}
  $$
  be elements of $G$ such that $b_{j,0}$ takes the vertex $*$ to the starting point of $\hat{B}_{j,1}^e$, $\hat{b}_{j,1}$ takes the starting point of $\hat{B}_{j,1}^e$ to its endpoint, and so on until reaching the endpoint of $B_{j,r_j}$. We also let $$B_{j, 0}^{e}{}, \dots, B_{j, r_j}^e{}$$ be paths connecting $*$ with the starting point of $\hat{B}_{j,1}^e$, from the endpoint of $\hat{B}_{j,1}^e$ to the starting point of $\hat{B}_{j,2}^e$, and so on.  As we did with the elements $a_i', \hat{a}_i', a_i''$, we can assume without loss of generality  that
  \begin{equation}\label{e: 21march19}
  b_{j}=b_{j,r_j}\hat{b}_{j,r_{j}} b_{j,r_{j}}\dots b_{j,2}\hat{b}_{j,1}b_{j,1}.
  \end{equation}
  This is achieved by multiplying, if needed, the element  $b_{j,1}$  by an appropriate element in the stabilizer of $*$ in a similar way as we did before.
  
  Furthermore, we claim that we can assume without loss of generality that
  \begin{equation}\label{e: hat_b_=_hat_a}
  \hat{b}_{j,\ell} = \hat{a}_{i_\ell}^{ s_\ell}, \quad \ell = 1,\dots, r_j,
  \end{equation}
  for some indices $1\leq i_\ell\leq |x^{\pm 1}|$, and some conjugating elements $s_\ell\in G$.  Indeed, we have $\hat{B}_{j,\ell}^e = S_{\ttraj}\hat{A}_i^e$  for some $i$ such that $x_i \in x^{\pm 1}$, and some $\ttraj\in \Pushes(x_i, y_j)$. Then the element $S_{[\ttraj]}\hat{a}_{i}S_{[\ttraj]}^{-1}$ takes the starting point of  $\hat{B}_{j,\ell}^e$ to its endpoint, or vice-versa depending on whether $x_i$ is positive or negative, and so when choosing $\hat{b}_{j,\ell}$ in \eqref{e: 21march19}  we can let take this conjugate or its inverse . 
  
  \medskip

  To each $a_{i}'$, $\hat{a}_i$, and $a_{i}''$ with $1\leq i\leq |x|$, we associate new variables $x_{i}'$, $\hat{x}_i$, and $x_{i}''$. 
  Similarly, to each $b_{j, \ell}$   we associate a new variable $y_{j,\ell}$. 	Let $x'{}^{\pm 1}, \hat{x}{}^{\pm 1}, x''{}^{\pm 1}$ and $y'{}^{\pm 1}$ denote the tuples consisting of all the $x_i', \hat{x}_i, x_i''$ and all $y_{j,\ell}$, and their inverses, respectively. Denote  $v^{\pm 1} = (x'{}^{\pm 1}, x''{}^{\pm 1}, y'{}^{\pm 1})$. Let
  \begin{align*}
  \alpha_{j}(v, \hat{x})=y_{j,r_{j+1}}\hat{x}_{i_{r_{j}}}\cdots y_{j,2}\hat{x}_{i_1}y_{j,1}.
  \end{align*}
   This word matches the decomposition in \eqref{e: 21march19} and \eqref{e: hat_b_=_hat_a}.
  
  Let $\alpha$ be the tuple $(\alpha_1, \dots, \alpha_{|y|})$. Similarly, let $a'{}^{\pm 1}, \hat{a}{}^{\pm 1}, a''{}^{\pm 1}$ be tuples consisting of all the  $a_i', \hat{a}_i', a_i''$ and their inverses, respectively ($1\leq i\leq |x|$). For each $j$ let $b_j'$ be the tuple $(b_{j,0}s_1^{-1}, s_1b_{j,1}s_2^{-1}, \dots, s_{r_j}b_{j,r_{j+1}})$, and let $b'$ be the tuple consisting on all the tuples $b_j'$ ($1\leq j\leq |b|$).

  Consider the word
  \begin{align*}
  \ww(w,v, \hat{x})= \ww(w,&x',x'',y', \hat{x})=_{def}\\ &S(w,x_{1}''\hx_{1}x_{1}',x_{2}''\hx_{2}x_{2}'',\dots,x_{|x^{\pm 1}|}''\hx_{|x^{\pm 1}|}x_{|x^{\pm 1}|}',\alpha(v,\hat{x})),
  \end{align*}
  which is nothing but a ``refinement'' of the word $S(w, x,y)$ where each variable $y_j$ has been replaced by $\alpha_j$ and each variable $x_i$ has been replaced by $x_i'' \hat{x}_i x_i'$.  It follows from \eqref{e: 21march19}, from \eqref{e: hat_b_=_hat_a}, and from the equality $a_i=a_i''\ha_{i}a_i'$, that  $$\ww(c,a',a'',b', \hat{a})=S(c,a,b)=1.$$
  Write
  \begin{align*}
  \ww=	W_{1}W_{2}\cdots W_{|S^*|}
  \end{align*}
  where, for all $k=1, \dots, |S^*|$, $W_{k}$ is one of the letters appearing in the tuples
  $w^{\pm 1},\hat{x}^{\pm 1}, v^{\pm 1}$.
  As before, given $1\leq k\leq |S^*|$, we let  $\ww_{[k]}=W_{1}W_{2}\cdots W_{k}^{\tau_k}$, where $\tau_k=0$ if $W_k$ is positive, and $\tau_k=1$ otherwise. We set $\ww_{[0]}=1$. For each $1\leq k \leq |S^*|$ we denote by $Seg(W_k)$ and $Seg(W_k)^e$ one of the segments
  $$
  A_{i}', \hat{A}_i, A_{i}'', B_{j,\ell}, \hat{B}_{j,\ell},  C_t
  $$
  and
  $$
  A_{i}^e{}', \hat{A}_i^e, A_{i}^e{}'', B_{j,\ell}^e, \hat{B}_{j,\ell}^e,  C_t,
  $$
  respectively, depending on whether $W_k$ is  $x_i', \hat{x}_i, x_i'', y_{j,\ell}, \hat{y}_{j,\ell}$, or $w_t$, respectively ($i=1,\dots,|x^{\pm 1}|$; $j=1,\dots, |y^{\pm 1}|$; $t=1,\dots, |w^{\pm 1}|$; $\ell = 1, \dots, r_j$).   Note that for each $Seg(W_k)$ there exists a unique index $1\leq k' \leq |S|$ such that $Seg(W_k) \subseteq Seg(Z_{k'})$. We denote such index by $k_S$ (indicating that it is an index referring to a letter in $S$). 
  
  \newcommand{\hpi}[0]{\hat{\Pi}}
  
  \medskip
  
  Take a copy $S^{1}$ of the circle and divide it into $|S^*|$ edges $e_1, \dots, e_{|S^*|}$. Orient all edges counterclockwise and label them  so that the word $S^*$ can be read counterclockwise when starting at some fixed vertex $*_{0}$.
  Given an edge $Seg_{\Gamma}(W_k)$ of $\Gamma$ we define 
  \begin{align}\label{e: 4_oct_19_3}
  &\phi(Seg_\Gamma(W_k)) =_{def} \push_{k_S}(Seg(W_k)) = S_{[k_S]}(c,b,a) Seg(W_k)
  \end{align}

  Given  edges $e, e'$ in $\Gamma$ labeled with variables from the tuple $\hx^{\pm 1}$, let $e\sim e'$ if $\phi(e)=\phi(e')$. We have that $\sim$ is an equivalence relation, and by Item (a) in Corollary \ref{cl: main Merzlyakov claim} and by construction, if $e \sim e'$ then the labels of $e$ and $e'$ coincide and so the labeling is well-defined on the equivalence class.  Moreover, this same corollary yields that if $e\sim e'$   then the orientation of  $\phi(e)$ inherited  from $e$ is the same as the  orientation of  $\phi(e')$ inherited from $e'$.
  
  The following observation follows from the construction of $\Gamma$, the map $\phi$, and the segments $A_i^{e}{}', A_i^{e}{}'', \hat{A}_i^e, B_{j,\ell}^e, \hat{B}_{j,\ell}^e$.
  \begin{remark}\label{r: 4_oct_19}
  The image of $\Gamma$ by $\phi$ is a closed cycle in $T$.
  \end{remark}

  Let $\bar{\Gamma}$ be the 1-complex obtained as a quotient of $\Gamma$ by identifying edges $e$ and $e'$ such that $e\sim e'$. The identification is done  in such a way that 1) if the label of $e$ is positive and the label of $e'$ is negative (or vice-versa), then the initial point of $e$ is identified with the final point of $e$, and vice-versa; 2) if $e$ and $e'$ both have positive (or negative) label, then the initial point of $e$ is identified with the initial point of $e'$, and similarly for the final points of $e$ and $e'$.
  Since the label is well-defined in the equivalence class, $\bar \Gamma$ inherits from $\Gamma$ a labeling on its edges. Denote by $\pi:\Gamma\to \bar \Gamma$ the quotient map. Then there exists a natural map $\bar{\phi}  : \bar \Gamma \to T$  for which $\phi = \pi \circ \bar{\phi}$.
  For shortness we shall denote $\pi(e)$ simply by  by $\bar{e}$, for $e$ an edge of $\Gamma$.

  Let $\mathcal{E}_{x}$ be the collection of all the edges in $\bar{\Gamma}$ with labels in $\hx$. 
  \newcommand{\interior}[1]{%
  {\kern0pt#1}^{\mathrm{o}}%
  }
  Let $C$ be the complement in $\bar{\Gamma}$ of the interior of all the edges in $\mathcal{E}_{x}$. Let $C_1, \dots, C_{k}$ be all connected components of $C$. Observe that each $C_i$ has labels in $(x' \cup x''\cup y' \cup w)$ (without inverses).
  \begin{obs}
  \label{o: collapse}
 
  The fundamental group of $\bar{\Gamma}$ is in the normal closure of the fundamental groups of $C_1, \dots, C_{k}$.
  \end{obs}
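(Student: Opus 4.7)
The plan is to use a standard fact in the theory of $1$-dimensional CW complexes: if $X$ is a connected graph and $C_1,\dots,C_k$ are disjoint connected subgraphs, then collapsing each $C_i$ to a single vertex yields a quotient $X'$ with $\pi_1(X') \cong \pi_1(X)/\langle\langle \pi_1(C_1),\dots,\pi_1(C_k)\rangle\rangle$. A one-line justification is to choose a spanning tree of $X$ that extends a spanning tree of each $C_i$; then the free generators of $\pi_1(X)$ split into those lying inside some $C_i$ and those corresponding to edges outside the $C_i$'s, and the second group projects to a free basis of $\pi_1(X')$. Applied to $X=\bar{\Gamma}$ and $X' = \hat{\Gamma}:=\bar{\Gamma}/(C_1\sqcup\cdots\sqcup C_k)$, the observation reduces to showing that $\hat{\Gamma}$ is simply connected, i.e. a tree.

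To verify that $\hat{\Gamma}$ is a tree, suppose for contradiction that it admits a reduced cycle $\bar e_1\bar e_2\cdots\bar e_m$ with $m\geq 1$. Each $\bar e_i$ is a $\sim$-class of $\hat x$-edges and lifts to a single edge $e_i$ of $\bar{\Gamma}$ with a well-defined oriented image $\bar{\phi}(e_i)\subseteq T$. Choosing geodesic paths $\gamma_i$ inside the appropriate components $C_{j_i}$ connecting the end of $e_i$ to the start of $e_{i+1}$, we assemble a closed loop $q = e_1\gamma_1 e_2\gamma_2\cdots e_m\gamma_m$ in $\bar{\Gamma}$. Its image $\bar{\phi}(q)$ is a closed path in the tree $T$, and hence folds completely: there must be a step at which two consecutive edges of the path traverse the same oriented segment in opposite directions.

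The main step, and the principal obstacle, is to show that such a folding must always occur between two $\hat x$-edges $e_i$ and $e_{i+1}$ of $q$. Reductions lying entirely inside a single $\gamma_i$ are excluded by the geodesic choice of the $\gamma_i$. Reductions mixing an $\hat x$-edge with a non-$\hat x$-edge are ruled out by Corollary~\ref{cl: main Merzlyakov claim}: the image $\bar{\phi}(e_i) = \hat A_i$ lies in the interior of $A_i$, disjoint from the preimages of the branch set $Br$ under every relevant push, and hence cannot overlap the image of a non-$\hat x$-edge. Once the folding is located between two $\hat x$-edges $e_i,e_{i+1}$ whose oriented images in $T$ coincide, the (weak) small cancellation condition on the tuple $a$ forces their labels to agree, and then the definition of $\sim$ identifies them (possibly after inverting one), so that $\bar e_i = \bar e_{i+1}^{\pm 1}$ in $\hat{\Gamma}$, contradicting the reducedness of the cycle.
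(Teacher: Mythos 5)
Your opening reduction --- collapse each $C_i$ to a point, observe that $\pi_1(\bar\Gamma)/\langle\langle\pi_1(C_1),\dots,\pi_1(C_k)\rangle\rangle\cong\pi_1(\hat\Gamma)$ for the quotient graph $\hat\Gamma$, and aim to show $\hat\Gamma$ is a tree --- is the same as the paper's, which phrases it as: every $\hat x$-labeled edge of $\bar\Gamma$ is a bridge. From there your route diverges, and the divergence introduces a gap. You try to locate a \emph{consecutive} fold in the closed path $\bar\phi(q)$ and argue that such a fold must occur at the junction of two $\hat x$-edges. The step ``reductions lying entirely inside a single $\gamma_i$ are excluded by the geodesic choice'' is not justified: $\gamma_i$ being a combinatorial geodesic in the subgraph $C_{j_i}$ does not prevent $\bar\phi(\gamma_i)$ from backtracking in $T$, because $\bar\phi$ restricted to $C_{j_i}$ need not be a local embedding --- distinct edges of $C_{j_i}$ may well have overlapping or identical images in $T$. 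Indeed, the full cycle $\phi(\Gamma)$ is a non-constant closed path in a tree and must therefore turn around somewhere, and nothing prevents those turning points from lying in the image of some $C_{j_i}$. So the first fold of $\bar\phi(q)$ may sit wholly inside one $\gamma_i$, and the iterated folding may never produce the two-$\hat x$-edge cancellation your argument needs.

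The paper avoids tracking folds by replacing your fold-location step with a parity argument. Fix a single $\hat x$-edge $\bar e$; if it is not a bridge, pick a path $\bar\gamma$ joining its endpoints in $\bar\Gamma$ that misses the interior of $\bar e$. The loop $\bar e\cup\bar\gamma$ maps under $\bar\phi$ to a closed path in $T$, so every $T$-edge in its image is crossed an even number of times; since $\bar\phi(\bar e)$ is an embedded segment traversed exactly once by $\bar e$, each of its $T$-edges must be traversed by $\bar\phi(\bar\gamma)$ as well, so some edge $\bar e'$ of $\bar\gamma$ has $\bar\phi(\bar e')\cap\bar\phi(\bar e)$ non-degenerate. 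No ``consecutive'' cancellation is needed. From there Corollary \ref{cl: main Merzlyakov claim}, \emph{together with the construction of the subdivisions $B_{j,\ell},\hat B_{j,\ell}$ of the $B_j$} (which your write-up omits --- the corollary alone does not handle $\bar e'$ lying inside a $y$-labeled segment), forces $\bar e'=\bar e$, contradicting the choice of $\bar\gamma$. Swapping your fold-location step for this parity step would close the gap and the rest of your plan would go through.
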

  \begin{proof}
  This is equivalent to saying that $\bar{\Gamma}\setminus\interior{\bar e}$ is disconnected for any $\bar{e}\in\mathcal{E}_{x}$, where by $\interior{\bar e}$ we mean the interior of $\bar{e}$.
  Notice that the image of the quotient map $\pi:\Gamma\to\bar{\Gamma}$ is a closed surjective cycle in $\bar{\Gamma}$ and that, by Remark \ref{r: 4_oct_19}, the images of $\phi^e = \bar{\phi}^e\circ\pi$ and $ \phi=\bar{\phi} \circ \pi$ are in turn a cycle in the tree $T$.
  
  Suppose $\bar{e}$ does not disconnect $\bar{\Gamma}$.  Then there is some path $\bar{\gamma}$ in $\bar{\Gamma}$ such that $\bar{\gamma}$ starts and ends at the opposite ends of $\bar{e}$ but does not intersect $\bar{e}^\circ$.   Let $\hx_i$ be the label of $\bar{e}$ and suppose $e$ is the $k$-th edge of $\Gamma$, starting at $*_0$.  
  The fact that the image by $\bar{\phi}$ of $\bar{\Gamma}$ is a cycle in the tree $T$ implies that $\bar{\phi}(e) \subseteq \bar{\phi}(\bar{\gamma})$.  Hence there exists and edge $\bar{e}'$ of $\bar{\gamma}$ such that  $\bar{\phi}(\bar{e}) \cap \bar{\phi}(\bar{e}')$ is neither empty nor a single point. Suppose $e'$ has label $W_{r}$. Now $$\bar{\phi}(\bar e) = \push_{k_S}(Seg(W_k)) = \push_{k_S}(\hat{A}_i), \quad  \bar{\phi}(\bar{e}') = \push_{r_S}(Seg(W_r)).$$ Thus
  $$
  0 < |\push_{k_S}(\hat{A}_i) \cap \push_{r_S}(Seg(W_r))| < |\push_{k_S}(\hat{A}_i) \cap \push_{r_S}(Seg(Z_{r}))|.
  $$
  Hence the domain of $ p_{k_S}^*{}^{-1} \circ p_{r_S}^*$  is non-empty.
  By Corollary \ref{cl: main Merzlyakov claim} it follows that either $Z_{k_S} = Z_{r_S}^{\pm 1} \in x^{\pm 1}$ or $Z_{r_S}\in y^{\pm 1}$. By construction of the intervals $B_{j,\ell}, \hat{B}_{j,\ell}$ it follows that in all cases we have $W_{r} = W_k = \hat{x}_i$ or $W_r = W_k^{-1}=\hat{x_i}^{-1}$. 
  But this means that $\bar{e}$ and $\bar{e}'$ were the same segment of $\bar \Gamma$ to begin with, a contradiction with the construction of $\bar{\gamma}$.
  \end{proof}
  
  Since we have $a_i = a_i'' \hat{a}_i a_i'$, we can replace $\hat{x}_i$ in the words from $\alpha(v, \hat{x}_i)$ by $x_{i}''{}^{-1} x_i x_i'{}^{-1}$, for all $1\leq i\leq |x|$.  This yields  a new tuple of words,  denoted $\alpha(v, x)$, on variables $(v, x)$ (recall that $x_i', x_i''$ are part of $v$, for all $i$).
  Let  $\Theta(v, w)=1$ be the system of equations obtained by equating the labels of $C_1, \dots, C_{k}$ to $1$ (all these have labels in $v, w$).  It follows from the above observation that $\alpha(v, x)$ is a formal solution to $\psi$  relative to $\exists v \Theta(v,w)=1$ ($\psi$ is the positive $\forall\exists$-formula in the statement of Theorems \ref{l: small cancellation solutions} and \ref{l: small_cancellation_parameters}). This completes the proof of Theorem \ref{l: small cancellation solutions}  and of  Item (i) in Theorem \ref{l: small_cancellation_parameters}. 
  
  \medskip
  From now on we assume that $a$ is (non-weak) $N$-small cancellation over $c$. Next we find an integer $M$ depending only on the equation $S$ such that $|\Theta'| \leq M$ where $\Theta'$ is a system of equations equivalent to $\Theta$, and $|\Theta'|$ denotes the sum of the lengths of the words appearing in $\Theta'$. From this it follows that the set of diophantine conditions in the statement of Theorem \ref{l: small_cancellation_parameters} can be taken to be all systems of equations of synctactic length at most $M$.
  
  We let  $Seg_\Gamma(Z_r)$, $r=1,\dots, |S|$  be   paths in $\Gamma$ so that for each equality (with concatenation operation) $Seg(Z_r) =  Seg(W_{r_1}) \dots Seg(W_{r_1+r_2})$  ($1 \leq r_1 < r_2 \leq |S^*|$) we have an analogous equality $Seg_\Gamma(Z_r)= Seg_\Gamma(W_{r_1}) \dots Seg_\Gamma(W_{r_1+r_2})$.
  
  We say that $C_i$ is \emph{singular} if
  \begin{enumerate}
  \item \label{it:1 singular} it is adjacent to at least three  edges from $\mathcal{E}_x$, or
  \item \label{it:2 singular} it is adjacent to exactly one edge from $\mathcal{E}_x$, or
  \item \label{it:3 singular}it contains the initial or final vertex of  $\pi(Seg_\Gamma(Z_r))$ for some $1\leq r \leq |S|$.
  \end{enumerate}
  \begin{lemma}\label{l: bounded_num_singular_C_i}
  The number $s$ of singular components $C_i$ is  bounded in terms of $S$, i.e.\ there exists an integer $M$ depending only on $S$ such that $s\leq M$.
  
  Furthermore, each component $C_i$ (not necessarily singular) has length  bounded in terms of $|S|$, and if $C_i$ is not singular then it has length at most $2$.
  \end{lemma}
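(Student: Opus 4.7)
The plan is to control the three singularity types and the length bound separately, exploiting the combinatorial structure of $\bar{\Gamma}$ together with the small cancellation hypothesis. Type~3 components are handled immediately: each of the $|S|$ paths $\pi(Seg_\Gamma(Z_r))$ contributes only two endpoints, so at most $2|S|$ vertices of $\bar\Gamma$ are available to serve as type-3 markers, and hence there are at most $2|S|$ type-3 components.

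For types~1 and~2 I form the dual multigraph $H$ whose vertex set is $\{C_1,\dots,C_k\}$ and whose edge set is $\mathcal{E}_x$, with each $\hat x$-edge of $\bar\Gamma$ contributing an $H$-edge between the (possibly coinciding) pair of components at its two endpoints. Because $\bar\Gamma$ is the image of a connected cycle, $H$ is connected, so $|V(H)| \leq |E(H)| + 1$. Contracting every maximal $H$-path through non-singular vertices (degree $2$ in $\mathcal{E}_x$ and containing no boundary vertex) into a single edge produces a reduced multigraph $H^{\ast}$ whose vertices are exactly the singular components, with degrees inherited from $H$. Type~1 vertices have $H^{\ast}$-degree $\geq 3$ and type~2 vertices have degree $1$, while type~3 vertices have degree uniformly bounded in $|S|$ by the (bounded) number of $Seg_\Gamma$-endpoints they can contain. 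Combining the handshake identity $\sum \deg_{H^{\ast}}(v)=2|E(H^{\ast})|$ with $|V(H^{\ast})| \leq |E(H^{\ast})| + 1$ then bounds the total number of singular components in terms of $|S|$.

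For the length bound I would first argue that a non-singular $C_i$ can contain only \emph{middle} $y'$-edges, namely edges $y_{j,\ell}$ with $2 \leq \ell \leq r_j$ whose $\Gamma$-endpoints sit strictly between two $\hat x$-edges of a single $Seg_\Gamma(y_j)$. Any other non-$\hat x$-edge of $\bar\Gamma$ --- an $x'$-, $x''$-, $w$-, or outer $y'$-edge ($y_{j,1}$ or $y_{j,r_j+1}$) --- is adjacent in $\Gamma$ to a $Seg_\Gamma$-boundary vertex, and its presence in $C_i$ would force a type-3 singularity. Next, two middle $y'$-edges glue in $\bar\Gamma$ only when both pairs of their $\hat x$-endpoints lie in identical $\sim$-classes, so such a gluing produces a pair of parallel edges between two common vertices; three or more simultaneous parallel gluings would entail three distinct $\sim$-classes meeting $C_i$, contradicting the $\mathcal{E}_x$-degree bound $\leq 2$. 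Hence a non-singular $C_i$ has length at most $2$. A singular $C_i$ is then a union of at most $2|S|$ boundary-touching edges together with at most two parallel middle $y'$-edges per $\sim$-class incident to it, giving the claimed $|S|$-dependent length bound.

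The main obstacle is the step bounding the $H^{\ast}$-degree of a type-3 component uniformly in $|S|$: a naive estimate introduces a dependency on the largest $r_j$, which is solution-dependent. Resolving it requires invoking Corollary \ref{cl: main Merzlyakov claim} (small cancellation) to show that each boundary vertex $\pi(v_r)$ is incident to only boundedly many $\sim$-classes of $\hat x$-edges, uniformly in the solution --- this is precisely what converts the otherwise solution-dependent count into the final $|S|$-dependent bound.
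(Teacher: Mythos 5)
Your approach is genuinely different from the paper's, and while the broad strategy is sound, it carries unnecessary baggage and some real gaps that the paper's simpler argument avoids.

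\paragraph{On the count of singular components.} The paper's proof does not count singular components combinatorially at all: it observes that $\bar\phi(C_i)$ is a subset of the tree $Hull$, that every singular $C_i$ has a vertex mapping under $\bar\phi$ into the finite set $Br$ of branch/marked points (this set is bounded by Remark \ref{r: boundedly many branch points}), and that each point of $Br$ has at most $|S|$ preimages because the cycle decomposes into $|S|$ geodesic arcs each of which hits a given vertex at most once. No small cancellation is used. Your dual-graph construction actually delivers the bound already at the very first step: $|E(H)| = |\mathcal{E}_x| \leq |S|$ because $\Gamma$ has at most $|S|$ edges labelled in $\hat x^{\pm 1}$, and since $H$ is connected, $|V(H)| = k \leq |E(H)| + 1 \leq |S|+1$. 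This bounds the \emph{total} number of components, hence also the singular ones, and makes the entire contraction to $H^*$, the handshake argument, and the bound on the $H^*$-degree of type-3 vertices unnecessary. The further suggestion that Corollary \ref{cl: main Merzlyakov claim} is needed to control type-3 degrees is a red herring: the lemma is purely combinatorial and does not invoke small cancellation at any point.

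\paragraph{On the length bound.} Here your argument and the paper's diverge more substantively. The paper shows each vertex of $C_i$ maps via $\bar\phi$ into $Br$ (either a boundary marker $v_r$ or a degree-$\geq 3$ point of $Hull$), so $|C_i| \leq |Br|$, which is bounded by $S$; and for a non-singular $C_i$, since the only available vertices are the two $\hat x$-endpoints, $|C_i| \leq 2$. Your route --- classifying which edge labels can appear in a non-singular $C_i$ (only ``middle'' $y'$-edges) and arguing about gluings of parallel $\sim$-classes --- is plausible in spirit but has gaps as written. It is not immediate that an $x'$-, $x''$-, $w$-, or outer $y'$-edge in $C_i$ forces a type-3 singularity: an $x'_{\ell}$-edge is adjacent to a $\Seg_\Gamma$-boundary vertex in $\Gamma$, but the quotient $\pi$ may move that vertex into a different component than $C_i$, and the paper's argument sidesteps this entirely by working with degrees in $Hull$. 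Similarly, the claim that ``three or more simultaneous parallel gluings would entail three distinct $\sim$-classes meeting $C_i$, contradicting the $\mathcal{E}_x$-degree bound $\leq 2$'' needs to be made precise: the degree bound comes from non-singularity, which has already been invoked, and you would have to rule out gluings of three $y'$-edges of the same $\sim$-class as well, which requires further argument about where the $\hat x$-endpoints land. The tree-theoretic argument in the paper is shorter and more robust for this part.
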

  \begin{proof}
  For all $i$, $\bar{\phi}(C_i)$ is a closed path in $Im(\bar{\phi}) = \bar{\phi}(\bar{\Gamma})$, which is the convex hull $Hull$ of the vertices $v_0=*, v_1=S_1(c,a,b)*, \dots, v_{|S|-1} = S_{|S|-1}(c,a,b)*, v_{|S|}=S(c,a,b)*=v_0$.
  
  Now if $C_i$ satisfies condition \ref{it:1 singular} or condition \ref{it:2 singular} above, then $\bar{\phi}(C_i)$ contains a vertex of degree at least $3$ in $Hull$, respectively (here we mean degree in the convex hull --- the degree in $T$ may be larger). On the other hand, if $C_i$ satisfies condition \ref{it:3 singular}, then $\bar{\phi}(C_i)$ contains one of the vertices $v_i$. Thus in all three cases $\bar{\phi}(C_i)$ contains a vertex from $Br$. By Remark \ref{r: boundedly many branch points}, $|Br|$ is bounded in terms of $S$.
  The number of times that $\bar{\phi}^{\mc{CS}}(\bar{\Gamma})$ traverses the same vertex is  bounded by $|S|$, and so the number of singular components $C_i$ is at most $|S||Br|$.%

  We now prove the second statement of the lemma. By construction of the cycles $C_i$, each vertex $v$ of $C_i$ is either  the endpoint or initial point of $\pi(Seg_\Gamma(Z_r))$  for some $1\leq r \leq |S|$, or it is an endpoint or the initial point of an edge from $\mc{E}_x$, in which case $v$ has degree at least $3$ and so does $\bar{\phi}(v)$ (because all vertices in $\Gamma$ are either endpoints or initial points of an edge with label in $\hat{x}^{\pm 1}$ or they are the endpoint/initial-point of a segment  $Seg(Z_k)$ for some $k$). Hence in both cases $\bar{\phi}(v)$ is a vertex of $Br$. Hence the length of $C_i$ is at most $|Br|$ and so it is bounded in terms of $S$ due to Remark \ref{r: boundedly many branch points}. If $C_i$ is  not singular then, by definition and the previous observation, all the vertices  of $C_i$ are necessarily endpoints/initial-points) of some vertex from $\mc{E}_x$, which implies that $C_i$ has at most two vertices. 
  \end{proof}

  Let $C_{i_1}, \dots, C_{i_\ell}$ be all non-singular components among $C_1, \dots, C_{k}$. By Lemma \ref{l: bounded_num_singular_C_i}, $C_{i_j}$ consists of either one or two edges and so the corresponding equation $label(C_{i_j})=1$ of $\Theta(v,w)=1$ has the form $X=1$ or $X = Y$ for some variables $X,Y \in v^{\pm 1} \cup w^{\pm 1}$. Thus one can replace each occurrence of $X$ in $\Theta$ by $1$ or by $Y$ and delete the equation $X=1$ or $X = Y$, respectively. After making this replacement for each $C_{i_j}$ we obtain a new system of equations $\Theta'(v,w)= 1$ equivalent to $\Theta(v,w)$, with a uniformly bounded (in terms of $S$) number of equations due to Lemma \ref{l: bounded_num_singular_C_i} ---there are as many equations as singular components $C_i$. Moreover, each such equation has length $|C_i|$, which is bounded in terms of $S$ by Lemma \ref{l: bounded_num_singular_C_i}. Hence $|\Theta'|$ is uniformly bounded in terms of $S$, as needed.
  
  \medskip
  
  Only Condition (ii) in the statement of Theorem \ref{l: small_cancellation_parameters} remains to be proved. For this, it suffices to show that each word from $\Theta(v, w)$ evaluates to $1$ on the tuple $d =_{def} (c,a',a'',b',\hat{a})$. 
  Recall that $S_{[k]}^*= W_1 \dots W_k^{\tau}$ where $\tau = 0$ if $W_k$ is positive and $\tau=1$ if $W_k$ is negative.

  \begin{lemma}
  \label{r: trivial} Suppose the tuple $a$ satisfies the (non-weak) small cancellation property with respect to $c$. Let $e_k, e_{r}$ be two distinct edges of $\Gamma$ such that $e_{k}\sim e_{r}$. Then $\ww_{[k]}{}^{-1}(d)\ww_{[r]}(d)=1$.
  \end{lemma}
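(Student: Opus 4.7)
The strategy is to show that the element $g := \ww_{[k]}^{-1}(d)\,\ww_{[r]}(d) \in G$ fixes point-wise the sub-segment $\hat{A}_i \subseteq A_i = [*, a_i *]$ for the index $i$ identified below, and then to invoke the non-weak small cancellation condition (the last clause of Definition~\ref{d: small cancellation section3}, item~(\ref{SCC})) to conclude $g = 1$; this step is licit because $|\hat{A}_i| > L/N_0 \geq L/N = (1/N)\min_{j} tl(a_j)$, where $L$ is as in Lemma~\ref{l: main_Merzlyakov_lemma}.

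First, the hypothesis $e_k \sim e_r$ gives $\phi(e_k) = \phi(e_r)$ as oriented segments of $T$. Corollary~\ref{cl: main Merzlyakov claim}(a) then forces $W_k = W_r = \hat{x}_i^{\pm 1}$ for some common $i$, whence $Seg(W_k) = Seg(W_r) = \hat{A}_i$ and the equality $S_{[k_S]}(c,a,b)\cdot\hat{A}_i = S_{[r_S]}(c,a,b)\cdot\hat{A}_i$ of oriented sub-segments. In particular $S_{[r_S]}(c,a,b)^{-1}\cdot S_{[k_S]}(c,a,b)$ stabilises $\hat{A}_i$ with its orientation and so fixes it point-wise.

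Next I would decompose
\[
 \ww_{[k]}(d) \;=\; S_{[k_S]}(c,a,b)\cdot \mu_k(d), \qquad \ww_{[r]}(d) \;=\; S_{[r_S]}(c,a,b)\cdot \mu_r(d),
\]
where $\mu_k(d)$ (respectively $\mu_r(d)$) denotes the evaluation on $d$ of the portion of the $\ww$-sub-word corresponding to $Z_{k_S}$ (respectively $Z_{r_S}$) lying to the left of $W_k$ (respectively of $W_r$). Applying the same pushes-comparison argument as in Corollary~\ref{cl: main Merzlyakov claim}, this time to the internal sub-structure of those sub-words---and using that the conjugators $s_\ell$ appearing in the identity $\hat{b}_{j,\ell} = \hat{a}_{i_\ell}^{s_\ell}$ have been absorbed into the adjacent $y_{j,\ell}$-variables, together with the ``furthermore'' clause of Corollary~\ref{cl: main Merzlyakov claim}(a) (available precisely because $a$ is non-weak small cancellation, which upgrades ``acts as the identity on $\hat{A}_i$'' to ``equals the identity element'')---yields simultaneously $S_{[k_S]}(c,a,b) = S_{[r_S]}(c,a,b)$ and $\mu_k(d) = \mu_r(d)$. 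Combining the two gives $\ww_{[k]}(d) = \ww_{[r]}(d)$, i.e.\ $g = 1$, as required.

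The principal obstacle is the equality $\mu_k(d) = \mu_r(d)$ in the case where $W_k$ sits inside a $y$-sub-word $\alpha_j(v,\hat{x})$, rather than inside an $x$-sub-word $x_i''\hat{x}_i x_i'$ (where the position of the $\hat{x}_i$-factor is forced to be the middle one and $\mu = a_i''$ is immediate). In the $y$-case, the position of the $\hat{x}_i$-factor inside $\alpha_j$ is governed by the unique push-decomposition of $B_j$ into the pieces $\hat{B}_{j,\ell}$, and one must verify that non-weak small cancellation makes this decomposition sufficiently rigid that distinct occurrences of $\hat{x}_i$ in $\alpha_j$ cannot lead to distinct $\mu$-values under the identification $\phi(e_k) = \phi(e_r)$. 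Once this rigidity is in place, the small-cancellation condition itself does the rest.
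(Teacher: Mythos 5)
Your strategy and the paper's share the same skeleton: use $\phi(e_k)=\phi(e_r)$ and Corollary~\ref{cl: main Merzlyakov claim} to constrain the two edges, decompose $\ww_{[k]}(d)=S_{[k_S]}(c,a,b)\cdot\mu_k(d)$, and close the argument with the (non-weak) small cancellation condition. The first part of your argument---that $W_k=W_r$ are the same $\hat{x}$-letter, that $S_{[k_S]}(c,a,b)\,Seg(W_k)^e=S_{[r_S]}(c,a,b)\,Seg(W_r)^e$ as oriented segments of $T$, and hence that the ratio of these two $S$-elements fixes the relevant segment point-wise---is exactly what the paper does.

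The genuine gap is in the second half. The paper's proof rests on the explicit identity
$\ww_{[k]}(d)=S_{[k_S]}(c,a,b)\cdot a_i'$ (resp.\ $S_{[k_S]}(c,a,b)\cdot a_i''^{-1}$ in the negative case), in which the second factor is a \emph{fixed} element of $G$ determined solely by the label $W_k$ and its sign, independent of the position $k$. This is what reduces the lemma to the single conclusion $S_{[k_S]}(c,a,b)=S_{[r_S]}(c,a,b)$, obtained from the non-weak small cancellation hypothesis. You instead introduce an undetermined factor $\mu_k(d)$ and then assert that a ``pushes-comparison argument'' yields $\mu_k(d)=\mu_r(d)$ simultaneously with $S_{[k_S]}(c,a,b)=S_{[r_S]}(c,a,b)$. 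No such argument is actually given. You correctly identify the delicate case---when $W_k$ lies inside a $y$-expansion $\alpha_j$, where $\mu_k$ is a genuine word in the $b'$- and $\hat{a}$-tuples---and correctly note that the conjugators $s_\ell$ have been absorbed into the $y_{j,\ell}$-letters precisely so that this factor is well-controlled. But you stop at ``one must verify that non-weak small cancellation makes this decomposition sufficiently rigid,'' which is the content one is asked to prove, not a lemma one may quote. The missing piece is the computation (using the telescoping built into the choice of $b_j'$, and the identities $a_i=a_i''\hat{a}_i a_i'$, $\alpha_j(d)=b_j$) that exhibits $\mu_k(d)$ as a fixed element of $G$ depending only on the label $W_k$ and its sign; once that is in hand the lemma is immediate from $S_{[k_S]}(c,a,b)=S_{[r_S]}(c,a,b)$, and the ``rigidity'' you invoke plays no further role.
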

  \begin{proof}
  
  Under the assumptions of the lemma  and due to the definition of $\phi$ (equation \eqref{e: 4_oct_19_3}) we have \begin{equation}\label{e: 4_oct_19_2}\phi(e_k) = S_{[k_S]}(c,b,a) Seg(W_k)^e =  S_{[r_S]}(c,b,a) Seg(W_r)^e = \phi(e_{r})\end{equation} as non-oriented segments of $T$. Let $x_i \in x$ be such that
  $Seg(W_k)^e =   \hat{A}_{i}^e.$ Then also $Seg(W_r)^e = \hat{A}_i^e$, and so from \eqref{e: 4_oct_19_2} we obtain that $S_{[k_S]}^{-1}S_{[r_S]}(c, a,b)$ fixes $\hat{A}_i^e$, and in particular it fixes  $\hat{A}_i$. The (non-weak) small cancellation condition now implies that  such element is the identity since $|\hat{A}_i|\geq \frac{L}{N}$.  Moreover, $\phi(e_k)$ and $\phi(e_r)$ have the same orientation, as argued previously. This implies that $Z_{k_S}$ and $Z_{r_S}$ are both positive, or they are both negative. In turn, the same statement holds for the variables $W_{k}$ and $W_{r}$. In the first case we have $S_{[k]}^{*}(d) = S_{[k_S]}(c,b,a) a_i' = S_{[r_S]}(c,b,a) a_i' = S_{[r]}^*(d)$, and in the second case $S_{[k]}^{*}(d) = S_{[k_S]}(c,b,a) a_i''{-1} = S_{[r_S]}(c,b,a) a_i''{}^{-1} = S_{[r]}^*(d)$, as required.
  \end{proof}
  
  Let $w$ be the label of some loop $C_{i}$ in $\bar{\Gamma}$. Let $d_0, \dots, d_{r-1}$ be all edges in $\bar{\Gamma}$ with label in $x^{\pm 1}$ and with an endpoint in $C_i$.     Then there are indices
  $1\leq j_0< j_1 < \dots < j_{2r-1} \leq |S^*|$ such that $e_{j_{2t}} \sim e_{j_{2t+1}}$ and $\phi(e_{j_{2t}}) = \phi(e_{j_{2t+1}})= d_t$ for all $t= 0, \dots,r-1$. Moreover, for each $t$ either $W_{j_{2t}}$ is positive and  $W_{j_{2t+1}}$ is negative or, vice-versa, $W_{j_{2t}}$ is negative and  $W_{j_{2t+1}}$ is positive.  Notice that $w$ is precisely $S^*$ after removing subwords of the form $(W_{j_i +\epsilon_i} \dots W_{j_{i+1} + \epsilon_i -1})$ from $S^*$, for some $\epsilon_i\in \{0,1\}$  (see Figure \ref{dibumaig2}). More precisely: 
  
  \begin{align}
  w=(W_{0} \dots W_{j_0 + \epsilon_0 -1}) &(W_{j_1 + \epsilon_0}\dots W_{j_2 + \epsilon_2 -1}) \dots \label{e: 13_oct_19}\\ &(W_{j_{2r-3} + \epsilon_{2r-4}} \dots W_{j_{2r-2} + \epsilon_{2r-2} -1}) (W_{j_{2r-1} + \epsilon_{2r-2}} \dots W_{|S^*|}).\nonumber
  \end{align}
  
  Each word $(W_{j_i +\epsilon_i} \dots W_{j_{i+1}+\epsilon_i-1})$   ($0\leq i\leq 2r-2$) is equal to $S_{[j_{i} +\epsilon_i]}^{*}{}^{-1}S_{[j_{i+1} +\epsilon_i +1]}^*$ if  $W_{j_{i}}$  is positive and $\epsilon_i = 0$ (see Figure \ref{dibumaig2}),  or if $W_{j_i}$ is negative and $\epsilon_i = 1$; or it is equal to  $W_{j_{i}}^{-1}S_{[j_{i}]}^{*}{}^{-1}S_{[j_{i+1}]}^* W_{j_{i+1}}$ if  $W_{i_j}$ is negative and $\epsilon_i=0$, or if $W_{i_j}$ is positive and $\epsilon_i=1$. By the previous Lemma \ref{r: trivial} and because $W_{j_{i}}(d) = W_{j_{i+1}}(d)$, each such factor evaluates on $d$ to the trivial element. 
  We conclude from \eqref{e: 13_oct_19} that  $1=S^{*} = w(d)$. This completes the proof of Theorem \ref{l: small_cancellation_parameters}.
  
  \begin{remark}\label{r: more than one eq}
   Suppose that $\phi(z) \equiv \forall x \exists y \Sigma$, where $\Sigma$ has more than one equation, namely, $\Sigma\equiv \bigwedge\limits_{i=1, \dots,s}\sigma_i=1$,  where $\sigma_i(x,y,z)=1$ is a single equation. Consider $\phi'(z)\equiv \forall x \exists y \Sigma'$, where $\Sigma'$ is the equation  $\prod\limits_{i=1, \dots, s} \sigma_i=1$. Let $N$ be provided by Theorem \ref{l: small_cancellation_parameters} for the formula $\phi'$. If $G$ acts on a tree and $\sigma_i(a,b,c)=1$, $i=1, \dots, s$, where $a\in G^{|x|}$ is $N$-small cancellation over $c$, then  clearly, $\Sigma'(a,b,c)=1$ in $G$. The formal solutions (relative to a set of diophantine conditions) obtained from running the procedure for the equation $\Sigma'=1$ using the tuple $(a,b,c)$ as above are, by construction, also formal solutions for the formula $\phi$ (relative to a set of diophantine conditions).
  \end{remark}

  \begin{figure}[ht]
  \begin{center}
  \begin{tikzpicture}[scale=1]
  	\tikzset{near start abs/.style={xshift=1cm}}
  	
  	\draw[thick] plot [smooth cycle] coordinates {(-1,2) (1,2) (2,0) (1.5,-1.5) (-1.5,-1.5) (-2,0)};
  	\filldraw (-1,2) circle (1pt) node[align=left, below, xshift=-0.4cm, yshift=0.3cm] {$e_{j_3}$} node[color=blue, align=right, above, xshift=-0.1cm, yshift=0.8] {$A_4$};
  	\filldraw (1,2) circle (1pt) node[align=right, below, xshift=0.5cm] {$e_{j_2}$} node[color=blue, align=right, above, xshift=-0.1cm, yshift=0.7] {$A_3$};
  	\filldraw (2,0) circle (1pt) node[align=left, above, xshift=0.3cm] {$e_{j_1}$} node[color=blue, align=right, above, xshift=0.2cm, yshift=0.5cm] {$A_2$};
  	\filldraw (1.5,-1.5) circle (1pt) node[align=left, below, xshift=-0.2cm] {$e_{j_0}$} node[color=blue, align=right, below, xshift=-0.75cm, yshift=-0.2cm] {$A_1$};
  	\filldraw (-1.5,-1.5) circle (1pt) node[align=left, below, xshift=0.3cm] {$e_{j_5}$} node[color=blue, align=right, below, xshift=-0.3cm, yshift=-0.7] {$A_6$};
  	\filldraw (-2,-0) circle (1pt) node[align=left, above, xshift=-0.1cm] {$e_{j_4}$} node[color=blue, align=right, below, xshift=-0.5cm, yshift=-0.5] {$A_5$};

  	\filldraw (-1.4,1.5) circle (1pt);
  	\filldraw (1.4,1.5) circle (1pt);
  	\filldraw (1.85,0.55) circle (1pt);
  	\filldraw (0.9,-1.62) circle (1pt);
  	\filldraw (-0.9,-1.62) circle (1pt);
  	\filldraw (-1.85,0.55) circle (1pt);
  	
  	\filldraw (-0,-1.65) circle (1pt) node[align=left, below] {$*$};
  	
  	\draw [thick, middlearrow = {<}]  plot [smooth, tension=1] coordinates { (-1,2) (0,1.5) (1,2)  };
  	\draw [thick,  middlearrow = {<}]  plot [smooth, tension=1] coordinates { (-1.4,1.5) (0,1)  (1.4,1.5)  };
  	\draw [thick, middlearrow = {<}]  plot [smooth, tension=1] coordinates { (2,0) (1.3,-0.6) (1.5,-1.5) };
  	\draw [thick, middlearrow = {<}]  plot [smooth, tension=1] coordinates { (1.85,0.55) (0.8,-0.2) (0.9,-1.62)};
  	\draw [thick, middlearrow = {>}]  plot [smooth, tension=1] coordinates { (-2,0) (-1.3,-0.6)  (-1.5,-1.5)};
  	\draw [thick, middlearrow = {>}]  plot [smooth, tension=1] coordinates { (-1.85,0.55) (-0.8,-0.2) (-0.9,-1.62)  };
  	\node at (3,2) {$\Gamma$};
  	
  \end{tikzpicture}
  \end{center}
  \caption{ A particular case setting of $\phi^{-1}(D)$ for $D$ a cycle of $\bar{\Gamma}$.  By definition, all edges have counterclockwise orientation. We have depicted with bands how different edges of $\Gamma$ are identified in $\bar{\Gamma}$. In this particular case, $\phi^{-1}(D)$ consists in the union of paths $[*, A_1], [A_2, A_3], [A_4,A_5], [A_6, *]$. If $W_{j_0}$ is positive, then the path from $A_1$ to $A_2$ has label $S_{[j_0]}^{-1} S_{[j_1]}$, otherwise it has label $W_{j_0}^{-1}S_{[j_0]}^{-1} S_{[j_1]} W_{j_1}$. Likewise, if $W_{j_2}$ is positive, then the path from $A_3$ to $A_4$ has label $W_{j_2}^{-1}S_{[j_2]}^{-1} S_{[j_3]}W_{j_3}$, and otherwise it has label $S_{[j_2]}^{-1} S_{[j_3]}$.  We stress that in general there can be more edges identifies in $\Gamma$ which have not been depicted, and which have nothing to do with $\phi^{-1}(D)$ projecting onto a cylcle.} \label{dibumaig2}
  \end{figure}
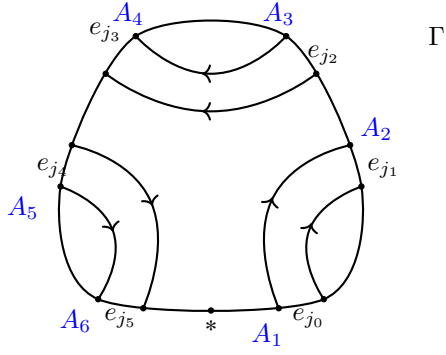
  
    \subsubsection{Proof of Lemma \ref{l: main_Merzlyakov_lemma}}

      The method used to construct the intervals $\hl_{i}$ follows a greedy sieve procedure. Roughly speaking, we consider all possible $\ttraj \in \Traj(x_i)$ one by one and every time we witness a failure of the conditions in the lemma for the interval at a particular stage, we replace it with a proper sub-interval whose length is larger than a fixed positive bound. A pigeon-hole principle argument, together with the weakly small cancellation property allows one to derive a uniform bound (i.e.\ independent from the particular solution $(c,a,b)$) on the number of times this reduction can take place. It follows that if the given $N$ is large enough, then the process eventually stabilizes on the non-degenerate interval $\hl_{i}$ as desired.
      
      Recall that $T$ is the $G$-tree and $*\in T$; the system of equations $\Sigma$ consists of a unique equation $S(w,x,y)=1$, where
      $S(w,x,y)=Z_{1}\cdots Z_{|S|}$, for all $1\leq r \leq |S|$,  $Z_{r}\in x^{\pm 1}\cup y^{\pm 1}\cup w^{\pm 1}$; 	$S_{[r]}=Z_{1}Z_{2}\cdots Z_{r}$ ($S_{0}=1$); and $v_{r}=S_{r}(c,a,b)\cdot*$, $r=1,\dots, |S|$. Furthermore $Hull(v_0, \dots, v_{|S|}) = \Hull(\mc{V})$ is the convex hull in $T$ of the vertices $v_i$, $i=0, \dots, |S|$, and $\bad$ is the collection of all points in $v_i$ and all branching points of $\mathcal Hull\subset T$, i.e. points of valency strictly more than 2 in $Hull$ (but note that such point could have higher valency as a vertex of $T$). Given $\ttraj\in\Traj(x_i)$, by  $A_{i}^{Br}(\ttraj)$ we denote the collection of open subintervals of $A_{i}$ disjoint from the pre-image of $\bad$ by $\Push_{\ttraj}$ and either contained or disjoint from the domain of $\Push_{\ttraj}$. Recall that $L$  is the minimum length of the intervals $\{[*, a_i*]\}$.
      
      \newcommand{\R}[0]{\mathbb{R}}

      \medskip
      
      We now enumerate $\Traj(x_i)$ as $\ttraj_{1},\ttraj_{2},\dots\ttraj_{n},\dots$ in such a way that for any two indices $\ell$ and $\ell'$ with $\ell' < \ell$, it is not the case that $\ttraj_{\ell}$ is a prefix of $\ttraj_{\ell'}$ (i.e.\ the first $|\ttraj_\ell|$ components of $\ttraj_{\ell'}$ are not equal to the  tuple $\ttraj_\ell$). Fix $x_i \in x^{\pm 1}$. We will prove that there exists an integer $N>0$ and a sequence of segments $(\tau_\ell)$ in $A_i$ such the following holds for all $\ell \geq 0$:
      
      \begin{enumerate}
      \item $\tau_{\ell+1}\subseteq \tau_\ell$,
      \item $|\tau_\ell|> L/N$,
      \item  for all $1\leq \ell'\leq \ell$ either $\tau_\ell$ is disjoint or contained in $dom(\Push_{\ttraj_{\ell'}}^*)$, and in the latter case $\tau_{\ell}\in A_{i}^{Br}(\ttraj_{\ell'})$.
      \end{enumerate}
      The proof of Lemma \ref{l: main_Merzlyakov_lemma} follows from the existence of such sequence: it suffices to take as $\hat{A}_i$ the largest segment contained in $\bigcap_{\ell} \tau_\ell$ and Item 2 assures that the segment is non-degenerate.

      We define the sequence $\tau_\ell$  inductively on $\ell$, starting with $\tau_{0}=A_{i}$. Assume that some intervals $\tau_0, \dots, \tau_\ell$ satisfying the above properties have been found, then we let
      \begin{enumerate}
      \item[(*)] $\tau_{\ell+1}=\tau_{\ell}$, if  $\tau_{\ell}\in A_i^{Br}(\ttraj_{\ell+1})$,
      \item[(**)]  otherwise $\tau_{\ell+1}$ is  the largest interval $\tau'$ such that $\tau' \subseteq \tau_\ell$ and $\tau' \in A_i^{Br}(\ttraj_{\ell+1})$.
      \end{enumerate}
      Observe that by definition $\tau_{\ell+1}\subseteq \tau_\ell$ and for each $\ell$ we have that $\tau_\ell$ is either contained or disjoint from $dom(\Push_{\ttraj_{\ell}})$. Hence the sequence $\tau_\ell$ satisfies Condition 3 above, because for each $1\leq \ell' \leq \ell$ we have $\tau_\ell\subseteq \tau_{\ell'}$, so $\tau_\ell$ is either contained or disjoint from $dom(\Push_{\ttraj_{\ell'}})$. Hence the proof of the lemma will be complete once we see that $\tau_\ell \geq L/N$ for all $\ell\geq 1$.
      
      We start with the following
      
      \begin{remark}\label{r: remark_merzlyakov_lemma}
      Suppose $\tau_{\ell+1}$ has been obtained from $\tau_\ell$ by applying operation (**). Then $|\tau_{\ell+1}|\geq \frac{|\tau_\ell|}{|Br|+1}$.
      \end{remark}
      This remark is true because when applying (**), $\tau_{\ell}$ is subdvided into at most $|Br|+1$ parts and then $\tau_{\ell+1}$ is set to be the largest of these parts.
      
      Let $\kappa_{\ell}$ be the number of times operation (**) has been applied while constructing $\tau_0, \dots, \tau_\ell$.
      We are going to prove that there exists some constant $K$ depending only on $S$ such that $\kappa_\ell \leq K$ for all $\ell \geq 0$. Suppose towards contradiction that such a constant $K$ does not exist.
      
      Assume  (**) is applied on step $\ell$. Then there exists a vertex $u_\ell\in Br$ such that one of the endpoints of $\tau_{\ell+1}$ is $\Push_{\ttraj_{\ell}}^{-1}(u_\ell)$. Since $|Br|$ is uniformly bounded in terms of $S$ and we are assuming that (**) is applied arbitrarily many times, there exists a vertex $u_*\in Br$ such that $u_\ell = u_*$  for arbitrarily many $\ell$'s. Furthermore, we can assume that there exists an edge from $Hull$ that is adjacent to $u_*$ such that, either for arbitrarily many $\ell$'s $\tau_{\ell+1}$ starts with $\Push_{\ttraj_{\ell}}^{-1}(e_*)$, or for arbitrarily many $\ell$'s $\tau_{\ell+1}$ ends with $\Push_{\ttraj_{\ell}}^{-1}(e_*)$.  
      
      Let $\ell_1$ and $\ell_2$ be the two smallest indices for which operation $(**)$ is applied on steps $\ell_1$ and $\ell_2$ and such that $\tau_{\ell_{i+1}}$ starts (or ends) with $P_{\ttraj_{\ell_{i+1}}}^{-1}(e_*)$, for both $i=1,2$. We have that $\kappa_{\ell_2}$ (the number of times (**) has been applied up to when step $\ell_2$ has been reached) is uniformly bounded in terms of $S$, say by an integer $M$.

      Taking $N > |Br|^{M}$ and using Remark \ref{r: remark_merzlyakov_lemma} repeatedly we obtain
      \begin{equation}\label{e: merzlyakov_lemma_final}
      |\tau_{\ell_2+1}|\geq \frac{|\tau_0|}{(|Br|+1)^{\kappa_{\ell_{2}}}}  \geq \frac{|\tau_0|}{(|Br|+1)^{M}}\geq \frac{L}{N}.
      \end{equation}
      Since $\tau_{\ell_2+1} \subseteq dom(\Push_{\ttraj_{\ell_1}}^*) \cap dom(\Push_{\ttraj_{\ell_2}}^*)$ we obtain (see Figure \ref{f: dib_6_sept_19})
      $$
      |S_{[\ttraj_{\ell_1}]} \cdot \tau_{\ell_2+1}\cap S_{[\ttraj_{\ell_2}]}\cdot \tau_{\ell_2+1}| = |P_{\ttraj_{\ell_1}}(\tau_{\ell_2 +1}) \cap P_{\ttraj_{\ell_2}}(\tau_{\ell_2+1})| \geq |\tau_{\ell_2+1}| \geq \frac{L}{N}.
      $$
      \begin{center}
      
        \begin{figure}[ht]
  \begin{center}
  \begin{tikzpicture}[scale=1]
  \filldraw (0,0) circle (2pt) node[below] {$*$};
   \filldraw (10,0) circle (2pt) node[below] {$a_i*$};
    \filldraw (1,0) circle (2pt);
     \filldraw (9,0) circle (2pt);
      \filldraw (1,3) circle (2pt) node[below] {$u_*$};
       \filldraw (9,3) circle (2pt);
       \filldraw[color=red] (4,0) circle (1pt);
        \filldraw[color=red] (9,0) circle (1pt);
        \filldraw[color=red] (1,3) circle (1pt);
        \filldraw[color=red] (6,3) circle (1pt);
    \draw[thick] (0,0) -- (1,0) -- (9,0) -- (10,0);
    \draw[thick] (-2,3) -- (12, 3);
    \draw[thick] (2,4) -- (1,3) -- (-1, 4);
    \draw[thick] (8,4) -- (9,3) -- (11, 4);
    \draw[thick, color=blue] (1,0) -- (9,0);
    \draw[thick, color=blue] (1,3) -- (9,3);
    \draw [color=red,thick,decorate,decoration={brace,amplitude=7pt},yshift=-1.5pt] (6,3)--(-2,3)  node [midway,yshift=-0.6cm] {$\Push_{\ttraj_{\ell_2}}(\tau_{\ell_1 +1})$};
\node[color=blue] at (3,-0.3) {$\tau_{\ell_1 +1}$};
\node[color=blue] at (7.3,2.7) {$\Push_{\ttraj_{\ell_1}}(\tau_{\ell_1 +1})$};
\draw [color=red,thick,decorate,decoration={brace,amplitude=7pt},yshift=1.5pt] (4,0)--(9,0)  node [midway,yshift=0.6cm] {$\tau_{\ell_2 +1}$};
\draw [color=red,thick,decorate,decoration={brace,amplitude=7pt},yshift=1.5pt] (1,3)--(6,3)  node [midway,yshift=0.6cm] {$\Push_{\ttraj_{\ell_2}}(\tau_{\ell_2 +1})$};
\draw[->, color=blue] (5,0.7) -- (5,2.2);
\draw[->, color=red] (4.75,0.7) -- (3.2,2.2);
\node[color=red] at (3.8, 1.2) {$\Push_{\ttraj_{\ell_2}}$};

\node[color=blue] at (5.5, 1.15) {$\Push_{\ttraj_{\ell_1}}$};

  \end{tikzpicture}
  \end{center}
  \caption{Hypothetical depiction of the image of $\tau_{\ell_2 
      1}$  by the map $\Push_{\ttraj_{\ell_2}}^*$, and of $\tau_{\ell_2 
      1}$  by  $\Push_{\ttraj_{\ell_1}}^*$. Using the small cancellation property we see that this scenario cannot hold and that one must have $\Push_{\ttraj_{\ell_1}}^*(\tau_{\ell_2+1})= \Push_{\ttraj_{\ell_2}}^*(\tau_{\ell_2+1})$.}
          \label{f: dib_6_sept_19}
  \end{figure}
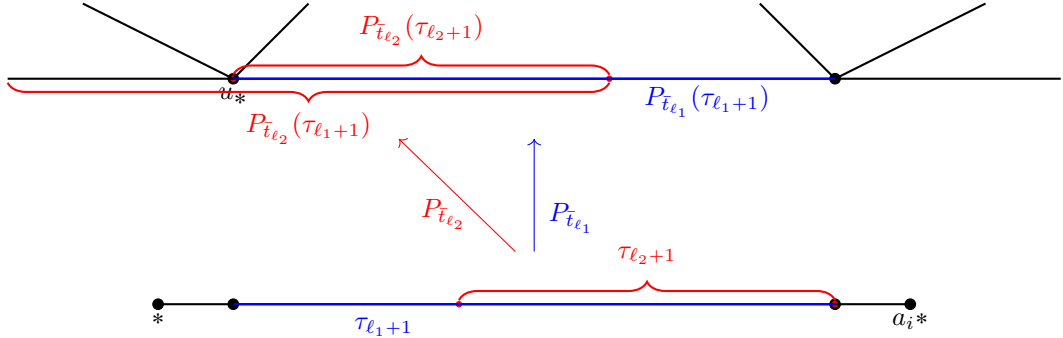
  \end{center}
      This implies that $|A_i \cap (S_{[\ttraj_{\ell_1}]}^{-1}S_{[\ttraj_{\ell_2}]} \cdot A_i)|\geq |\tau_{\ell_2+1} \cap (S_{[\ttraj_{\ell_1}]}^{-1}S_{[\ttraj_{\ell_2}]} \cdot \tau_{\ell_2+1})| \geq  L/N$, and so by the weak small cancellation property we have that  $S_{[\ttraj_{\ell_1}]}^{-1}S_{[\ttraj_{\ell_2}]}$ acts as the identity on $A_i \cap S_{[\ttraj_{\ell_1}]}^{-1}S_{[\ttraj_{\ell_2}]}\cdot A_i$. Since  $S_{[\ttraj_{\ell_2}]} = S_{[\ttraj_{\ell_1}]}S_{[\ttraj_{\ell_1}]}^{-1}S_{[\ttraj_{\ell_2}]}$, the image of $\tau_{\ell_2}$ by $S_{[\ttraj_{\ell_2}]}$ coincides with the image of $\tau_{\ell_2}$ by $S_{\ttraj_{\ell_1}}$. This means that $\Push_{\ttraj_{\ell_2}}(\tau_{\ell_2+1}) = \Push_{\ttraj_{\ell_1}}(\tau_{\ell_2+1})$, and so $\Push_{\ttraj_{\ell_1}}(\tau_{\ell_2+1})$  does not contain any vertex from $\bad$, because if it did then when constructing $\tau_{\ell_1+1}$ the segment $\tau_{\ell_1}$ would be subdivided along a point inside $\tau_{\ell_2+1}$, and then we would not have $\Push_{\ttraj_{\ell_1}}(\tau_{\ell_2+1})=\Push_{\ttraj_{\ell_2}}(\tau_{\ell_2+1})$.  
      This means that  transformation (*) is applied when constructing $\tau_{\ell_2+1}$ during step $\ell_2$, contradicting our assumptions. Thus the sequence $(\kappa_\ell\mid \ell)$ had to be bounded in terms of $S$ to begin with, say by an integer $M'$. Now, similarly as in \eqref{e: merzlyakov_lemma_final} and taking  $N > |Br|^{M'}$ we obtain  $$|\tau_\ell| \geq \frac{|\tau_0|}{(|Br|+1)^{\kappa_\ell}}\geq \frac{|\tau_0|}{(|Br|+1)^{M'}} \geq \frac{L}{N}$$ for all $\ell\geq 0$. This completes the proof of Lemma \ref{l: main_Merzlyakov_lemma}.
      
      The following corollary of Theorem \ref{l: small_cancellation_parameters} allows one to deal with arbitrary quantifier complexity.
      \begin{cor}
      \label{c: iterated formal solutions}
      Given a positive formula
      \begin{align*}
      \phi\equiv \forall x_{1}\exists y_{1}\cdots\forall x_{r}\exists y_{r}\Sigma(w,x_{1},y_{1},x_{2},y_{2}\cdots x_{r},y_{r})=1,
      \end{align*}
            where $w$, $x_{j}$ and $y_{j}$ are finite tuples of variables, there is some $N>0$ and a finite collection $\mathcal{D}$ of diophantine conditions on free variables $w$ with the following properties:
      \enum{(i)}{
      \item For each $\Delta\in \mathcal{D}$ there exists a formal solution $\alpha_{\Delta}$ to $\psi(w)$ relative to $\Delta$.
      \item \label{item witness_2} For any action of a group $G$ on a tree $T$ and tuples $c\in G^{|w|}$, $a_{j}\in G^{m_{j}}$ and $b_{j}\in G^{m_{j}}$ such that
      \enum{(a)}{
      \item $a_{j}$ is $N$-small cancellation over $(c,a_{i},b_{i})_{i<j}$ for any $1\leq j\leq r$ and
      
      \item $\Sigma(c,a_{1},b_{1},a_{2},b_{2}\cdots a_{r},b_{r})=1$,
      }
      there is a diophantine condition $\exists v\,\Theta(w,v)=1$ in $\mathcal{D}$ and $d\subset G$ such that $\Theta(c,d)=1$ holds in $G$ .
      }
      \end{cor}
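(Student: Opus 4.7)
The plan is to proceed by induction on $r$, the number of alternations of quantifiers. The base case $r=1$ is exactly the statement of Theorem \ref{l: small_cancellation_parameters}. For the inductive step, write $\phi \equiv \forall x_{1}\exists y_{1}\,\phi'(w,x_{1},y_{1})$, where
\[
\phi'(w,x_{1},y_{1})\equiv \forall x_{2}\exists y_{2}\cdots\forall x_{r}\exists y_{r}\,\Sigma(w,x_{1},y_{1},x_{2},y_{2},\ldots,x_{r},y_{r})=1
\]
has $r-1$ alternations and free variables $(w,x_{1},y_{1})$. Apply the inductive hypothesis to $\phi'$ to obtain a threshold $N'$ and a finite family $\mathcal{D}'$ of diophantine conditions $\Delta'(w,x_{1},y_{1})\equiv \exists v_{\Delta'}\,\Theta_{\Delta'}(w,x_{1},y_{1},v_{\Delta'})=1$, each equipped with a formal solution $\alpha_{\Delta'}$ of $\phi'$ relative to $\Delta'$, and such that whenever $(c,a_{1},b_{1},\ldots,a_{r},b_{r})$ satisfy $\Sigma=1$ and $a_{j}$ is $N'$-small cancellation over $(c,a_{1},b_{1},(a_{i},b_{i})_{2\le i<j})$ for each $j\ge 2$, some $\Delta'\in\mathcal{D}'$ is witnessed by $(c,a_{1},b_{1})$.

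Next, for each $\Delta'\in\mathcal{D}'$ separately, apply Theorem \ref{l: small_cancellation_parameters} to the $\forall\exists$ positive formula
\[
\psi_{\Delta'}(w)\equiv \forall x_{1}\,\exists (y_{1},v_{\Delta'})\,\Theta_{\Delta'}(w,x_{1},y_{1},v_{\Delta'})=1
\]
(using Remark \ref{r: more than one eq} to treat systems as single equations). This yields a threshold $N_{\Delta'}$, a finite family $\mathcal{D}_{\Delta'}$ of diophantine conditions on $w$, and, for each $\Delta\in\mathcal{D}_{\Delta'}$, a formal solution $\beta_{\Delta,\Delta'}$ of $\psi_{\Delta'}$ relative to $\Delta$. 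Set $N:=\max\{N',\,\max_{\Delta'}N_{\Delta'}\}$ and $\mathcal{D}:=\bigcup_{\Delta'\in\mathcal{D}'}\mathcal{D}_{\Delta'}$.

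To verify (i), for each $\Delta\in\mathcal{D}_{\Delta'}$ concatenate $\beta_{\Delta,\Delta'}$, which expresses $(y_{1},v_{\Delta'})$ as words in $x_{1}$ and the auxiliary variables of $\Delta$, with $\alpha_{\Delta'}$, which expresses $(y_{2},\ldots,y_{r})$ as words in $(w,x_{1},y_{1},v_{\Delta'},x_{2},y_{2},\ldots)$; this composition is a formal solution to $\phi$ relative to $\Delta$. To verify (ii), given a tuple $(c,a_{1},b_{1},\ldots,a_{r},b_{r})$ satisfying the hypotheses with threshold $N$, the inductive hypothesis produces some $\Delta'\in\mathcal{D}'$ and an auxiliary tuple $d_{1}$ with $\Theta_{\Delta'}(c,a_{1},b_{1},d_{1})=1$; then the pair $(c,a_{1},(b_{1},d_{1}))$ satisfies the hypotheses of Theorem \ref{l: small_cancellation_parameters} applied to $\psi_{\Delta'}$ (because $a_{1}$ is $N$-small cancellation over $c$ and $N\ge N_{\Delta'}$), so some $\Delta\in\mathcal{D}_{\Delta'}\subseteq \mathcal{D}$ is witnessed by $c$.

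The main bookkeeping hurdle is ensuring that the cascade of small-cancellation conditions needed at the different induction levels is compatible: the hypothesis at the outer level asks $a_{1}$ to be small cancellation over $c$ alone, while the inner levels require $a_{j}$ ($j\ge 2$) to be small cancellation over the whole prefix $(c,a_{1},b_{1},(a_{i},b_{i})_{i<j})$. Since the formulation of the corollary already builds this nesting into its hypotheses and since the single threshold $N$ dominates all the thresholds produced along the way, this matches exactly what the induction needs, and no further adjustment is required.
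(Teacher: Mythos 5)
Your argument is correct and the result is the same, but you peel the quantifier blocks in the opposite order from the paper. The paper first applies Theorem~\ref{l: small_cancellation_parameters} to the \emph{innermost} block $\forall x_{r}\exists y_{r}\,\Sigma(w',x_{r},y_{r})=1$ with $w'=(w,x_{1},y_{1},\dots,x_{r-1},y_{r-1})$ as the parameter tuple, obtaining diophantine conditions $\Theta_{\lambda}(w',u)$ and formal solutions $\alpha_{\lambda}$; it then forms $\phi_{\lambda}(w)\equiv\forall x_{1}\exists y_{1}\cdots\forall x_{r-1}\exists(y_{r-1},u)\,\Theta_{\lambda}(w',u)=1$, of alternation depth $r-1$, and invokes the induction hypothesis on $\phi_{\lambda}$. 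You instead peel the \emph{outermost} block: you apply the induction hypothesis to the genuine subformula $\phi'(w,x_{1},y_{1})$ of depth $r-1$, obtaining conditions $\Theta_{\Delta'}(w,x_{1},y_{1},v_{\Delta'})$, and then apply Theorem~\ref{l: small_cancellation_parameters} to the re-packaged $\forall\exists$-formula $\psi_{\Delta'}(w)\equiv\forall x_{1}\exists(y_{1},v_{\Delta'})\,\Theta_{\Delta'}=1$. The two are exact mirror images: in the paper the base-case theorem handles level $j=r$ of the nested small-cancellation hypotheses and the recursion handles levels $j<r$; in yours the theorem handles $j=1$ and the recursion handles $j\geq 2$. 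In each version one call is to a subformula and one is to a formula rebuilt from a diophantine condition, the thresholds combine by a finite maximum, and the formal solutions compose by the same substitution of $(y_{1},v_{\Delta'})$ into $\alpha_{\Delta'}$ (respectively $(y_{r-1},u)$ into $\alpha_{\lambda}$) followed by normal-closure bookkeeping in the appropriate free product. Neither direction is more general or shorter; the choice is purely one of which end of the quantifier prefix the recursion anchors on.
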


      \begin{proof}
      We prove that if a tuple with the properties above exists then the given formula admits a formal solution by induction on $r$. We apply Theorem
      \ref{l: small_cancellation_parameters}  to the formula $\psi(w)\cong\forall x\exists y\Sigma(w',x,y)=1$ where we use the partition of variables $w'=(w,x_{1},y_{1},x_{2}\cdots y_{r-1})$, $x=x_{r}$ and $y=y_{r}$. The case $r=1$ is precisely Theorem \ref{l: small_cancellation_parameters}.
      
      Assume $r\geq 1$. Let $N_{0}$,  $\mathcal{D}=\{\exists u\,\Theta_{\lambda}(w',u)=1\}_{\lambda\in\Lambda}$, and $\{\alpha_\lambda\mid \lambda \in \Lambda\} $  be the positive integer, the diophantine conditions, and the formal solutions provided by  Theorem
      \ref{l: small_cancellation_parameters}.
      
      For each $\lambda\in\Lambda$ we can consider the formula:
      \begin{align*}
      \phi_{\lambda}(w)\equiv\forall x_{1}\exists y_{1}\forall 	x_{2}\cdots\exists y_{r-1}\forall x_{r-1}\exists y_{r-1}\exists u\,\,\Theta_{\lambda}( w', u)=1,
      \end{align*}
      where, as agreed above, $w'=(w,x_{1},y_{1},x_{2}\cdots y_{r-1})$.
      
      By induction hypothesis we can assume that the statement of the theorem holds for each $\phi_\lambda$, that is there is some $N_{\lambda}>0$, a set $\mathcal D_\lambda=\{ \exists v \ \Theta_{\lambda,i}(w,v) \}_{i\in I_\lambda}$ and formal solutions $\alpha_{\lambda,i}^{ (r-1)}$ such that if $G$ acts on a simplicial tree and some tuples $c',a_{1}',b_{1}',a_{2}',b_{2}',\dots b_{r-1}',d'$ in $G$ satisfy
      \begin{align*}
      \Theta_{\lambda}(c',a_{1}',b_{1}',a_{2}', \cdots a_{r-1}',b_{r-1}', d')&=1
      \end{align*}
      and $a_{j}'$ is $N$-small cancellation over $\{c',a_{l}',b_{l}'\}_{l<j}$ for any $1\leq j\leq r-1$, then $\Theta_{\lambda,i}(c',d'')=1$ holds in $G$ for some $d''\in G^{|u|}$.
      
      Take $N=\max\{N_{0},N_{\lambda}\}_{\lambda\in\Lambda}$ and let $\mathcal D$ be the set of diophantine conditions $$\{\exists v \  \Theta_{\lambda,i}(w,v)=1\}_{\lambda\in \Lambda, i \in I_\lambda}.$$
       First note that $\alpha_{\lambda,i}=(\alpha_{\lambda,i}^{(r-1)}, \alpha_\lambda)$  is a formal solution for $\phi$ relative to the diophantine condition $\Theta_{\lambda,i}$. Indeed, since $\alpha_\lambda$ is a formal solution of $\phi$ relative to the diophantine condition $\exists u \Theta_\lambda(w',u)$, by definition we have that $\Sigma(w,x_1,\dots, y_{r-1},x_r, \alpha_\lambda)$ is in the normal closure of $\Theta_\lambda(w',u)$. In turn, since $\alpha_{\lambda, i}^{(r-1)}$ is a formal solution for $\phi_\lambda$ relative to $\Theta_{\lambda,i}(w,v)$, it follows that $\Theta_{\lambda}(w, x_1, \alpha_{\lambda,i}^{(1)}, \dots, x_{r-1}, \alpha_{\lambda,i}^{(r-1)},u)$  belongs to the normal closure of $\Theta_{\lambda,i}(w,v)$ and so $\Sigma(w,x^1,\alpha_{\lambda,i}^{(1)}, \dots, \alpha_{\lambda, i}^{(r-1)}, x_r, \alpha_\lambda)$ also belongs to the normal closure of $\Theta_{\lambda,i}(w,v)$. This proves Item (i) in the statement of the corollary.
      
      Let us now check the second item of the statement of the corollary. Suppose that there is a group $G$ acting on a tree $T$ and tuples of elements $c,a_{1},b_{1},\dots a_{r},b_{r}$ in $G$ as in the assumption of the statement.
      
      Since $a_{r}$ is $N$-small cancellation over $\{c,a_{j},b_{j}\}_{j<r}$, it follows from Theorem \ref{l: small_cancellation_parameters} that $a_r$ and the tuples $\{c,a_{j},b_{j}\}_{j<r}$ satisfy at least one of the diophantine conditions $\Theta_\lambda$ for some $\lambda\in\Lambda$, so
      \begin{align*}
      \Theta_{\lambda}(c,a_{1},b_{1},a_{2}\dots, a_{r-1},b_{r-1}, d)=1
      \end{align*}
      for some $d\in G^{|u|}$. From the induction hypothesis described above, since $a_{l}$ is $N$-small cancellation over $c,a_{j},b_{j}$ for $j<l<r$  and
      $\phi_{\lambda}$ admits a formal solution relative to $\Theta_{\lambda,i}$, we have that there exists $e \in G^{|v|}$ such that $\Theta_{\lambda,i}(c,e)=1$. This proves the statement.
      \end{proof}
      
\section{Uniform weak small cancellation from weak stable elements}\label{s: section 4}

In the previous section, we determine a sufficient condition for a group acting on a tree to have trivial positive $\forall \exists$-theory. This condition, a priory, requires checking infinitely many conditions - the existence of weakly small cancellation $m$-tuples, for all $m\in \mathbb N$.  The goal of this section is to describe a uniform way to obtain $m$-tuples of (weak) small cancellation elements for all $m \in \mathbb N$ from just one (weak) stable hyperbolic element $h$ and an element $g$ which does not belong to the stabiliser of the axis of $h$, see Corollary \ref{cor:uniformWSC}. Therefore, if a group acts on a tree irreducibly, it suffices to prove the existence of a hyperbolic stable element in order to conclude that its positive theory is trivial.
  
  \medskip
  
  We start with some definitions and remarks. Throughout this section $G$ denotes a group acting on a real tree  $T$. Recall that, for $g\in G$, we denote by $E(g)$ the set of elements in $G$ that fix set-wise the set $A(g)$, where $A(g)$ is either the axis of $g$ if $g$ is hyperbolic, or the set of all elements pointwise fixed by $g$. The following definition was introduced already in the preliminary Section \ref{s: preliminaries}, and it is reproduced here for convenience.

  \begin{definition}\label{defn:weakly stable}
  	Let $h$ be a hyperbolic element of $G$ (with respect to the action of $G$ on $T$). Then $h$ is called \emph{weakly} $\lambda$-\emph{stable}  if for all $g\in G$ the inequality $|A(h)\cap gA(h)|> \lambda tl(h)$ implies that $g\in E(h)$. 
  	
  	We say that the hyperbolic element $h$ is $\lambda$-\emph{stable} if it is weakly $\lambda$-stable and $K(h)=\{1\}$.
  	
  	If $\lambda=1$ then we drop the prefix $\lambda$ in the above definitions.

  \end{definition}
  
  \begin{remark}\label{r: stability and powers}
  	The following observations will be used later on:
  	\item If $\lambda < \mu$ then any $\lambda$-stable element is $\mu$-stable.
  	
  	\item  For any $n>0$, if a hyperbolic element $h$ is (weakly) $\lambda$-stable then $h^{n}$ is (weakly) $\frac{\lambda}{n}$-stable.
  	
  	\item If $G$ acts on a real tree $T$ with trivial edge stabilizers, then every hyperbolic element of $G$ is
  	stable.
  \end{remark}

  \begin{definition}\label{d: acylindrical_pair}
  	Let $G$ be a group acting on a real tree. We will call a pair of elements $g,h$ \emph{acylindrical} if
  	\begin{itemize}
  		\item $\subg{g,h}$ acts irreducibly on $T$,
  		\item $|A(g)\cap A(h)|\le \frac{1}{2}\max\{tl(g),tl(h)\}$ and
  		\item if  $g$  is hyperbolic then $g$ is  weakly $\frac{1}{3}$-stable, and similarly for $h$.
  	\end{itemize}
  \end{definition}
  
  We also recall the notion of small cancellation:

  \begin{definition}
  	\label{d: small cancellation}
  	Fix an action of a group $G$ on a real tree $T$. Consider a tuple of elements $a=(a_{1},\dots, a_{m})\in G^{m}$.
  	Given $N>0$, we say that $a$ is \emph{weakly $N$-small cancellation} (in $T$) if the following holds for some base point $*\in VT$ 
  	
  	\enum{(a)}{
  		\item \label{SCA_prelim} $\dis{a_{i}}>N$ for all $i$,
  		\item \label{SCB_prelim} $\dis{a_{j}}\leq\frac{N+1}{N}\min_i tl(a_{i})$ for all $i, j$,  and
  		\item \label{SCC_prelim}
  		For all $g\in G,\epsilon\in\{1,-1\}$ and $1\leq i,j\leq m$  the condition $|[*,a_{i}*]\cap g[*,a_{j}*]^{\epsilon}]|\geq\frac{1}{N}\min_i tl(a_{i})$ cannot hold unless $i=j$, $\epsilon =1$ in which case $g$ acts like the identity on the collection of points of $[*,a_{i}*]$ that it sends to $[*,a_{i}]$, that is $g$ acts as the identity on the the intersection $[*,a_{i}*]\cap g[*,a_{i}*]$.
  	}
  	Given $c_{1},c_{2},\dots, c_{k}\in G$, we say that a tuple $a$ is weakly $N$-small cancellation \emph{over $c_{1},c_{2},\dots, c_{k}$} if $*$ can be chosen in such a way that $\min_{i}\dis{a_{i}}>N \cdot \dis{c_{j}}$, for all $1\leq j\leq k$. We say that $(a_{1},a_{2},\dots a_{m})$ is \emph{$N$-small cancellation} (over $c$)
  	if all the above applies and we additionally require that $g$ can only be equal to the identity in the last alternative of Item (\ref{SCC_prelim}).

  \end{definition}
  
  \newcommand{\N}[0]{\mathbb{N}}

  \begin{remark}\label{r: alb_remark_3}
  	The following are immediate consequences of Definition \ref{d: small cancellation}. Below $a_1, \dots, a_n$ is an $N$-small cancellation tuple.
  	\begin{enumerate}
  		\item $a_i$ is hyperbolic for all $i$.
  		\item If $a_{1},a_{2},\dots a_{k}$ is $N$-small cancellation over $c$ then so is $a_{1},a_{2},\dots a_{i}^{-1},a_{i+1},\dots a_{k}$.
  		\item $d(*,A(a_i)) \le \frac{L}{2N}$, where $L=min_i tl(a_i)$. Indeed, $d(*,a_i*) = 2d(*, A(a_i)) + tl(a_i)$. From this and Item \ref{SCB} we have that
  		$
  		d(*,A(a_i)) \leq  \frac{N+1}{2N}L - \frac{tl(a_i)}{2}
  		$
  		from where the inequality follows.
  		\item For all $i,j$ and all $g\notin \langle a_j\rangle$ we have $|A(a_i) \cap gA(a_j)| \leq \frac{L}{N}$.
  		
  	\end{enumerate}
  \end{remark}

  \begin{lemma}
  	\label{l: acylindrical pairs} Let $G$ act on a tree $T$ and take two elements $g,h\in G$ such that $\subg{g,h}$ acts irreducibly on $T$. Suppose that either $g$ is elliptic or is the $m$-th power of a weakly $\lambda$-stable element, where $m\geq 10\max\{\lambda,1\}$. Make the same assumption on $h$. Then the pair $\{g,h\}$ is acylindrical.
  \end{lemma}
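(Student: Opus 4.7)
The plan is to verify the three defining conditions of acylindricity from Definition \ref{d: acylindrical_pair}. The first (irreducibility of $\langle g, h\rangle$ on $T$) is given by hypothesis. For the third condition (weak $\tfrac{1}{3}$-stability of whichever of $g, h$ is hyperbolic), if $g$ is hyperbolic then by hypothesis $g = g_0^{m}$ with $g_0$ weakly $\lambda$-stable and $m \geq 10\max\{\lambda, 1\}$; by Remark \ref{r: stability and powers}, $g$ is weakly $\tfrac{\lambda}{m}$-stable, and $\tfrac{\lambda}{m} \leq \tfrac{1}{10} < \tfrac{1}{3}$. The same argument applies to $h$. The remaining, substantial task is the bound $|A(g) \cap A(h)| \leq \tfrac{1}{2}\max\{tl(g), tl(h)\}$, which I will establish by contradiction.

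Assume $L := |A(g) \cap A(h)| > \tfrac{1}{2}\max\{tl(g), tl(h)\}$. In every case below the contradiction will be: both $g$ and $h$ preserve a common line in $T$ setwise, making the $\langle g, h\rangle$-action reducible (via an invariant axis). I split into three cases based on the type of $g$ and $h$.

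First, if $g$ and $h$ are both elliptic, $L > 0$ implies that $A(g) \cap A(h)$ is a non-degenerate subtree of common fixed points of $g$ and $h$; this yields a global fixed vertex for $\langle g, h\rangle$, contradicting irreducibility. Second, if exactly one element---say $g$---is elliptic and $h = h_0^{m_2}$ is hyperbolic, then
\begin{align*}
L > \tfrac{1}{2}tl(h) = \tfrac{m_2}{2} tl(h_0) \geq 5\max\{\lambda, 1\}\, tl(h_0) \geq \lambda\, tl(h_0),
\end{align*}
so $g$ fixes a subsegment of $A(h_0) = A(h)$ of length exceeding $\lambda \cdot tl(h_0)$. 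By Lemma \ref{l: forms of stability}, weak $\lambda$-stability of $h_0$ implies property $FS(\lambda)$, forcing $g$ to fix all of $A(h_0)$. Then both $g$ and $h$ preserve $A(h)$ setwise, contradicting irreducibility.

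Finally, suppose both $g = g_0^{m_1}$ and $h = h_0^{m_2}$ are hyperbolic, with $g_0, h_0$ weakly $\lambda$-stable and $m_1, m_2 \geq 10\max\{\lambda, 1\}$. Without loss of generality $tl(h_0) \geq tl(g_0)$. Observing that $5 \max\{\lambda, 1\} \geq \lambda + 2$ for all $\lambda > 0$ (case-split on whether $\lambda \geq 1$), we obtain
\begin{align*}
L > \tfrac{1}{2}tl(h) = \tfrac{m_2}{2} tl(h_0) \geq (\lambda + 2)\, tl(h_0).
\end{align*}
By Lemma \ref{l: forms of stability}, weak $\lambda$-stability of $h_0$ implies property $AI(\lambda + 2)$; applied to the hyperbolic element $g_0$ (which satisfies $tl(g_0) \leq tl(h_0)$ and $|A(g_0) \cap A(h_0)| = L > (\lambda + 2) tl(h_0)$), this yields $g_0 \in E(h_0)$. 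Hence $g = g_0^{m_1}$ preserves $A(h_0) = A(h)$ setwise and, together with $h$, makes $A(h)$ an invariant line for $\langle g, h\rangle$, contradicting irreducibility. The only subtle point of the proof is choosing the numerical bounds so that the factor $10\max\{\lambda, 1\}$ in the hypothesis provides exactly the slack needed to invoke both $FS(\lambda)$ in the elliptic-hyperbolic case and $AI(\lambda + 2)$ in the hyperbolic-hyperbolic case.
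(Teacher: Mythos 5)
Your proof is correct and follows the paper's argument closely: the same reduction of item (iii) via Remark \ref{r: stability and powers}, the same case split (both elliptic; one elliptic and one hyperbolic; both hyperbolic), and the same contradiction from $|A(g)\cap A(h)|>\tfrac12\max\{tl(g),tl(h)\}$ via exhibiting a $\langle g,h\rangle$-invariant line or fixed point. The only mild variation is that in the cases involving a hyperbolic element you package the stability hypothesis through $FS(\lambda)$ and $AI(\lambda+2)$ of Lemma \ref{l: forms of stability} applied to the roots $g_0,h_0$ (with the WLOG $tl(g_0)\leq tl(h_0)$), whereas the paper applies the weak $\tfrac{1}{10}$-stability of the powers $g,h$ directly together with the small displacement of $g_0$ on $A(g)\cap A(h)$ (with the WLOG $tl(h)\leq tl(g)$); both are correct implementations of the same idea.
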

  \begin{proof}
  	If $g$ is hyperbolic, then it is  weakly $\frac{1}{10}$-stable by assumption and by Remark \ref{r: stability and powers}. In particular, it is weakly $\frac{1}{3}$-stable again by this remark. The same is true for $h$.  Therefore Items 1 and 3 of Definition \ref{d: acylindrical_pair} hold. It remains to see that $|A(g)\cap A(h)|\le\frac{1}{2}\max\{tl(g),tl(h)\}$. 
  	
  	First of all, notice that if both $g$ and $h$ are  elliptic, then $A(g)\cap A(h)=\emptyset$, otherwise the action of $\langle g, h\rangle$ on $T$ would have a fixed point, contradicting its irreducibility. Now assume $g$ is hyperbolic and $h$ is elliptic.  Then by hypothesis $g= g_0^{m}$ for some weakly $\lambda$-stable hyperbolic element $g_0$. If we had $|A(g)\cap A(h)| > \frac{1}{2}\max\{tl(g),tl(h)\}$, then it would follow that $|A(g_0) \cap A(h)| = |A(g)\cap A(h)| > \frac{1}{2}\max\{tl(g),tl(h)\} = tl(g)/2 = m tl(g_0)/2 \geq \lambda tl(g_0)/2$, which implies $h \in E(g_0) = E(g)$; indeed, since $h$ is elliptic, we have that $h[A(g)\cap A(h)]=A(g) \cap A(h)$ and so $[A(g) \cap A(h)] \subset A(g) \cap h A(g)$, implying that $h\in E(g)$. This contradicts the fact that  $\langle g, h\rangle$ acts irreducibly on $T$. 
  	
  	The only case left is that in which both $g$ and $h$ are hyperbolic. Without loss of generality, $tl(h)\leq tl(g)$.
  	Then the $m$-th root $g_0$ of $g$ is a hyperbolic element with translation length at most $\frac{1}{10}tl(g)$. Suppose towards contradiction that $|A(g) \cap A(h) | > \frac{1}{2}\max\{tl(g), tl(h)\} = tl(g)/2$. Then
  	\begin{equation}\label{e: 7_jan}
  		|\left[A(g) \cap A(h)\right] \cap g_0\left[A(g) \cap A(h)\right]|\geq |A(g) \cap A(h)| - \frac{tl(g)}{10} \geq \frac{1}{10}\max\{tl(g),tl(h)\}.
  	\end{equation}
  	Since both $g$ and $h$ are weakly $\frac{1}{10}$-stable (as shown at the beginning of the proof), it follows from \eqref{e: 7_jan} that $g_0$ preserves both $A(g)$ and $A(h)$, a contradiction with the action of $\subg{g,h}$ being irreducible.
  \end{proof}
  
  \begin{lemma}
  	\label{l: multiple conjugation} Let $G$ be a group acting on a real tree $T$ and let $g,h\in G$ be an acylindrical pair. Consider the words $v(x,y)=x^{y^{x}}$ and $w^{cn}(x,y)=y^{v(x,y)}x$ (where `cn' stands for `conjugation').
  	Then the element $w^{cn}(g,h)$ is hyperbolic with $tl(w^{cn}(g,h))\geq\max\{tl(g),tl(h)\}$.  Moreover, $g \notin E(w^{cn}(g,h))$ and $A(g)$ and $A(w^{cn}(g,h))$ intersect coherently in case $g$ is hyperbolic.
  \end{lemma}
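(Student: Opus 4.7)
The strategy is to analyse the axis of $w^{cn}(g,h) = h^v\cdot g$ by showing that $A(h^v)$ and $A(g)$ are disjoint, and then to apply Lemma \ref{l: chiswell2} to the product. Write $k = h^g$, so that $v = g^k = k^{-1}gk$, giving $A(v) = k^{-1}A(g)$ and $A(h^v) = v^{-1}A(h)$.

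First we locate the auxiliary axis $A(v)$. Irreducibility of $\subg{g,h}$ forces $h\notin E(g)$, and a direct computation with conjugates gives $k = h^g\notin E(g)$ as well. When $g$ is hyperbolic, weak $\tfrac{1}{3}$-stability of $g$ (from Definition \ref{d: acylindrical_pair}) then yields
\[
|A(g)\cap A(v)| \;=\; |A(g)\cap k^{-1}A(g)| \;\leq\; \tfrac{1}{3}\,tl(g).
\]
Using $A(k) = g^{-1}A(h)$ and $g$-invariance of $A(g)$ one also obtains $|A(g)\cap A(k)| = |A(g)\cap A(h)| \leq \tfrac{1}{2}\max\{tl(g),tl(h)\}$. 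Combining these bounds with the explicit translation of $k$ along $A(k)$, and invoking Remarks \ref{r: alb_remark_1} and \ref{r: alb_remark_11}, forces $A(v)$ to sit on the opposite side of $A(k)$ from $A(g)$; in particular the bridge from $A(g)$ to $A(v)$ traverses a nontrivial subsegment of $A(k)$.

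Next, we show that $A(h^v)$ and $A(g)$ are disjoint. Weak $\tfrac{1}{3}$-stability of $h$ together with $g\notin E(h)$ give $|A(h)\cap A(k)| = |A(h)\cap g^{-1}A(h)|\leq \tfrac{1}{3}\,tl(h)$, so $A(h)$ lies essentially on the same side of $A(k)$ as $A(g)$, opposite to $A(v)$. The translation $v^{-1}$ along $A(v)$ therefore carries $A(h)$ across $A(v)$; tracking the resulting bridges via Remark \ref{r: alb_remark_2} and Remark \ref{r: alb_remark_11} yields $A(h^v)\cap A(g)=\emptyset$, with the bridge from $A(g)$ to $A(h^v)$ passing through $A(v)$.

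With disjointness in hand, Lemma \ref{l: chiswell2} applied to $h^v$ and $g$ shows that $w^{cn}(g,h)$ is hyperbolic with
\[
tl(w^{cn}(g,h)) \;=\; tl(h)+tl(g)+2\,d(A(h^v),A(g)) \;\geq\; \max\{tl(g),tl(h)\},
\]
and $A(w^{cn}(g,h))\cap A(g)$ is a segment of length $tl(g)$ oriented along $g$'s translation direction, which is the coherence claim when $g$ is hyperbolic. Finally $g\notin E(w^{cn}(g,h))$: in the hyperbolic case the intersection is a finite segment while $g$ translates $A(g)$ by $tl(g)>0$, so $g$ cannot preserve it setwise; in the elliptic case the explicit picture of Figure \ref{f: hyp_elliptic} shows that $A(w^{cn}(g,h))$ enters and exits $Fix(g)$ and so cannot be preserved by $g$. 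The main obstacle is the disjointness step, especially when one of $g,h$ is elliptic: the bridge-based argument must then be supplemented by reasoning about the fixed-point subtree, and it is here that both the acylindrical gap $\tfrac{1}{2}\max\{tl(g),tl(h)\}$ and the weak $\tfrac{1}{3}$-stability are used in combination rather than either alone.
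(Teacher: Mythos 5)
Your overall structure mirrors the paper's: write $c=v(g,h)=g^{h^g}$, prove $A(h^c)\cap A(g)=\emptyset$ by pushing axes around via the stability and acylindricity bounds together with Remarks~\ref{r: alb_remark_1} and \ref{r: alb_remark_11}, then feed the disjointness into Lemma~\ref{l: chiswell2} to read off hyperbolicity, the translation-length lower bound, and coherence with $A(g)$. So far this is the same route as the paper.

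There are, however, two genuine gaps. First, the disjointness argument is only actually carried out when both $g$ and $h$ are hyperbolic; the cases where one of them is elliptic (and, within those, whether $A(g)\cap A(h)$ is degenerate or not) are acknowledged as requiring "supplementation by reasoning about the fixed-point subtree" but are not supplied. The paper's proof spends most of its effort on precisely this case analysis, and the bridge-chasing does change qualitatively (e.g., which element plays the role of $k$ in Remark~\ref{r: alb_remark_11}, and whether one uses alternative (1) or (2) of that remark), so this is not a routine modification.

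Second, and more seriously, the argument that $g\notin E(w^{cn}(g,h))$ when $g$ is \emph{elliptic} is incorrect. You write that Figure~\ref{f: hyp_elliptic} "shows $A(w^{cn}(g,h))$ enters and exits $Fix(g)$," but that figure depicts a different configuration, and in fact the opposite happens here: since $A(h^c)\cap A(g)=\emptyset$, Lemma~\ref{l: chiswell2} gives $A(w^{cn}(g,h))\cap A(g)=[p,gp]$, and for elliptic $g$ with $p\in A(g)=Fix(g)$ one has $gp=p$, so $A(w^{cn}(g,h))$ meets $Fix(g)$ in a \emph{single point}. There is thus no "entering and exiting"; instead $g$ fixes a point on the line $A(w^{cn}(g,h))$, and the real danger is that $g$ preserves this line by acting as a flip about $p$, i.e. $g\in E(w^{cn}(g,h))\setminus E^{+}(w^{cn}(g,h))$. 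Ruling this out is nontrivial: the paper does it by showing that a flip would force $\langle g,h\rangle$ to act dihedrally on a line, contradicting irreducibility. That argument (tracking $g^{-1}J$ and $c^{-1}J$ inside the bridge $I$ and then invoking Lemma~\ref{l: lema dihedral action}) is missing from your proposal and cannot be replaced by the hyperbolic-case reasoning, which indeed only uses that $A(g)\cap A(w^{cn}(g,h))$ is a bounded segment.
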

  
  \begin{proof}
  	Let $c = g^{h^g}$. Our first goal is to prove that $A(h^c)\cap A(g)=\emptyset$, since in
    that case applying Lemma \ref{l: chiswell2}  to  $h^c$ and $g$ we obtain
  	\begin{equation*}
  		tl(h^c g) \geq 2d(A(h^c), A(g)) + tl(h^c) + tl(g) = 2d(A(h^c), A(g)) + tl(h) + tl(g) 
  	\end{equation*}%
  	which implies that the element $w^{cn}(g,h)=c^{h}g$ is hyperbolic and has translation length bigger than $\max\{tl(g),tl(h)\}$. 

  	We distinguish several cases.
  	  In case $A(g)\cap A(h) = \emptyset$ an application of Remark \ref{r: alb_remark_1} yields $A(g)\cap (A(h^g)\cup A(h)) = \emptyset$ and $A(g)$ intersects the path between $A(h)$ and $A(h^{g})$. From a second application we get $A(g)\cap A(c)= \emptyset$. Item 2 of this same remark ensures that the path from $A(c)$ to $A(h)$ crosses $A(h^g)$. From this and the fact that the path from $A(h^g)$ to $A(h)$ crosses $A(g)$ we obtain that  the path from $A(c)$ to $A(h)$ crosses $A(g)$. Another application of Remark \ref{r: alb_remark_1}  yields $A(g)\cap A(h^c)=\emptyset$. 
  	
  	   Assume now  $A(g)\cap A(h)\neq \emptyset$. We have that  $|A(h) \cap A(g)| \leq \max\{tl(g), tl(h)\}/2$ by the acylindricity assumption on the pair $(g,h)$. Then at least one of the two elements $g$ and $h$ is hyperbolic, since otherwise $\langle g,h\rangle$ would fix a point,  contradicting the assumption that $\langle g, h\rangle$ acts irreducibly on $T$.
  	
  	Assume first that $tl(g) \geq tl(h)$, so that in particular  $g$ is hyperbolic. Notice that (see Figures \ref{f: section_4_1} and \ref{f: section_4_2})  
  	$$
  	d(A(h), A(h^g)) = tl(g) - |A(h)\cap A(g)|  \geq tl(g)/2 >0.
  	$$
  	Hence $A(h)$ and $A(h^g)$ are disjoint.

  		If $h$ is hyperbolic as well or if the intersection of $A(g)$ and $A(h)$ is non-degenerate, then since the path from $A(h)$ to $A(h^g)$ crosses $A(g)$ and $h^g$ acts on $T$ by translation along the axis $A(h^g)$, it follows that
  	$$
  	d(A(c), A(h)) =  d((h^g)^{-1}A(g), A(h)) \geq d(A(h^g), A(h)) > 0,
  	$$
  	$A(g)$ intersects the path between $A(c)$ and $A(h)$ and  $|A(c)\cap A(g)|<tl(c)=tl(g)$.   Remark \ref{r: alb_remark_11} can now be applied by taking. $c^{-1}$ in the role of $k$; indeed,  if $\A(g) \cap A(c) \neq \emptyset$ then Item 1 in the statement of the remark  is satisfied, and otherwise  the intersection is bounded by $tl(g) = tl(c)$, as shown above. Hence $A(g^{c^{-1}}) \cap A(h) =\emptyset$. Conjugating by $c$ be obtain  $A(g) \cap A(h^c) =\emptyset$.

  	\begin{figure}[ht]
  		\begin{center}
  			\begin{tikzpicture}[scale =0.8]
  				\tikzset{near start abs/.style={xshift=1cm}}
  				\draw[color=blue, thick] (0,0) -- (5,0);
  				\draw[color=blue, thick, middlearrow={>}] (5,0) -- (9,0);
  				\node (xx) at (8.6,-0.3) {\color{blue}{$A(h)$}};

  				\draw[thick, middlearrow={>}] (2,5) -- (2,0.05);
  				\draw[thick] (2,0.05) -- (3.5,0.05) -- (3.5,5);
  				\node (xx) at (1.5,4.8) { $A(g)$};

  				\draw[color=blue, thick] (9,2) -- (3.55,2) -- (3.55,3)--(5,3);
  				\draw[color=blue, thick, middlearrow={>}] (5,3) -- (9,3);
  				\node (xx) at (8.6,3.3) {\color{blue}{$A(h^g)$}};

  				\draw[thick, middlearrow={>}] (6,3.05) -- (6,5);
  				\draw[thick] (4.5,5) -- (4.5,3.05) -- (6,3.05);
  				\node (xx) at (6.8,4.8) { $A(g^{h^g})$};

  				\draw [thick,color=red,decorate,decoration={brace,amplitude=10pt},xshift=-4pt,yshift=0pt]
  				(3.6,0.1) -- (3.6,3) node [midway, left] {\color{red}{ $tl(g)$\ \ \;}};
  				\draw [thick,color=red,decorate,decoration={brace,amplitude=10pt},xshift=-4pt,yshift=0pt]
  				(3.7,1.9) -- (3.7,0.1) node [midway, right] {\color{red}{ \ \ $tl(g)-|A(h)\cap A(g)|\ge \frac12 tl(g)$}};
  				\draw [thick,color=red,decorate,decoration={brace,amplitude=5pt},xshift=-4pt,yshift=0pt]
  				(3.65,-0.05)-- (2.15,-0.05);
  				
  				\node (xx) at (2.75,-0.9) {\color{red}{ $\le \frac12 tl(g)$}};
  				
  			\end{tikzpicture}
  		\end{center}
  		\caption{The case in Lemma \ref{l: multiple conjugation} when $A(g)\cap A(h)\neq \emptyset$ and both elements $g$ and $h$ are hyperbolic. We stress that, in general, $A(g)$ and $A(g^{h^g})$ may intersect.} \label{f: section_4_1}
  	\end{figure}
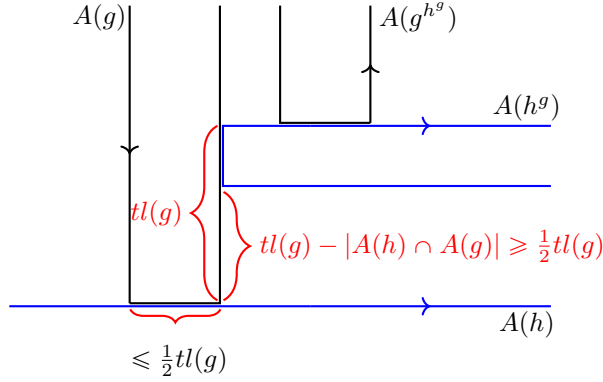
  	 Next we consider the case in which $g$ is hyperbolic, $h$ is elliptic and $A(g)\cap A(h)$ is a single point. By the definition of acylindrical pair $g$ is weakly $1/3$-stable and $\langle h^g, g \rangle = \langle h, g\rangle$ acts irreducibly on $T$. We thus have $|A(g) \cap A(c)| = |A(g) \cap (h^g)^{-1}A(g)|\leq tl(g)/3$.
  	   Moreover $$d(A(c), A(h)) \geq tl(g) - |A(c) \cap A(g)| \geq 2tl(g)/3 >0,$$ and the projection of $A(h)$ to $A(c)$ lie in $A(c)\cap A(g)$ (see Figure \ref{f: section_4_2}). An application of case \ref{option2} of Remark \ref{r: alb_remark_11} (taking $c^{-1}$ in the role of $k$) yields $A(h^c)\cap A(g)=\emptyset$ just as in the previous case. 

  	\begin{figure}[ht]
  		\begin{center}
  			\begin{tikzpicture}[scale =0.9]
  				\tikzset{near start abs/.style={xshift=1cm}}

  				\draw[ thick] (0,0) -- (4.5,0);
  				\draw[ thick, middlearrow={>}] (4.5,0) -- (6,0) node[below left] {\footnotesize $A(g)$};

  				\draw[ thick] (2,3) -- (3,0.05) --(4.5,0.05) -- (5.5,3) node[below right] {\footnotesize $A(g^{h^g})=(h^g)^{-1}A(g)$};

  				\draw [thick,color=red,decorate,decoration={brace,amplitude=7pt},xshift=-4pt,yshift=0pt] (1.2,0.1) -- (3.05,0.1) node [color=red,midway,xshift=0cm, yshift=0.45cm]{\footnotesize $\ge \frac{2tl(g)}{3}$};
  				\draw [thick,color=red,decorate,decoration={brace,amplitude=7pt},xshift=-4pt,yshift=0pt] (3.85,-0.1) -- (1.2,-0.1) node [color=red,midway,xshift=0cm, yshift=-0.45cm]{\footnotesize $tl(g)$};
  				\draw [thick,color=red,decorate,decoration={brace,amplitude=7pt},xshift=-4pt,yshift=0pt] (3.15,0.1)-- (4.6,0.1) node [color=red,midway,xshift=0cm, yshift=0.45cm]{\footnotesize $\le \frac{tl(g)}{3}$};

  				\filldraw[color=blue] (1,0) circle (1pt) node[color=blue, below left] {\footnotesize $A(h)$};
  				\filldraw[color=blue] (3.75,0) circle (1pt) node[color=blue,below right] {\footnotesize $A(h^g)$};
 
  			\end{tikzpicture}
  		\end{center}
  		\caption{The case in Lemma \ref{l: multiple conjugation} when $A(g)\cap A(h)\neq \emptyset$,  $g$ is hyperbolic and $h$ is elliptic. We stress that $A(h)$ is in general larger than a single point, as shown in the picture.} \label{f: section_4_2}
  	\end{figure}
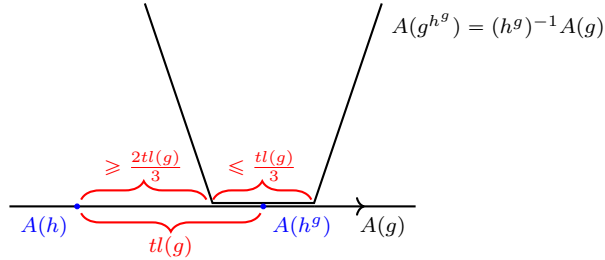
  
  		 Assume now $tl(g) < tl(h)$ and consider first the sub-case in which  $g$ is hyperbolic or $A(g)\cap A(h)$ is non-degenerate. Notice that the projection of $A(h)$ to $A(h^{g})$ ($pr_{A(h^g}(A(h))$) (in particular, $A(h)\cap A(h^{g})$ if $d(A(h),A(h^{g})=0$) is contained in $A(h^{g})\cap A(g)$. Since $| A(h^{g})\cap A(g) |\leq\frac{1}{2}tl(h)=\frac{1}{2}tl(h^{g})$, this implies by Remark \ref{r: alb_remark_11} that $A(c)=h^{g^{-1}}A(g)$ is disjoint from $A(g)\cup A(h)$. Moreover, both $A(g)$ and $A(h)$ project to the same point on $A(c)$. Another application of Remark \ref{r: alb_remark_11} yields $A(h^{c})\cap A(g)=\emptyset$.
  		The case in which $h$ is hyperbolic, $g$ elliptic and $A(g)\cap A(h)$ is a single vertex follows from a similar argument using the fact that $|A(h)\cap A(h^g)|\leq \frac{1}{3}tl(h)$ and is left to the reader. 
  		  This completes the proof that $tl(w^{cn}(g,h)) \geq \max\{tl(g), tl(h)\}$. 
  		
  		We next see that in all cases $g\notin E(h^c g) = E(w^{cn}(g,h))$. From  $A(g) \cap A(h^c) = \emptyset$ this is satisfied if $g$ is hyperbolic. Hence assume $g$ is elliptic, in which case we have $g \notin E^+(h^c g)$ (the set of elements that fix $A(h^c g)$ set-wise and preserve the orientation).  Assume towards contradiction that $g\in E(h^c g)\setminus E(h^c g)^{+}$. Observe that $\langle g,h^c g \rangle = \langle g, h^c \rangle = \langle h^c, h^c g\rangle$ preserves $A(h^c g)$. In particular, $h^c \in E(h^c g)$. We can assume that $h^c$ is elliptic, because otherwise we would have $A(h^c)= A(h^c g)$ a contradiction with Lemmas \ref{l: chiswell2} and \ref{l: chiswell3}. It follows that $g$ and $h^c$  both act as involutions on $A(h^c g)$. 
  		We now claim that $\langle g, h \rangle$  acts dihedrally on an infinite line (under the current assumptions that $g$ and $h^c$ are elliptic).  Once this is verified, the proof of the lemma is finished, since this claim contradicts the fact that $\langle g, h \rangle$ acts irreducibly on $T$ due to the definition of acylindricity.

  		We proceed to prove the claim. Let $J$ be the bridge between $A(h)$ and $A(g)$ (i.e.\ the path connecting these two subgraphs), and let $I$ be the bridge between $A(g)$ and $A(h^c)$ (see Figure \ref{fff_19}). Our goal is to apply Lemma \ref{l: lema dihedral action}, and so we need to prove that $g^2$ and $h^2$ fix $J$. By Remark \ref{l: chiswell2}, $I$ is contained in $A(h^c g)$. Assume we have proved that $g^{-1}J$ and $c^{-1}J$ are contained in $I$. Then since $g^2$ fixes $A(h^c g)$ (because $g\in E(h^c g) \setminus E^+(h^c g)$) we have $g^2(g^{-1}J)= gJ = g^{-1}J$ and so $g^2$ fixes $J$. Similarly, $h^2$ fixes $J$. Thus by Lemma \ref{l: lema dihedral action} we obtain that the action of $\langle g, h\rangle$ on $T$ is dihedral, a contradiction.

  		We now prove that $g^{-1}J$ and $c^{-1}J$ are contained in $I$. Indeed, since $A(h) \cap A(g) = \emptyset$ and both $h$ and $g$ are elliptic, we have that $A(h^{g})$ is disjoint from $A(g)$  and $A(h)$,  and the path between $A(h)$ and $A(h^g)$ crosses $A(g)$, by Remark \ref{r: alb_remark_1}. Similarly, $A(c)=h^{-1}{}^{g}A(g)$ is disjoint from $A(g)$ and the path from $A(g)$ to $A(c)$ crosses $A(h^g)$. Analogous arguments yield that the path between $A(g^c)$ to $A(h^c)$ is contained in $I$ (see Figure \ref{fff_19}). The former is precisely $c^{-1}J$, and so both $g^{-1}J$ and $c^{-1}J$ are contained in $A(h^c g)$.
  		  	\end{proof}
  		
  		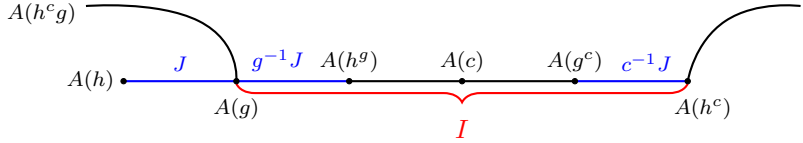
\begin{figure}[ht]
  			\begin{center}
  				\begin{tikzpicture}
  					\tikzset{near start abs/.style={xshift=1cm}}
  					
  					\draw[thick, color=blue] (1.5,0) -- (0,0) node[align=left,xshift=0cm, midway, above] {\footnotesize $J$};
  					\draw[color=blue, thick] (3,0) -- (1.5,0) node[align=left,xshift=-0.2cm, midway, above] {\footnotesize $g^{-1}J$};
  					\draw[thick] (4.5,0) -- (3,0);
  					
  					\draw[thick] (6,0) -- (4.5,0) node[align=left,xshift=0cm, midway, above] {\footnotesize {}};
  					\draw[thick, color=blue] (7.5,0) -- (6,0) node[align=left,xshift=0.2cm, midway, above] {\footnotesize $c^{-1}J$};
  					\draw[thick]  plot [smooth, tension = 1]  coordinates { (7.5,0) (8,0.8) (9,1)  };
  					
  					\draw [thick]  plot [smooth, tension=1] coordinates { (1.5,0) (1,0.8) (-0.5,1) } node[left, yshift=-0.1cm] {\footnotesize $A(h^c g)$};
  					 	\draw [color=red,thick,decorate,decoration={brace,amplitude=7pt}, yshift=-1pt] (7.5,0)-- (1.5,0)  node [midway,yshift=-0.6cm] {$I$};

  					\filldraw (0,0) circle (1pt) node[align=left,xshift=0cm, left] {\footnotesize $A(h)$};
  					\filldraw (1.5,0) circle (1pt) node[align=left,xshift=0cm, yshift=-0.1cm, below] {\footnotesize $A(g)$};
  					\filldraw (3,0) circle (1pt) node[align=left,xshift=0cm, above] {\footnotesize $A(h^g)$};
  					\filldraw (4.5,0) circle (1pt) node[align=left,xshift=0cm, above] {\footnotesize $A(c)$};
  					\filldraw (6,0) circle (1pt)
  					node[align=left,xshift=0cm, above]
  					{\footnotesize $A(g^c)$};
  					\filldraw (7.5,0) circle (1pt) node[align=left,xshift=0.2cm, yshift=-0.1cm, below] {\footnotesize $A(h^c)$};

  				\end{tikzpicture}
  			\end{center}
  			\caption{The axis of $A(h^cg)$ (in black), and the segments $J$ and $I$ in studied at the end of the proof of Lemma \ref{l: multiple conjugation}.} \label{fff_19}
  		\end{figure}

  	The next lemma shows  that there is a uniform way to combine a stable element with another one that does not belong to the normaliser of $g$ in order to obtain a small-cancellation tuple.

  	\medskip
  	
  	\begin{prop}
  		\label{l: from good pair to small cancellation}
  		
  		For all $N\geq 1$ and $m\geq 1$ there exists an $m$-tuple of words $w^{sc}\in\F(x,y)^{m}$ with the following property (`sc' stands for `small cancellation'). Suppose we are given an action of a group $G$ on a real tree $T$, a hyperbolic element $h\in G$, and another element $g\in G$ such that:
 
  		\enum{i)}{
  			\item $h$ is hyperbolic and stable (respectively, weakly stable),
  			\item $g\notin E(h)$,
  			\item \label{it: no cancellation}$A(g)$ and $A(h)$ intersect, $g$ does not invert a subsegment of $A(h)$ and if $g$ is hyperbolic, then $A(g)$ and $A(h)$ intersect coherently,
  			\item $tl(g)\leq tl(h)$.

  		}
  		Then $w(g,h)^{sc}$ is $N$-small cancellation (respectively, weakly $N$-small cancellation) as witnessed by any point in $A(g)\cap A(h)$, and for all $1\leq i\leq m$ we have \begin{equation}\label{e: from good pair eqn} tl(w_{i}^{sc}(g,h))\geq N \max\{tl(g),tl(h)\}=N tl(h).
  		\end{equation}

  	\end{prop}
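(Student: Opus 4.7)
The plan is to produce the tuple $w^{sc}=(w_1^{sc},\ldots,w_m^{sc})\in\F(x,y)^{m}$ explicitly, so that each $w_i^{sc}(g,h)$ is a hyperbolic element whose axis is a long alternating concatenation of sub-segments of $A(h)$ and of $gA(h)$, arranged so that different indices $i$ yield different spacing ``rhythms''. A natural candidate is a word of the shape
\[
w_i^{sc}(x,y)=y^{a_{i,1}M}\bigl(x\,y^{a_{i,2}M}\,x^{-1}\bigr)\,y^{a_{i,3}M},\qquad 1\leq i\leq m,
\]
where $(a_{i,1},a_{i,2},a_{i,3})$ is a triple of positive integers that varies with $i$, and $M=M(N,m)$ is a single large parameter to be tuned at the end. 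I take $*\in A(g)\cap A(h)$ as basepoint; the assumption that $g$ does not invert a subsegment of $A(h)$ and that $A(g)$ meets $A(h)$ coherently in the hyperbolic case guarantees that $A(h)$ and $gA(h)=A(ghg^{-1})$ are two distinct axes meeting compatibly at $*$, which provides the raw geometric material for the axis computation.

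First I verify items (\ref{SCA_prelim}) and (\ref{SCB_prelim}) of Definition \ref{d: small cancellation} together with the translation length estimate \eqref{e: from good pair eqn}. Lemmas \ref{l: chiswell2} and \ref{l: chiswell3} (together with Remark \ref{r: alb_remark_2}) applied iteratively show that $A(w_i^{sc}(g,h))$ contains three long successive segments inside $A(h)$, $gA(h)$ and $A(h)$ of respective lengths $a_{i,k}M\cdot tl(h)+O(tl(h))$, linked by short bridges coming from the defect of coherence between $A(h)$ and $gA(h)$; the total translation length is $(a_{i,1}+a_{i,2}+a_{i,3})M\cdot tl(h)-O(tl(h))$, which exceeds $N\cdot tl(h)$ for $M$ large and is, up to bounded additive error, independent of $i$. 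The fact that $g\notin E(h)$ is essential here, as it ensures that the three segments in the axis do not collapse into a single copy of $A(h)$.

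The heart of the proof is item (\ref{SCC_prelim}). Suppose some $\sigma\in G$ produces an overlap
\[
\bigl|[*, w_i^{sc}(g,h)*]\cap \sigma\,[*, w_j^{sc}(g,h)*]^{\varepsilon}\bigr|\;\geq\; \frac{1}{N}\min_{k}tl(w_k^{sc}(g,h)).
\]
Such an overlap must simultaneously align the alternating patterns of $A(h)$- and $gA(h)$-segments on both sides; in particular, $\sigma$ must send some sub-segment of $A(h)$ of length $\gg tl(h)$ into another sub-segment of $A(h)$. By (weak) stability of $h$ and Lemma \ref{l: forms of stability}, this forces $\sigma\in E(h)$ and even forces $\sigma$ to act as the identity on the overlap; comparing the exponent triples $(a_{i,k})$ and $(a_{j,k})$ then pins down $i=j$ and $\varepsilon=1$. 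In the non-weak case $K(h)=\{1\}$ upgrades the previous conclusion to $\sigma=1$, which is exactly the stronger condition demanded for $N$-small cancellation.

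The principal obstacle is the combinatorial pattern-matching step in (\ref{SCC_prelim}): one needs a uniform argument, independent of the particular $(g,h,T)$, that the alternating $A(h)$/$gA(h)$ structure of $A(w_i^{sc})$ admits no nontrivial self-similarity modulo the $G$-action other than those just described. The hypothesis that $g$ does not invert a subsegment of $A(h)$ together with coherent intersection eliminates orientation-reversing alignments ($\varepsilon=-1$); choosing the triples $(a_{i,k})$ so that all sums $a_{i,1}+a_{i,2}+a_{i,3}$ are nearly equal but no two triples coincide breaks translation symmetry along $A(h)$; and stability of $h$ converts the geometric long-overlap condition into the algebraic condition $\sigma\in E(h)$. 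With these three inputs in place, everything reduces to bookkeeping with exponents, which can be carried out uniformly for $M$ sufficiently large in terms of $N$ and $m$.
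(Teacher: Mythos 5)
Your overall strategy --- build a word whose $[*,w_i^{sc}(g,h)*]$ alternates long subsegments of translates of $A(h)$, use weak stability of $h$ to upgrade a long-overlap condition into membership in $E(h)$, then use distinctness of exponent patterns to force $i=j$ and the identity action --- is the same as the paper's. But the concrete three-block word $w_i^{sc}=y^{a_{i,1}M}\bigl(xy^{a_{i,2}M}x^{-1}\bigr)y^{a_{i,3}M}$ cannot satisfy item (\ref{SCC_prelim}) of Definition~\ref{d: small cancellation} once $N\geq 4$, and no choice of $M$ or of the triples $(a_{i,1},a_{i,2},a_{i,3})$ repairs this. Indeed, $[*,w_i^{sc}(g,h)*]$ decomposes (up to degenerate or short bridges) into three blocks lying respectively on $A(h)$, on $h^{a_{i,1}M}gA(h)$, and on a third translate, of lengths $\approx a_{i,k}M\,tl(h)$, so the total is $\approx(a_{i,1}+a_{i,2}+a_{i,3})M\,tl(h)$. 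Taking $i=j$ and $\sigma=h^t$ with $0<t\ll a_{i,1}M$ already gives
\[
\bigl|[*,w_i^{sc}(g,h)*]\cap\sigma[*,w_i^{sc}(g,h)*]\bigr|\;\approx\;(a_{i,1}M-t)\,tl(h),
\]
an overlap on which $\sigma$ acts as a nontrivial translation along $A(h)$, not as the identity; conjugating this $\sigma$ by $h^{a_{i,1}M}g$ produces the analogous forbidden slide of the middle block, and similarly for the third. Since $\max_k a_{i,k}\geq\tfrac{1}{3}\sum_k a_{i,k}$, one of these forbidden overlaps always has length roughly one third of $\min_i tl(w_i^{sc}(g,h))$, far above the $\tfrac{1}{N}$-threshold.

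The paper circumvents this precisely by taking many blocks: $w_i^{sc}(x,y)=xy^{k_{i,1}}xy^{k_{i,2}}\cdots xy^{k_{i,K}}$ with $K>20(m+2)N$ and $k_{i,j}=K^2+Ki+j$. Then each $[h]$-block has length $\approx K^2\,tl(h)$ while the total is $\approx K^3\,tl(h)$, so a single-block slide falls \emph{below} the $\tfrac{1}{N}$-threshold. Any overlap above the threshold must therefore contain two consecutive full $[h]$-blocks, which is exactly what makes the paper's pattern-matching run: it compares $f_0=w_{i',j'}^{-1}fw_{i,j}$ with $f_1=h^{-k_{i',j'}}f_0h^{k_{i,j}}$ and $f_{-1}=f_0^{g}$, all forced into $E(h)$ by weak stability, to conclude $k_{i,j}=k_{i',j'}$ and hence $i=i'$; a three-block word never furnishes the two adjacent $[h]$-blocks this comparison needs at the required scale. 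Two smaller remarks: the translation length should gain, not lose, an $O(tl(h))$ term plus twice the bridge length; and ruling out the orientation-reversed matches is not immediate from ``$g$ does not invert a subsegment of $A(h)$'' --- in the paper it is a separate argument deriving incompatible equalities among the $k_{i,j}$ from the three elements $f_0$, $g^{-1}h^{-k_{i',j'}}f_0g^{-1}$, $gf_0h^{k_{i,j}}g$ lying in $E^{-}(h)$.
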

  	
  	\begin{proof}
  		Take $K>20(m+2)N$. We denote by $w^{sc}_{i}(x,y)$ the word
  		$$
  		w^{sc}_i(x,y)=xy^{k_{i,1}}xy^{k_{i,2}}\cdots xy^{k_{i,K}},
  		$$
  		where $k_{i,j}=K^{2}+Ki+j$, for $1\leq i\leq m$ and $1\leq j\leq K$.
  		
  		Since by assumption, the axis $A(g)$ and $A(h)$ intersect, we let $*\in A(g)\cap A(h)$ be the  point in $A(g)\cap A(h)$ with minimal value with respect to the order induced on $A(g)$ by the translation action of $g$, i.e.\ $*$ is the `first' point where $A(g)$ and $A(h)$ meet.  The axis $A(w^{sc}_{i}(g,h))$ contains $A(g)\cap A(h)$  and so, in particular, $\ast \in A(w^{sc}_{i}(g,h))$. A picture of $A(w^{sc}_{i}(g,h))$ can be obtained by repeatedly drawing the figures from the preliminary lemmas \ref{l: chiswell2} and \ref{l: chiswell3}, and  Figure \ref{f: hyp_elliptic}.
  		
  		We  denote $w_{i,1} = g$, $w_{i,1}(h) = gh^{k_{i,1}}$, and
  		$$
  		w_{i,j} = w_{i,j-1}(h) g, \quad w_{i,j}(h) = w_{i,j} h^{k_{i,j}}, \quad j=2, \dots, K.
  		$$
  		We further let $[g]_{i,1} = [*, g*]$, $[h]_{i,1}= [g*, w_{i,1}(h)*]$, and
  		$$[g]_{i,j} = [w_{i,j-1}(h)*, w_{i,j}*], \quad  [h]_{i,j} = [w_{i,j}*, w_{i,j}(h)*] \quad j=2, \dots, K.
  		$$
  		Notice that $w_{i,j}*$ is the image of $w_{i,j-1}(h)*$ by $g^{w_{i,j-1}(h)^{-1}}$, and similarly, $w_{i,j}(h)*$ is the image of $w_{i,j}*$  by $w_{i,j}h^{k_{i,j}}{w_{i,j}}^{-1}$. Hence $|[g]_{i,j}| = tl(g)$ and $|[h]_{i,j}| = tl(h^{k_{i,j}}) = k_{i,j}tl(h)$.

  		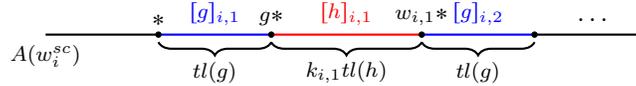
\begin{figure}[ht]
  			\begin{center}
  				\begin{tikzpicture}
  					\tikzset{near start abs/.style={xshift=1cm}}
  					\draw[thick] (1.5,0) -- (0,0) node[align=left,xshift=0cm, below] {\footnotesize $A(w_i^{sc})$};
  					\draw[thick, color=blue] (1.5,0) -- (3,0) node[align=left,xshift=0cm, midway, above] {\footnotesize $[g]_{i,1}$};
  					\draw[thick, color=red] (3,0) -- (5,0) node[align=left,xshift=0cm, midway, above] {\footnotesize $[h]_{i,1}$};
  					\draw[thick, color=blue] (5,0) -- (6.5,0) node[align=left,xshift=0cm, midway, above] {\footnotesize $[g]_{i,2}$};
  					\draw[thick] (6.5,0) -- (8,0);
  					\filldraw (1.5,0) circle (1pt) node[align=left,xshift=0cm, above] {\footnotesize $*$};
  					\filldraw (3,0) circle (1pt) node[align=left,xshift=0cm, above] {\footnotesize $g*$};
  					\filldraw (5,0) circle (1pt) node[align=left,xshift=0cm, above] {\footnotesize $w_{i,1}*$};
  					\filldraw (6.5,0) circle (1pt);
  					
  					\draw [thick,decorate,decoration={brace,amplitude=5pt},xshift=0pt,yshift=-0.1cm] (3,0) -- (1.5,0) node [midway,xshift=0cm, yshift=-0.4cm]{\footnotesize $tl(g)$};
  					\draw [thick,decorate,decoration={brace,amplitude=5pt},xshift=0pt,yshift=-0.1cm] (5,0) -- (3,0) node [midway,xshift=0cm, yshift=-0.4cm]{\footnotesize $k_{i,1} tl(h)$};
  					\node at (7.3,0.2) {$\dots$};
  					\draw [thick,decorate,decoration={brace,amplitude=5pt},xshift=0pt,yshift=-0.1cm] (6.5,0) -- (5,0) node [midway,xshift=0cm, yshift=-0.4cm]{\footnotesize $tl(g)$};
  					
  				\end{tikzpicture}
  			\end{center}
  			\caption{Part of the axis of $w_i^{sc}(g,h)$.} \label{ffff1}
  		\end{figure}
  		
  		We claim that $A(w_i^{sc}(g,h)*, *)$ contains the path obtained by concatenating $$[g]_{i,1}, [h]_{i,1}, \dots, [g]_{i,K}, [h]_{i,K},$$ and that there is no backtracking (i.e.\ 'cancellation') in the path $[g]_{i,1} \dots [h]_{i,K}$, that is
  		$$
  		d(w^{sc}_{i}(g,h)\cdot*,*)= tl(w^{sc}_{i}(g,h))
  		= K tl(g) + \sum\limits_{j=1}^{K} k_{ij} tl(h).
  		$$
  		Indeed, by assumption, if $g$ is hyperbolic then the axis of $g$ and $h^{k_{i, 1}}$ intersect coherently, and so by Lemma \ref{l: chiswell3}, $gh^{k_{i,1}}$ is hyperbolic and $tl(gh^{k_{i,1}})=tl(g) + tl(h^{k_{i,1}})$. Moreover given that $*\in A(g)\cap A(h^{k_{i,1}})$ we have that $[*, g*] \subseteq A(gh^{k_{i,1}})\cap A(h^{k_{i,1}})$ (see Figure \ref{f: fff} in Lemma \ref{l: chiswell3}). The claim then follows by induction on the length of the word $w_i^{sc}(g,h)$ using analogous arguments in each induction step.  If $g$ is elliptic, then since $g \notin E(h)$ and $g$ does not invert a subsegment of $A(h)=A(h^{k_{i,1}})$, we have $tl(gh^{k_{i,1}}) = tl(g) = tl(g) + tl(h^{k_{i,1}})$ due to Item 2 of Remark \ref{r: alb_remark_2}. Moreover  $g* = * \in A(gh^{k_{i,1}})\cap A(g)$ because $A(gh^{k_{i,1}})$ contains $A(g) \cap A(h)$. In this case the claim now follows by inductively combining this and the previous argument.

  		Since $tl(g)\le tl(h)$ (by assumption) and $1\le K\le K^2$, we have the follwing inequalities:
  		\begin{gather}\label{eq:<}
  			\begin{split}
  				d(w^{sc}_{i}(g,h)\cdot*,*)=& tl(w^{sc}_{i}(g,h))
  				= K tl(g) + \sum\limits_{j=1}^{K} k_{ij} tl(h)\\
  				< &(K+K(K^2+ mK +K))tl(h)<
  				(K^2(K+m+2))tl(h),
  			\end{split}
  		\end{gather}
  		and since $k_{i,j}\geq K^2+K$ for all $i,j$,
  		\begin{equation}\label{eq:>}
  			d(w^{sc}_{i}(g,h)\cdot*,*)= tl(w^{sc}_{i}(g,h))\ge K(K^2+K)tl(h)=K^2(K+1)tl(h).
  		\end{equation}
  		In particular, since $K\geq N$, \eqref{eq:>} implies the inequality \eqref{e: from good pair eqn} from the statement of the lemma.
  		
  		Using the above inequalities we now shows that
  		$(w^{sc}_{i}(g,h))_{i=1}^{K}$ satisfies the first two properties in the definition of small cancellation (Definition \ref{d: small cancellation}). Indeed, by the choice of $K$ and Equation (\ref{eq:>})  we have that $d(w^{sc}_{i}(g,h)\cdot*,*)>N$ for all $i$. We now establish inequality (b) from Definition \ref{d: small cancellation}, i.e.\ we prove that $\dis{w^{sc}_{i}(g,h)}\leq\frac{N+1}{N}\min_j tl(w^{sc}_{j})$ for all $i-$,. From Equations (\ref{eq:<}) and (\ref{eq:>}), we have, for all $i$,
  		$$
  		\begin{array}{l}
  			\frac{N+1}{N}\min_i(tl(w^{sc}_{i}(g,h)))\ge\frac{N+1}{N}K^2(K+1)tl(h);\\
  			d(w^{sc}_{i}(g,h)\cdot*,*)<(K^2(K+m+2))tl(h).
  		\end{array}
  		$$
  		Hence, it suffices to show that
  		$$
  		(K^2(K+m+2))tl(h)\le\frac{N+1}{N}K^2(K+1)tl(h).
  		$$
  		The latter is equivalent to $N(K+m+2)\leq (K+1)(N+1)$, which  holds by the choice of $K$, since $K> 20 (m+2)N > (m+1)N$.
  		
  		\medskip
  		
  		In order to check property (\ref{SCC}) of Definition \ref{d: small cancellation}, by \eqref{eq:>} it suffices to prove the following
  		
  		\begin{claim}\label{c: main_claim_section_4}
  			Let $1\leq i \leq m$, let $f\in G$, and let  $J$ be a subsegment of $A(w^{sc}_{i}(g,h))$ of length at least $\frac{1}{N}(K^{2}(K+1))tl(h)$ that is mapped by the action of  $f$ into a subsegment of $A(w^{sc}_{i'}(g,h))$, for some $1\leq i' \leq m$. Then
  			$i=i'$ and $f$  fixes  $A(w^{sc}_{i}(g,h))$ pointwise.  In case $h$ is stable then in fact we  have $f=1$.
  		\end{claim}
  		
  		If the claim is true, then whenever we have $|A(w_i^{sc}(g,h) \cap f A(w_{i'}^{sc}(g,h)|\geq \min_j\{tl(w_j^{sc}(g,h))\}$ we have $|A(w_i^{sc}(g,h) \cap f A(w_{i'}^{sc}(g,h)|\geq K^2(K+1)tl(h)$ due to \eqref{eq:>}, and then from the claim we obtain the required conclusions in Condition 3 of the definition of small cancellation. 
  		
  		\begin{proof}[Proof of Claim \ref{c: main_claim_section_4}]
  			Some preliminary observations are in order. Notice  that
  			\begin{align}\label{e: 19_sept_19}
  				&\max\limits_{i,j}(tl(h^{k_{ij}}))=(K^2+mK+K) tl(h) -2tl(h) <\\ &2 \min\limits_{i,j}(tl(h^{k_{ij}}))= 2(K^2+K+1)tl(h) -2tl(h),\nonumber
  			\end{align}
  			where the second inequality follows from $K> 20(m+2)N > 20m$.
  			Hence for all $1\leq i,i'\leq m$ and $1\leq j\leq K$ such that $f[h]_{i,j} \cap A(w_{i'}^{sc})$ is nonempty  there exists $1\leq j'\leq K$ such that
  			\begin{align}\label{e: 26_jan_19_0}
  				f[h]_{i,j} \cap A(w_{i'}^{sc}) &\subseteq [h]_{i',j'}[g]_{i',j'+1}[h]_{i',j'+1},\quad \text{or}\\
  				A(w_{i'}^{sc}) &\subseteq [g]_{i',j'}[h]_{i',j'}[g]_{i',j'+1}\nonumber
  			\end{align}
  			In this case,    since the translation length of $h^{k_{ij}}$ is at least $(K^2+K+1)tl(h)$ and $tl(g)\le tl(h)$, it follows that
  			\begin{gather}\label{e: 26_jan_19}
  				\begin{split}
  					&|f[h]_{i,j}\cap [h]_{i',j'}| \geq(K^2+K)tl(h)/2 \geq tl(h),  \quad \hbox{or} \\  &|f[h]_{i,j}\cap [h]_{i',j'+1}| \geq (K^2+K)tl(h)/2 \geq tl(h).
  				\end{split}
  			\end{gather}
  			We next prove that any segment of $A(w^{sc}_{i}(g,h))$ of length $\frac{1}{N}(K^2(K+1))tl(h)$ contains a translate of the path $[h]_{i,j-1}[g]_{i,j}[h]_{i,j}[g]_{i,j+1}[h]_{i,j+1}$, for some $j=1, \dots, K-1$ (by translate of a segment we mean the image of the action of an element on such segment). Indeed, in order to prove this, it suffices to show that $\frac{1}{N}(K^2(K+1))tl(h)$  is greater than $4 \max\limits_{i,j}(tl(h^{k_{i,j}}))+4tl(g)$. This follows from the inequality
  			\begin{gather}\notag
  				\begin{split}
  					4(K^2+mK+K)tl(h) + 4 tl(g) &\le  4(m+2)K^2tl(h) + 4tl(h) \le \\
  					\le & \left(\frac{1}{N} K^3 + 4\right) tl(h) \le  \frac{1}{N}(K^2(K+1))tl(h).
  				\end{split}
  			\end{gather}
  			Note that the inequality $4(m+2)K^2tl(h) \le \frac{1}{N} K^3 tl(h)$ holds by the choice of $K$, i.e. since $K> 20 (m+2)N > 4(m+2)N$.
  			
  			In views of the argument above, let $[h]_{i,j-1}[g]_{i,j}[h]_{i,j}[g]_{i,j+1}[h]_{i,j+1}$
  			be contained in the segment $J$. By \eqref{e: 26_jan_19}, there exists $1\leq j' \leq K$ such that  $$|f [h]_{i,j} \cap [h]_{i',j'}| \geq \frac{K^2+K}{2}tl(h)> tl(h).$$ We take $[h]_{i',j'}$ to be such that this intersection is as large as possible. Then it follows from \eqref{e: 19_sept_19} and from  \eqref{e: 26_jan_19} that, if $fA(w_i^{sc})$ and $A(w_{i'}^{sc})$ meet coherently,  then
  			$$|f[h]_{i,j+1} \cap [h]_{i',j'+1}| \geq  tl(h).$$ Similarly in this case we obtain $$|f[h]_{i,j-1} \cap [h]_{i',j'-1}| \geq  tl(h).$$
  			If $f(A(w_i^{sc}))$ and $A(w_{i'}^{sc})$ do not meet coherently then by analogous reasons we obtain $|f[h]_{i,j+\epsilon} \cap [h]_{i',j'-\epsilon}| \geq  tl(h)$ for $\epsilon\in\{-1,0,1\}$.
  			
  			Let $f_{0}=w_{i',j'}^{-1}fw_{i,j}$, $f_{-1} = w_{i',j'-1}(h)^{-1} f w_{i,j-1}(h)$, $f_1 = w_{i,j}(h)^{-1} f w_{i',j'}(h)$. Note that
  			\begin{equation}\label{e: 7june}
  				f_{-1} = f_0^{g}, \quad f_1 = h^{-k_{i',j'}}f_0h^{k_{i,j}}.
  			\end{equation}
  			The discussion above implies that each of the three elements $f_{0}$, $f_{-1}$, $f_1$ send a segment of the axis $A(h)$ of length at least $tl(h)$ to $A(h)$, as indicated in Figure \ref{f: 12_sept_19}.
  			
  			Assume first that the intersection $f[h]_{i,j}\cap[h]_{i',j'}$ is coherent. Then $f_{0}$, $f_{-1}$, $f_1$ send a segment of the axis $A(h)$ of length at least $tl(h)$ to $A(h)$ in an orientation preserving manner. Since $h$ is weakly stable, it follows that the three elements lie in $E^{+}(h)$ (i.e.\ they preserve $A(h)$ and its orientation). We now prove that  $f_{0}$ is elliptic. Indeed, if $f_0$ was hyperbolic we would have $A(f_0)=A(h)$, and thus $A(f_{0}^{g}) = g^{-1}A(f_0) =g^{-1}A(h)\neq A(h)$ because $g\notin E(h)$. But now the inequality $A(f_{0}^{g}) \neq A(h)$  contradicts the fact that $f_0^{g}$  preserves $A(h)$. This proves that $f_0$ is elliptic, which immediately implies that $f_{-1}=f_0^{g}$ is elliptic as well
  			
  			It follows that $f_0,f_{-1}\in K(A(h))$, i.e.\ $f_0$ and $f_{-1}$ act as the identity on $A(h)$ (since they are elliptic and preserve the orientation of $A(h)$). Notice that if $h$ was chosen to be stable, then this by itself implies that $f_{0}=f_{-1}=1$.
  			
  			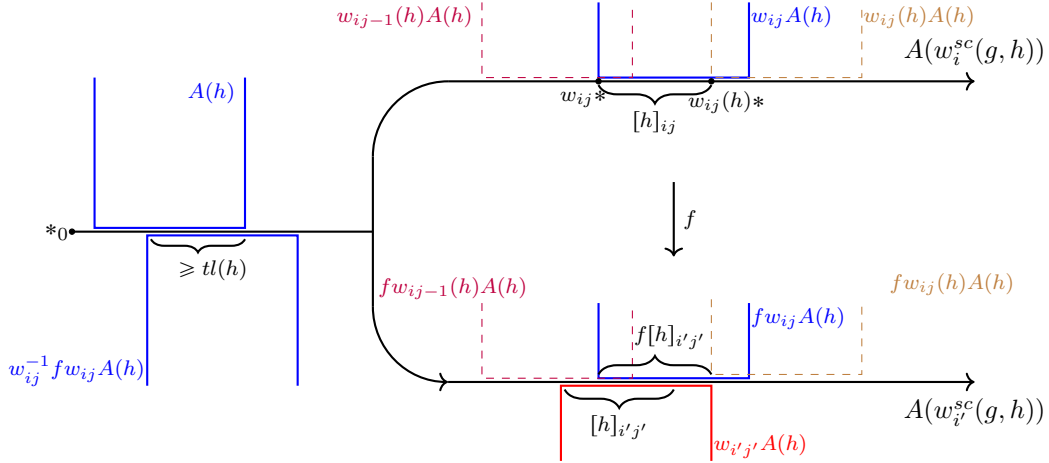
\begin{figure}[ht]
  				\begin{center}
  					\begin{tikzpicture}
  						\tikzset{near start abs/.style={xshift=1cm}}

  						\draw[->, thick] (5,2) -- (12,2);
  						\draw[thick] (5,2) arc[radius=1,start angle=90,end angle=180];
  						\draw[thick] (4,1) -- (4,-1);
  						\draw[->,thick] (4,-1) arc[radius=1,start angle=180,end angle=270];
  						\draw[->,thick] (5,-2) -- (12,-2);
  						
  						\draw[thick] (0,0) -- (4,0);
  							\filldraw (0,0) circle (1pt);
  						\node at (-0.2,0) {$*_0$};
  		
  						\node at (12,  -2.4) {$A(w_{i'}^{sc}(g,h))$};
  						\node at (12,  2.4) {$A(w_{i}^{sc}(g,h))$};
  										
						\draw[color=blue, thick, xshift=0.3cm, yshift=0.05cm] (0,2) -- (0,0) -- (2,0) -- (2,2) node[left, yshift=-0.2cm] {\footnotesize $A(h)$} ;
  						
  						\draw[color=blue, thick, xshift=1cm, yshift=-0.05cm] (0,-2) -- (0,0) -- (2,0) -- (2,-2) node[left, yshift=0.2cm, xshift=-1.9cm] {\footnotesize $w_{ij}^{-1}fw_{ij}A(h)$};
  						
  						\draw[color=blue, thick, xshift=7cm, yshift=2.05cm] (0,1) -- (0,0) -- (2,0) -- (2,1)  node[right, yshift=-0.2cm, xshift=-0.1cm] {\footnotesize $w_{ij}A(h)$};
  						
  						\draw[color=blue, thick, xshift=7cm, yshift=-1.95cm] (0,1) -- (0,0) -- (2,0) -- (2,1) node[right, yshift=-0.2cm, xshift=-0.1cm] {\footnotesize $fw_{ij}A(h)$};
  						
  						\draw[color=red, thick, xshift=6.5cm, yshift=-2.05cm] (0,-1) -- (0,0) -- (2,0) -- (2,-1) node[right, yshift=0.2cm, xshift=-0.1cm] {\footnotesize $w_{i'j'}A(h)$};
  						\draw[color=purple, dashed, xshift=5.45cm, yshift=2.05cm] (0,1) -- (0,0) -- (2,0) -- (2,1) node[left, yshift=-0.2cm, xshift=-2cm] {\footnotesize $w_{ij-1}(h)A(h)$};
  						\draw[color=brown, dashed, xshift=8.5cm, yshift=2.05cm] (0,1) -- (0,0) -- (2,0) -- (2,1) node[right, yshift=-0.2cm, xshift=-0.1cm] {\footnotesize $w_{ij}(h)A(h)$};
  						
  						\draw[color=purple, dashed, xshift=5.45cm, yshift=-1.95cm] (0,1) -- (0,0) -- (2,0) -- (2,1) node[right, yshift=0.2cm, xshift=-3.5cm] {\footnotesize $fw_{ij-1}(h)A(h)$};
  						\draw[color=brown, dashed, xshift=8.5cm, yshift=-1.9cm] (0,1) -- (0,0) -- (2,0) -- (2,1) node[right, yshift=0.2cm, xshift=0.25cm] {\footnotesize $fw_{ij}(h)A(h)$};
  						\draw [thick,decorate,decoration={brace,amplitude=5pt},xshift=0pt,yshift=0pt] (2.3,-0.1)-- (1.05,-0.1) node [midway,xshift=0.2cm, yshift=-0.4cm]{\footnotesize $\ge tl(h)$};
  						
  						\filldraw (7,2) circle (1pt) node[align=left,xshift=-0.2cm, below] {\footnotesize $w_{ij}*$};
  						\filldraw (8.5,2) circle (1pt) node[align=left,xshift=0.2cm,below] {\footnotesize $w_{ij}(h)*$};
  						
  						\draw [thick,decorate,decoration={brace,amplitude=7pt},xshift=0pt,yshift=-0.05cm] (8.5,2)-- (7,2) node [midway,xshift=0cm, yshift=-0.5cm]{\footnotesize $[h]_{ij}$};
  						\draw [thick,decorate,decoration={brace,amplitude=7pt},xshift=0cm,yshift=-0.1cm] (8,-2)-- (6.55,-2) node [midway,xshift=0cm, yshift=-0.5cm]{\footnotesize $[h]_{i'j'}$};
  						\draw [thick,decorate,decoration={brace,amplitude=7pt},xshift=0cm,yshift=0.1cm] (7,-2) --(8.5,-2) node [midway,xshift=0.2cm, yshift=0.5cm]{\footnotesize $f [h]_{i'j'}$};
  						
  						\draw[->, thick] (8,0.66) -- (8,-0.33) node[midway, right] {\footnotesize $f$};
  					\end{tikzpicture}
  				\end{center}
  				\caption{Auxiliary picture for the last arguments in the proof of Proposition \ref{l: from good pair to small cancellation}.  We show that $|[h]_{i',j'} \cap  f[h]_{i,j}| \geq tl(h)$, from where it follows that $|A(h) \cap w_{ij}^{-1}fw_{ij}A(h) | \geq tl(h)$ } \label{f: 12_sept_19}
  			\end{figure}
  			
  			Since $f_{0}\in K(A(h))$, the element $f_{1}=(h^{-k_{i',j'}}f_{0}h^{k_{i,j}})$  can only preserve $A(h)$ if
  			$k_{i',j'}=k_{i,j}$, because since $f_0$ fixes the axis of $h$ pointwise,  $f_1$ acts by translation on $A(h)$ with translation length $h^{k_{i,j}-k_{i',j'}}$, and since $h$ is weakly-stable the equality follows. By construction, $k_{i,j}=k_{i',j'}$ implies $i=i'$ and $j=j'$. Then the fact that $f_{0}$ fixes a subsegment of $A(h)$ implies its conjugate $f=f_{0}^{w^{sc}_{i}(g,h)^{-1}}$ fixes a subsegment of $A(w^{sc}_{i}(g,h))$. Hence the claim is proved in case the intersection $f[h]_{i,j}\cap[h]_{i',j'}$ is coherent.  We next prove that such intersection is always coherent.
  			
  			\renewcommand{\k}[0]{k_{i,j}}
  			\newcommand{\kk}[0]{k_{i',j'}}
  			Suppose towards contradiction that the intersection $f[h]_{i,j-1}\cap[h]_{i',j'+1}$ is not coherent.
  			In this case $|f[h]_{i,j+\epsilon} \cap [h]_{i',j'-\epsilon}|\geq tl(h)$ for $\epsilon\in\{0,1,2\}$. Just as in the first case, weak stability of $h$ allows us to conclude that each of the elements
  			\begin{align*}
  				f_{0},\,\,\,g^{-1}h^{-\kk}f_{0}g^{-1},\,\,gf_{0}h^{\k}g
  			\end{align*}
 
  			belong to $E^{-}(h)$. From this we conclude that
  			\begin{align*}
  				c=(h^{-\kk}f_{0}^{2}h^{\k})^{g}=(g^{-1}h^{-\kk}f_{0}g^{-1})(gf_{0}h^{\k}g)\in E(h).
  			\end{align*}
  			Notice that $f_{0}^{2}\in K(A(h))$. Just as above, $\k\neq\kk$ implies $c$ is hyperbolic, which together with $g\nin A(h)$ contradicts $f_{1}\in A(h)$. Thus $\k=\kk$ and hence $(i,j)=(i',j')$. Now, the exact same argument applied to $(i,j-1)$ yields
  			$k_{i,j-1}=k_{i',j'+1}$, contradicting $(i,j)=(i',j')$.  Hence, the intersection $f[h]_{i,j-1}\cap[h]_{i',j'+1}$ had to be coherent to begin with.
  		\end{proof}
  		
  		As explained previously, Claim \ref{c: main_claim_section_4} together with our previous arguments imply that  the tuple $w^{sc}$ is weakly $N$-small cancellation. If additionally $h$ is stable, then Claim \ref{c: main_claim_section_4} ensures that $f=1$  and thus the tuple $w^{sc}$ is $N$-small cancellation.
  		
  	\end{proof}
  	
  	\begin{cor}\label{cor:uniformWSC}
  		For all $N,m \geq 1$, there exist two $m$-tuple of words $W^{sc}, W^{sc}{}'\in\F(x,y)^{m} $ with the following property. Suppose we are given an action of a group $G$ on a tree $T$, two elements $g,h$ such that the subgroup $\subg{g,h}$ acts irreducibly on $T$, $h$ is hyperbolic and stable (resp. weakly stable) and $d(A(h),A(g))< tl(h)$. Then either $W(g,h)^{sc}$ or $W(g,h)^{sc}{}'$ is $N$-small cancellation (respectively, weakly $N$-small cancellation). 
  	\end{cor}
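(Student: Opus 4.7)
My plan is to reduce Corollary~\ref{cor:uniformWSC} to Proposition~\ref{l: from good pair to small cancellation}, which produces small cancellation tuples from a pair $(g',h')$ satisfying: (i) $h'$ (weakly) stable hyperbolic, (ii) $g'\notin E(h')$, (iii) $A(g')\cap A(h')\neq\emptyset$ with coherent intersection in the hyperbolic case and $g'$ not inverting a subsegment of $A(h')$, and (iv) $tl(g')\leq tl(h')$. I will manufacture such a pair as $(g',h')=(U_i(g,h), h^K)$, where $U_1,U_2\in\F(x,y)$ are two fixed words and $K$ a fixed integer depending only on $N$ and $m$. Setting $W^{sc}(x,y)=w^{sc}(U_1(x,y),y^K)$ and $W^{sc}{}'(x,y)=w^{sc}(U_2(x,y),y^K)$, at least one of the two will satisfy (i)-(iv) for any admissible pair $(g,h)$ and hence be $N$-small cancellation by that proposition.

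Hypothesis (i) is automatic because $A(h^K)=A(h)$ and $h^K$ inherits (weak) stability from $h$ by Remark~\ref{r: stability and powers}. Hypothesis (ii) holds because irreducibility of $\subg{g,h}$ on $T$ forces $g\notin E(h)=E(h^K)$: otherwise $A(h)$ would be an invariant line for $\subg{g,h}$, contradicting irreducibility; a suitable $U_i$ keeps this non-membership. For hypothesis (iii), the bound $d(A(g),A(h))<tl(h)$ splits into two sub-cases. When $A(g)\cap A(h)\neq\emptyset$, either $g$ or $g^{-1}$ already meets $A(h)$ coherently in the hyperbolic case; and if $g$ is elliptic and inverts a subsegment of $A(h)$, replacing it by $g^{2}$ restores non-inversion. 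When $A(g)\cap A(h)=\emptyset$, Lemma~\ref{l: chiswell2} shows that $A(gh^{\pm 1})\cap A(h)$ is a segment of length exactly $tl(h)$, and one of the two sign choices $h^{\pm 1}$ yields coherence. The dichotomy $W^{sc}$ versus $W^{sc}{}'$ absorbs this sign/orientation alternative.

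The main obstacle is hypothesis (iv), namely $tl(U_i(g,h))\leq tl(h^K)$, uniformly without any a priori bound on $tl(g)$. Naive choices such as $U(x,y)=x$ or $xy$ give $tl(g')=\Theta(tl(g))$, which is unbounded. I plan to use a commutator-type word for $U_i$, whose translation length admits a bound of the form $tl(U(g,h))\leq C\cdot tl(h)$ for some constant $C$ independent of $g$, exploiting the distance hypothesis $d(A(g),A(h))<tl(h)$ and the weak stability of $h$ (which, since $g\notin E(h)$, forces $|A(g)\cap A(h)|\leq tl(h)$) to control the relevant geometric quantities. With $K\geq C$ chosen accordingly, hypothesis (iv) is enforced. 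This is the most delicate step and will require a careful geometric case analysis of the position of $A(g)$ relative to $A(h)$.

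With (i)-(iv) secured for at least one $i\in\{1,2\}$, Proposition~\ref{l: from good pair to small cancellation} applied to $(U_i(g,h),h^K)$ yields that $w^{sc}(U_i(g,h),h^K)$, which equals $W^{sc}(g,h)$ or $W^{sc}{}'(g,h)$, is $N$-small cancellation (respectively weakly $N$-small cancellation), as required.
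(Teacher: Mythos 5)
Your plan matches the paper's proof structurally: fix $h' = h^K$ (the paper uses $K = 4$), take $g' = U_i(g,h)$ for two candidate words $U_1, U_2$, and reduce to Proposition~\ref{l: from good pair to small cancellation}. The paper's choice of $U_i$ is $U_1(x,y) = yy^{x}$ and $U_2(x,y) = y(y^{-1})^{x}$ --- precisely the ``conjugation-type'' words you anticipate --- so the plan is headed in the right direction. Coherence of $A(g')$ with $A(h')$ (your hypothesis~(iii)) is then obtained from Lemmas~\ref{l: chiswell2}, \ref{l: chiswell3} and Remark~\ref{r: alb_remark_1}, by a case split on whether $A(h)$ and $A(h^g)$ are disjoint or not; and since $g'$ is a product of two conjugates of $h$, it is hyperbolic in every case, so your worry about $g$ inverting a subsegment of $A(h)$ and the proposed fix of replacing $g$ by $g^2$ become moot once the right word is in hand.

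That said, there are two genuine gaps. First, the ``commutator-type word'' step --- which is the entire technical content of the corollary --- is left as a plan: you do not exhibit $U_1$, $U_2$, or $K$, nor the translation-length estimates that make hypothesis~(iv) hold. Second, the auxiliary estimate you invoke to control the geometry is wrong. Weak stability of $h$ together with $g \notin E(h)$ gives $|A(h) \cap gA(h)| \leq tl(h)$, but this does \emph{not} bound $|A(g) \cap A(h)|$ when $g$ is hyperbolic with $tl(g) > tl(h)$. The deduction is valid for elliptic $g$, since then $A(g) \cap A(h) \subseteq A(h) \cap gA(h)$; but for hyperbolic $g$, translation along $A(g)$ eats up most of the overlap between $A(g)\cap A(h)$ and its $g$-image, and the resulting bound is only $|A(g) \cap A(h)| \leq tl(g) + tl(h)$, which is no control at all when $tl(g)$ is large. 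The quantity that weak stability actually bounds (and which the paper's argument uses) is $|A(h) \cap A(h^g)| = |A(h)\cap g^{-1}A(h)| \leq tl(h)$, not $|A(g) \cap A(h)|$. Your ``careful geometric case analysis'' will need to be built on the former.
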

  	
  	\begin{proof}
  		Let $W^{sc}=w^{sc}(yy^x, y^4)$ and $W^{sc}{}'=w^{sc}(y (y^{-1})^x, y^4)$ the two $M$-tuples, where $w^{sc}(x,y)$ is the $m$-tuple given in Lemma \ref{l: from good pair to small cancellation}.
  		We show that given $g$ and $h$ as in the statement of the Corollary, either the pair $g'=hh^g$, $h'=h^4$ or $g'=h (h^{-1})^g$, $h'=h^4$ satisfy the requirements of Lemma \ref{l: from good pair to small cancellation}.
  		
  		Let $h$ be hyperbolic and (weakly)-stable. Since by assumption $g,h$ acts irreducibly, we have that $g \notin E(h)$.

  		Assume first that $A(h)$ and $A(g)$ intersect. In this case, $h^g$ is hyperbolic and
  		\begin{itemize}
  			\item (i) either $A(h)$ and $A(h^g)$ are disjoint; or
  			\item (ii) $A(h^g) \cap A(h) \neq \emptyset$ and $A( (h^{\epsilon})^g)$ and $A(h)$ intersect consistently, for $\epsilon \in \{\pm 1\}$.
  		\end{itemize}
  		In case (ii), by Lemma \ref{l: chiswell3} it suffices to take $h'=h^4$ and $g'=h(h^{\epsilon})^g$ since $g'$ is hyperbolic, $g'\notin E(h)$ and $tl(g')=2tl(h) \le 4tl(h)=tl(h')$.
  		
  		If $A(h)$ and $A(g)$ are disjoint, by Remark \ref{r: alb_remark_1}, so are $A(h)$ and $A(h^g)$. In this case and in the Alternative (i), by Lemma \ref{l: chiswell2}, it suffices to take $h'=h^4$ and $g'=h h^g$ since $tl(g')=2tl(h) +2d(A(h),A(g)) < 4tl(h)=tl(h')$.
  		
  		Hence, the pair $h'$ and $g'$ satisfies the requirements of Lemma \ref{l: from good pair to small cancellation}, we have that $W^{sc}(g,h)=w^{sc}(g',h')$ or $W^{sc}{}'(g,h)=w^{sc}(g',h')$  is (weakly) $N$-small cancellation. 
  	\end{proof}
  	
  	 \begin{remark}\label{r: stable_and_sc}
  	 
  	 It follows from the previous corollary that a group $G$ acting irreducibly on a tree has   stable elements if and only if it has an $m$-tuple of small cancellation elements for any $m\geq 1$.  
  	 
  	 Indeed,  let $h$ be a  $\lambda$-stable element of $G$. Let $g\in G$ be such that $\langle g, h \rangle$ acts irreducibly on $T$ and let $d=d(A(h),A(g))$. Then $h^d$ is a stable element and $h^d$ and $g$ satisfy the requirements of Corollary \ref{cor:uniformWSC}. Hence, $G$ has small cancellation elements. Assume now that $G$ has an $m$-tuple of $N$-small cancellation elements for all $m\geq 1$. Let $m=1$ and let $a$ be an $N$-small cancellation element with respect to a basepoint $*$. By Remark \ref{r: alb_remark_3}, $a^{-1}$ is also $N$-small cancellation. Then $a$ is $\frac{2}{N}$-stable. Indeed, if $|A(a) \cap g A(A)| \ge \frac{2}{N} tl(a)$, then either $|[*, a*] \cap g[*,a*]|$ or $|[*, a^{-1}*] \cap g[*,a^{-1}*]|$ is greater than or equal to $\frac{1}{N}$ and since both $a$ and $a^{-1}$ are $N$-small cancellation, we deduce that $g=1$ and so in particular $g\in E(a)$; hence $a$ is $\frac{2}{N}$-stable.
    \end{remark}

	\section{Uniform small cancellation over $c$ from stable elements} \label{sec:uniform small cancellation}

In Section \ref{sec:formal solutions}, we determined a sufficient condition for the existence of formal solutions relative to a set of diophantine conditions for $\forall \exists$-formulas when the group acts on a tree, see Theorem \ref{l: small_cancellation_parameters}. This condition requires the existence of small cancellation $m$-tuples that dominate a parameter $c$.

In the previous section, we proved that the existence of a (weakly) stable element in a group acting irreducibly on a tree is sufficient to obtain in a uniform way (weak) small cancellation $m$-tuples. Similarly, in this section, we describe in a uniform way how to obtain small cancellation tuples over $c$ given two elements $a$ and $b$ of a group $G$ acting on a tree such that the subgroup $\langle a, b\rangle$ has an irreducible action and with the assumption that hyperbolic elements of $G$ are also stable, see Theorem \ref{l: general small cancellation}.

This uniform description of small cancellation elements will be essential to prove the quantifier reduction in Section \ref{sec:quantifier reduction}, as well as in the preservation of the non-trivial positive theory under extensions in Section \ref{sec:preservation extensions}. 

\bigskip

  	We begin with two preliminary lemmas.
  	
  	\begin{lemma}
  		\label{l: product of conjugates}Suppose that $g$ is an $N$-small cancellation element, with $N\geq 1$. 
  		Then for any non-zero integer $k$ and any $c\in G$, the element $g^{k}(g^{k})^{c}$ is hyperbolic with
  		\begin{align}\label{e: product_of_conjugates}
  			(2k-1)tl(g)+d(A(g),c A(g))\leq tl(g^{k}(g^{k})^{c})\leq 2k\,tl(g)+d(A(g),c A(g))
  		\end{align}
  	\end{lemma}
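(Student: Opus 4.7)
Write $g' = (g^k)^c = c^{-1}g^kc$, so $A(g')=c^{-1}A(g)$ and $tl(g')=k\cdot tl(g)$, and set $D=d(A(g),cA(g))=d(A(g),A(g'))$. The translation length of $g^kg'$ is then determined by the configuration of $A(g)$ and $A(g')$, which is covered by the trichotomy in Remark \ref{r: alb_remark_2}. The plan is to run through this case analysis and then use the $N$-small cancellation of $g$ to rule out (or strongly bound) the degenerate possibilities.

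I would first handle the case $c\notin E(g)$, so that $A(g)\neq A(g')$.
\begin{itemize}
\item If $A(g)\cap A(g')=\emptyset$, Lemma \ref{l: chiswell2} gives directly that $g^kg'$ is hyperbolic with $tl(g^kg')=2k\cdot tl(g)+2D$.
\item If the axes meet coherently, Lemma \ref{l: chiswell3} gives $tl(g^kg')=2k\cdot tl(g)$, and $D=0$.
\item Otherwise the axes share a segment $I=A(g)\cap A(g')$ along which their orientations are incompatible; Remark \ref{r: alb_remark_2}(3) then gives $tl(g^kg')=2k\cdot tl(g)-2|I|$, provided $|I|<k\cdot tl(g)$.
\end{itemize}

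In the last subcase I would bound $|I|$ using the small cancellation hypothesis: if $|I|$ were comparable to $tl(g)$, then, after composing $c$ with a suitable power of $g$ (which translates one endpoint into position without changing $cA(g)$), the segment $[*,g*]\subset A(g)$ and the translate $c'[*,g*]^{-1}$ would overlap in length at least $\tfrac{1}{N}tl(g)$, violating condition (c) of Definition \ref{d: small cancellation} (this condition forces $\epsilon=1$ and $c'=1$, whereas here $\epsilon=-1$). This gives $|I|\leq tl(g)/2$, which is the estimate needed to match the stated lower bound.

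Next I would dispatch the case $c\in E(g)$, so that $A(g')=A(g)$ and $D=0$. If $c$ preserves orientation on $A(g)$, then $g'$ acts on $A(g)$ as a translation of length $k\cdot tl(g)$ in the same direction as $g^k$, so $g^kg'=g^{2k}\cdot(\text{identity on }A(g))$ is hyperbolic of translation length $2k\cdot tl(g)$. The orientation-reversing subcase $c\in E^-(g)$ is the only genuinely dangerous one, since an inversion would cause $g^k(g^k)^c$ to fix $A(g)$ pointwise; this subcase is excluded by the small cancellation hypothesis by the same argument as above, replacing $c$ by $g^nc$ for an appropriate $n\in\mathbb Z$ so that the overlap $[*,g*]\cap (g^nc)[*,g*]^{-1}$ exceeds $\tfrac{1}{N}tl(g)$.

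Assembling the cases, $g^kg'$ is always hyperbolic, and its translation length lies between $2k\cdot tl(g)-2|I|$ (non-coherent intersection, with $|I|\leq tl(g)/2$ from the paragraph above) and $2k\cdot tl(g)+2D$ (disjoint axes), which is precisely the claimed two-sided bound. The main obstacle is the small cancellation estimate on $|I|$ together with the exclusion of $c\in E^-(g)$: both reduce to producing an overlap $|[*,g*]\cap c'[*,g*]^{-1}|\geq\tfrac{1}{N}tl(g)$ for a suitable $c'=g^nc$, and the flexibility provided by the $g$-action on $A(g)$ is what makes this detection possible.
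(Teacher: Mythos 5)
Your proposal is correct and follows essentially the same route as the paper: split by the configuration of $A(g)$ and $cA(g)$ (disjoint, coherent intersection, non-coherent intersection), apply Remark \ref{r: alb_remark_2} in each case, and invoke the $N$-small cancellation hypothesis to control the overlap in the non-coherent case. The only cosmetic differences are that you re-derive the intersection bound directly from condition~(c) of Definition~\ref{d: small cancellation} where the paper simply cites Remark \ref{r: alb_remark_3}(4), and you make explicit the analysis of $c\in E(g)$ (including ruling out $c\in E^{-}(g)$) that the paper folds into the exception $c\in\langle g\rangle$.
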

  	\begin{proof}
  		Denote $g_{0}=g^{k}$. By Remark \ref{r: alb_remark_3}, $g$ is hyperbolic, and thus so is $g_0$, $g_0^c$ and $g_0 g_0^c$.
  		Now $tl(g_0)=k\cdot tl(g)$,  $A(g_0)= A(g)$, and $A(g_0^c) = A(g^c)= c^{-1}A(g)$. If the intersection  $A(g)\cap A(g^{c})$ is empty, then  by Remark \ref{r: alb_remark_2}, $$
  		tl(g_{0}g_{0}^{c})=2tl(g_{0})+d(A(g_0),A(g_0^{c})) =2k tl(g)+d(A(g),A(g^{c}))
  		$$
  		and so \eqref{e: product_of_conjugates} holds.
  		By Remark \ref{r: alb_remark_2}, if $A(g)$ and $A(g^{c})$ intersect coherently, then $tl(g_{0}g_{0}^{c})= 2k (tl(g))$ and  $d(A(g), A(g^c)) = 0$, hence \eqref{e: product_of_conjugates} holds as well.
  	Suppose $A(g)$ and $A(g^{c})$ have non-empty intersection and this intersection is non-coherent. Since $g$ is $N$-small cancellation Remark \ref{r: alb_remark_3}  ensures that $|A(g)\cap A(g^{c})| = |A(g) \cap c^{-1}A(g)| \leq \frac{tl(g)}{2N}\leq \frac{tl(g)}{2}$, unless $c \in \langle g \rangle$, in which case $A(g^c)=A(g)$ and the intersection $A(g)\cap A(g^c)$ is coherent. Now by Remark \ref{r: alb_remark_2} \begin{align*}tl(g_{0}g_{0}^{c})=&2tl(g_{0})-2|A(g_0)\cap A(g_0^{c})| =\\ &2k(tl(g))-2|A(g)\cap A(g^{c})| \geq 2k(tl(g)) - tl(g),\end{align*} and \eqref{e: product_of_conjugates} follows since in this case $d(A(g),cA(g)) = 0$.
  	\end{proof}

  	\begin{lemma}
  		\label{l: simple domination}
  		Let $x,y,z$ be variables and define $w_1^{dm}=xx^{z}$, $w_2^{dm} = yy^{z}$,  $w^{dm}=(w_1^{dm}, w_2^{dm}) = (xx^{z},yy^{z})\in\F(x,y,z)^{2}$ (`dm' stands for `domination'). Then there is some $N$ such that the following holds.
  		Let $G$ be a group acting on a real tree $T$, $g,h,c\in G$, and let $*\in T$ be such that the pair $(g,h)$ is $N$-small cancellation over $c$ with respect to $*$. Denote $a=(w^{dm}_{1}(g,h))^2$, $b=(w^{dm}_{2}(g,h))^2$. Then both $a$ and $b$ are hyperbolic, $A(a)\neq A(b)$,  and there exists $\gamma\in \{a, b\}$ such that $tl(\gamma)\geq\max\{d(c\cdot*,*),tl(g),tl(h)\}$.
  	\end{lemma}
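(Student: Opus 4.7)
The statement has three parts: (a) $a,b$ are hyperbolic; (b) $A(a)\neq A(b)$; (c) some $\gamma\in\{a,b\}$ satisfies $tl(\gamma)\geq\max\{d(c\cdot *,*),tl(g),tl(h)\}$. Parts (a) and (c) are direct consequences of Lemma~\ref{l: product of conjugates}. Applying that lemma with $k=1$ to the $N$-small cancellation element $g$, we find $gg^{c}$ is hyperbolic with $tl(gg^{c})\geq tl(g)+d(A(g),cA(g))$, so $a=(gg^{c})^{2}$ is hyperbolic and $tl(a)\geq 2tl(g)+2d(A(g),cA(g))$. Setting $L:=\min\{tl(g),tl(h)\}$ and using $d(*,A(g))\leq L/(2N)$ from Remark~\ref{r: alb_remark_3}, the triangle inequality gives $d(A(g),cA(g))\geq d(c\cdot *,*)-L/N$. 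Plugging in yields
\[
tl(a)\ \geq\ 2tl(g)+2d(c\cdot *,*)-\tfrac{2L}{N},
\]
which, for $N$ large enough (also using $(N{+}1)/N$-comparability of $tl(g)$ and $tl(h)$ and the over-$c$ bound $d(c\cdot *,*)<d(*,g\cdot *)/N$), exceeds each of $tl(g)$, $tl(h)$ and $d(c\cdot *,*)$. Thus $\gamma=a$ suffices.

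For (b), argue by contradiction and assume $A(a)=A(b)=:A$. Remark~\ref{r: axis_of_product} gives that the convex intersections $J_g:=A\cap A(g)$ and $J_h:=A\cap A(h)$ have lengths at least $tl(g)$ and $tl(h)$ respectively. If $A=A(g)$, then $J_h\subseteq A(g)\cap A(h)$ has length at least $tl(h)>L/N$ (for $N\geq 2$), contradicting Remark~\ref{r: alb_remark_3}(4); the case $A=A(h)$ is symmetric. So $A$ differs from both $A(g)$ and $A(h)$, and both $J_g$, $J_h$ are finite convex segments in the line $A$. The edge case $c=1$ is handled separately: then $a=g^{4}$, $b=h^{4}$, $A(a)=A(g)$, $A(b)=A(h)$, and small cancellation directly gives $A(g)\neq A(h)$.

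The key step is the structural description of $A=A(gg^{c})$ provided by Lemmas~\ref{l: chiswell2} and~\ref{l: chiswell3}: traversing $A$ in the $gg^{c}$-translation direction, it is an alternating concatenation of non-bridge pieces, each a subsegment of some translate $(gg^{c})^{k}A(g)$ or $(gg^{c})^{k}A(g^{c})$ of length $\leq tl(g)+|A(g)\cap A(g^{c})|\leq tl(g)+L/N$, separated (in the disjoint case) by bridges of length $d(A(g),cA(g))\leq 2d(*,A(g))+d(c\cdot *,*)=O(L/N)$. For every $k$ one has $(gg^{c})^{k}\notin\langle g\rangle$ (else $A((gg^{c})^{k})=A(g)$ would force $A=A(g)$, already excluded), and also $(gg^{c})^{k}A(g)\neq A(h)$ (else the full intersection $A(h)\cap A(h)$ would violate the small-cancellation bound $|A(h)\cap fA(g)|\leq L/N$ of Remark~\ref{r: alb_remark_3}(4)). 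Therefore, that same bound yields $|J_h\cap P|\leq L/N$ for every non-bridge piece $P$ of $A$. Since $J_h$ is a single connected sub-interval of $A$, if it strictly contained an entire non-bridge piece $P$ in its interior, then $P\subseteq J_h\subseteq A(h)$ would give $|P|\leq L/N$, contradicting $|P|\geq tl(g)-L/N>L/N$ (for $N\geq 3$). Hence $J_h$ spans at most two non-bridge pieces (one at each end of its span) with at most one intermediate bridge, yielding $|J_h|\leq 2(L/N)+O(L/N)=O(L/N)$. But $|J_h|\geq tl(h)\geq \tfrac{N}{N+1}\,tl(g)\geq L/2$, and the two bounds are incompatible for $N$ sufficiently large, completing the proof by contradiction.

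The main obstacle is carrying out this structural analysis uniformly in the three sub-cases of how $A(g)$ and $A(g^{c})$ intersect (disjoint, coherent, non-coherent), where the piece/bridge lengths must be bookkept carefully, and verifying the two auxiliary non-coincidences $(gg^{c})^{k}\notin\langle g\rangle$ and $(gg^{c})^{k}A(g)\neq A(h)$ that legitimize the use of the small-cancellation bound on each piece of $A$.
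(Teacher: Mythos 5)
Your proposal reaches the right conclusions but diverges from the paper's proof in two notable ways, one of which contains a slip.

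For the domination claim (c), the asserted ``triangle inequality'' $d(A(g),cA(g)) \geq d(c\cdot *,*) - L/N$ is false — in fact it is opposite to the correct estimate $d(A(g),cA(g)) \leq d(c\cdot *,*) + L/N$, which you yourself use (correctly) in part (b). Indeed, if $A(g)$ and $cA(g)$ share a point, the left side is $0$ while the right side can be positive. Fortunately the flaw is harmless: your final deduction only needs $tl(a) \geq 2\,tl(g)$ (from Lemma \ref{l: product of conjugates} with $k=1$, discarding the nonnegative bridge term) together with the over-$c$ bound $d(c\cdot *,*) < \tfrac1N\min\{d(*,g*),d(*,h*)\} < L$ and $(N{+}1)/N$-comparability; the quantity $2tl(g) + 2d(c\cdot*,*) - 2L/N$ is a \emph{weaker} lower bound than $2tl(g)$ once $d(c\cdot*,*) < L/N$, so the downstream comparison survives. (As an aside, this observation also short-circuits the paper's own more involved case analysis for $|J|\geq 2M$: the over-$c$ hypothesis already forces $d(*,c^{-1}*)<L<2\max\{tl(g),tl(h)\}$.)

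For the distinct-axes claim (b), your argument is genuinely different from the paper's and is the more interesting part of the proposal. The paper argues model-theoretically: if $A(a)=A(b)$ then both translate along the common line and hence commute on it, so $[gg^c,hh^c]$ fixes a subsegment of $A(g)$ of length $\geq L/N$, whence (non-weak) small cancellation gives $[gg^c,hh^c]=1$; since the positive $\forall\exists$-sentence $\forall x\forall y\,\exists z\,[xx^z,yy^z]=1$ fails in a free group, Theorem \ref{l: small cancellation solutions} produces an $N$ for which no such $c$ can exist once $(g,h)$ is $N$-small cancellation. Your argument instead decomposes $A = A(gg^c)$ directly into $\langle gg^c\rangle$-translates of short pieces of $A(g)$, $A(g^c)$ and bridges, bounds each piece's overlap with $A(h)$ via Remark \ref{r: alb_remark_3}(4) after verifying the auxiliary non-coincidences $(gg^c)^k\notin\langle g\rangle$ and $(gg^c)^k A(g)\neq A(h)$, and concludes the connected intersection $J_h = A\cap A(h)$ cannot contain a whole piece, forcing $|J_h| = O(L/N)$ against $|J_h|\geq tl(h)\geq L$. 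This is a correct, more elementary and self-contained route: it avoids invoking the (heavy) Theorem \ref{l: small cancellation solutions} as a black box, at the cost of the piece/bridge bookkeeping across the three sub-cases (disjoint, coherent, incoherent) which you rightly flag yourself. Both approaches crucially need the full (non-weak) $N$-small cancellation hypothesis, as the statement requires.
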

  	
  	\begin{proof}
        Let $(g,h)$ be an $N$-small cancellation pair  over $c \in G$ with respect to a base point $*\in T$, and consider the elements $a=(gg^c)^2$ and $b=(hh^c)^2$. These elements are hyperbolic due to Lemma \ref{l: product of conjugates}. 
  		
  		Write $*'=c^{-1}*$ and let $J=[*,*']$. First we want to check that
  		\begin{align}\label{e: eqn_simple_domination}
  			|J|= d(*,*')\leq	2\max\{tl(g),tl(h)\},
  		\end{align}
  		since if such inequality holds then to prove the lemma it suffices to show that $A(a)\neq A(b)$ as the inequalities $\max\{tl(a),tl(b)\}\geq \max\{2tl(g), 2tl(h)\} \geq \max\{d(*, *'), tl(g), tl(h)\}$ are guaranteed by the previous Lemma \ref{l: product of conjugates} and by \eqref{e: eqn_simple_domination}, respectively.

  		Let $M = \max\{tl(g),tl(h)\}$ and let $L=\min\{tl(g), tl(h)\}$. Assume that $|J|\geq 2M$, and let $N'=N/4$.  By Remark \ref{r: alb_remark_3}, $(a,b)$ is $N'$-small cancellation over $c$, and by this same remark we have $d(*,A(g))\leq\frac{L}{2N'}$ and also $d(*,A(h))\leq\frac{L}{2N'}$. Hence $d(*',A(g^c))\leq\frac{L}{2N'}$ and $d(*',A(h^c))\leq\frac{L}{2N'}$, since $d(*',A(g^c)) = d(c^{-1} *, c^{-1}A(g))$, and similarly for the second inequality.
  		
  		It follows (see Figure \ref{f: 14_oct_19}) that there exists some subsegment $J'\subset J$ of length at least $|J|-\frac{L}{N'}$ which is entirely contained in the convex closure of
  		$A(g)\cup A(g^{c})$ as well as entirely contained in the convex closure of $A(h)\cup A(h^{c})$. This segment can be constructed explicitly as
  		$$
  		J'=J \cap (A(g)\cup A(g^c) \cup b(g,g^c))
  		\cap (A(h)\cup A(h^c) \cup b(h,h^c)),
  		$$
  		where $b(g,g^c)$ (resp. $b(h,h^c)$) is the bridge between $A(g)$ and $A(g^c)$ (resp. $A(h)$ and $A(h^c)$).
  		
  		Given $\alpha \in\{g,h\}$ we can subdivide
  		$J'$ in three consecutive subsegments $J^{\alpha}_{1},J^{\alpha}_{2},J^{\alpha}_{3}$, where $J^{\alpha}_{1}\subset A(\alpha),J^{\alpha}_{3}\subset A(\alpha^{c})$ and $J^{\alpha}_{2} \subset b(\alpha,\alpha^c)$ in case $d(A(\alpha),A(\alpha^{c}))>0$ and $J^{\alpha}_{2}$ is degenerate otherwise. The segment $J_1^\alpha$ is degenerate if  the intersection $A(\alpha) \cap J'$ is degenerate, and similarly for $J_3^\alpha$.
  		
  		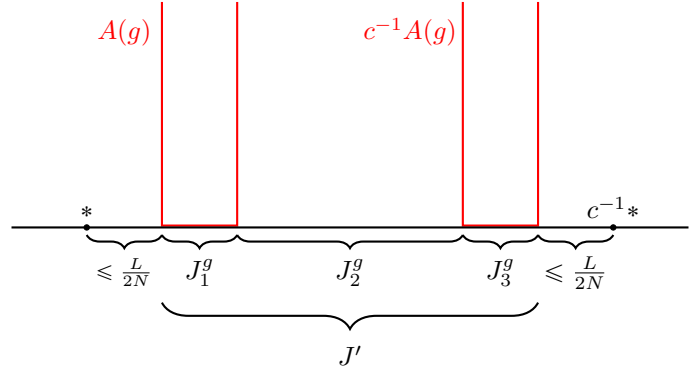
\begin{figure}[ht]
  			\begin{center}
  				\begin{tikzpicture}
  					\tikzset{near start abs/.style={xshift=1cm}}
                \filldraw (0,0) circle (1pt) node[above] {$*$};
                \filldraw (7,0) circle (1pt) node[above] {$c^{-1}*$};
                \draw[color=red, thick] (1,3) -- (1,0.025) -- (2,0.025) -- (2,3);
                \draw[color=red, thick] (5,3) -- (5,0.025) -- (6,0.025) -- (6,3);
                \node[color=red] at (0.5, 2.6) {$A(g)$};
                \node[color=red] at (4.3, 2.6) {$c^{-1}A(g)$};
                \draw[thick] (-1,0) -- (8,0);
  					\draw [thick,decorate,decoration={brace,amplitude=5pt}, ,yshift=-0.1cm]
  					(1,0) -- (0,0) node [black,midway, yshift=-0.5cm]
  					{\footnotesize $\le \frac{L}{2N}$};
  						\draw [ thick,decorate,decoration={brace,amplitude=5pt}, yshift=-0.1cm]
  					 (2,0) -- (1,0) node [black,midway, yshift=-0.5cm]
  					{$J_1^g$};
  					\draw [ thick,decorate,decoration={brace,amplitude=5pt}, ,yshift=-0.1cm]
  					(5,0) -- (2,0) node [black,midway, yshift=-0.5cm]
  					{$J_2^g$};
  					\draw [ thick,decorate,decoration={brace,amplitude=5pt}, ,yshift=-0.1cm]
  					(6,0) -- (5,0) node [black,midway, yshift=-0.5cm]
  					{$J_3^g$};
  					\draw [ thick,decorate,decoration={brace,amplitude=5pt}, ,yshift=-0.1cm]
  					(7,0) -- (6,0) node [black,midway, yshift=-0.5cm]
  					{$\leq \frac{L}{2N}$};
  					\draw [ thick,decorate,decoration={brace,amplitude=10pt}, yshift=-1cm]
  					(6,0) -- (1,0) node [black,midway, yshift=-0.7cm]
  					{$J'$};
  				\end{tikzpicture}
  			\end{center}
  			\caption{We take $J'\subseteq J$ to be the largest segment contained in the convex closures of $A(g) \cup A(g^c)$ and of $A(h) \cup A(h^c)$ (the latter is not shown in the figure). In this picture we have $J_1{}^g=\emptyset$, $J_2{}^g = J'$, and $J_3{}^g =\emptyset$. } \label{f: 14_oct_19}
  		\end{figure}
  		
  		Assume we have proved there exists $\alpha\in\{g,h\}$ such that $|J^{\alpha}_{2}|>\frac{|J|}{2}-L$. Then from our assumption $|J|\geq 2M$ we have that $|J^{\alpha}_{2}|> 0$ and so $J^{\alpha}_2$ is non-degenerate. Moreover from $d(A(\alpha), A(\alpha^c)) \geq |J_2^\alpha|$ we obtain  $A(\alpha)\cap A(\alpha^c)=\emptyset$. We now have  \begin{align*}
  			&tl(\alpha \alpha^{c})\geq  2d(A(\alpha),A(\alpha^{c}))+  2tl(\alpha)\\
  		\geq &|J|  -2L + 2tl(\alpha) \geq |J|\geq \max\{d(*, c*), tl(g), tl(h)\},\end{align*}
  		where  the first inequality is due to Lemma \ref{l: chiswell2};  the second last inequality follows from $tl(\alpha) \geq L$;  and the last inequality is due to our assumption that $|J|=d(*, c*) \geq 2M$. Thus in this case we will have proved the last part of the lemma.

  		In views of this discussion, our next goal is to show that there exists $\alpha\in\{g,h\}$ such that $|J^{\alpha}_{2}|\geq\frac{|J|}{2}-L$.
  		Assume for the sake of contradiction that
  		$|J^{g}_{2}|,|J^{h}_{2}|<\frac{1}{2}|J|-L$. This implies that the set $X=J'\setminus (J^{g}_{2}\cup J^{h}_{2})$
  		has size at least
  		$$
  		|X|\ge |J'|-|J^{g}_2|-|J^{h}_2| \ge \left(|J|-\frac{L}{N'}\right) - 2\left(\frac{|J|}{2}-L\right) = \frac{L(2N'-1)}{N'}.
  		$$
  		Denote $\mc{J}= \{J_1^{g} \cup J_3^{g} \cup J_1^h \cup J_3^h\}$. Since   $X =\cup_{I \in \mc{J}} I$, one of the segments from $\mc{J}$ must have size at least $\frac{2L(N'-1)}{4N'} = \frac{}.$
  		Furthermore, given that $J'\subset (A(g)\cup A(g^c) \cup b(g,g^c))
  		\cap (A(h)\cup A(h^c) \cup b(h,h^c))$, by the definition of $X$ we have that
  		$$
  		X\subset (A(g)\cup A(g^c))
  		\cap (A(h)\cup A(h^c))
  		$$
  		and so each segment from $\mc{J}$ is contained in a set of the form $A(g^{c^{\epsilon}})\cap A(h^{c^{\delta}})$, where $\epsilon,\delta\in\{0,1\}$, and one of them has length greater than or equal to  (recall that $N'=N/4$) $$\frac{L(2N'-1)}{4N'}\geq \frac{L}{2N'}\geq \frac{L}{N/2}.$$
  		Since $(a,b)$ is $N$-small cancellation this contradicts Item 4 of Remark \ref{r: alb_remark_3}.
 
  		Hence we have proved that $tl(d_{i})\geq\max\{d(c\cdot*,*),tl(g),tl(h)\}$ for some $i=1,2$.
  		
  		All is left is to show is that, for $N$ big enough, the two elements $gg^{c}$ and $hh^{c}$ have different axis. Assume this was not the case. Then the commutator element $[gg^{c},hh^{c}]$ would fix the axis $A(gg^c)= A(hh^c)$ point-wise.

  		Since $|A(g)\cap A(gg^c)| =tl(g) \geq \frac{L}{N}$, we have that $[gg^{c},hh^{c}]$ fixes a segment of $A(g)$ of length greater than $\frac{L}{N}$ and so the $N$-small cancellation property implies that $[gg^{c},hh^{c}]=1$. Now, consider the sentence:
  		\begin{align*}
  			\forall x\forall y\exists z\,\,[xx^{z},yy^{z}]=1.
  		\end{align*}
  		This sentence is false in a non-abelian free group $\mbb{F}$. Indeed take $x$ and $y$ to be any two elements $\alpha, \beta$ in a  basis $S$ of $\mbb{F}$. For  $\alpha\alpha^{z}$ and $\beta\beta^{z}$ to commute they must have some common non-trivial power. However, the epimorphism onto an infinite cyclic group given by trivializing all elements in $S$ except $\alpha$ sends $\beta\beta^z$ to $1$ and $\alpha\alpha^{z}$ to a non-trivial element, and so there is no $z$ such that $\alpha\alpha^{z}$ and $\beta\beta^{z}$ commute. By virtue of Theorem \ref{l: small cancellation solutions}, this means that there exists some $N>0$ such that if $(g,h)$ is a weakly $N$-small cancellation pair, then there exists no  $c\in G$ such that $[gg^c, hh^c]=1$. We conclude that the axes $A(gg^c)$ and $A(hh^c)$ are different.
  	\end{proof}
  	
  	\begin{thm}
  		\label{l: general small cancellation} Given positive integers $k,N,m$ and a real number $\lambda\geq 3$, there is a finite collection $\mathcal{W}\subseteq\F(x,y,z_{1},z_{2},\dots z_{k})^{m}$ of $m$-tuples of words in the normal closure of $x,y$ with the following property:
  		
  		Suppose we are given a group $G$ and a tree $T$ where $G$  acts in a way such that every hyperbolic element of $G$ is $\lambda$-stable. Let $(a,b)\in G^{2},c\in G^{k}$ be such that $\subg{a,b}$ acts irreducibly on $T$ and both $a$ and $b$ have roots of order $\geq 10\lambda$.
  		
  		Then there is $u\in\mathcal{W}$ such that the tuple $u(a,b,c)$ is $N$-small cancellation over $c$. 
  	\end{thm}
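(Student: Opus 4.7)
The strategy is to build the $m$-tuple in three stages: first distill a stable hyperbolic element $h_0$ from $(a,b)$ alone; then, through an induction on the coordinates of $c$ driven by Lemma \ref{l: simple domination}, amplify $h_0$ into a stable element $h_k$ whose translation length dominates every $d(c_j\cdot *, *)$; and finally feed $h_k$ together with a suitable companion into Corollary \ref{cor:uniformWSC} to obtain the full $m$-tuple. The finite collection $\mathcal{W}$ will arise from a bounded case-analysis performed along the way.

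For Stage 1, Lemma \ref{l: acylindrical pairs} applies directly: $a$ and $b$ are each elliptic or a $10\lambda$-th power of a $\lambda$-stable element, and $\subg{a,b}$ acts irreducibly, so $(a,b)$ is acylindrical. Lemma \ref{l: multiple conjugation} then yields the hyperbolic element $w^{cn}(a,b)$ with translation length at least $\max\{tl(a),tl(b)\}$, with $a\notin E(w^{cn}(a,b))$, and (when $a$ is hyperbolic) with $A(a)$ meeting its axis coherently. Since every hyperbolic element of $G$ is $\lambda$-stable by hypothesis, the fixed power $h_0:=w^{cn}(a,b)^{\lceil\lambda\rceil}$ is stable, and it is expressed by a word in $a,b$ lying in the normal closure of $\{x,y\}$.

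Stage 2 is an induction on $j=1,\dots,k$. At step $j$ we maintain a stable hyperbolic element $h_{j-1}$, written as a word in $a,b,c_1,\dots,c_{j-1}$ inside the normal closure of $\{x,y\}$, together with a companion element $a'_{j-1}$ (also so expressed) for which $\subg{h_{j-1},a'_{j-1}}$ acts irreducibly and $d(A(a'_{j-1}),A(h_{j-1}))<tl(h_{j-1})$. To incorporate $c_j$: if $c_j\in E(h_{j-1})$, then $d(c_j\cdot *,*)$ is already bounded by $tl(h_{j-1})$ plus a uniform constant and a fixed power of $h_{j-1}$ suffices; otherwise Corollary \ref{cor:uniformWSC} applied to $(h_{j-1},c_j)$ produces an $N$-small cancellation pair $(\gamma_1,\gamma_2)$ in $G$, and Lemma \ref{l: simple domination} applied to $(\gamma_1,\gamma_2)$ with parameter $c_j$ yields an element $\gamma$ of translation length at least $d(c_j\cdot *,*)$. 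A further application of $w^{cn}$ and a fixed power combines $h_{j-1}$ and $\gamma$ into the next stable element $h_j$ of translation length at least $N\cdot\max_{i\leq j}d(c_i\cdot *,*)$, and the companion $a'_j$ is updated in parallel using the stock of elements already at our disposal.

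Stage 3 is then a single invocation of Corollary \ref{cor:uniformWSC} on $(h_k,a'_k)$: it returns an $m$-tuple of $N$-small cancellation elements, and the domination $tl(h_k)>N\cdot d(c_j\cdot *,*)$ for all $j$ upgrades this to $N$-small cancellation over $c$. The set $\mathcal{W}$ is the finite collection of word tuples obtained by running through every branch of the case-analysis tree: at each of the boundedly many inductive steps there are finitely many alternatives (whether $c_j\in E(h_{j-1})$, whether the relevant axes meet coherently, whether $a,b$ are elliptic or hyperbolic), while all exponents and conjugations are chosen uniformly in $k,N,m,\lambda$. The principal technical obstacle lies in Stage 2: preserving stability of $h_j$ and irreducibility of $\subg{h_j,a'_j}$ across all $k$ amplification steps requires a careful choice of companion at each stage, since a naive use of $w^{cn}$ or of conjugation by $c_j$ may collapse $a'_j$ into the elementary closure $E(h_j)$ of the new stable element, invalidating the hypotheses needed at step $j+1$. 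The remedy is to carry along an auxiliary element drawn from $a,b$ with controlled axis relative to $A(h_0)$ and to exploit that the elementary closures $E(h_j)$ remain proper subgroups, so that suitable fixed words in $a,b,c_1,\dots,c_j$ systematically land outside of them.
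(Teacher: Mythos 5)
Your overall plan — distill a stable element from $(a,b)$, then iterate a domination step $k$ times driven by Lemma~\ref{l: simple domination}, then output an $m$-tuple — shares the paper's inductive skeleton, but the execution diverges and leaves three genuine gaps.

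First, the dichotomy in Stage~2 is wrong in the branch $c_j\in E(h_{j-1})$. Membership in $E(h_{j-1})$ only means $c_j$ preserves $A(h_{j-1})$ set-wise; a hyperbolic $c_j$ translating along that axis has $d(c_j\cdot *,*)\geq tl(c_j)$, which is unbounded and cannot be dominated by any \emph{fixed} power of $h_{j-1}$. The paper avoids any case split on the position of $c_j$'s axis precisely because no such split is available: it always feeds the previous pair $(e_1,e_2)$ into the words $w^{dm}_i(\cdot,\cdot,c_k)=(\cdot\,\cdot^{c_k})$, and Lemma~\ref{l: simple domination} produces domination no matter how $c_k$ sits relative to $A(e_1),A(e_2)$.

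Second, Corollary~\ref{cor:uniformWSC} cannot be applied to $(h_{j-1},c_j)$. Its hypotheses require $\subg{h_{j-1},c_j}$ to act irreducibly and, crucially, $d(A(h_{j-1}),A(c_j))<tl(h_{j-1})$. Neither follows from $c_j\notin E(h_{j-1})$; the second can fail badly exactly when $c_j$ is ``far away,'' which is the case you must handle. For this reason the paper never invokes Corollary~\ref{cor:uniformWSC} in this proof: it works at the level of Lemma~\ref{l: multiple conjugation} and Proposition~\ref{l: from good pair to small cancellation}, after explicitly verifying acylindricity of $(d_1^p,d_2^p)$ via the $FS\Rightarrow AI$ implication of Lemma~\ref{l: forms of stability}. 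Your ``further application of $w^{cn}$ and a fixed power'' similarly presupposes an acylindrical pair that is not established.

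Third, and most importantly, the statement you must prove is $N$-small cancellation \emph{over} $c$, which is a condition on a single base point $*$ satisfying $\min_i d(*,u_i(a,b,c)*)>N\,d(*,c_j*)$ for all $j$ simultaneously. Your Stage~2 tracks translation lengths $tl(h_j)$, which are base-point-free, and asserts $tl(h_j)\geq N\max_{i\leq j}d(c_i\cdot *,*)$ without specifying how the base point $*$ is coherently chosen across all $j$ and then matched to the final tuple. This is precisely where the paper's proof does its heaviest lifting: after building $u^{N,k}$ (which only controls $tl$ against $tl(c_j)$), it passes to $v^{N,k}$ by adjoining all products $z_iz_j$, invokes Lemma~\ref{l: one basepoint} to obtain a vertex $*_0$ near which every $c_j$ has small displacement, and then runs a case analysis on whether $*_0$ is within distance $L$ of $A(f_1)\cap A(f_2)$ — yielding the two families $w^{sc,3N}(v^{N,k})$ and $w^{sc,N}(\alpha(v^{N,k}_i,z_j),\alpha((v^{N,k}_i)^2,z_j))$. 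Your proposal has no counterpart to any of this, and since this is the content of ``over $c$,'' the argument as written does not establish the theorem. (You do flag base-point drift as ``the principal technical obstacle'' and gesture at a remedy, but the gesture is not an argument, and the specific mechanism the paper uses — $z_iz_j$ plus Helly/Lemma~\ref{l: one basepoint} plus a projection-to-bridge dichotomy — is not recoverable from what you wrote.)
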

  	
  	\begin{proof}
  		We will not make this explicit in our notation, but all words constructed in the course of the proof depend on the given parameters $N$ and $\lambda$. 
  		
  		First we treat the case $k=0$. We know that $a$ and $b$ admit roots  $a_{0}$ and $b_{0}$ of order $s,t\geq 10\lambda$, respectively. Note that $a$ is hyperbolic if and only if $a_0$ is hyperbolic, hence by the hypothesis of the lemma, either $a_0$ is  elliptic or $\lambda$-stable.  The same considerations apply for $b_0$. Thus the hypotheses of Lemma \ref{l: acylindrical pairs} apply to $(a,b)$ and so $(a,b)$ is an acylindrical pair.
  		
  		We can now apply Lemma \ref{l: multiple conjugation} to the pair $(a,b)$. This ensures that the element  $w^{cn}(a,b)$ is hyperbolic with translation length larger than $tl(a)$ and $tl(b)$; that $a \notin E(w^{cn}(a,b))$, and that $A(a)$ intersects with $A(w^{cn}(a,b))$ coherently in case $a$ is hyperbolic, where $w^{cn}$ is the word given in Lemma \ref{l: multiple conjugation}. Denote $d_1=w^{cn}(a,b)$. By our assumptions, $d_1$ is $\lambda$-stable, and since the subgroup $\langle d_1^{\ceil{10\lambda}}, a \rangle$  acts irreducibly on $T$, we can apply again Lemma \ref{l: acylindrical pairs} to the tuple $(d_1^{\ceil{10\lambda}},a)$, obtaining thus that $(d_1^{\ceil{10\lambda}},a)$ is an acylindrical pair. We can now apply  Lemma \ref{l: multiple conjugation} to the pair $(d_1^{\ceil{10\lambda}},a)$. Thus by letting $d_2=w^{cn}(d_1^{\ceil{10\lambda}},a)$, we have that both $d_1$ and $d_2$ are hyperbolic with $tl(d_2)\geq tl(d_1)$
  		and $d_1\nin A(d_2)$ and $A(d_1)$ and $A(d_2)$ intersect coherently.
  		
  		Again, since $d_2$ is hyperbolic, by our assumptions it is $\lambda$-stable. Thus the tuple $(d_1,d_2)$ satisfies the hypothesis of Proposition  \ref{l: from good pair to small cancellation}. This ensures that $w^{sc}(d_1,d_2)$ is $N$-small cancellation  with respect to any base point $*$ in $A(d_1) \cap A(d_2)$, where similarly as before,  $w^{sc}$ is the $m$-tuple of words provided by Proposition  \ref{l: from good pair to small cancellation} (note that the condition in Proposition  \ref{l: from good pair to small cancellation} about inversion of segments is automatically met since a hyperbolic element does not invert any segment). 
  		
  		In sum, for the case $k=0$ one can take   the tuple of words $$w^{sc}(w^{cn}(x,y),w^{cn}(w^{cn}(x,y),y)).$$ We denote this tuple by $u^{N,0}(x,y)$. It is clear that in this case, $u^{N,0}(x,y)$ are in the normal closure of $x$ and $y$. This completes the proof for the case $k=0$.
  		
  		Let us now consider the general case $k\geq 0$. We need to find an $m$-tuple of words $w(x,y,z)$ in the normal closure of $x$ and $y$ such that $w(a,b,c)$ is $N$-small cancellation and
  		\begin{equation}\label{e: sc_condition_k>0}
  			d(*, w_i(a,b,c) *) > N d(*, c_j*)
  		\end{equation}
  		for all $1\leq i \leq m$ and $1\leq j\leq k$. We begin by finding tuples of  words that are weakly $N$-small cancellation and that satisfy a weaker form of  condition \eqref{e: sc_condition_k>0}. More precisely, we first show the existence of a tuple of words $u^{N,k}(x,y,z)$ in the normal closure of $x$ and $y$ such that:
  		\begin{enumerate}
  			\item[a)] $A(u^{N,k}(a,b,c)_i) \cap A(u^{N,k}(a,b,c)_j)\neq \emptyset$ for all $1\leq i,j\leq m$,  where the subindices $i,j$ denote the $i,j$-th component of the $m$-tuple $u^{N,k}$, respectively. Moreover, this intersection is coherent.
  			\item[b)] $u^{N,k}(a,b,c)$ is $N$-small cancellation as witnessed by any base point $*$ in the intersection of all the axis of the components of  $u^{N,k}(a,b,c)$,
  			\item[c)] $f_1 \notin E(f_2)$, and $tl(f_1)\leq tl(f_2)$.
  			\item[d)] the following inequality holds for all $1\leq i\leq m$ and $1\leq j\leq k$: \begin{equation}\label{e: sc_condition_2_k>0}
  				tl(u^{N,k}(a,b,c)_{i})\geq N tl(c_{j}).
  			\end{equation}
  		\end{enumerate}
  		
  		We proceed by induction on $k$, the case $k=0$ being done above. Assume that $N\geq N_{0}$, where $N_{0}$ is the constant of Lemma \ref{l: simple domination}.

  		\newcommand{\kot}[0]{\frac{k(k+1)}{2}}
  		
  		Suppose we have constructed a pair $u^{N,k-1}$ in the normal closure of $x$ and $y$ such that $u^{N, k-1}(a,b,c)$ satisfies conditions a), b) and c) above, for any $N\geq 1$ and any $(k-1)$-tuple $c=(c_1, \dots, c_{k-1})$.
  		Denote $(e_{1},e_{2})=u^{N,k-1}(a,b,c_{1},\dots, c_{k-1})$. By construction the pair $(e_1,e_2)$ is $N$-small cancellation with respect to any $*$ in the intersection of the axis of $e_1$ and $e_2$.
  		Denote $(d_1, d_2) = ((w_1^{dm}(e_1,e_2))^2, (w_2^{dm}(e_1,e_2))^2)$, where $w_i^{dm}$ are the words given in Lemma \ref{l: simple domination}. By this same  lemma  we have
  		\begin{equation} \label{e: eq_6_10}
  			tl(d_{i})\geq \max\{d(*, c_{k}*),tl(e_{1}),tl(e_{2})\} \geq \max\{tl(c_k),tl(e_{1}),tl(e_{2})\},
  		\end{equation}
  		for some $i=1,2$, say for $i=1$ without loss of generality.  Moreover,
  		\begin{equation}\label{e: december_29}
  			A(d_1) \neq A(d_2),
  		\end{equation}
  		and both $d_1$ and $d_2$ are hyperbolic. By our assumptions, $d_1$ and $d_2$ are $\lambda$-stable, and by Remark \ref{r: stability and powers}, $d_1$ and $d_2$ are $(\lambda+2)$-stable.
  		
  		If we let $p=\ceil{2(\lambda+2)}$, then the implication $WS(\lambda)\rightarrow AI(\lambda+2)$  of Lemma  \ref{l: forms of stability} and Remark \ref{r: stability and powers} yield
  		\begin{align}
  			&|A(d_{1}^{p}) \cap A(d_{2}^{p})|< \lambda \frac{\lambda+2}{p}\max\{tl(d_1^p), tl(d_2^p)\} = (\lambda+2) \max\{tl(d_1), tl(d_2)\} \leq \nonumber \\
  			\leq &\frac{p}{2} \max\{tl(d_1), tl(d_2)\} = \frac{1}{2} \max\{tl(d_1^{p}), tl(d_2^{p})\}. \nonumber
  		\end{align}
  		Hence $(d_1^{p}, d_2^{p})$ is an acylindrical pair (we have used that $d_1^p \notin E(d_2^p)$ because $A(d_1) \neq A(d_2)$, and vice-versa).
  		
  		We can now apply Lemma  \ref{l: multiple conjugation} to the pair $(d_1^{p}, d_2^{p})$. Denoting  $f_{1}=d_{1}^p$ and $f_{2}=w^{cn}(d_{1}^{p},d_{2}^{p})$, we have
  		\begin{enumerate}
  			\item $f_{2}$ is hyperbolic in $T$,
  			\item $tl(f_{2})\geq \max\{tl(d_1^p), tl(d_2^p)\} \geq  tl(d_1^p) = tl(f_{1})$,
  			\item $f_1 \not \in E(f_2)$, and
  			\item $f_1=d_1^p$ is hyperbolic and  $A(f_{1})$ and $A(f_{2})$ intersect coherently (in particular $f_1$ does not invert a subsegment of $A(f_2)$). 
  		\end{enumerate}
  		In fact, $f_{2}$ is $\lambda$-stable in $T$ as well (by our hypothesis), so  the pair
  		$f_{1},f_{2}$ satisfies the hypothesis of Proposition  \ref{l: from good pair to small cancellation}. It follows that the $m$-tuple $w^{sc}(f_{1},f_{2})$ is $N$-small cancellation with respect to a base point $*$ in the intersection of the axis of the components in $w^{sc}(f_{1},f_{2})$. Additionally, from Proposition  \ref{l: from good pair to small cancellation} we have that \begin{equation}\label{e: december_27} tl(w^{sc}_{i}(f_{1},f_{2}))\geq N\max\{tl(f_{1}),tl(f_{2})\}, \quad i=1,\dots, m.
  		\end{equation}
  		Now by Lemma \ref{l: multiple conjugation},
  		$$
  		tl(f_{1}) = tl( w^{cn}(d_1^{\lambda+1}, d_2^{\lambda +1})) \geq \max\{tl(d_1^{\lambda+1}), tl(d_2^{\lambda+1})\} \geq \max\{tl(d_1), tl(d_2)\}.
  		$$
  		Hence by \eqref{e: december_27},
  		$tl(w^{sc}_{i}(f_{1},f_{2})) \geq N\max\{tl(d_{1}),tl(d_{2})\}.$
  		By \eqref{e: eq_6_10}, $$\max\{tl(d_{1}),tl(d_{2})\} \geq tl(d_1) \geq \max\{tl(c_k), tl(e_1) tl(e_3)\} \geq tl(c_{k})$$ and so $$tl(w^{sc}_{i}(f_{1},f_{2}))\geq  N tl(c_k)$$ for all $i=1,\dots, m$.
  		Hence the $m$-tuple of words
  		\begin{align}\label{e: december_31}
  			u^{N,k}=w^{sc}(w^{cn}(w^{dm}_{1}(u^{N,k-1},c_{k})^{2p}),w^{dm}_{2}(u^{N,k-1},c_{k})^{2p}))
  		\end{align}
  		are in the normal closure of 
  		$x$ and $y$ since by the definition $w_i^{dm}$, $i=1,2$ and assumption on $u^{N,k-1}$, both are in the normal closure of $x$; furthermore, they  satisfies conditions a), b) c), and d) above, and so the word $u^{N,k}$  satisfies all required properties.
  		
  		We now claim that there exists a pair  of words $v^{N,k}(x,y,z)$ in the normal closure of $x$ and $y$ such that $v^{N,k}(a,b,c)$ is $N$-small cancellation and for $T$ there is some vertex $*\in T$ such that
  		\begin{equation}\label{e: sc_condition_3_k>0}
  			tl(v^{N,k}_{i}(a,b,c))\geq N d(c_{j} *,*)
  		\end{equation}
  		for all $1\leq i\leq m$ and $1\leq j\leq k$. Indeed, we shall prove that such pair can be obtained by taking
  		\begin{gather}\label{e: december_31_2}
  			\begin{split}
  				&v^{N,k}(x,y,z_{1},z_{2},\dots, z_{k})=\\
  				&u^{2N, k + \kot}(x,y,z_{1},z_{2},\dots, z_{k},z_{1}z_{2},z_{1}z_{3},\dots, z_1z_k, z_{2}z_{3},\dots, z_2z_k, \dots, z_{k-1}z_{k}).
  			\end{split}
  		\end{gather}

  		\newcommand{\dl}[1]{d(#1\cdot*,*)}
  		Let $(f_{1},f_{2})=v^{N,k}(a,b,c)$ (these $f_i$'s are unrelated to the previous ones, but we keep the notation $f_i$). Denote by $S$ the set of vertices $*$ of $T$ such that
  		\begin{equation}\label{e: 25_sept_19}
  			2\max_{1\leq j\leq k}\dis{c_{j}}\leq  \min\{tl(f_{1}),tl(f_{2})\}=:L.
  		\end{equation}
  		We know that $S$ is non-empty by Lemma \ref{l: one basepoint} together with \eqref{e: sc_condition_2_k>0} (the only reason we use the components $z_i z_j$ in the definition of $v^{N,k}$ is  to guarantee that Lemma \ref{l: one basepoint} can be applied here). Pick $*_{0}\in S$ at minimal distance from $A(f_1) \cap A(f_2)$ and let $*_1 \in A(f_1) \cap A(f_2)$ be such that $d(S, A(f_1) \cap A(f_2))=d(*_0,*_1)$. 
 
  		Assume first that $d(*_{0},*_{1})\leq L$. Let $(g_1,g_2):=w^{sc}(f_1,f_2)$, where $w^{sc}$ is the tuple of words from Proposition  \ref{l: from good pair to small cancellation} taking the small cancellation parameter to be $3N$. For convenience let us denote such tuple  by $w^{sc, 3N}$ until the end of the proof.  We now show that $(g_1,g_2)$ is $N$-small cancellation over $c_1, \dots, c_k$.

  		First note that since $f_1$ and $f_2$  satisfy the conditions of Proposition  \ref{l: from good pair to small cancellation} (due to the properties of the words $u^{N,k}$) and $(g_1,g_2)=w^{sc, 3N}(f_1,f_2)$, we have that $g_1$ and $g_2$ are $3N$-small cancellation, $A(g_1)\cap A(g_2)\neq \emptyset$, $\min\{tl(g_1), tl(g_2)\} \ge 3NL$,  and   $*_1$ belongs to $A(g_1) \cap A(g_2)$. 
  		For all $j=1, \dots, k$,
  		$$
  		d(c_{j}*_{1},*_{1})\leq d(*_1, *_0) + d(*_0, c_j*_0) +  d(c_j*_0, c_j*_1) \leq 2d(*_1, *_0) + \frac{L}{20} \leq 3L 
  		$$
  	
  		Hence  $3L \leq \frac{\min\{tl(g_{1}),tl(g_{2})\}}{N}$. This together with condition b) in the formulation of the words $u^{N',k'}$, and since $*_1 \in A(g_1) \cap A(g_2)$, implies that $(g_{1},g_{2})$ is $N$-small cancellation over $c$ as witnessed by $*_{1}$. Hence in the case that $d(*_0, *_1)\leq L$  we are done.
  		
  		Denote $J=A(g_1) \cap A(g_2)$ and
  		assume now that $d(S,J)>L$, in which case we have $d(*_0, c_j *_0) \leq \frac{L}{20}\leq \frac{d(*_0, J)}{20}$ and so $*_0 \notin J$. We claim there exists $1\leq j_0\leq k$ such that the path $Q_j$ from $A(c_{j})$ to  $J$ contains the path $K$ from $*_{0}$ to $J$. Indeed, for each $1\leq j\leq k$ write $K =K_{j1}K_{j2}$ where $K_{j2} = Q_j \cap K$, and $K_{j1}$ is the remaining part of $K$. Note that $K_{j2}$ may be empty. Assume towards contradiction that our claim is false. Then $K_{j1}$ is nonempty for all $j$. It follows that there exists a vertex $*_0' \in \bigcap_j K_{j1}$ such that $*_0' \neq *_0$ (see Figure \ref{f: f6}). Consequently $d(*_0', J) < d(*_0, J)$, and  $*_0'$ is closer than $*_0$ to $A(c_j)$, for all $j$. This last property implies that $d(*_0', c_j *_0') \leq d(*_0, c_j *_0)\leq L/20$ for all $j$, which implies $*_0' \in S$, contradicting the fact that $*_0$ is the vertex from $S$ at minimal distance from $J$. The claim is proved.

  		\begin{figure}[ht]
  			\begin{center}
  				\begin{tikzpicture}
  					\tikzset{near start abs/.style={xshift=1cm}}

  					\filldraw (0,0) circle (1pt) node[left] {$*_0$};
  					\filldraw (1,0) circle (1pt) node[below, yshift=0.05cm] {$*_0'$};
  					\filldraw (5.5,0) circle (2pt) node[below, yshift=0.05cm, xshift = 0.4cm] {$*_1$};
  					
  					\draw[thick, color=teal] (0.5,2) -- (0.5, 1) -- (1.5,1) -- (1.5, 2);
  					
  					\draw[thick, color=teal] (2,-2) -- (2, -0.025) -- (3,-0.025) -- (3, -2);
  					
  					\draw[thick, color=teal] (1,1) -- (1,0);
  					
  					\draw[thick, color=blue] (5,2) -- (5,0.025) -- (7,0.025) -- (7,2);
  					\draw[thick, color=red] (5.5,-2) -- (5.5,-0.025) -- (6.5,-0.025) -- (6.5,-2);
  					\draw[thick] (0,0) -- (7,0);

  			    	\draw [thick,decorate,decoration={brace,amplitude=5pt},xshift=0pt,yshift=-0.3cm](2,0) -- (0,0) node [black,midway,yshift=-0.45cm] {\footnotesize$K_{21}$};
  					
  					\node[color=teal] at (2, 1.8) {$A(c_1)$};
  					\node[color=teal] at (3.6, -1.8) {$A(c_2)$};
  					\node[color=blue] at (5.5, 1.8) {$A(g_1)$};
  					\node[color=red] at (6, -1.8) {$A(g_2)$};
  					\draw [thick,decorate,decoration={brace,amplitude=5pt},xshift=0pt,yshift=0.1cm](0,0) -- (1,0) node [black,midway,yshift=0.45cm] {\footnotesize$K_{11}$};
  					\draw [thick,decorate,decoration={brace,amplitude=5pt},xshift=0pt,yshift=0.1cm](1,0) -- (5.5,0) node [black,midway,yshift=0.45cm] {\footnotesize$K_{12}$};
  					\draw [thick,decorate,decoration={brace,amplitude=5pt},xshift=0pt,yshift=-0.3cm] (5.5,0) -- (2,0)  node [black,midway,yshift=-0.45cm] {\footnotesize $K_{22}$};
  					\draw [thick,decorate,decoration={brace,amplitude=5pt},xshift=0pt,yshift=-0.3cm] (6.5,0) -- (5.5,0)  node [black,midway,yshift=-0.45cm] {\footnotesize $J$};
  					
  				\end{tikzpicture}
  			\end{center}
  			\caption{} \label{f: f6}
  		\end{figure}
  		
  		Consider the elements $g=f_{i_0}f_{i_0}^{c_{j_0}}$ and $g'=f_{i_0}^{2}(f_{i_0}^{2})^{c_{j_0}}$, where $j_0$ is the index given by the previous claim and $i_0$ is either $1$ or $2$ depending on whether $K$ has non-degenerate intersection  with $A(f_1)$ or with $A(f_2)$ (if none of the conditions are met then $i_0$ is chosen arbitrarily).
  		By Lemma \ref{l: product of conjugates}, both $g$ and $g'$ are hyperbolic in $T$ and $tl(g')\geq tl(g)\geq tl(f_{i_0})$. Moreover,  the path from $A({c_{j_0}})$ to $A(f_{i_0})$  contains $K$, hence $A(c_{j_0})$ and $A(f_{i_0})$ are disjoint. By Item 1 of Remark \ref{r: alb_remark_1} so are  $A(f_{i_0})$ and $A(f_{i_0}^{c_{j_0}})$.

  		Since $K$ is contained in the path connecting $A(f_{i_0})$ and $A(f_{i_0}^{c_{j_0}})$, it follows from Remark \ref{r: axis_of_product} that $*_{0}\in A(g)\cap A(g')$.   
  		Next we claim that $g \notin E(g')$. Indeed, it suffices to check that $A(g) \neq A(g')$. This is clear from Remark \ref{r: axis_of_product}: $A(g)$ intersects 
  		$A(f_{i_0})$
  		on a segment of length $tl(f_{i_0})$, while $A(g')$ intersects
  		this axis
  		on a segment of length $2tl(f_{i_0})$, so $A(g)\neq A(g')$. Note also that $g'$ is hyperbolic and so $\lambda$-stable, and $tl(g)\leq tl(g')$.  Hence the two elements $g$ and $g'$ satisfy the assumptions of Proposition  \ref{l: from good pair to small cancellation}.
  		
  		Therefore the pair $(h_{1},h_{2})=w^{sc}(g,g')$ is $N$-small cancellation as witnessed by $*_{0}$. Here we take  the small cancellation parameter in $w^{sc}$ to be $N$, and for convenience we denote this word by $w^{sc, N}$ during the rest of the proof.
  		In fact, it is small cancellation over $c_{1},c_{2}\dots c_{k}$, since $$d(c_{j}\cdot *_{0},*_{0})\leq \frac{1}{2} \min\{tl(f_{1}), tl(f_2)\} \leq  \frac{1}{N}\min\{tl(h_{1}),tl(h_{2})\},$$
  		where the first inequality is due to \eqref{e: 25_sept_19} and the second is a consequence of Proposition  \ref{l: from good pair to small cancellation}.
  		This completes the proof of the main part of the proposition: indeed, if we let $(v_{1},v_{2})=v^{N,k}$ and
  		$\alpha(x,y)=xx^{y}$, then the following family of $m$-tuples of words satisfies the required property:
  		\begin{align*}
  			\mathcal{W}=\left\{w^{sc,3N}(v^{N,k})\right\} \cup \left\{w^{sc, N}\left(\alpha(v^{N,k}_{i},z_{j}),\alpha((v^{N,k}_{i})^{2},z_{j})\right)\mid (i,j)\in\{1,2\}\times\{1,2,\dots k\} \right\}
  		\end{align*}
  		The first set $\left\{v^{2N,0}(v_{1},v_{1}v_{2})\right\}$ corresponds to the case $d(S,J)\leq L$, and the second set to the case $d(S,J) > L$. Note that the words belong to the normal closure of $x$ and $y$ since $v^{N,k}$ also belongs.
  	\end{proof}

  \begin{cor}
  	\label{c: small cancellation for free actions} Given any integer $N$, tuples of variables $x,x',z$ and an integer $m$, there is a finite collection $\mc{S}$ of $m$-tuples of words $w\in\F(x,x',z)^{m}$ in the normal closure of $\subg{x}$ satisfying the following property.
  	
  	Suppose we are given an action of a group $G$ on a real tree $T$ with trivial edge stabilizers and tuples $a\in G^{|x|}, a' \in G^{|x'|}$, $c\in G^{|z|}$ such that
  	$\subg{a,a'}$ acts irreducibly on $T$. Assume further that $a_{i_{0}}\neq 1$ for some $1\leq i_{0}\leq|x|$. Then  $w(a,a',c)$ is $N$-small cancellation over $c$ for some $w\in \mc{S}$.
  \end{cor}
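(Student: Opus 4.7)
The proof is an application of Theorem \ref{l: general small cancellation}. Since the action has trivial edge stabilizers, Remark \ref{r: stability and powers}(3) ensures every hyperbolic element of $G$ is stable, so we may invoke the theorem with $\lambda=1$, under which the root-order requirement becomes $\geq 10$. The plan is to exhibit, in a uniform way, a finite list of candidate pairs $(\tilde b_1^{(\alpha)}, \tilde b_2^{(\alpha)})$ expressible as words in the normal closure of $\subg{x}$ such that for any action meeting the hypothesis, at least one candidate pair generates an irreducibly acting subgroup; Theorem \ref{l: general small cancellation} then produces the desired $N$-small cancellation tuples over $c$. The resulting words remain in the normal closure of $\subg{x}$ because the words supplied by Theorem \ref{l: general small cancellation} lie in the normal closure of their first two arguments and these arguments will lie in the normal closure of $\subg{x}$ by construction.

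For the construction of $(\tilde b_1, \tilde b_2)$, let $S=\{a_1,\dots, a_{|x|}, a'_1,\dots,a'_{|x'|}\}$. Irreducibility of $\subg{a,a'}$ forces that not every element of $S$ can lie in $E(a_{i_0})$ if $a_{i_0}$ is hyperbolic, nor fix the unique fixed vertex $Fix(a_{i_0})$ if $a_{i_0}$ is elliptic (uniqueness follows from the trivial arc stabilizer assumption). If $a_{i_0}$ is hyperbolic, choosing a witness $y\in S$ produces two hyperbolic elements $\tilde b_1=a_{i_0}$, $\tilde b_2=a_{i_0}^y$ with distinct axes, hence generating an irreducible action. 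If $a_{i_0}$ is elliptic but not an involution, Lemma \ref{l: lema dihedral action} applied to $(\tilde b_1,\tilde b_2)=(a_{i_0},a_{i_0}^y)$ yields irreducibility, since the dihedral alternative is excluded: $a_{i_0}^2\neq 1$ can only fix $Fix(a_{i_0})$, so cannot also fix the bridge between $Fix(a_{i_0})$ and $y^{-1}Fix(a_{i_0})$. If $a_{i_0}$ is an involution fixing $v=Fix(a_{i_0})$, I instead set $\tilde b_1=a_{i_0} a_{i_0}^{y_1}$ and $\tilde b_2=a_{i_0} a_{i_0}^{y_2}$, which are products of two involutions with distinct fixed points, hence hyperbolic, with axes containing $\{v,y_1^{-1}v\}$ and $\{v,y_2^{-1}v\}$ respectively. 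These axes are distinct, and the pair generates an irreducible action, as soon as $v, y_1^{-1}v, y_2^{-1}v$ form a tripod. The key point I then verify is that suitable $y_1, y_2$ can always be found among words of length at most two in $S$: if every element of $S$ preserved setwise some common line $L$ through $v$, the whole $\subg{a,a'}$-orbit of $v$ would lie in $L$, contradicting irreducibility; hence some generator $y_b$ fails to preserve $L$, and combining it with a generator $y_a$ satisfying $y_a^{-1}v\in L\setminus\{v\}$ yields $(y_a y_b)^{-1}v\notin L$, producing the required tripod from a word of length one and a word of length two.

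Given an irreducible pair $(\tilde b_1,\tilde b_2)$ thus obtained, I apply Lemma \ref{c: irreducibility of powers} to the coprime pair $(10,11)$ to select $(s,t)\in\{10,11\}^2$ with $\subg{\tilde b_1^s,\tilde b_2^t}$ still acting irreducibly; both $b_1=\tilde b_1^s$ and $b_2=\tilde b_2^t$ then admit roots of order at least $10$. Theorem \ref{l: general small cancellation} applied with parameters $N$, $\lambda=1$, $m$ and $k=|z|$ provides a finite family $\mathcal{W}$ of $m$-tuples of words $w(p,q,z_1,\dots,z_k)$ in the normal closure of $\subg{p,q}$ such that $w(b_1,b_2,c)$ is $N$-small cancellation over $c$ for at least one $w\in\mathcal{W}$. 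The final collection $\mathcal{S}$ consists of all words obtained by enumerating over: the choice of conjugator $y$ (resp.\ of the pair $y_1,y_2$) among $S$ and products of two elements of $S$, the pair $(s,t)\in\{10,11\}^2$, and $w\in\mathcal{W}$; each is in the normal closure of $\subg{x}$. The main technical step is the tripod argument in the involution subcase, which converts global irreducibility into a uniform bound of two on the word-length of the conjugators.
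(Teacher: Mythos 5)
Your proposal follows the same overall strategy as the paper's proof: establish an analogue of the paper's Claim \ref{claim1} (a uniform finite collection of pairs of words in the normal closure of $\subg{x}$, one of which always generates a subgroup acting irreducibly), then apply Lemma \ref{c: irreducibility of powers} with the coprime pair $(10,11)$ to guarantee roots of order $\geq 10$, and finally invoke Theorem \ref{l: general small cancellation} with $\lambda = 1$ (justified via Remark \ref{r: stability and powers}). Your case split (hyperbolic / elliptic non-involution / involution) is finer than the paper's ($a_{i_0}^2 \ne 1$ / $a_{i_0}^2 = 1$), and your use of Lemma \ref{l: lema dihedral action} to rule out the dihedral alternative in the non-involution elliptic case is a worthwhile explicit check. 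The bookkeeping for membership in the normal closure of $\subg{x}$ is correct.

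There is, however, a genuine gap in the involution case. The tripod argument is not coherent as written: the line $L$ is never defined, and even granting a choice of $L$, failing to preserve $L$ setwise does not force $(y_a y_b)^{-1}v = y_b^{-1}(y_a^{-1}v)$ out of $L$, nor would $(y_a y_b)^{-1}v \notin L$ alone guarantee a tripod. More decisively, the concrete claim that conjugators of word-length at most $2$ in $S$ always yield a tripod $v, y_1^{-1}v, y_2^{-1}v$ is false. Take $G = \mathbb{Z}/2 * \mathbb{Z} = \langle a, t \mid a^2 \rangle$ acting on its Bass--Serre tree, with $a = (a)$ and $a' = (at)$, so $S = \{a, at\}$ and $v = Fix(a)$. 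The points $y^{-1}v$ for $y$ ranging over $S \cup S\!\cdot\!S = \{a, at, 1, t, ata, atat\}$ are exactly $\{v,\; t^{-1}v,\; at^{-1}v,\; t^{-1}at^{-1}v\}$, and every pair of the three non-$v$ points is collinear with $v$ (the median is $v$ or $t^{-1}v$ in each case), so no tripod exists at this word-length. The construction is in fact rescued in this example by a weaker observation that your argument does not make: since $a_{i_0}$ inverts both $\tilde b_1 = a_{i_0}a_{i_0}^{y_1}$ and $\tilde b_2 = a_{i_0}a_{i_0}^{y_2}$ while fixing $v$, it acts as a reflection at $v$ on both axes, so whenever the axes are merely \emph{distinct} their intersection is automatically bounded and irreducibility follows without a tripod. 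As stated, though, your proof requires the tripod, asserts it can always be found at length $\leq 2$, and supports this with an argument that does not go through; this step needs to be replaced (e.g.\ by aiming only for distinct axes, or by enlarging the set of candidate conjugators).
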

  \begin{proof}
  	Let us first prove the following claim, which shows that from two tuples that generate a subgroup that has an irreducible action on a tree, one can uniformly describe a finite set of pair of words such that at least one of the pairs defines a 2 generated subgroup that also has an irreducible action on the tree.
  	
  	\begin{claim}\label{claim1}
  		There is a finite collection  $\mathcal{U}_{irr}\subset\F(x,x')^{2}$ of pairs of words in the normal closure of $x$ such that for any action of a group $G$ on a tree $T$ and $a, a'$ as above, the pair $(u(a,a'),v(a,a'))$ generates a group that acts irreducibly on $T$ for at least one $(u,v)\in \mathcal U_{irr}$.
  	\end{claim}
  	\begin{proof}[Proof of Claim]
  		The collection $\mathcal{U}_{irr}$ of all the pairs $(x_{i_{0}}x_{i_{0}}^{t_{1}},x_{i_{0}}^{t_{2}}x_{i_{0}}^{t_{3}})$, where $1\leq i_0 \leq |x|$ and $t_{1},t_{2},t_{3}$ range among all triples of terms in $\{1\}\cup x\cup x'$ satisfies the required conditions. Indeed, first note that for all $1\leq i_0\leq |x|$ there is $g\in\{a_{j},a'_{k}\}_{1\leq j\leq |x|,1\leq k\leq |x'|}$ such that $A(a_{i_0})$ and $A(a_{i_0}^{g})$ are different; otherwise the group generated by $a,a'$ would preserve the axis $A(a_{i_0})$, which is either a finite subtree (if $a_{i_0}$ is elliptic) or a bi-infinite line (if $a_{i_0}$ is hyperbolic), and thus the action of the group generated by $a$ and $a'$ on $T$ would not be irreducible.
  		
  		If $a_{i_{0}}^{2}\neq 1$, since $A(a_{i_0}^2)$ and $A((a_{i_0}^g)^2)$ are different, then  $\subg{a_{i_{0}}^{2},(a_{i_{0}}^{g})^{2}}$ acts irreducibly on $T$ and $(a_{i_{0}}^{2},(a_{i_{0}}^{g})^{2}) \in \mathcal{U}_{irr}$.
  		
  		If $a_{i_{0}}^{2}=1$ (and so $a_{i_0}$ is elliptic), since $A(a_{i_0})
  		$ and $A((a_{i_0}^g))$ are different, we have that $a_{i_{0}}a_{i_{0}}^{g}$ is hyperbolic. Since the action of $\subg{a,a'}$ on $T$ is irreducible, there exists $h\in \{a_{j},a'_{k}\}_{1\leq j\leq |x|,1\leq k\leq |x'|}$ such that $a_{i_0}a_{i_0}^h$ is hyperbolic and the axis $A(a_{i_0}a_{i_0}^g)$ and $A(a_{i_0}a_{i_0}^h)$ are different. Hence
  		$\subg{a_{i_{0}}a_{i_{0}}^{g},a_{i_{0}}a_{i_{0}}^{h}}$ acts irreducibly on $T$ and $(a_{i_{0}}a_{i_{0}}^{g},a_{i_{0}}a_{i_{0}}^{h}) \in \mathcal{U}_{irr}$.
  	\end{proof}
  	Let us now address the proof of Corollary \ref{c: small cancellation for free actions}.
  	Let $\mc{W}$ be the collection of $m$-tuples of words provided by Theorem \ref{l: general small cancellation}  for $\lambda=1$ and the same value of $N$ as in the statement of the corollary. Note that since $G$ acts on $T$ with trivial edge stabilizers, it follows that each hyperbolic element is $1$-stable.
  	
  	We claim that the following collection :
  	\begin{align}
  		\mathcal{S}:=\{w(u(x,x')^{s},v(x,x')^{t},z)\,|\,(s,t)\in\{10,11\}^{2},\ (u,v)\in\mathcal{U}_{irr}, w\in \mc{W}\}
  	\end{align}
  	satisfies the required properties. Notice that from Theorem \ref{l: general small cancellation}, we have that, for any $w\in \mc{W}$, the word $w(x,x',z)$ can be assumed to be in the normal closure of $x$ and $y$. 
  	  Since by Claim \ref{claim1}, the word $v(x,x')$ is in the normal closure of $x$, we conclude that any $S\in\mathcal{S}$ can be assumed to be in the normal closure of $x$.
  	
  	Suppose we are given $G$, $T$, $a,a',c$ as in the hypothesis.
  	From Claim \ref{claim1}, it follows that there is $(u,v)\in\mathcal{U}_{irr}$ such that $\subg{u(x,x'),v(x,x')}$ acts irreducibly on $T$.
  	By Lemma \ref{c: irreducibility of powers}, there is $(s,t)\in\{10,11\}^{2}$ such that $(b,b')=(u(x,x')^{s},v(x,x')^{t})$ acts irreductibly on $T$ and both $b$ and $b'$ have roots of order greater than or equal to $10\lambda=10$. Hence the triple $(b,b',c)$ satisfies the assumptions of Theorem \ref{l: general small cancellation} and so we conclude that $w(u(a,a')^s,v(a,a')^t,c)$ is $N$-small cancellation over $c$ for some $w\in\mathcal{W}$, $u,v\in \mc{U}_{irr}$, and $s,t\in \{10,11\}$.
  \end{proof}

\section{Applications of uniform small cancellation}
   In this section we will use the results of Section \ref{sec:uniform small cancellation} to deduce two corollaries, of interest on their own. On the one hand, we prove a quantifier reduction for positive sentences. More precisely, we show that if an arbitrary group satisfies a non-trivial positive sentence, then it also satisfies a non-trivial positive $\forall \exists$-sentence, see Theorem \ref{l: quantifier reduction}.
  
  As a second application we show that the class of groups with non-trivial positive theory is closed under extensions, see Theorem \ref{lem:extensionpreservation}.

  \subsection{Quantifier reduction for positive sentences}\label{sec:quantifier reduction}
  
  The goal of this section is to show that if a group satisfies a non-trivial positive sentence, then it also satisfies a non-trivial positive $\forall \exists$-sentence.
  
  In fact, we will prove the aforementioned result for a wider set of sentences which we call \emph{generic almost positive}. This fragment of the theory is not as classical and well-known as the positive theory, but it is relevant in the forthcoming work on interpretable sets in graph products of groups, see \cite{graphproductspreprint} and since the proofs are very similar for the two types of sentences, we treat the most general case, for generic almost positive sentences, here. For readers only interested in the positive theory, they can go directly to Theorem \ref{l: quantifier reduction} and assume that the systems of inequalites is empty.
    
    \medskip
    
    \begin{definition}[Almost positive sentence]
    	We say that a sentence is \emph{almost positive} if it is of the form:
    	\begin{align*}
    		\phi\equiv\exists z\,(\Theta(z)\neq 1\wedge\psi(z))
    	\end{align*}
    	where $\psi(z)$ is a positive formula whose free variables are in the tuple $z$ and $\Theta(z)\neq 1$ a non-trivial system of inequalities.
    \end{definition}
    
    These more general sentences allow to express properties of the group such as the group has a non-trivial center or that it is boundedly generated.
    
    \begin{example}\
    	\begin{itemize}
    		\item The property of having non-trivial center can be axiomatized using an almost positive sentence:
    		\begin{align*}
    			\exists z\,\,z\neq 1 \wedge \forall x\,\,[x,z]=1
    		\end{align*}
    		
    		\item The property of a group to be bounded centraliser generated, that is $G=C(z_1) \cdots C(z_n)$ for some $z_1, \dots, z_n \in G$ is also axiomatized by an almost positive sentence:
    		\begin{align*}
    			\exists z_{1}\exists z_{2}\cdots\exists z_{k}\, (\bigwedge_{i=1}^{k}z_{i}\neq 1)\wedge(\forall x\exists y_{1}\exists y_{2}\cdots\exists y_{k}\,(\bigwedge_{i=1}^{k}[y_{i},z_{i}]=1)\wedge x=y_{1}y_{2}\cdots y_{k})
    		\end{align*}
    	\end{itemize}
    \end{example}
    
    In our approach what determines whether or not a positive sentence is trivial is the existence of a formal solutions. Our goal is to define a set of sentences that on the one hand allow some (controlled) inequalites and on the other one, the triviality of the sentence (the fact that it is true in all groups) is determine by the existence of a formal solution that witness both the inequalites as well as the positive part of the sentence. More generally, we want to restrict to those cases in which there is no $H$ containing a tuple $a$ such that $\Theta(a)\neq 1$ and the sentence with parameters $\psi(a)$ is trivially satisfied in any group $G$ containing $H$.
    
    \newcommand{\easy}[0]{easy } 
    
    This brings us to the definition of generic almost positive sentence.
    
    \begin{definition}[Generic almost positive sentence]
    	We say that an almost positive sentence $\phi\equiv\exists z\,(\Theta(z)\neq 1\wedge\psi(z))$ is \emph{generic} if given any formal solution for the formula $\psi(z)\equiv \forall x^1, \exists y^1, \dots,\forall x_m, \exists y^m \, \Sigma(z,x^1, \dots, y^m)=1$ relative to a diophantine condition
    	$\exists w\,\,\Pi(w,z)=1$, see Definition \ref{defn:formal solution}, there is some word in $\Theta(z)$ which is in the normal closure of $\Pi(w,z)$ in $\F(w,z)$ or, equivalently, one of the words in $\Theta(z)$ is the trivial element in the group $\F(w,z)/\Pi(w,z)=1$.
    	
    \end{definition}
    
  \bigskip
    
    We begin recalling an easy consequence of the well-known fact
    \footnote{    \begin{align*}
    	x=1 \vee y=1   \leftrightarrow \bigwedge_{\epsilon,\delta\in\{1,-1\}}([xa^{\epsilon}xa^{-\epsilon},yb^{\epsilon}yb^{-\epsilon}]=1), 
    \end{align*} where $a$ and $b$ are any two elements that do not commute.}, due to Gurevich, that any finite disjunction of equations in the free group over a non-abelian subgroup
    of parameters is equivalent to a single system of equations.

    \begin{lemma}
    	\label{l: no disjunctions} Given a positive formula 
    	$$\phi(z)\cong \forall x_{1}\exists y_{1}\forall x_{2}\cdots \exists y_{r}
    		\,\,\chi(z,x_{1},\dots x_{r},y_{1},\dots, y_{r})$$ 
    	in the language of groups, where $\chi$ is a positive quantifier free formula,
    	there is a system of equations $\Sigma=1$ and
    	a positive formula of the form:
    	\begin{equation}
    		\theta(z,u,v)\cong \forall x_{1}\exists y_{1}\forall x_{2}\cdots \exists y_{r}
    		\,\,\Sigma(z,u,v,x_{1},\dots, x_{r},y_{1},\dots, y_{r})=1
    	\end{equation}
    	such that $\forall z\forall u\forall v(\phi(z)\rightarrow\theta(z,u,v))$  is valid in any group. Moreover
    	\begin{align*}
    		\F\models\forall z\forall u \forall z\,\,([u,v]= 1\vee(\phi(z)\leftrightarrow \theta(z,u,v))).
    	\end{align*}
    	In particular, if the given $\phi$ is a non-trivial positive sentence, then the sentence $\forall u\forall v\,\, \theta(u,v)$ is a non-trivial positive sentence as well. 
    \end{lemma}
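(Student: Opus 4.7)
The plan is to remove all disjunctions from the positive quantifier-free matrix $\chi$ by iterating the Gurevich identity recalled in the footnote, while leaving the quantifier prefix $\forall x_1\exists y_1\cdots\forall x_r\exists y_r$ untouched. First, I would put $\chi$ in conjunctive normal form, $\chi\equiv\bigwedge_{i=1}^{n}\bigvee_{j=1}^{k_i}(s_{ij}=1)$, and then replace each clause $\bigvee_j(s_{ij}=1)$ by an equivalent (in a non-abelian free group with $[u,v]\ne 1$) system of equations in the $s_{ij}$ and $u,v$. The two-term clause is handled directly by Gurevich. For a clause of length $k_i>2$ I would proceed inductively: write it as $\bigl((s_{i,1}=1)\vee(s_{i,2}=1)\bigr)\vee \bigvee_{j\ge 3}(s_{ij}=1)$, apply Gurevich to turn the bracketed two-term disjunction into a four-equation system $\bigwedge_\alpha E_\alpha=1$, then use distributivity $(\bigwedge_\alpha E_\alpha=1)\vee(s_{i,3}=1)\equiv \bigwedge_\alpha(E_\alpha=1\vee s_{i,3}=1)$ to push the next disjunct into each component, and apply Gurevich once more on every two-term disjunction. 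Iterating yields a finite conjunction of equations per clause; conjoining the outputs over $i$ produces the required system $\Sigma(z,u,v,\vec x,\vec y)=1$, and one sets $\theta(z,u,v)\equiv\forall x_1\exists y_1\cdots\forall x_r\exists y_r\,\Sigma=1$.

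Next I would check the two semantic properties. The implication $\forall z\forall u\forall v\,(\phi(z)\to\theta(z,u,v))$ is valid in every group because every rewrite above is an equivalence whose forward direction is trivial: if $s=1$ then $sa^\epsilon sa^{-\epsilon}=1$ for any $a$, so each associated commutator vanishes. Consequently any Skolem choice of witnesses $y_j$ for the matrix of $\phi$ still witnesses the matrix of $\theta$, with the parameters $u,v$ inert. Conversely, inside a non-abelian free group $\F$, the Gurevich identity is an honest equivalence as soon as $[u,v]\ne 1$; this is the content of the footnote, ultimately resting on the commutative transitivity of $\F$ (two non-trivial elements commute iff they share a common cyclic subgroup). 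Lifting the backward implication back through the inductive construction (distributivity preserves equivalence, and every Gurevich step is an equivalence under $[u,v]\ne 1$) yields $\F\models\forall z\forall u\forall v\,([u,v]=1\vee(\phi(z)\leftrightarrow\theta(z,u,v)))$.

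For the ``in particular'' clause, suppose $\phi$ is a non-trivial positive sentence, so that by definition $F_2\not\models\phi$. Picking any non-commuting pair $u_0,v_0\in F_2$, the free-group equivalence just established gives $F_2\not\models\theta(u_0,v_0)$, hence $F_2\not\models\forall u\forall v\,\theta(u,v)$, so the latter is a non-trivial positive $\forall\exists$-sentence.

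The only delicate bookkeeping is ensuring the matrix stays a pure conjunction of equations (i.e.\ a system) at every inductive step, which is exactly what the distributivity reduction and Gurevich's output guarantee; the quantifier block is never touched since $u,v$ are introduced as free variables. The one point I treat as a black box is the backward direction of Gurevich in $\F$, for which the footnote is the standard reference.
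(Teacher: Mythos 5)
Your proof is correct and follows the same route as the paper: encode disjunctions via the Gurevich commutator identity (using $u,v$ as the non-commuting parameters), observe the forward direction is trivial in every group while the backward direction holds in $\F$ when $[u,v]\neq 1$, and then specialize to a non-commuting pair in $F_2$ for the ``in particular'' clause. The paper treats the existence of $\Sigma$ and the free-group equivalence as standard (its written proof only addresses the final clause), so your explicit CNF-plus-induction construction simply fills in the details the paper leaves to the reader.
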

    
    \begin{proof}
    The only statement left to prove is that if $\phi$ is non-trivial, then so is the sentence $\theta'= \forall u \forall v \ \theta(u,v)$. Since $\F \models \forall u \forall v\,\,([u,v]= 1\vee(\phi\leftrightarrow \theta(u,v)))$, for a non-commuting pair of elements of the free group, we have that the sentences $\phi$ and $\theta'$ are equivalent, that is $\F$ satisfies $\phi$ if and only if it satisfies $\theta'$. Since $\phi$ is non-trivial, $\F$ does not satisfy $\phi$ and so it neither satisfies $\theta'$. Hence $\theta'$ is non-trivial.
    
    \end{proof}

        \begin{thm}[Quantifier Reduction]
    	\label{l: quantifier reduction} Given a generic almost positive sentence $\psi\equiv\exists z\,\Pi(z)\neq 1\wedge\phi(z)$, where $\phi(z)$ is a positive formula, one can effectively describe a generic almost positive sentence
    	$\psi'\equiv\exists z\,\Pi(z)\neq 1\wedge\phi'(z)$ where $\phi'(z)$ is a positive $\forall\exists$-formula. In particular, if a group has non-trivial positive theory, it must satisfy some non-trivial positive $\forall\exists$-sentence.
    \end{thm}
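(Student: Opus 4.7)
My plan is to combine the iterated formal solutions from Corollary \ref{c: iterated formal solutions} with the uniform small cancellation construction of Corollary \ref{c: small cancellation for free actions}. Given $\psi \equiv \exists z\, \Pi(z) \neq 1 \wedge \phi(z)$ with $\phi(z) \equiv \forall x_{1} \exists y_{1} \cdots \forall x_{r} \exists y_{r}\, \Sigma(z, x, y) = 1$, first apply Corollary \ref{c: iterated formal solutions} to $\phi(z)$ to obtain a positive integer $N$, a finite family $\mathcal{D} = \{\exists v\, \Theta_{\lambda}(z, v) = 1\}_{\lambda \in \Lambda}$ of diophantine conditions on $z$, and formal solutions $\alpha_{\lambda}$ for $\phi(z)$ relative to each. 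Next, for each $j \in \{1, \ldots, r\}$, apply Corollary \ref{c: small cancellation for free actions} with parameter $N$ and the accumulated variable tuple $(z, x_{<j}, y_{<j})$ in the role of the base parameter, to extract a finite family $\mathcal{S}_{j}$ of tuples of words in auxiliary variables $(u_j, u_j')$, each lying in the normal closure of $\langle u_{j}\rangle$, with the property that for any action of a group on a tree with trivial edge stabilizers such that $\langle u_j, u_j'\rangle$ acts irreducibly and some coordinate of $u_j$ is non-trivial, at least one member of $\mathcal{S}_j$ evaluates to an $N$-small cancellation tuple over the accumulated data.

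Define the intermediate formula
\[
\phi'_{0}(z) \;\equiv\; \forall u_1 u_1' \cdots \forall u_r u_r' \;\exists y_1 \cdots y_r\; \bigvee_{(s^{(1)}, \ldots, s^{(r)}) \in \mathcal{S}_1 \times \cdots \times \mathcal{S}_r}\; \Sigma(z, s^{(1)}, y_1, \ldots, s^{(r)}, y_r) = 1,
\]
where each $s^{(j)}$ denotes the recursively substituted expression obtained by replacing each prior $x_i$ by $s^{(i)}$. Applying Lemma \ref{l: no disjunctions} to collapse the finite disjunction into a single system of equations (at the cost of two auxiliary universally quantified variables to witness non-commutativity), this yields a positive $\forall \exists$-formula $\phi'(z)$, and we set $\psi' \equiv \exists z\, \Pi(z) \neq 1 \wedge \phi'(z)$.

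For the implication $G \models \psi \Rightarrow G \models \psi'$: given $a \in G$ with $\Pi(a) \neq 1$ and $G \models \phi(a)$, for any universal witnesses $u_j, u_j' \in G$ we fix a single $(s^{(1)}, \ldots, s^{(r)}) \in \mathcal{S}_1 \times \cdots \times \mathcal{S}_r$ and inductively select $y_j \in G$ by applying the truth of $\phi(a)$ to the specific $x_j = s^{(j)}(u_j, u_j', a, \ldots)$; at step $j$ the values $x_{<j}$ and $y_{<j}$ have already been determined so $x_j$ is a well-defined element of $G$. This constructs a witness to the corresponding disjunct, establishing $G \models \phi'(a)$. For preservation of non-triviality, suppose $F_n \models \psi'$ with some witness $a$: since $F_n$ acts on its Cayley tree with trivial edge stabilizers, one can inductively select $u_j, u_j' \in F_n$ so that at each level Corollary \ref{c: small cancellation for free actions} yields an $N$-small cancellation tuple over the accumulating data, for some choice $(s^{(1)}, \ldots, s^{(r)})$; then Corollary \ref{c: iterated formal solutions}(ii) produces $d \in F_n^{|v|}$ with $\Theta_\lambda(a, d) = 1$ for some $\lambda$, and the formal solution $\alpha_\lambda$ gives $F_n \models \phi(a)$, contradicting non-triviality of $\psi$. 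Preservation of genericity is analogous: any formal solution to $\phi'$ relative to a diophantine condition $\Pi''$ descends (via the substitution $x_j = s^{(j)}$) to a formal solution to $\phi$ relative to $\Pi''$, and the genericity hypothesis on $\psi$ then forces some word of $\Pi$ into the normal closure of $\Pi''$.

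The main technical obstacle is synchronizing the $r$ successive applications of Corollary \ref{c: small cancellation for free actions} so that the small cancellation hypothesis at level $j$ holds over accumulated parameters that include existential witnesses $y_{<j}$ depending on earlier universal choices $u_{<j}, u_{<j}'$. This is handled by treating each $\mathcal{S}_j$ as parametric in the tuple $(z, x_{<j}, y_{<j})$ and exploiting the uniformity of the collection provided by Corollary \ref{c: small cancellation for free actions}, which is independent of the specific values these variables take in the ambient group.
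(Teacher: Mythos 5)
Your approach — extract $N$ from Corollary \ref{c: iterated formal solutions}, substitute small-cancellation words obtained from Corollary \ref{c: small cancellation for free actions} for the universally-quantified $x_j$, and argue genericity in the Bass--Serre group $G_\Theta\frp\F(u,u')$ — is essentially the paper's. But the disjunction in $\phi'_0$ is a genuine error. Corollary \ref{c: small cancellation for free actions} guarantees that \emph{some} tuple in $\mathcal{S}_j$ evaluates to an $N$-small cancellation tuple — not all of them, and you do not control which one works, as it depends on the ambient data. With a disjunction, a formal solution for $\phi'_0$ relative to $\Theta$ certifies $\Sigma(z,s^*,\alpha)=1$ modulo $\Theta$ only for the \emph{single} disjunct $s^*=(s^{*(1)},\dots,s^{*(r)})$ that the formal solution selects (Definition \ref{defn:formal solution} picks one disjunct), and nothing forces this $s^*$ to coincide with the small-cancellation choice in $G_\Theta\frp\F(u,u')$; so Corollary \ref{c: iterated formal solutions}(ii) cannot be invoked and genericity of $\psi'$ does not follow. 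The same defect undermines the non-triviality paragraph: the disjunct that the chosen witnesses $y_j$ end up satisfying need not be the small-cancellation one. You must replace the disjunction by a \emph{conjunction} with a \emph{separate} existential block $y^{(s)}$ for each $s\in\mathcal{S}_1\times\cdots\times\mathcal{S}_r$ (compare the $\bigwedge_{\tau\in\mathcal{T}}\exists w^{\tau}$ inside the paper's proof of Theorem \ref{lem:extensionpreservation}). The forward implication $\psi\Rightarrow\psi'$ persists since $\phi(a)$ is universally quantified, and genericity then goes through because a formal solution must supply $\alpha^{(s)}$ for \emph{every} $s$, including the small-cancellation one.

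Separately, the claim that ``any formal solution to $\phi'$ descends to a formal solution to $\phi$'' via $x_j=s^{(j)}$ is not literally true: by Definition \ref{defn:formal solution} a formal solution to $\phi$ must express $y_j$ as a word in $v,z,x_1,\dots,x_j$ only, but after your substitution the expression for $y_j$ involves all of $u_1,u_1',\dots,u_r,u_r'$, and the $u$'s of index greater than $j$ determine $x_{>j}$, so the quantifier-dependency constraint is violated. The correct argument does not pass through a formal solution for $\phi$; rather it works directly in $G_\Theta\frp\F(u,u')$, applying Corollary \ref{c: iterated formal solutions}(ii) with $c=z$, $a_j=s^{(j)}(\dots)$ and $b_j=\alpha^{(s)}_j$ for the small-cancellation choice of $s$, concluding that $\Pi(z)=1$ must hold in $G_\Theta$ — which is exactly what genericity of $\psi'$ requires. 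Your non-triviality paragraph already contains the germ of this argument (it invokes Corollary \ref{c: iterated formal solutions}(ii) in a concrete tree); the genericity step should be cast in the same terms.
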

    \begin{proof}
    Using Lemma \ref{l: no disjunctions} we can assume that the positive formula $\phi(z)$ is of the form:
    	\begin{align*}
    		\phi(z)\equiv \forall x_{1}\exists y_{1}\cdots\forall x_{r}\exists y_{r}\;\Sigma(z,x_{1},\dots,x_r, y_1,\dots,y_{r})=1,
    	\end{align*}
    	where each $x_i, y_i$ are tuples of variables and $\Sigma(z,x_{1},\dots,x_{r},y_{1},\dots,y_{r})=1$ is a system of equations.
    	From the definition of generic almost positive sentence and Theorem \ref{c: iterated formal solutions}, it follows that there is some $N>0$ such that given any $G$ acting on a tree $T$ and given tuples $c\in G^{|z|}$, $a_{i}\in G^{|x_{i}|}$ and $b_{i}\in G^{|y_{i}|}$ such that
    	$a_{i}$ is $N$-small cancellation in $T$ over $(c,a_{j},b_{j})_{j<i}$, then both $\Sigma(c,a,b)=1$ and $\Pi(c)\ne 1$ cannot hold simultaneously,  where $a$, $b$ denote the collections of all tuples $a_i$ and $b_i$, $1\leq i \leq r$, respectively.
    	
    	We define a set of words by recursively using Corollary \ref{c: small cancellation for free actions} as follows. Let $\tau_{1}(t,t',z)$ be the set of words provided in Corollary \ref{c: small cancellation for free actions} for the tuples of variables $t,t',z$ and $m=1$; assume that $\tau_{j-1}(t,t',z, y_{1},y_{2},\dots, y_{j-2})$ has been defined; then $\tau_{j}(t,t',z, y_{1},y_{2},\dots, y_{j-1})$ is the set of words provided in Corollary \ref{c: small cancellation for free actions} for the tuples of variables $t,t',z_j$, where $z_j$ is the tuple $\left(z,\tau_{1}(t,t',z), \dots, \tau_{j-1}(t,t',z, y_{1},y_{2},\dots, y_{j-2}), y_1, \dots, y_{j-1}\right)$, for $j=2, \dots, r$.
    	
    	From Corollary \ref{c: small cancellation for free actions}, these tuples have the property that given any action of a group $G$ on a tree $T$ with trivial edge stabilizers, given $a,\in G^{|t|}$, $a'\in G^{|t'|}$ such that $\subg{a,a'}$ acts irreducibly on $T$ and given $c\in G^{|z|}$, $b_{i}\in G^{|y_{i}|}$, some tuple $d_{j}=\tau_{j}(c,b_{1},b_{2},\dots, b_{j-1})$ is $N$-small cancellation over $c, d_{1},d_{2},\dots, d_{j-1},b_{1},\dots, b_{j-1}$.
    	
    	We now take the positive $\forall \exists$-formula:
    	\begin{align*}
    		\phi'(z)\equiv&\forall t\forall t'\exists y_{1}\exists y_{2}\dots\exists y_{r}\,\,\\
    		\Sigma(z,\tau_{1}(z,t,t'),&\tau_{2}(z,t,t',y_{1}),\dots,\tau_{r}(z,t,t',\dots,y_{r-1}),y_{1},y_{2},\dots, y_{r})=1
    	\end{align*}
    	Clearly the sentence $\psi'\equiv \exists z \, \Pi(z) \ne 1 \wedge \phi'(z)$ is valid in every group in which the sentence $\psi$ is valid as, in fact, $\forall z \, \phi(z) \to \phi'(z)$ holds in any group.
    	
    	We are left to show that the almost positive sentence $\phi'$ is generic. 
    	Consider a formal solution
    	\begin{align*}
    		\alpha=(\alpha_{1}(w,z,t,t'),\alpha_{2}(w,z,t,t'),\dots,\alpha_{r}(w,z,t,t'))
    	\end{align*}
    	for the positive formula $\phi'$ relative to a diophantine condition $\exists w\,\Theta(w,z)=1$.
    	
    	Since $\alpha$ is a formal solution, by definition, see Definition \ref{defn:formal solution}, each of the words in the expression
    	\begin{align*}
    		\Sigma(z,\tau_{1}(z,t,t'),\tau_{2}(z,t,t',\alpha_{1}),\dots,\tau_{r}(z,t,t',\dots,\alpha_{r-1}),\alpha_{1},\alpha_{2},\dots, \alpha_{r})
    	\end{align*}
    	is trivial as an element of $G=G_{\theta}\frp\F(t,t')$, where $G_{\Theta}$ is the quotient of the group $\F(w,z)$ by the relations given by the system $\Theta(w,z)=1$. Let $T$ be the Bass-Serre tree dual to the decomposition $G=G_{\Theta}\frp\F(t,t')$, regarded as an iterated HNN-extension of the group $G_{\Theta}$ with trivial edge stabilisers. Clearly $G$ acts on a tree with trivial edge stabilisers and $t,t'$ act irreducibly on $T$. By the properties of the words $\tau_i$, we deduce that $\tau_{j}(t,t',z, \alpha_{1},\dots,\alpha_{j-1})$ is $N$-small cancellation over
    	$(z, \tau_i,\alpha_{i})_{i<j}$. As we note above, under the aforementioned conditions $\Sigma(z, \tau_1, \dots, \tau_r,\alpha_1, \dots, \alpha_r)=1$ and $\Pi(z)\ne 1$ cannot hold simultaneously in $G$. Since $\Sigma(z, \tau_1, \dots, \tau_r,\alpha_1, \dots, \alpha_r)=1$ in $G$ we conclude that $\Pi(z)\ne 1$ cannot hold in $G_\Theta$ (note that in this context, $z$ is view as an element of $G$ and not as a variable). It follows by definition that $\phi'$ is a generic almost positive sentence.
    \end{proof}
    
    Combining Theorem \ref{l: quantifier reduction} and Theorem \ref{l: small cancellation solutions}, we obtained the following Theorem.
    
    \begin{thm}\label{thm: wscitp}
    	If $G$ acts irreducibly and minimally on a tree and contains weak small cancellation elements, then its positive theory is trivial.
    \end{thm}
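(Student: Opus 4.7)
The plan is to derive Theorem \ref{thm: wscitp} as an essentially immediate combination of the two technical results already established in the excerpt: the non-parametric Merzlyakov-type result (Theorem \ref{l: small cancellation solutions}), which disposes of non-trivial positive $\forall\exists$-sentences under a weak small-cancellation assumption, and the quantifier reduction (Theorem \ref{l: quantifier reduction}), which says that having a non-trivial positive theory at all forces a non-trivial positive $\forall\exists$-sentence.

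The argument proceeds by contradiction. Suppose $G$ has non-trivial positive theory and let $\phi$ be a positive sentence with $G \models \phi$ but $\phi$ not valid in the free group. Applying Theorem \ref{l: quantifier reduction} (with empty tuple of inequalities, so the generic-almost-positive hypothesis is vacuous), we obtain a non-trivial positive $\forall\exists$-sentence $\psi \equiv \forall x\,\exists y\,\Sigma(x,y)=1$ which is still satisfied in $G$. Next I would feed $\psi$ into Theorem \ref{l: small cancellation solutions} to obtain the integer $N_0 = N_0(\psi)$ controlling the small-cancellation threshold. The hypothesis that $G$ contains weak small cancellation elements, interpreted as the existence of weakly $N$-small cancellation $m$-tuples for every $N$ and every $m$ (this is the quantitative meaning fixed in Section~\ref{sec:formal solutions}), lets us pick a weakly $N_0$-small cancellation tuple $a \in G^{|x|}$. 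Since $G \models \psi$, there exists $b \in G^{|y|}$ with $\Sigma(a,b) = 1$. Theorem \ref{l: small cancellation solutions} then produces a formal solution $\alpha(x)$ to $\psi$, and by the definition of formal solution this forces $\psi$ to hold in every group --- contradicting its non-triviality.

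The role played by the hypotheses ``irreducibly and minimally on a tree'' is only through the ambient setup of Sections \ref{s: preliminaries}--\ref{sec:formal solutions}: irreducibility ensures that the notion of weakly $N$-small cancellation tuple is sensible and that the constructions underlying Theorem \ref{l: small cancellation solutions} apply, while minimality is a cosmetic normalisation allowing us to identify the tree $T$ with the minimal $G$-invariant subtree. No further geometric input is needed at this stage.

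There is no serious obstacle in this proof: all of the work has been absorbed into Theorems \ref{l: small cancellation solutions} and \ref{l: quantifier reduction}. The only point requiring minimal care is bookkeeping between the two statements: the value of $N_0$ in Theorem \ref{l: small cancellation solutions} depends on the syntactic length of $\psi$, so one must first fix $\psi$ via quantifier reduction and only then extract the weakly $N_0$-small cancellation tuple from the hypothesis on $G$. Once this order is respected, the contradiction closes at once.
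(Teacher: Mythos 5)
Your proof is correct and follows exactly the route the paper itself takes: the theorem is recorded in the source as an immediate combination of Theorem \ref{l: quantifier reduction} (quantifier reduction to positive $\forall\exists$-sentences) with Theorem \ref{l: small cancellation solutions} (weak small cancellation tuples yield formal solutions for positive $\forall\exists$-sentences), and your write-up simply makes that combination explicit, including the correct ordering of quantifiers so that $N_0$ is chosen after the $\forall\exists$-sentence $\psi$ has been fixed.
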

    
  \subsection{Preservation of the non-trivial positive theory under extensions} \label{sec:preservation extensions}
    
    The goal of this section is to show that the class of groups with non-trivial positive theory is closed under group extensions, that is, if $N$ and $Q$ are group with non-trivial positive theory and
    
    $$
    1 \to N \to G \to Q \to 1,
    $$
    
    then $G$ has non-trivial positive theory.
    
    The most common property used to show that a group has non-trivial positive theory is the finite width of some of its verbal subgroup (see also the discussion in Section \ref{sec:openquestions}). However, we want to remark that this property is \emph{not} closed under finite extensions. Indeed, in \cite{george1976verbal} (see also \cite{segal2009words}) they describe a group $H$ with commutator width 1 and a group $G$ which contains $H$ as index 3 subgroup such that $G$ has infinite commutator width. This helps us to stress a key point of our result: if $N$ and $Q$ satisfy some non-trivial positive sentence $\phi$, then $G$ satisfies a non-trivial positive sentence, not necessarily $\phi$. In that example, $H$ is a nilpotent group while $G$ is solvable. 
    
\medskip
    
    We now address the main result of this section.
    
    \begin{thm} \label{lem:extensionpreservation}
    	Let $\mathcal C$ be either the class of all groups satisfying generic almost positive sentences, the class of groups with non-trivial positive theory or the class of groups satisfying a non-trivial law. Then $\mathcal C$ is closed under extensions. That is: if $K,Q\in \mc C$,  $K \triangleleft E$ and $E/K \simeq Q$, then $E\in \mc C$.
    \end{thm}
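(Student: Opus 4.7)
The plan is to reduce all three cases to a single construction via the quantifier reduction of Theorem \ref{l: quantifier reduction}: we may assume that $Q$ satisfies a non-trivial positive $\forall\exists$-sentence $\phi_Q\equiv\forall x\exists y\;\Sigma(x,y)=1$ and $K$ satisfies $\phi_K\equiv\forall u\exists v\;\Pi(u,v)=1$ (the almost positive case carries along inequalities, and a non-trivial law is already $\forall\exists$). The pure law case is much simpler and can be handled first by direct composition: $w_K(w_Q(x^{1}),\dots,w_Q(x^{m}))=1$ is a law valid in $E$, because every $w_Q$-value projects to $1$ in $Q$ and hence lies in $K$, and then $w_K$ kills the resulting $m$-tuple; non-triviality in a free group is read off by the same dichotomy used below. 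The rest of the plan concerns the positive and generic almost positive cases.

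The composite sentence is built using the uniform small-cancellation words of Section \ref{sec:uniform small cancellation}. Let $\tau(t,t')=(\tau_1,\dots,\tau_{|x|})$ be the tuple produced by Corollary \ref{c: small cancellation for free actions} so that $\tau(t,t')$ is an $N$-small cancellation tuple whenever $\langle t,t'\rangle$ acts irreducibly on a tree with trivial edge stabilizers, and let $\rho(w_{1},\dots,w_{|x|})=(\rho_1,\dots,\rho_{|u|})$ be a companion tuple, chosen in the normal closure of its $w$-arguments, which produces an $N$-small cancellation tuple of length $|u|$ from a generic tuple of elements. Define
\[
\psi \;\equiv\; \forall t,t'\;\exists y_1,\dots,y_{|x|},v\;\;\Pi\!\bigl(\rho(w_1,\dots,w_{|x|}),v\bigr)=1,\qquad w_i:=\Sigma(\tau_i(t,t'),y_i).
\]
The verification that $E\models\psi$ is immediate: given $t,t'\in E$, the projections $\bar\tau_i(\bar t,\bar t')\in Q$ admit witnesses in $Q$ by $\phi_Q$, which we lift to obtain $y_i\in E$ with $w_i\in K$; since each $\rho_j$ is in the normal closure of the $w_i$'s, the tuple $\rho(w)$ lies in $K$, and $K\models\phi_K$ supplies the existential witness $v\in K\subseteq E$.

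The heart of the proof is showing $\psi$ non-trivial. Suppose, for contradiction, that $\psi$ admits a formal solution: words $\alpha_i(t,t'),\beta(t,t')$ for which $\Pi\bigl(\rho(\Sigma(\tau(t,t'),\alpha)),\beta\bigr)=1$ holds in the free group $F=\F(t,t')$ acting freely on its Cayley tree. The uniform choice of $\tau$ gives that $\tau(t,t')$ is $N$-small cancellation in $F$; by Theorem \ref{l: small cancellation solutions} applied to $\phi_Q$, non-triviality of $\phi_Q$ forces $w_i=\Sigma(\tau_i(t,t'),\alpha_i)\neq 1$ in $F$ for every $i$ (otherwise we would read off a formal solution to $\phi_Q$). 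Consequently the tuple $\rho(w_1,\dots,w_{|x|})$ is defined and, by the small-cancellation design of $\rho$, is itself $N$-small cancellation in $F$. A second application of Theorem \ref{l: small cancellation solutions}, this time to $\phi_K$ with universal tuple $\rho(w)$ and witness $\beta$, extracts a formal solution for $\phi_K$, contradicting its non-triviality. The generic almost-positive case follows the same pattern, with the genericity of $\phi_Q$ and $\phi_K$ (Definition of generic almost-positive sentences) replacing ``non-existence of a formal solution'' at both contradiction steps, the relevant inequalities being provided by the diophantine conditions of Theorem \ref{l: small_cancellation_parameters}.

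The main technical obstacle is the choice of $\rho$: the uniform construction of Theorem \ref{l: general small cancellation} produces words in the normal closure of its two ``generator'' variables rather than its parameter variables, whereas for our purposes we need the output to lie in the normal closure of the parameters, so that the $E$-side verification stays inside $K$. This forces us to rework the argument of Section \ref{sec:uniform small cancellation} with the roles of ``generators'' and ``parameters'' swapped, combining conjugates of the $w_i$'s by well-chosen auxiliary elements (possibly by introducing additional universal variables into $\psi$) to manufacture the required small-cancellation tuple; once such a $\rho$ is produced, the two-step contradiction above goes through verbatim.
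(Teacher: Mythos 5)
Your high-level strategy matches the paper's: after quantifier reduction, build a single composite positive $\forall\exists$-sentence $\psi$ by nesting $\Sigma_Q$ inside $\Sigma_K$ through uniform small-cancellation words, verify $E\models\psi$ by lifting witnesses through $Q$ and $K$, and argue non-triviality by contradiction via formal solutions. However, there are two concrete issues with your version.

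First, the obstacle you flag at the end is in fact already resolved by the paper's machinery, and your failure to notice this leaves a genuine gap. You say you need a tuple $\rho$ of small-cancellation words lying in the normal closure of the \emph{inputs} (the $w_i$'s) rather than of two auxiliary generators, and that obtaining such $\rho$ ``forces us to rework the argument of Section \ref{sec:uniform small cancellation} with the roles of generators and parameters swapped.'' But Corollary \ref{c: small cancellation for free actions} already produces a finite collection of $m$-tuples of words $w\in\F(x,x',z)^m$ \emph{in the normal closure of $\subg{x}$}, which become $N$-small cancellation over $z$ whenever $\subg{x,x'}$ acts irreducibly on a tree with trivial edge stabilizers and some coordinate of $x$ is non-trivial. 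This is precisely what is needed: in the paper's construction, one substitutes $q:=\Sigma_Q(s,x,y)$ for the tuple $x$, auxiliary universal variables $u,u'$ for $x'$, and $t$ for $z$. Because the words are in the normal closure of $\subg{q}$, and $\Sigma_Q(s,x,y)$ evaluates in $K$ when one passes to $E$ and projects to $Q$, the composite universal argument of $\Sigma_K$ automatically lies in $K$. You should use this corollary directly rather than proposing to re-derive it; as stated, your argument is not complete because $\rho$ is never constructed.

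Second, your formula $\psi$ is structurally heavier than the paper's, and the extra layer introduces notational problems you have not resolved. You replace $\phi_Q$'s universal tuple $x$ with a synthesized small-cancellation tuple $\tau(t,t')$, and then write $w_i:=\Sigma(\tau_i(t,t'),y_i)$; but $\Sigma$ takes a $|x|$-tuple as its first argument, while $\tau_i$ is a single coordinate, so as written this does not typecheck (one would need $|x|$ distinct $|x|$-tuples, which you have not arranged). The paper avoids this by keeping $\forall x$ directly: the composite sentence is $\forall u\forall u'\forall x\exists y\,\bigwedge_\tau\exists w^\tau\,\Sigma_K\bigl(t,\tau(\Sigma_Q(s,x,y),u,u',t),w^\tau\bigr)=1$, with only \emph{one} invocation of the small-cancellation synthesis (for $\phi_K$'s universal), and the non-triviality of $\phi_Q$ entering simply through the observation that a formal solution would retract to a formal solution for $\phi_Q$ after killing $u,u'$, hence $\Sigma_Q(s,x,\alpha)$ has a non-trivial component. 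Your two-layer version could in principle be made to work, but it requires additional care to specify, and for the generic almost-positive case the contradiction must be run in the Bass-Serre tree of $G_\psi\frp\F(u,u',x)$ (to accommodate the diophantine condition) rather than merely in $\F(t,t')$ as your sketch suggests; this is material you have only gestured at.
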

    \begin{proof}
    	In virtue of Lemma \ref{l: no disjunctions} and Theorem \ref{l: quantifier reduction}, we can assume that the positive formula is of $\forall\exists$-type and there involves no disjuntions, i.e. there are sentences
    	$\phi_{K}\in Th^{+}(K)$ and $\phi_{Q}\in Th^{+}(Q)$ of the form
    	\begin{align*}
    		\phi_{Q}\equiv\,\,&\exists s\,\,\Pi_{Q}(s)\neq 1\wedge\psi_{Q}(s) \\
    		\phi_{K}\equiv\,\,&\exists t\,\,\Pi_{K}(t)\neq 1\wedge\psi_{K}(t)
    	\end{align*}
    	where $\psi_{Q}$ and $\psi_{K}$ are of the form:
    	\begin{align*}
    		\psi_{Q}\equiv\forall x\exists y\,\,\Sigma_{Q}(s,x,y)=1\\
    		\psi_{K}\equiv\forall z\exists w\,\,\Sigma_{K}(t,z,w)=1
    	\end{align*}
    	and $\phi_{K},\phi_{Q}$ do not admit a formal solution relative to a diophantine condition compatible with $\Pi_{Q}(s)\neq 1$
    	and $\Pi_K(t)\neq 1$ respectively.
    	
    	From the definition of generic almost positive sentence and Theorem \ref{l: small_cancellation_parameters}, it follows that there is some $N>0$ such that given any $G$ acting on a tree with trivial edge stablisers
    	and tuples $(c,a,b)\in G^{|s|}\times G^{|x|}\times G^{|y|}$ such that $a$ is $N$-small cancellation over $c$, then both $\Pi_{Q}(c)\neq 1$ and 
    	$\Sigma_{Q}(c,a,b)=1$ cannot hold simultaneously in $G$.
    	
    	Let $q=(q_{1},q_{2},\dots, q_{m})$ be a multivariable, where $m$ is the number of words in $\Sigma_{Q}$. Let also $n=|s|$ and $o=|t|$.

    	Applying Corollary \ref{c: small cancellation for free actions} with $q$ in place of $x$, $u,u'$ in place of $x'$ and $t$ in place of $z$, we get a finite collection $\mathcal{T}$, where each $\tau\in\mathcal{T}$ is a $|z|$-tuple of words $\tau(q,u,u',t)$ in the normal closure of $\subg{q}$ such that given any $G$-tree $T$ with trivial edge stabilizers and
    	$b_{1},b_{2},c\in G$ and $a\in G^{m}$ where $\subg{a,b_{1},b_{2}}$ acts irreducibly and $a_{i}\neq 1$ for some $1\leq i\leq m$,
    	the tuple $\tau(a,b_{1},b_{2},c)$ must be $N$-small cancellation over $c$ in $T$. Consider now the formula:

    	\begin{align*}
    		\psi(s,t)\equiv\forall u\forall u'\forall x\exists y
    		\,\bwedge{\tau\in\mathcal{T}}{\exists w^{\tau}\,\,\Sigma_{K}(t,\tau(\Sigma_{Q}(s,x,y),u,u',t),w^{\tau})=1}{}.
    	\end{align*}
    	
    	Let us first check that $E\models\phi$ where $\phi\equiv\exists s\exists t\;\Pi_{K}(s)\neq 1\wedge \Pi_{Q}(t)\neq 1\wedge\psi(s,t)$. It will be convenient for us to phrase our arguments in terms of the validity of certain sentences in the expansion of $G$ by a predicate $P_{K}$ for $K$. For simplicity, we will
    	write them using syntactic abbreviations, for instance if $x$ is a 2-tuple, we write $\exists x\in K\,\phi(x)$ instead of $\exists x_{1}\exists x_{2}\,\,P_K(x_{1})\wedge P_{K}(x_{2})\wedge\phi(x)$ etc.
    	Notice first that the validity of $\phi_{Q}$ in $Q$ implies that $E$ satisfies:
    	\begin{equation}
    		\label{phiQ}\exists s\,\,\Pi_{Q}(s)\neq 1\wedge\forall x\exists y\,\,\Sigma_{Q}(s,x,y)\in K.
    	\end{equation}

    	Now, for $\tau\in\mathcal{T}$ the fact that all components of $\tau(q,u,u',t)$ belong to the normal closure of $q$ implies the validity of the following sentence in $E$:
    	\begin{align*}
    		\forall t,\forall u\forall u' (\forall q\in K)\,\,\,\tau(t,u,u',q)\in K.
	\end{align*}
    	From this, together with the validity of $\phi_K$ on $K$, it follows that:
    	\begin{equation}
    		\label{phiK}\exists t\in K\;\Pi_{K}(t)\neq 1\wedge\forall u\forall u'(\forall q\in K)
    		\,\,\exists w\in K\,\, \Sigma_{K}(w,\tau(t,u,u',q),t)=1.
    	\end{equation}
    	The validity of $\phi$ in $G$ follows from (\ref{phiQ}) and (\ref{phiK}).
    	
    	We now show that the almost positive sentence $\phi$ is generic. Consider a formal solution
    	$(\alpha,\beta)$ ($\alpha$ being an expression for $y$ and $\beta$ for $w=(w^{\tau})_{\tau\in\mathcal{T}}$) for $\phi$
    	relative to some diophantine condition $\exists r\,\Theta(s,t,r)=1$.
    	Our goal is to show that $\exists r\,\,\Theta(s,t,r)=1\wedge\,\,\Pi_{Q}(s)\neq 1\wedge\Pi_{K}(t)\neq 1$ is inconsistent. Assume towards contradiction that this is not the case, i.e. the pair $(s,t)$ in the group $G_{\psi}=\subg{s,t,r\,|\,\Theta(s,t,r)=1}$ verifies the condition $\Pi_{Q}(s)\neq 1\wedge\Pi_{K}(t)\neq 1$.
    	
    	Recall from the definition of formal solution, see Definition \ref{defn:formal solution}, that we can see $(\alpha,\beta)$ as the image in $G=G_{\psi}\frp\F(u,u',x)$ of the tuples $y,w$ by a homomorphism from $G_{\Sigma'}$ to $G$, where $\Sigma'=\Sigma_{K}(t,\tau(\Sigma_{Q}(s,x,y),u,u',t),w^{\tau})=1$ is the system of equations inside the quantifiers of $\psi(s,t)$.
    	
    	Denote by $T$ be the Bass-Serre tree with trivial edge stabilizers associated with the decomposition of $G$ above, where vertex stabilizers are the conjugates of $G_{\psi}$.
    	Since $\psi_{Q}(s)$ does not admit a formal solution relative to a diophantine condition compatible with $\Pi_{Q}(s)\neq 1$, at least one of the words in the tuple:
    	\begin{align*}
    		\sigma=\Sigma_{Q}(s,x,\alpha(r,s,t,u,u',x))
    	\end{align*}
    	has to be non-trivial. Otherwise, the result of postcomposing $\alpha$ with the retraction killing $u$ and $u'$ in $G_{\Sigma_{Q}}$ results in a formal solution for $\phi_{Q}$ relative to the diophantine condition $\exists r\,\exists t\;\Theta(s,t,r)=1$.
    	
    	Clearly, $G$ acts on its Bass-Serre tree with trivial edge stabiliser, $\langle \sigma,u,u'\rangle$ acts irreducibly on it and as notice above, at least one of the words in $\sigma$ is non-trivial; the choice of $\tau$ and Corollary \ref{c: small cancellation for free actions} implies the existence of some $\tau\in\mathcal{T}$  such that $\tau(\sigma,u,u',t)$ is $N$-small cancellation over $t$. The equality $\Sigma_{K}(t,\tau(\Sigma_{Q}(s,x,y),u,u',t),w^{\tau})=1$ in $G$, the fact that $\tau(\sigma,u,u',t)$ is $N$-small cancellation over $t$ and that $\Pi_{K}(t)\neq 1$ holds in $G_{\psi}$ implies by Theorem \ref{l: small_cancellation_parameters} that $\psi_{K}(t)$ admits a formal solution relative to a diophantine condition compatible with $\Pi(t)\neq 1$, contradicting the assumption that $\psi_K(t)$ is generic almost positive. Therefore $\exists r\,\,\Theta(s,t,r)=1\wedge\,\,\Pi_{Q}(s)\neq 1\wedge\Pi_{K}(t)\neq 1$ is inconsistent and so $\phi$ is generic.
     	
    	If we take $\Pi_{K}$ and $\Pi_{Q}$ to be empty, the argument shows the preservation of non-triviality of the positive theory under extensions. If we take $y$ and $w$ to be empty, we recover the fact that the extension of a group satisfying a non-trivial law by another one which does also satisfies a non-trivial law.
    \end{proof}

\section{Characterization of groups acting on trees with trivial positive theory} \label{sec: characterisation}
  
  The goal of this section is to show that if a group $G$ acts minimally on a real tree and the action on the boundary neither fixes a point nor is 2-transitive, then it has weak small cancellation elements and so the positive theory is trivial. In order to prove it, we will make use of a characterisation of the type of action in consideration given by Iozzi, Pagliantini and Sisto in \cite{iozzi2014characterising}.  In their work, the authors show that if a group admits this type of action on a tree, then the action admits a good labeling and from it, they deduce the existence of very long random segments with small cancellation features. The strategy of the proof is then to prove that having a good labeling (and so these long random segments) is equivalent to having weak small cancellation elements as in Definition \ref{d: small cancellation}.  
  
  We remark that while in the rest of the article the trees we consider may be real, in this section we require them to be simplicial.
  
  \bigskip
  
  For completeness, we recall the definitions from \cite{iozzi2014characterising} just necessary to prove this equivalence.
  
  Fix an action of a group $G$ on a simplicial tree $T$ and a vertex $v\in T$. Denote by $\mathcal{E}^{(n)}$ the collection of all oriented geodesic segments of length $n$.
  \begin{definition}
  	An $o$-geodesic is an \emph{oriented} segment in $T$ both whose endpoints are in the orbit $G\cdot v$. The $o$-length of $J$, denoted by $|J|_{0}$, is $\#(J\cap G\cdot v)-1$.
  \end{definition}
  
  Given any $o$-geodesic $J$, let $J^{op}$ stand for the inverse orientation of $J$.
  
  \begin{definition}[Good labelling, Sisto \& co.\cite{iozzi2014characterising}] \label{defn:goodlabelling}
  	Fix an alphabet with three letters $\{a,b,c\}$. A labelling is given by some pair $(k,f)$, where $k$ is a positive integer and $f$ a $G$-invariant map (the image is determined by the orbit) from $\mathcal{E}^{(k)}$ to $\{a,b,c\}$. We say that the labelling is \emph{good} if both $f^{-1}(a)$ and $f^{-1}(b)$ are finite and at least one of the following two conditions is satisfied:
  	\enum{(i)}{
  		\item \label{gl 1}$|f^{-1}(a)|=|f^{-1}(b)|=1$, i.e. all segments labelled by $a$ (resp. by $b$) are in a unique $G$- orbit;
  		\item \label{gl 2} $f(J)=a$ implies $f(J^{op})\neq b$, i.e. $f(J^{op})$ can only have labels $a$ or $c$.
  	}
  	We will write $C_{f}:=\max\,\{|J|\,\}_{J\in f^{-1}(\{a,b\})}$.
  \end{definition}
  
  	For each $N\geq k$, any labeling $(k,f)$ determines  a $G$-invariant function
  	$f^{*}:\mathcal{E}^{(N)}\to\{a,b,c\}^{N-k+1}$ as follows. Given $J\in\mathcal{E}^{(N)}$,
  	if $J_{1},J_{2},\dots J_{N-k+1}\in\mathcal{E}^{(k)}$ are the $o$-geodesics of $J$ of
  	$o$-length $k$ (from left to right) with the same orientation as $J$, ordered from left to right, then
  	we let $f^{*}(J)=f(J_{1})f(J_{2})\dots f(J_{N-k+1})$.
  
  Observe that the image by $f^{*}$ of an initial (resp. final) oriented subsegment of an oriented segment $J$ is an initial (resp. final) subword of $f^{*}(J)$ of the appropriate length.
  
  If $J,J'$ are oriented segments whose orientation intersect coherently (i.e. the two orientations agree on the intersection), then $f^{*}(J\cap J')$ is a common subword of $f^*(J)$ and $f^*(J')$.
  
  Given a sequence of positive integers $\bar{e}=(e_{1},e_{2},\dots e_{q})$
  and $X,Y$ two letters from $\{a,b,c\}$, let $u_{\bar{e}}(X,Y)$ be the word
  $XY^{e_{1}}XY^{e_{2}}\dots XY^{e_{q}}X$.
  
  Note that in general there is no relation between the words $f^*(J)$ and $f^*(J^{op})$; however if the labelling is good as defined in Definition \ref{defn:goodlabelling}, then the next lemma determines some relations between $f^*(J)$ and $f^*(J^{op})$.
  
  \begin{lemma}
  	\label{reverse} Let $(k,f)$ be a good labeling and $J$ an $o$-geodesic. Assume that $f^{*}(J)=u_{\bar{e}}(a,b)$ for some tuple $\bar{e}$ of positive integers $>2$ and let $v:=f^{*}(J^ {op})$.
  	Assume that
  	$v_{0}=ab^{m_{1}}ab^{m_{2}}ab^{m_{3}}a$ with $m_{0},m_{1},m_{2}>0$ is a sub-word of $v$. Then either:
  	\enum{(A)}{
  		\item \label{reverse 1} there exists $2\leq i\leq|\bar{e}|-1$ such that  $m_{2}=e_{i}$, $m_{1}\leq e_{i+1}$ and $m_{3}\leq e_{i-1}$ or
  		\item \label{reverse 2}$2m_{l}\leq e_{i}$ for some $1\leq i\leq |\bar{e}|$ and $l\in\{1,2,3\}$.
  	}
  \end{lemma}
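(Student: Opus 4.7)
The plan is to translate the hypothesis that $v_0$ occurs as a subword of $v = f^*(J^{op})$ into a statement about a subword of $f^*(J) = u_{\bar e}(a,b)$, and then read the conclusion off by matching $b$-runs. Write $N$ for the common length of $f^*(J)$ and $f^*(J^{op})$, and let $K_j$ denote the length-$k$ subsegment of $J$ starting at its $j$-th vertex, so that $f^*(J)_j = f(K_j)$ and $f^*(J^{op})_i = f(K_{N-1-i}^{op})$.

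The key intermediate step is a \emph{mirror correspondence}: whenever $v_i \in \{a,b\}$ one has $v_i = f^*(J)_{N-1-i}$. Under hypothesis (ii) of the good labeling I would apply the implication $f(L) = a \Rightarrow f(L^{op}) \neq b$ both to $L = K_{N-1-i}$ and to $L = K_{N-1-i}^{op}$; since $f^*(J)$ takes only the values $a$ and $b$, this forces the mirror letter to coincide with $v_i$. Under hypothesis (i), $G$-invariance together with $|f^{-1}(a)| = |f^{-1}(b)| = 1$ produces a well-defined pair $(\alpha_a, \alpha_b) \in \{a,b,c\}^2$ capturing how reversal acts on labels, so that $v_i$ equals $\alpha_a$ or $\alpha_b$ according to whether $f^*(J)_{N-1-i}$ is $a$ or $b$. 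All subcases except $(\alpha_a, \alpha_b) = (a,b)$ are then eliminated: the cases with $\alpha_a = \alpha_b$ or with some entry equal to $c$ leave $v$ missing one of the letters $a,b$ and contradict that $v_0$ contains both; the ``swapped'' case $(b,a)$ turns $v$ into $b a^{e_q} b a^{e_{q-1}} b \dots b a^{e_1} b$, whose $a$-runs have length $e_j > 2$, preventing an isolated $a$ of $v_0$ followed by a $b$-run of length $m_l \geq 1$ from fitting.

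With the mirror correspondence in hand, the reverse of $v_0$, namely $V_0 = a b^{m_3} a b^{m_2} a b^{m_1} a$, appears as a subword of $f^*(J) = a b^{e_1} a b^{e_2} \dots a b^{e_q} a$. Its four $a$-letters coincide with four of the $a$-positions of $f^*(J)$; since only $b$'s occur between consecutive $a$'s of $V_0$, these four must be four \emph{consecutive} $a$-positions, and matching the three intermediate $b$-runs of $V_0$ against the corresponding $e_j$-gaps in $f^*(J)$ yields $m_3 = e_{i-1}$, $m_2 = e_i$, and $m_1 = e_{i+1}$ for some $2 \leq i \leq q - 1 = |\bar e| - 1$. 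This is precisely conclusion \ref{reverse 1} (in fact with equality in all three slots).

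The main obstacle will be the case (i) enumeration; in particular, ruling out the swapped subcase $(b,a)$ is the one place where the hypothesis $e_j > 2$ is essential, since it forces $v$'s $a$-runs to be too long to house the isolated $a$'s of $v_0$. Conclusion \ref{reverse 2} is a looser alternative meant to cover any degenerate situation where the extracted index fails to lie in $[2, |\bar e| - 1]$, but under the stated hypotheses conclusion \ref{reverse 1} always applies.
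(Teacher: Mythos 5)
Your proposal is correct, and it takes a tighter route than the paper's own proof. The paper only establishes the $b$-direction of the mirror correspondence in case (ii) of the good-labelling definition (namely $v_i = b \Rightarrow (f^*(J))_{N-1-i} = b$), and therefore has to split into the sub-cases $m_2 = e_i$ (giving alternative (A) with inequalities) and $m_2 < e_i$ (giving alternative (B)). You apply the defining implication of (ii) to both $L$ and $L^{op}$ and combine this with the fact that $f^*(J)$ uses only the letters $a,b$, obtaining the correspondence in both directions; this means the three maximal $b$-runs of $v_0$ mirror \emph{exactly} onto three consecutive runs $b^{e_{i-1}}, b^{e_i}, b^{e_{i+1}}$ of $u_{\bar e}$, so (A) holds with equalities and (B) is in fact vacuous under the lemma's hypotheses. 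In case (i), the paper simply asserts that reversal respects the labels without explaining why the single-orbit condition forces this; your enumeration of the well-defined pair $(\alpha_a,\alpha_b)$ --- ruling out the constant and $c$-valued possibilities because $v_0$ uses both letters, and ruling out the swapped pair $(b,a)$ because $e_j > 2$ makes every $a$ in the reversed-and-swapped word non-isolated --- supplies the missing justification. Both proofs perform the same basic reduction, namely transferring the subword $v_0$ of $v = f^*(J^{op})$ to a subword of $f^*(J)$ and matching $b$-runs against the $e_j$, but yours is more careful in case (i) and delivers a strictly stronger conclusion in case (ii).
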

  \begin{proof}
  	Assume first that the good labelling satisfies condition (\ref{gl 1}) in Definition \ref{defn:goodlabelling}. Since $v_0$ is a subword of $v$ and by assumption all segments labelled by $a$ (resp. $b$) belong to the same orbit, we have that for all segment $I$ in $J$, $f^*(I)= a$ (resp. $f^*(I)=b$) if and only if $f^*(I^{op})=a$ (resp. $f^*(I^{op})=b$) and so $v$ is the word $f^*(J)$ read from right to left. It follows that alternative (A) of the statement holds.
  	
  	Assume now we are in case (\ref{gl 2}) of Definition \ref{defn:goodlabelling}, that is if $f^*(I)=a$, then $f^*(I^{op})$ is either $a$ or $c$. Hence if $f^*(I^{op})=b$, then we have that $f^*(I)=b$ and so any maximal subword of $v$ of the form $b^{m}$ is the image of a segment whose opposite is contained in a segment with image
  	$b^{e_{j}}$ in $u_{\bar{e}}(a,b)$. In this case, we say that $b^m$ is a segment of $b^{e_j}$.
  	
  	If $b^{m_2}$ is a fragment of $b^{e_i}$, then we have two alternatives: either $m_2=e_i$ or $m_2<e_i$. If $m_2=e_i$, then $b^{m_{1}}$ and $b^{m_{3}}$ are fragments of $b^{e_{i+1}}$ and $b^{e_{i-1}}$ respectively and thus the alternative (\ref{reverse 1}) of the statement holds. If $m_2< e_i$, then either $b^{m_1}$ or $b^{m_3}$ (or both) is a fragment of the same $b^{e_i}$ and so at least one of the fragments $b^{m_1l}$, $l\in \{1,2,3\}$ has length at most $\frac{e_{j}}{2}$ and so Alternative (B) from the statement holds.
  \end{proof}
  
  \begin{definition}\label{defn:independenttuple}
  	We say that a finite tuple $(\bar{e}^{i})_{i=1}^{r}$ of tuples of positive integers is
  	\emph{independent} if the following holds:
  	\enum{(a)}{
  		\item $e^{i}_{j}>1$ for any $i,j$
  		\item \label{independence 1}$ e^{i}_{j}=e^{i'}_{j}$ implies that $(i,j)=(i,j')$
  		\item \label{independence 2} $2e^{i}_{j}>e^{i'}_{j'}$ for any $i,i'$ and $j,j'$.
  	}
  	The following Lemma describes the key property of a good labeling.
  \end{definition}

  \begin{lemma}
  	\label{l: gl intersection}Let $(k,f)$ be a good labeling and $J_{1},J_{2},\dots J_{k}$ $o$-geodesics such that
  	for all $1\leq j\leq k$ we have $f^{*}(J_{j})=u_{\bar{e}^{j}}$ for some tuple $\bar{e}^{j}$ of positive integers
  	and let $e_{max}:=\max\{e^{j}_{i}\}_{i,j}$.
  	Assume moreover that $(\bar{e}^{j})_{j=1}^{m}$ is independent. Then for any $g\in G$ and $1\leq l,l'\leq k$ at least one of the following holds:
  	\begin{itemize}
  		\item $l=l'$ and $g$ fixes some non-degenerate subsegment of $J_{l}$
  		\item $|gJ_{l'}\cap gJ_{l}|< (4e_{max}+k+5)C_{f}$,
  	\end{itemize}
  	where $C_{f}:=\max\,\{|J|\,\}_{J\in f^{-1}(\{a,b\})}$.
  \end{lemma}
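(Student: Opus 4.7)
The plan is to analyze the intersection $I := gJ_{l'} \cap J_{l}$ through its labelling $f^{*}$, splitting into the coherent and non-coherent orientation cases, and in each one exploiting the two independence conditions of Definition \ref{defn:independenttuple}: condition (b) gives a unique identification of complete $b$-blocks across all tuples, while condition (c) rules out halving relations between block sizes. The first step is a length-to-structure reduction: if $|I| \geq (4e_{\max}+k+5)C_{f}$, then $f^{*}(I)$ read in either orientation contains a contiguous subword of the form $ab^{m_{1}}ab^{m_{2}}ab^{m_{3}}a$ where each $b^{m_s}$ is a complete $b$-block of the relevant $u_{\bar{e}}$. The constant $C_{f}$ bounds the length overhead of the $f^{*}$-window while $e_{\max}$ bounds the length of each complete block, and the given bound is tuned so that three consecutive complete blocks fit strictly inside $I$ even after accounting for a partial block at each end.

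In the coherent case, $f^{*}(I)$ is simultaneously a subword of $u_{\bar{e}^{l}}$ and of $u_{\bar{e}^{l'}}=f^{*}(gJ_{l'})$ (by $G$-invariance of $f$). Matching the middle complete block yields $m_{2}=e^{l}_{i}=e^{l'}_{j}$, so by Definition \ref{defn:independenttuple}(b) we obtain $l=l'$ and $i=j$; matching the two adjacent complete blocks similarly pins down the offset, so $g^{-1}(I)$ must coincide with $I$ as a subsegment of $J_{l'}=J_{l}$. Since the orientations are coherent, $g$ sends each endpoint of $I$ to itself, and any isometry of a tree fixing two points of $I$ fixes the geodesic between them; in particular $g$ fixes the non-degenerate subsegment $I$ of $J_l$ pointwise.

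In the non-coherent case, let $w' := f^{*}(I)$ read in $gJ_{l'}$'s orientation. Then $w'$ is a subword of $u_{\bar{e}^{l'}}$ and simultaneously a subword of $v := f^{*}(J_{l}^{op})$. Applying Lemma \ref{reverse} to the 3-block subword of $w' \subseteq v$, alternative (B) gives $2m_{s}\leq e^{l}_{i'}$; combined with $m_{s}=e^{l'}_{j_{s}}$ (from $w' \subseteq u_{\bar{e}^{l'}}$), this contradicts Definition \ref{defn:independenttuple}(c). Alternative (A) yields $m_{2}=e^{l}_{i}=e^{l'}_{j_{2}}$, forcing $l=l'$ and $i=j_{2}$ by (b); then the inequalities $m_{1}\leq e^{l}_{i+1}$ and $m_{3}\leq e^{l}_{i-1}$ together with the identifications $m_{1}=e^{l}_{j_{2}-1}$ and $m_{3}=e^{l}_{j_{2}+1}$ force $e^{l}_{j_{2}-1}=e^{l}_{j_{2}+1}$, again contradicting (b). Hence the non-coherent case cannot occur at this length, completing the argument.

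The main obstacle I anticipate is the length bookkeeping in the first step: one has to track how the geodesic length of $I$ converts to the word length of $f^{*}(I)$ through the window-size $k$ and the orbit-spacing governed by $C_{f}$, and verify that a partial block at each end of $I$ still leaves at least three complete $b$-blocks strictly inside, which is precisely what the constant $(4e_{\max}+k+5)C_{f}$ is engineered for. Once this bookkeeping is settled, the remaining argument is a clean extraction of information from the block-structure via Lemma \ref{reverse} and the two independence conditions.
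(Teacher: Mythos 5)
Your proposal is correct and takes essentially the same route as the paper: extract a complete three-block subword from the large intersection, then dispatch the coherent case by using independence condition (b) to pin down the block position (hence forcing $g$ to send a subsegment to itself with the same orientation, so fixing it pointwise), and dispatch the non-coherent case by applying Lemma \ref{reverse} and showing alternative (B) violates condition (c) while alternative (A) forces two distinct indices to have equal entries, violating (b). The only cosmetic difference is which side you read the three-block subword from before invoking Lemma \ref{reverse}; the paper extracts it in $J_l$'s orientation and applies the lemma to $J_{l'}$, whereas you extract it in $gJ_{l'}$'s orientation and apply the lemma to $J_l$, but these roles are symmetric and lead to the same contradictions.
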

  \begin{proof}
  	We may assume (unoriented intersection) $gJ_{j}\cap J_{j'}$ is not degenerate.
  	If $gJ_{l'}$ and $J_{l}$ intersect coherently, let
  	$J'=J_{l'}$. Otherwise, let $J'=J_{l'}^{op}$.
  	Then $w:=f^{*}(gJ'\cap J_{l})$ is a common subword of
  	$f^{*}(J')$ and $f^{*}(J_{l})$.
  	
  	If $|gJ'\cap J_{l}|\geq (4e_{max}+k+5)C_{f}$ and since the distance between two points in the same orbit is bounded above by $C_f$, we have that
  	there exists some $o$-geodesic $\tilde{J}$ contained in $gJ'\cap J_{l}$  with
  	$|\tilde{J}|\geq (4e_{max}+k+3)C_{f}$ and so $|\tilde{J}|_{o}\geq 4e_{max}+k+3$. The aforementioned inequality and the definition of the $(k,f)$ implies that
  	$|f^{*}(\tilde{J})|\geq 4e_{max}+4$. and so there is some $o$-geodesic $\hat{J}$ contained in $\tilde{J}$ for which
  	$f^{*}(\hat{J})=ab^{e^{l}_{i}}ab^{e^{l}_{i+1}}ab^{e^{l}_{i+2}}a$.
  	
  	In case $J'=J_{l'}$ it follows immediately from the independence of $(\bar{e}^{j})_{j=1}^{m}$ that
  	$l=l'$ and the embedding of $\hat{J}$ into $J_{l}$ and $gJ_{l}$ corresponds to identical instances of $w$ in $f^{*}(J_{l})=f^{*}(gJ_{l})$. This implies that $g$ fixes  $J_{l}\cap gJ_{l}=J_{l}\cap gJ_{l'}$ point-wise.
  	
  	Assume now that $J'=J_{l}^{op}$ and apply the Lemma \ref{reverse} to $w$ and $J_{l'}$. If the alternative (\ref{reverse 2}) in Lemma \ref{reverse} holds, then it implies the existence of $1\leq i'\leq|\bar{e}^{l'}|$ and $\epsilon\in\{0,1,2\}$
  	such that $2e^{l}_{i+\epsilon}\leq e^{l'}_{i'}$, contradicting condition (\ref{independence 2}) in Definition \ref{defn:independenttuple}. If the alternative Case (\ref{reverse 1}) of Lemma \ref{reverse} holds, then it implies that
  	$l=l'$ and $e^{l}_{i+\delta}\leq e^{l}_{i-\delta}$ for $\delta\in\{-1,1\}$, which is also impossible.
  \end{proof}

  \begin{cor}
  	\label{c: labellings and wsc}Assume that an action of a group $G$ on a simplicial tree $T$ admits a good labeling $(k,f)$ and for any
  	sequence $\bar{e}$ there exists an $o$-geodesic $J$ such that $f^{*}(J)=u_{\bar{e}}(a,b)$. Then for any $N,m>0$ there exists a weak $N$-small cancellation $m$-tuple of elements from $G$ (with respect to the action of $G$ on $T$).
  \end{cor}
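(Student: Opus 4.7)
Fix $N, m \geq 1$; I plan to construct an $m$-tuple $(a_1, \ldots, a_m)$ of hyperbolic elements of $G$ that is weakly $N$-small cancellation with respect to the base vertex $v$. The strategy is to realize each $a_i$ via an $o$-geodesic $[v, a_i v]$ carrying a prescribed label of the form $u_{\bar{e}^i}(a, b)$, where the $m$-tuple $(\bar{e}^i)_{i=1}^m$ is chosen to be independent in the sense of Definition \ref{defn:independenttuple}. Lemma \ref{l: gl intersection} will then directly control the intersections needed in condition (\ref{SCC}) of Definition \ref{d: small cancellation}.

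Concretely, I would pick an independent tuple $(\bar{e}^i)_{i=1}^m$ whose components all have the same length $q$ and whose entries $e^i_j$ lie in a narrow window $[E_0, E_0(1 + \tfrac{1}{10N})]$, with $E_0$ chosen large enough that $E_0 q \geq N(4 e_{\max} + m + 5) C_f$. The windowing ensures independence (distinctness and the factor-$2$ inequality) while keeping the total lengths $|u_{\bar{e}^i}(a,b)|$ pairwise within a factor $\tfrac{N+1}{N}$. By the hypothesis of the corollary, each label $u_{\bar{e}^i}(a, b)$ is realized by an $o$-geodesic $J_i^0$; by $G$-invariance of $f$, I may translate $J_i^0$ to begin at $v$, obtaining $J_i = [v, a_i v]$ for some $a_i \in G$. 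Assuming momentarily that $a_i$ is hyperbolic with $v \in A(a_i)$ and $tl(a_i) = |J_i|$, the three weak small cancellation conditions follow immediately: (\ref{SCA}) from $|J_i| \geq q E_0 \gg N$; (\ref{SCB}) from the windowing; and (\ref{SCC}) from Lemma \ref{l: gl intersection}: if $|J_i \cap g J_j^\epsilon| \geq \tfrac{1}{N} \min_l tl(a_l) \geq (4 e_{\max} + m + 5) C_f$, the lemma forces $i = j$, and its proof actually excludes the non-coherent case $\epsilon = -1$ by independence and shows that in the coherent case $g$ fixes $J_i \cap g J_i$ pointwise---exactly the ``acts like the identity'' requirement.

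The main obstacle is the standing assumption that $a_i$ is hyperbolic with $v \in A(a_i)$: a priori $a_i$ might be elliptic (making $tl(a_i) = 0$ and breaking (\ref{SCB})) or hyperbolic with $v$ off-axis. I would remedy this by requesting, for each $i$, an $o$-geodesic with the doubled label $u_{\bar{e}^i}(a,b) \cdot u_{\bar{e}^i}(a,b)$ starting at $v$, and picking $a_i$ to map $v$ to its midpoint. The $G$-invariance of $f^*$ combined with the independence of $(\bar{e}^i)$ (to preclude accidental label coincidences of the kind ruled out in the proof of Lemma \ref{l: gl intersection}) forces $a_i$ to translate along the entire doubled geodesic, placing $v \in A(a_i)$ with $tl(a_i) = |J_i|$. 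This doubling trick---or alternatively a direct appeal to the construction in \cite{iozzi2014characterising} that produces hyperbolic elements with prescribed axial labels---is the technical heart of the argument.
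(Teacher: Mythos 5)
Your overall strategy is the same as the paper's: pick an independent tuple $(\bar e^i)$, realize each $u_{\bar e^i}(a,b)$ as the label of an $o$-geodesic $J_i = [v, a_i v]$, and use Lemma \ref{l: gl intersection} to control intersections with translates. But you have put your finger on something the paper's proof glosses over, and your instinct is correct: the paper only verifies (a version of) condition (\ref{SCC}) of Definition \ref{d: small cancellation section3}, and never shows that the $a_i$ are hyperbolic with $tl(a_i)$ comparable to $d(v, a_i v)$. This is essential: if some $a_i$ were elliptic, $\min_i tl(a_i)=0$ and condition (\ref{SCB}) is immediately violated; and if $v$ is far from $A(a_i)$, condition (\ref{SCB}) can likewise fail. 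So your ``main obstacle'' is a genuine gap, not just a point the paper silently absorbs.

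Your proposed fix, however, is only sketched, and the doubling trick as stated does not obviously work: having $a_i$ send $v$ to the midpoint of a geodesic with label $u_{\bar e^i}(a,b)^2$ does not by itself force $a_i$ to translate along that geodesic (the segment $a_i[v, \text{mid}]$ need not coincide with the second half). There is a cleaner and more self-contained remedy that stays inside the proof you already have. Apply the argument of Lemma \ref{l: gl intersection} to $g = a_i$ and $l = l' = i$. The segments $J_i$ and $a_iJ_i$ share the endpoint $a_i v$, and if their intersection is nondegenerate it is an overlap near $a_i v$ with \emph{opposite} orientations. By the non-coherent case of the proof (via Lemma \ref{reverse}) this is ruled out when $|J_i\cap a_iJ_i|\ge (4e_{\max}+k+5)C_f$, by independence of $(\bar e^j)$. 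Hence $\ell_i := |J_i\cap a_iJ_i| < (4e_{\max}+k+5)C_f$, so $d(v, a_i^2 v)=2|J_i|-2\ell_i > |J_i|=d(v,a_iv)$. This already shows $a_i$ is hyperbolic (an elliptic element satisfies $d(v,a_i^2 v)\le d(v,a_iv)$), and then $tl(a_i)=d(v,a_i^2v)-d(v,a_iv)=|J_i|-2\ell_i$ and $d(v,A(a_i))=\ell_i$, both controlled by the constant choices. Once you have this, conditions (\ref{SCA}) and (\ref{SCB}) follow, provided the $|J_i|$ are tightly clustered — and here your narrow windowing (factor $1+\tfrac{1}{10N}$) is in fact a genuine improvement over the paper's factor-$2$ condition $2e^i_j > e^{i'}_{j'}$, which on its own is not tight enough to give (\ref{SCB}). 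One small slip: in your bound $E_0 q \ge N(4e_{\max}+m+5)C_f$ the constant from Lemma \ref{l: gl intersection} uses $k$ (the label window size), not $m$; this is harmless since both are fixed, but worth correcting.
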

  \begin{proof}
  	Let choose $C$ such that $|J|\leq C$ for any $o$-geodesic $J$ such that $f^{*}(J)\in\{a,b\}$.
  	One can always choose an independent tuple $(\bar{e}^{j})_{j=1}^{m}$, where  $\bar{e}^{j}$ has length $q>0$ for all $j$ and the following holds:
  	\begin{itemize}
  		\item $e^{j}_{i}>\frac{N}{q}$ for any $1\leq j\leq m$ and $1\leq i\leq q$
  		\item $2e^{j}_{i}>e^{j'}_{i'}$
  		for any $1\leq j,j'\leq m$, $1\leq i\leq |\bar{e}^{j}|$
  		and $1\leq i\leq q$
  		\item $N(4e_{max}+k+5)C_{f}\leq 2q\min_{i,j}e^{j}_{i}$

  	\end{itemize}
  	Any tuple $(a_{j})_{j=1}^{m}\in G^{m}$ such that $f^{*}(J_{j})=u_{\bar{e}^{j}}(a,b)$ where $J_{j}=[v,a_{j}v]$  (notice such tuple must exist, by hypothesis) is weakly $N$-small cancellation.
  	Let $L=\max_{1\leq l\leq m}|J_{l}|$.
  	
  	Indeed, given $g\in G$ and $1\leq l, l'\leq m$ such that intersection $|gJ_{l'}\cap J_{l}|\geq\frac{L}{N}$, the third bound above together with Lemma \ref{l: gl intersection} implies that $l=l'$ and $g$ fixes $J_{l}\cap J_{l'}$ point-wise.
  \end{proof}

  The conclusion of the first part of the proof of Theorem $1$ in  \cite{iozzi2014characterising} can be formulated as follows:
  \begin{thm}\label{thm:1SistoCo}
  	Suppose that $G$ acts minimally on a tree $T$ (i.e.\ there is no proper subtree that is invariant under this action) and that the valence of every vertex of $T$ is at least $3$. Then exactly one of the following alternatives holds:
  	\begin{enumerate}[(i)]
  		\item \label{alternative transitivity}There exists $l\in\{1,2\}$ such that for all $n>0$ and any vertex $v$ of $T$ the group  $G$ acts transitively on
  		\begin{align*}
  			\{[x,y]\in\mathcal{E}^{(n)}\,|\,d(x,v)\equiv 0\,\,(mod\,\,l)\}.
  		\end{align*}
  		\item \label{alternative fixed end}$G$ fixes one end of $T$.
  		\item \label{alternative wsc} The action of $G$ on $T$ admits a good labeling $(k,f)$ and for any sequence $\bar{e}$ there exists an $o$-geodesic $J$ such that $f^{*}(J)=u_{\bar{e}}(a,b)$.
  	\end{enumerate}
  \end{thm}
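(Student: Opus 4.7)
Since this theorem is a reformulation of the first part of Theorem 1 of \cite{iozzi2014characterising}, my plan is primarily to reconcile the two statements and sketch the construction of the labeling in the interesting case. First I would note that the three alternatives are mutually exclusive: in case (i) the action is so transitive that no non-trivial partition of long-segment orbits exists, and in case (ii) a fixed end would obstruct the realizability condition in (iii). So the real content is that if $G$ fixes no end and is not highly transitive in the sense of (i), then a good labeling with the realizability property can be built.

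For the construction I would fix a scale $k$ at which the $G$-action on oriented $k$-segments based at a vertex $v$ has at least two orbits; such $k$ exists precisely because (i) fails (for every parity choice $l \in \{1,2\}$ one finds a length $n$ and two non-equivalent segments anchored accordingly). Selecting two such orbits $\mathcal{O}_a, \mathcal{O}_b$, define $f : \mathcal{E}^{(k)} \to \{a,b,c\}$ by assigning label $a$ to segments in $\mathcal{O}_a$, label $b$ to those in $\mathcal{O}_b$, and $c$ to everything else. Verifying that this is a good labeling in the sense of Definition \ref{defn:goodlabelling} reduces to a case analysis on how the orientation-reversing involution $J \mapsto J^{op}$ acts on the orbit structure. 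Either one can take both $\mathcal{O}_a$ and $\mathcal{O}_b$ to be singletons (case (i) of Definition \ref{defn:goodlabelling}), or one chooses them so that reversal cannot send a segment with label $a$ to one with label $b$ (case (ii)). The finiteness of $f^{-1}(a)$ and $f^{-1}(b)$ is then automatic from the single-orbit construction.

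The main obstacle is the realizability statement: given any tuple $\bar e$ of positive integers, one must produce an $o$-geodesic $J$ with $f^*(J) = u_{\bar e}(a,b)$. This requires concatenating translates of the chosen orbit representatives in arbitrary patterns along the tree, with $c$-labeled buffer segments of prescribed lengths interpolating between occurrences of $a$ and $b$. The key input is that, because $G$ fixes no end and acts minimally on $T$ with vertices of valence $\geq 3$, it contains at least two independent hyperbolic elements with well-distributed axes; a ping-pong-style argument then lets one arrange translates of $a$- and $b$-segments in any prescribed sequence along a single long geodesic, with the buffer lengths tuned by powers of appropriate hyperbolic generators. The detailed combinatorial construction is the technical core of \cite{iozzi2014characterising}, and the present formulation packages its outcome in the form we use downstream via Corollary \ref{c: labellings and wsc} to produce weakly small cancellation tuples of arbitrary size.
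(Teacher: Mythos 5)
Your proposal is correct and takes essentially the same approach as the paper: the paper itself supplies no proof of this statement, presenting it explicitly as a reformulation of the conclusion of the first part of the proof of Theorem~1 in \cite{iozzi2014characterising}, and you likewise defer to that reference while sketching the expected shape of the argument. The one thing worth flagging is that the exclusivity and, more seriously, the realizability of every word $u_{\bar e}(a,b)$ are precisely the delicate combinatorial content of the cited proof, so your sketch --- while plausible --- is not an independent verification, but neither does the paper attempt one.
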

  
  We will say that a minimal action of a group $G$ on a simplicial tree $T$ (without inversions) is \emph{weakly acylindrical} if $T$ is not a line and satisfies condition (\ref{alternative wsc}) from Theorem \ref{thm:1SistoCo}.
  
  \begin{prop}\label{p: from_weak_acyl_to_sc}
  	Let $G$ be a group acting on a simplicial tree. Then the action of
  	$G$ is weakly acylindrical if and only if there exists a weak $N$-small cancellation $m$-tuple of elements from $G$, for all $m,N>0$.
  \end{prop}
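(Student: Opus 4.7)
The plan is to establish both directions. The forward implication is immediate: if the action is weakly acylindrical, then by definition there is a good labelling $(k, f)$ on the action such that every sequence $\bar{e}$ of positive integers is realised by an $o$-geodesic $J$ with $f^{*}(J) = u_{\bar{e}}(a, b)$, which are precisely the hypotheses of Corollary \ref{c: labellings and wsc}; that corollary then produces weak $N$-small cancellation $m$-tuples for arbitrary $N, m > 0$.

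For the converse, I would argue by contrapositive, invoking the trichotomy of Theorem \ref{thm:1SistoCo}. Assuming weak $N$-small cancellation $m$-tuples exist for all parameters, I first pass to the minimal $G$-invariant subtree (any weakly SC element has positive translation length by condition \ref{SCA_prelim}, hence is hyperbolic and supplies an invariant axis), so Theorem \ref{thm:1SistoCo} applies to this action. It then remains to exclude the possibilities that $T$ is a line, that $G$ fixes an end (alternative (\ref{alternative fixed end})), and that the action is as transitive as in alternative (\ref{alternative transitivity}). In the first two cases, taking a weak $N$-SC pair $(a_1, a_2)$ in the given tuple with $N$ large and, if necessary, replacing some $a_{i}$ by $a_{i}^{-1}$ to coordinate translation directions -- along the full line in the first case, along the common ray to the fixed end in the second -- the basepoint $*$ can be placed on this shared axis/ray, forcing $[*, a_1 *]$ and $[*, a_2 *]$ to share an initial subsegment of length $\min\{tl(a_1), tl(a_2)\}$. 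This contradicts condition \ref{SCC_prelim} with $g = 1$ and $i \neq j$ as soon as $N > 1$.

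Under alternative (\ref{alternative transitivity}), the $G$-action is transitive on the collection of oriented $n$-segments satisfying a parity condition at a fixed vertex $v$. Given a weakly $N$-SC pair $(a_1, a_2)$ with $N$ sufficiently large, I would choose subsegments $J_1 \subset [*, a_1 *]$ and $J_2 \subset [*, a_2 *]$ of a common length $n > \tfrac{1}{N} \min_i tl(a_i)$ with matching parity with respect to $v$ (achievable by making $N$ and hence $\min_i tl(a_i)$ sufficiently large); transitivity then produces $g \in G$ with $g J_1 = J_2$, so $|g[*, a_1 *] \cap [*, a_2 *]| \geq n$, again violating \ref{SCC_prelim} with $i \neq j$. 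The main obstacle I anticipate is the bookkeeping in the fixed-end case: because \ref{SCC_prelim} quantifies over all basepoints and both orientations via $\epsilon \in \{\pm 1\}$, one must verify that no clever choice of $*$ sidesteps the forced overlap. Since the axes of any two hyperbolic elements fixing a common end genuinely share an entire ray, however, no basepoint-orientation combination can avoid the overlap on a subsegment of length comparable to $\min_i tl(a_i)$, and the argument closes.
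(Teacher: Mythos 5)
Your proposal is correct and follows exactly the same route as the paper: the direction ``weakly acylindrical $\Rightarrow$ weak SC'' via Corollary~\ref{c: labellings and wsc}, and the converse by invoking the trichotomy of Theorem~\ref{thm:1SistoCo} and ruling out alternatives~(\ref{alternative transitivity}) and~(\ref{alternative fixed end}) (as well as $T$ being a line). The paper simply asserts these incompatibilities as ``clear,'' so your elaboration---using a pair of elements and the forced overlap of $[*,a_1*]$, $[*,a_2*]$ along a shared line/ray, and transitivity to transport a subsegment from one axis into the other---is a welcome unpacking; the one small slip is your parenthetical that condition~(\ref{SCA_prelim}) alone forces hyperbolicity, whereas one actually needs~(\ref{SCA_prelim}) together with~(\ref{SCB_prelim}) to conclude $tl(a_i)>0$.
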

  \begin{proof}
  	Clearly the existence of $N$-small cancellation $m$-tuples is incompatible with either (\ref{alternative transitivity}) or (\ref{alternative fixed end}) in  Theorem \ref{thm:1SistoCo}, giving us the 'if' part, while the 'only if' part follows from Corollary \ref{c: labellings and wsc}.
  \end{proof}
  
  One could see the second part of Theorem $1$ in \cite{iozzi2014characterising} as the statement that any group admitting a weakly acylindrical action on a simplicial tree has infinite-dimensional second bounded cohomology group.
  
  We combine Proposition \ref{p: from_weak_acyl_to_sc} and Corollary \ref{c: iterated formal solutions} to obtain the main result of the paper.
  
  \begin{thm}\label{thm:characterisation}
  	If a group $G$ admits a weakly acylindrical action on a simplicial tree, then $G$ has trivial positive theory.
  \end{thm}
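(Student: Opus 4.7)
The plan is to derive the statement as a direct combination of the three main machines assembled in the earlier sections: the characterization of weak acylindricity via weak small cancellation tuples (Proposition \ref{p: from_weak_acyl_to_sc}), the quantifier reduction for positive sentences (Theorem \ref{l: quantifier reduction}), and the triviality of positive $\forall\exists$-sentences under the presence of weak small cancellation tuples (Theorem \ref{l: small cancellation solutions}). Since the heavy lifting is already done, the argument should be a short assembly, essentially a proof by contradiction.

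More concretely, suppose the action of $G$ on the simplicial tree $T$ is weakly acylindrical. First, I would apply Proposition \ref{p: from_weak_acyl_to_sc} to conclude that for every $m,N>0$ there exists a weakly $N$-small cancellation $m$-tuple in $G$ with respect to the action on $T$. Assume towards contradiction that the positive theory of $G$ is non-trivial, i.e. $G$ satisfies some positive sentence $\phi$ which fails in a non-abelian free group. By Theorem \ref{l: quantifier reduction}, from $\phi$ one can effectively extract a non-trivial positive $\forall\exists$-sentence $\phi'$ of the form $\forall x\,\exists y\,\Sigma(x,y)=1$ (perhaps after the elementary reduction to a single system via Lemma \ref{l: no disjunctions}) that is still satisfied by $G$.

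Now the crucial input is the uniform bound in Theorem \ref{l: small cancellation solutions}: the integer $N_0$ produced there depends only on the syntactic length of $\phi'$ and not on the particular realization in $G$. Pick $m=|x|$ and choose a weakly $N_0$-small cancellation $m$-tuple $a\in G^m$, which exists by the first step. Since $G\models\phi'$, there is some $b\in G^{|y|}$ with $\Sigma(a,b)=1$. Theorem \ref{l: small cancellation solutions} then yields a formal solution for $\phi'$, meaning $\phi'$ holds in every group and in particular in a non-abelian free group, contradicting non-triviality of $\phi'$. Therefore $G$ cannot satisfy any non-trivial positive sentence, i.e.\ $Th^+(G)=Th^+(F_2)$.

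Since essentially every technical point has been taken care of earlier, the only real obstacle is bookkeeping: one must verify that the parameter $N_0$ in Theorem \ref{l: small cancellation solutions} can legitimately be chosen \emph{before} selecting the witnessing tuple $a$, and that the reduction from a general positive sentence to a $\forall\exists$-positive sentence via Theorem \ref{l: quantifier reduction} genuinely preserves non-triviality in $G$ (not merely in a free group). Both are already recorded in those statements, so the proof reduces to a clean citation of the three results.
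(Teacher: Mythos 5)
Your proof is correct and follows essentially the same route as the paper: it is precisely Proposition~\ref{p: from_weak_acyl_to_sc} combined with Theorem~\ref{thm: wscitp}, which in the paper is itself obtained from Theorem~\ref{l: quantifier reduction} and Theorem~\ref{l: small cancellation solutions} exactly as you spell out. If anything your version is the more careful citation: the paper's one-line proof references Corollary~\ref{c: iterated formal solutions}, which nominally requires (non-weak) small cancellation, whereas the chain you use only needs the weakly small cancellation tuples that Proposition~\ref{p: from_weak_acyl_to_sc} actually provides, and your remark that the constant $N_0$ in Theorem~\ref{l: small cancellation solutions} depends only on the syntactic length of $\phi'$ (hence can be fixed before choosing $a$) is exactly the point that makes the assembly legitimate.
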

  
\section{Examples of groups with (non-)trivial positive theories}\label{s: applications}

 In this section we study concrete families of groups, namely acylindrically hyperbolic groups acting on trees, generalised Baumslag-Solitar, one-relator and graph products of groups and use our results to characterised them in terms of their positive theory. We summarize the results of this section in the following Corollary.
  
  \begin{cor}\label{cor:summaryex}
  Non-virtually  solvable  fundamental  groups  of closed,  orientable,  irreducible 3-manifolds; groups acting acylindrically on a tree;  non-solvable generalised Baumslag-Solitar groups;  (almost all)  non-solvable  one-relator groups;  and  graph  products  of  groups  whose underlying  graph is not complete have trivial positive theory. In particular, these groups have verbal subgroups of infinite width and are not boundedly simple.
  \end{cor}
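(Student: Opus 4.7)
The plan is to apply Theorem \ref{thm:characterisation} to each of the listed families by producing, in each case, a minimal action on a simplicial tree whose boundary action neither fixes a point nor is $2$-transitive --- i.e.\ a weakly acylindrical action in the sense of Proposition \ref{p: from_weak_acyl_to_sc}. The ``in particular'' conclusion will then follow at once: for every word $w$ and every integer $n$, the statement that $w(G)$ has width at most $n$, and the statement that $G$ is $n$-boundedly simple, are each expressible by positive sentences that fail in a non-abelian free group; hence neither can hold in a group whose positive theory coincides with that of $F_{2}$.

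The strategy in each case is to show that the two excluded alternatives of Theorem \ref{thm:1SistoCo} (transitivity of $G$ on oriented $n$-segments modulo the appropriate congruence, and the existence of a globally fixed end) correspond exactly to the structural exceptions listed in the hypotheses. A group $G$ acting acylindrically on a tree with unbounded orbits satisfies both exclusions directly: a fixed end would force infinite pointwise stabilisers of half-lines, violating acylindricity, while the existence of two loxodromic elements with bounded-intersection axes obstructs transitivity on sufficiently long oriented segments. Non-virtually solvable fundamental groups of closed orientable irreducible $3$-manifolds fit into this framework via the JSJ/geometrisation decomposition: outside the virtually solvable geometries one obtains an acylindrical action on the associated JSJ tree.

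For non-solvable generalised Baumslag--Solitar groups, the canonical Bass--Serre tree provides the required action, and fixing an end is equivalent to $G$ being isomorphic to some $BS(1,n)$, while transitivity on long segments likewise forces $G$ to collapse to such an ascending HNN-extension; both are solvable and so excluded. For (almost all) non-solvable one-relator groups, the Magnus hierarchy produces a non-trivial splitting with Magnus subgroups as edge groups, and one argues that the alternatives of Theorem \ref{thm:1SistoCo} once again single out solvable sub-cases; the ``almost all'' qualifier absorbs a short finite list of exceptions (for instance, certain one-relator groups with torsion) which can be disposed of separately by invoking hyperbolicity and then Corollary \ref{cor:uniformWSC}. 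Finally, for a graph product $G_{\Gamma}$ whose defining graph $\Gamma$ is not complete, any two non-adjacent vertices $u,v$ yield a non-trivial amalgamated splitting of $G_{\Gamma}$ whose Bass--Serre tree carries a minimal irreducible action in which $\subg{u,v}$ contains a non-abelian free subgroup; this immediately rules out both a fixed end and a $2$-transitive boundary action. The main obstacle throughout is precisely this case-by-case verification that the excluded alternatives of Theorem \ref{thm:1SistoCo} match the excluded structural hypotheses; this is essentially transparent for acylindrical actions, generalised Baumslag--Solitar, and graph-product cases, but requires careful bookkeeping through the Magnus hierarchy in the one-relator setting to ensure that the ``almost all'' exceptions are precisely the solvable ones.
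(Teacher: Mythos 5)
Your high-level plan --- verify weakly acylindrical actions in each case and invoke Theorem \ref{thm:characterisation}, then read off the "in particular" consequences from the fact that bounded verbal width and bounded simplicity are expressible by positive sentences false in $F_2$ --- is the same framework the paper uses, and several cases (3-manifolds via the JSJ tree, graph products via the amalgam over a link subgroup, acylindrical tree actions directly) are handled in a spirit close to the paper's. But you diverge from the paper's route in a way that introduces a genuine error in the one-relator case, and some unsubstantiated shortcuts elsewhere.

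The paper does not run the one-relator case through the Magnus hierarchy. It instead uses the Minasyan--Osin trichotomy (Proposition 4.20 of \cite{minasyan2015acylindrical}): a one-relator group either (i) admits a non-elementary acylindrical action on a tree (handled by Theorem \ref{thm:characterisation}); or (ii) is a generalised Baumslag--Solitar group with infinite cyclic $s$-normal subgroup (reduced to Corollary \ref{cor:GBS}); or (iii) is a $2$-generated ascending HNN extension of a finite-rank free group. The "almost all" qualifier in Corollary \ref{cor:summaryex} refers to the residue in case (iii) --- non-hyperbolic strictly ascending HNN extensions of free groups, for which one only has a conjectural picture (see Corollary \ref{cor:1-relator} and the discussion after it, following Button). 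Your claim that the "almost all" absorbs one-relator groups \emph{with torsion} is incorrect: by Newman's theorem such groups are word-hyperbolic, hence have trivial positive theory by \cite{sela2009diophantineVII} and present no difficulty at all. So your exceptional set is pointed in the wrong direction, and your proposed remedy --- "invoke hyperbolicity and Corollary \ref{cor:uniformWSC}" --- addresses a non-problem while leaving the genuine exceptional class (non-hyperbolic ascending HNN extensions of free groups) untreated.

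A second, smaller concern: in the GBS case you assert that "transitivity on long segments likewise forces $G$ to collapse to such an ascending HNN-extension." This is not an obvious structural fact about GBS Bass--Serre trees and you give no justification. The paper sidesteps this by following the argument of \cite{button2016nonhyperbolic} almost verbatim, using the observation that trivial positive theory passes down to a group from any of its quotients and that torsion-free hyperbolic groups have trivial positive theory --- a reduction strategy, not a direct check of the alternatives in Theorem \ref{thm:1SistoCo}. If you want to argue directly as you propose, you would need to actually establish the claimed dichotomy for the Bass--Serre action; as written, this is a gap.
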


      \paragraph{Acylindrically hyperbolic groups acting on trees}

        \begin{cor} \label{cor:acylindricallyhyperbolic}
        	Every group $G$ acting acylindrically hyperbolically and irreducibly on a tree has trivial positive theory. In particular, all verbal subgroups have infinite width.
        \end{cor}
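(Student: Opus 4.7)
The plan is to verify the hypotheses of Theorem \ref{thm: wscitp} by using Corollary \ref{cor:uniformWSC} to convert a (weakly) stable hyperbolic element, extracted from the acylindricity hypothesis, into weak small cancellation tuples of arbitrary size and parameter. Once trivial positive theory has been established, the statement about verbal widths follows from a classical fact about free groups.

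First, I would extract a weakly stable hyperbolic element. Irreducibility guarantees the existence of hyperbolic elements in $G$. Acylindricity of the action on the tree provides uniform constants $R,C>0$ so that the pointwise stabilizer of any segment of length at least $R$ has size at most $C$, and more importantly, so that no element $g\notin E(h_0)$ can share an arbitrarily long initial segment of $A(h_0)$ coherently with $h_0$. Given a hyperbolic $h_0\in G$, pass to a sufficiently high power $h=h_0^n$ so that $tl(h)>R$. Then for any $g\in G$ satisfying $|A(h)\cap gA(h)|>tl(h)$, the element $g$ (possibly composed with a power of $h$) pointwise fixes a segment longer than the acylindricity threshold, which forces $g\in E(h)$. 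This is precisely weak stability in the sense of Definition \ref{denf:weakly stable}, and by replacing $h$ with a further power we may in fact upgrade this to stability in the acylindrical setting (since $K(h)$ is finite and one can kill it by raising to a suitable power using Remark \ref{r: stability and powers}).

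Second, I would select the companion element and invoke the uniform construction. Since $E(h)$ set-wise preserves the bi-infinite line $A(h)$, irreducibility of the $G$-action forces $E(h)\neq G$; choose $g\in G\setminus E(h)$, so that $\langle g,h\rangle$ acts irreducibly on $T$. After replacing $h$ by a still higher power (again preserving stability by Remark \ref{r: stability and powers}) we may also ensure $d(A(h),A(g))<tl(h)$. All hypotheses of Corollary \ref{cor:uniformWSC} are now met, and for every pair $(N,m)$ the corollary produces an $m$-tuple of elements of $\langle g,h\rangle$ which is weakly $N$-small cancellation in $T$. Thus $G$ contains weak small cancellation elements in the sense of Section \ref{sec:formal solutions}, and Theorem \ref{thm: wscitp} immediately yields that the positive theory of $G$ coincides with the positive theory of a non-abelian free group.

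For the final assertion on verbal widths, I would appeal to the classical fact that every non-trivial verbal subgroup of a non-abelian free group has infinite width in its defining word. If some non-trivial word $w$ had $w(G)$ of width at most $k$, then $G$ would satisfy a non-trivial positive sentence expressing that every value of $[w(x_1),w(x_2)]$ (which always lies in $w(G)$) is a product of at most $k$ values of $w^{\pm 1}$ in auxiliary variables. Triviality of the positive theory of $G$ would force this sentence to hold in a non-abelian free group, contradicting the classical bound. The principal technical obstacle in the whole argument is the first step: translating the acylindricity constants into weak stability of a concrete power $h_0^n$, in a way that matches Definition \ref{denf:weakly stable} exactly; once this is done, the remaining steps are a short chain of citations to results already established in the paper.
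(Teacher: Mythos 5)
Your overall chain of citations — produce a weakly stable hyperbolic element, feed it into Corollary \ref{cor:uniformWSC} to obtain weak small cancellation $m$-tuples for arbitrary $N,m$, then conclude by Theorem \ref{thm: wscitp} — is exactly what the paper does. The difference is in the first step: the paper does not re-derive the stable element from the acylindricity constants; it simply cites Osin's theorem that every non-elementary acylindrical action on a hyperbolic space admits a hyperbolic WPD element, and a WPD element for a tree action is a stable element.

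Your self-contained substitute for that citation is where the gap lies. You claim that if $|A(h)\cap gA(h)|>tl(h)$ then some $h^{k}g$ ``pointwise fixes a segment longer than the acylindricity threshold, which forces $g\in E(h)$.'' Neither half is correct as written. An element $h^{k}g$ can act on the long overlap as a translation by a small nonzero amount and fix nothing; and even if an element $k$ does pointwise fix a subsegment of $A(h)$ of length $\geq R$, $(R,C)$-acylindricity only bounds the \emph{cardinality} of such stabilizers by $C$ — it does not by itself place $k$ in $E(h)$. The missing bridge is the implication $FS(\lambda)\Rightarrow WS(\lambda+2)$ of Lemma \ref{l: forms of stability}, and in the acylindrical setting $FS$ is obtained by a pigeonhole argument: if $k$ fixes a subsegment $J\subset A(h)$ of length at least $(C+1)tl(h)+R$, then the $C+1$ elements $k,k^{h},\dots,k^{h^{C}}$ all fix the common subsegment $\bigcap_{j=0}^{C}h^{-j}J$, which has length $\geq R$; by acylindricity two of them coincide, so $k$ commutes with a nonzero power of $h$, hence preserves $A(h)$, and being an isometry of $A(h)$ fixing a nondegenerate subsegment it fixes $A(h)$ pointwise. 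A separate slip: you cannot ``kill $K(h)$ by raising to a suitable power'' — $K(h)$ is the pointwise stabilizer of $A(h)$, which is unchanged under $h\mapsto h^{n}$. That slip is harmless here (Corollary \ref{cor:uniformWSC} and Theorem \ref{thm: wscitp} only require weak stability), but the first issue is a genuine hole in your derivation.
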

        
        \begin{proof}
        	Indeed, Osin showed in \cite{osin2016acylindrically} that if the group $G$ has a non-elementary acylindrical action on a hyperbolic space, then it has a hyperbolic WPD element and so small cancellation elements.
        \end{proof}

        In \cite{bestvina2015constructing}, the authors show that in acylindrically hyperbolic groups, all verbal subgroups have infinite verbal width. Our result is a generalisation in the case that the group acts non-elementary acylindrically on a tree.
   
      \paragraph{3-manifold groups}
              
        Let $G$ be the fundamental group of a $3$-manifold of a closed, orientable, irreducible $3$-manifold $M$. By \cite[Lemma 2.4]{wilton2010profinite}, $M$ has a finite-sheeted covering space that is a torus bundle over a circle, or the JSJ decomposition of $G$ is 4-acylindrical. Hence, from Theorem \ref{thm:characterisation}, we have the following result:
        
        \begin{cor}
        	Let $G$ be the fundamental group of a closed, orientable, irreducible $3$-manifold. Then either the positive theory of $G$ is trivial or $G$ is virtually polycyclic (and so its positive theory is not trivial).
        \end{cor}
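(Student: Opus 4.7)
The plan is to combine the Wilton--Zalesskii dichotomy \cite[Lemma 2.4]{wilton2010profinite} with Theorem \ref{thm:characterisation}. Either $M$ has a finite-sheeted covering which is a torus bundle over $S^1$, or the JSJ decomposition of $G$ is $4$-acylindrical.

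In the first case, the fundamental group of a torus bundle over $S^1$ sits in a polycyclic extension $1 \to \mathbb{Z}^2 \to \pi \to \mathbb{Z} \to 1$, so $G$ is virtually polycyclic; since virtually polycyclic groups are virtually solvable they satisfy a non-trivial positive law (for instance a suitable iterated commutator of $n$-th powers), so the positive theory of $G$ is non-trivial and the second alternative of the statement holds.

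In the second case, the JSJ decomposition equips $G$ with a minimal $4$-acylindrical action on its Bass--Serre tree $T$. When this action is non-elementary (no global fixed point, fixed end, or preserved line), the analysis of \cite{iozzi2014characterising} recalled in Section \ref{sec: characterisation} produces a good labeling with arbitrarily long $u_{\bar e}(a,b)$-patterns, so the action is weakly acylindrical in our sense, and Theorem \ref{thm:characterisation} then delivers triviality of the positive theory of $G$. The main obstacle is verifying non-elementarity of the JSJ action whenever $G$ is not virtually polycyclic: a trivial JSJ or an elementary action on $T$ forces $M$ to be geometric, and among closed, orientable, irreducible geometric $3$-manifolds the non-virtually-polycyclic fundamental groups are hyperbolic or Seifert-fibered with hyperbolic base. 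These residual cases can be disposed of either by invoking the prior results on hyperbolic groups \cite{sela2009diophantineVII,heilthesis}, which already give trivial positive theory, or by applying the extension result Theorem \ref{lem:extensionpreservation} to the central $\mathbb{Z}$-extension expressing a Seifert fundamental group over its Fuchsian base.
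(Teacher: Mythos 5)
Your overall plan matches the paper's: invoke the Wilton--Zalesskii dichotomy and, in the $4$-acylindrical case, conclude via Theorem \ref{thm:characterisation} (the paper's own proof simply asserts that a $4$-acylindrical JSJ yields weakly stable elements). You go a step further and notice, correctly, that this one-liner tacitly presupposes an irreducible action on the JSJ tree, which can fail when $M$ is geometric and the JSJ is trivial; separating out the hyperbolic and Seifert-with-hyperbolic-base cases is a genuine refinement of what the paper writes.

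However, the tool you invoke for the Seifert-fibered residual case is misapplied. Theorem \ref{lem:extensionpreservation} states that if $N$ and $Q$ both have \emph{non}-trivial positive theory and $1 \to N \to G \to Q \to 1$ is exact, then $G$ has non-trivial positive theory. For a Seifert group with hyperbolic base you have $1 \to \mathbb{Z} \to G \to \Gamma \to 1$ with $\Gamma$ a non-elementary cocompact Fuchsian group, so $\Gamma$ already has \emph{trivial} positive theory; the hypothesis $Q \in \mathcal C$ fails and the theorem yields nothing about $G$. More fundamentally, a preservation-of-non-triviality result can never be used to deduce \emph{triviality}. What you want instead is the elementary observation from the Introduction: if $G$ quotients onto $H$ then $Th^{+}(G) \subseteq Th^{+}(H)$. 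Since $G$ surjects onto the hyperbolic group $\Gamma$, we get $Th^{+}(G) \subseteq Th^{+}(\Gamma) = Th^{+}(F_2)$, and the reverse inclusion holds for every group, so $Th^{+}(G) = Th^{+}(F_2)$. With that substitution (keeping \cite{sela2009diophantineVII,heilthesis} for the genuinely hyperbolic manifolds), your argument closes all the cases.
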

        
        \begin{proof}
        	Indeed, if $M$ has a finite-sheeted covering space that is a torus bundle over a circle, then by Thurston's classification, $M$ is virtually polycyclic. If the JSJ decomposition of $G$ is 4-acylindrical, then it contains weakly stable elements.
        \end{proof}
        
        We recover one of the results from  \cite{bestvina2015constructing}.
        
        \begin{cor}[cf. Theorem 1.1 in \cite{bestvina2015constructing}]
        	Let $G$ be the fundamental group of a 3-manifold and suppose that $G$ is non-polycyclic. Then all verbal subgroups of $G$ have infinite width.
        \end{cor}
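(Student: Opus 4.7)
The plan is to reduce the claim to the closed, orientable, irreducible case handled by the previous corollary, via standard 3-manifold decomposition theorems. For a compact 3-manifold $M$ with non-polycyclic fundamental group $G$, I would first pass to the orientable double cover if necessary, using that virtual polycyclicity is preserved under commensurability. Then applying the Kneser--Milnor prime decomposition $M = M_1\#\cdots\#M_r$ yields a free-product decomposition $G = \pi_1(M_1)\ast\cdots\ast\pi_1(M_r)$. When $r \geq 2$ and the group is not virtually $\mathbb{Z}/2\ast\mathbb{Z}/2$ (which is virtually cyclic, hence polycyclic, and therefore excluded), this free product acts minimally and irreducibly on its Bass--Serre tree with trivial edge stabilizers, so by Remark \ref{r: stability and powers} every hyperbolic element is stable and Corollary \ref{cor:uniformWSC} combined with Theorem \ref{thm: wscitp} deliver trivial positive theory. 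When $r=1$, the unique prime factor is either $S^2\times S^1$ (whose fundamental group $\mathbb{Z}$ is polycyclic, contradicting the hypothesis) or irreducible, in which case the previous corollary applies.

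From trivial positive theory, infinite verbal width is read off directly: for any non-trivial word $w$, the positive $\forall\exists$-sentence asserting that ``every $k$-fold product of values of $w$ equals a product of at most $n$ conjugates of $w^{\pm 1}$'' is non-trivial, since it fails in the non-abelian free group for $k$ sufficiently large relative to $n$; hence it must fail in $G$, showing that the verbal subgroup $w(G)$ has infinite width. Handling the case in which $M$ has non-empty boundary runs along the same lines, either by doubling along $\partial M$ or by invoking the JSJ decomposition of $M$ directly, both of which supply a $G$-action on a tree with the required weak small cancellation features.

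The main obstacle is that infinite verbal width is \emph{not} in general preserved when passing between a finite-index subgroup and its overgroup (as illustrated by the example from \cite{george1976verbal} recalled in Section \ref{sec:preservation extensions}), and the extension theorem \ref{lem:extensionpreservation} runs in the opposite direction of what a naive argument would require. To avoid this pitfall, the reductions above must be performed so as to produce the weak small cancellation tuples inside $G$ itself rather than only inside a proper subgroup. Concretely, for the non-orientable case one would note that the $\pi_1(\tilde M)$-action on the Bass--Serre tree arising from the decomposition of the orientable double cover $\tilde M$ extends canonically to a $G$-action on the same tree (the deck involution permutes the pieces of the decomposition), and this extended action still satisfies the irreducibility and minimality hypotheses of Theorem \ref{thm: wscitp}. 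Once weak small cancellation elements are exhibited in $G$ itself, trivial positive theory, and hence infinite verbal width of every verbal subgroup, is immediate.
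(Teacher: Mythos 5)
The paper itself offers no proof of this corollary, so you are filling a real gap, and the overall blueprint (prime decomposition, orientable double cover, reduction to the closed orientable irreducible case handled above) is the natural route. Two points need attention.

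First, the doubling alternative for the boundary case does not work. Inside $\pi_1(DM)=\pi_1(M)\ast_{\pi_1(\partial M)}\pi_1(M)$, the subgroup $\pi_1(M)$ is a vertex group and therefore fixes a vertex of the Bass--Serre tree, so no irreducible action and no small cancellation tuple is produced in $\pi_1(M)$ itself. Nor can you argue through the quotient: the retraction $\pi_1(DM)\twoheadrightarrow\pi_1(M)$ gives $Th^+(\pi_1(DM))\subseteq Th^+(\pi_1(M))$, so triviality of the positive theory of $\pi_1(DM)$ says nothing about $\pi_1(M)$; you would need an epimorphism \emph{from} $\pi_1(M)$ onto a group with trivial positive theory, which is the opposite of what doubling supplies. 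Only your second alternative --- applying the JSJ decomposition of $M$ directly to obtain a $\pi_1(M)$-action on a tree --- is viable for the boundary case, so you should drop the doubling option.

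Second, notice that the previous corollary produces the dichotomy ``trivial positive theory or \emph{virtually} polycyclic,'' whereas the hypothesis here reads ``non-polycyclic,'' which is strictly weaker. Your argument, faithfully invoking the previous corollary in the $r=1$ irreducible subcase, therefore does not close the gap between ``virtually polycyclic'' and ``polycyclic.'' Concretely, for the Poincar\'e homology sphere the fundamental group is finite non-solvable, hence non-polycyclic but (trivially) virtually polycyclic; the conclusion of infinite verbal width is false for a finite group, so this case genuinely must be excluded. The hypothesis should be ``not virtually solvable'' (equivalently here, not virtually polycyclic), matching the paper's abstract, Corollary~\ref{cor:summaryex}, and the original statement in \cite{bestvina2015constructing}; your proof then goes through once you restrict to that hypothesis. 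Your treatment of the non-orientable case via a deck-invariant representative of the decomposition is the right idea, and your deduction of infinite verbal width from trivial positive theory (via the family of $\forall\exists$ positive sentences expressing width $\leq n$, which fail in a free group for $k$ large) is correct as stated.
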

        
      \paragraph{Generalised Baumslag-Solitar groups}
        The next class of groups we consider is the family of generalised Baumslag-Solitar groups, that is fundamental groups of graphs of groups with infinite cyclic vertex and edge groups.
        
        \begin{cor}
        	A Baumslag-Solitar group $BS(m,m)=\langle a, y\mid a^n = y^{-1}a^m y\rangle$ either has trivial positive theory or it is solvable and so $n=1$ or $m =1$.
        \end{cor}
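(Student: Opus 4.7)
The plan is to establish the dichotomy by invoking the characterization in Theorem \ref{thm:characterisation}. First, it is classical that $BS(m,n)$ is solvable precisely when $|m|=1$ or $|n|=1$: in these cases $BS(1,n)\cong \mathbb{Z}[1/n]\rtimes\mathbb{Z}$ is metabelian, while if $|m|,|n|\geq 2$ the group contains a non-abelian free subgroup and is hence non-solvable. This reduces the proof to showing that whenever $|m|,|n|\geq 2$ the group $BS(m,n)$ has trivial positive theory.

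Assume therefore that $|m|,|n|\geq 2$ and consider the action of $BS(m,n)$ on the Bass-Serre tree $T$ of its HNN-decomposition. The tree $T$ is regular of degree $|m|+|n|\geq 4$ and the action is minimal. It remains to verify the two boundary conditions required by Theorem \ref{thm:characterisation}.

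To see that $BS(m,n)$ does not fix a point of $\partial T$, let $v_0$ be the vertex stabilized by $\langle a\rangle$ and consider the hyperbolic elements $y$ and $y^{a}=aya^{-1}$. Both axes $A(y)$ and $A(y^{a})=a\cdot A(y)$ pass through $v_0$. Since $|m|,|n|\geq 2$, the cyclic group $\langle a\rangle$ acts non-trivially on the set of edges of $T$ issuing from $v_0$ on each side: on the ``positive'' side the action factors through $\langle a\rangle/\langle a^{m}\rangle\cong\mathbb{Z}/m\mathbb{Z}$ acting faithfully on the $|m|$ outgoing edges, and similarly on the other side. Hence $a$ maps the two edges of $A(y)$ at $v_0$ to genuinely different edges, so $A(y)\cap A(y^{a})=\{v_0\}$ and the two axes share no end. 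If some $\xi\in\partial T$ were fixed by $BS(m,n)$, both $y$ and $y^{a}$ would have $\xi$ as an endpoint of their axes, a contradiction. To see that the action is not $2$-transitive on $\partial T$, note that if it were, then every pair of distinct ends would lie in the orbit of the pair of endpoints of $A(y)$ and hence coincide with the endpoint pair of some conjugate of $y$. But $BS(m,n)$ is countable whereas $\partial T$, being the boundary of a regular tree of degree at least $3$, is a Cantor set and has uncountably many pairs of distinct ends; this contradiction rules out $2$-transitivity.

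Theorem \ref{thm:characterisation} now yields that the positive theory of $BS(m,n)$ is trivial, completing the proof. The only delicate point is the branching argument at $v_0$, which relies on a careful description of the action of the vertex stabilizer on the link of $v_0$ when $|m|,|n|\geq 2$; the remainder of the argument is a routine cardinality comparison.
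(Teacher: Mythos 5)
Your approach is genuinely different from the paper's, which simply exhibits the single weakly $2$-small cancellation element $yay^{-1}a$ (together with irreducibility of the Bass--Serre action) and appeals to Theorem~\ref{thm: wscitp} via the uniform construction of Corollary~\ref{cor:uniformWSC}. You instead try to check the hypotheses of Theorem~\ref{thm:characterisation} directly, which in view of Theorem~\ref{thm:1SistoCo} amounts to ruling out alternatives~(\ref{alternative transitivity}) and~(\ref{alternative fixed end}). Your argument ruling out a fixed end is fine: when $|m|,|n|\geq 2$ the vertex group $\langle a\rangle$ acts non-trivially on both sides of the link of $v_0$, so $A(y)\cap A(y^{a})=\{v_0\}$, and the two hyperbolic elements $y,y^{a}$ have no common fixed end, which kills alternative~(\ref{alternative fixed end}).

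The gap is in the treatment of ``$2$-transitivity.'' The phrase ``$2$-transitive on the boundary'' in Theorem~A is an informal paraphrase of alternative~(\ref{alternative transitivity}) of Theorem~\ref{thm:1SistoCo}: transitivity (up to a parity constraint) of $G$ on oriented geodesic segments of every finite length. This combinatorial condition is the one that has to be excluded, and it is equivalent to $2$-transitivity on $\partial T$ of the \emph{closure} of $G$ in $\mathrm{Aut}(T)$, not of $G$ itself. A countable group acting on a locally finite tree with uncountable boundary is \emph{never} literally $2$-transitive on $\partial T$, so your cardinality comparison is vacuously true and proves nothing about alternative~(\ref{alternative transitivity}). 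Indeed, the same cardinality argument would apply verbatim to the Burger--Mozes lattices, which are countable and fall precisely into alternative~(\ref{alternative transitivity}); the paper explicitly flags them as \emph{not} covered by Theorem~A (see Section~\ref{sec:openquestions}). To salvage your route you would have to rule out segment transitivity directly. When $|m|\neq|n|$ this is easy: the edges at $v_0$ split into $|m|$ with $\langle a\rangle$-stabilizer $\langle a^{m}\rangle$ and $|n|$ with $\langle a\rangle$-stabilizer $\langle a^{n}\rangle$, a $G$-invariant bipartition of each link, so there are already two $G$-orbits of oriented length-$2$ geodesics. For $|m|=|n|$ the bipartition is no longer detected by orders of stabilizers and a further argument is needed, so this approach requires more care than your write-up supplies; the paper's direct small-cancellation argument sidesteps the issue entirely.
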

        
        \begin{proof}
        	The element $g=tat^{-1}a$ is weakly 2-small cancellation and the action is irreducible.
        \end{proof}
        
        \begin{cor}\label{cor:GBS}
        	A generalised Baumslag-Solitar group $G$ has trivial positive theory or it is isomorphic to $BS(1,n)$, for some $n\in \mathbb Z$ or it is infinite cyclic.
        \end{cor}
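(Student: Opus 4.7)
The plan is to apply Theorem \ref{thm:characterisation} to the natural action of $G$ on its Bass-Serre tree $T$. First I would reduce to the case where the underlying graph of groups is reduced (no edge along which one of the two inclusions is onto and the edge is not a loop), so that the action of $G$ on $T$ is minimal and without inversions. If this reduction collapses $T$ to a point then $G$ coincides with a vertex group, which is infinite cyclic; so from now on we may assume $T$ is non-degenerate.

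Next I would analyse when the two exceptional conditions of Theorem \ref{thm:characterisation} occur. If the boundary action fixes a point, then $G$ fixes an end of $T$; by the structure theory of GBS groups (Forester--Levitt), $G$ then admits a strictly ascending HNN presentation over $\mathbb{Z}$, and a direct check identifies $G$ with some $BS(1,n)$, $n\in\mathbb{Z}$. The second exception is that the boundary action is 2-transitive. I would rule this out using the cyclicity of edge stabilisers: if the boundary action were 2-transitive, then for any two pairs of distinct ends there would be a $G$-translation taking one to the other, in particular forcing the pointwise stabiliser of a pair of ends (equivalently, a bi-infinite line in $T$) to contain the cyclic stabiliser of any edge on that line together with large conjugates of it, which would force non-cyclic edge stabilisers in contradiction to the GBS hypothesis. (Here the reduction to a reduced graph of groups is used to know that no edge stabiliser has finite index strictly containing another edge stabiliser along such a line.) With both exceptions disposed of, Theorem \ref{thm:characterisation} applies and gives that the positive theory of $G$ is trivial.

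The main obstacle is, as above, the 2-transitivity exclusion: one needs a clean argument that, for a reduced GBS action, cyclic edge stabilisers are incompatible with 2-transitivity on $\partial T$. I would handle this by picking two pairs $(\xi_1,\xi_2)$, $(\eta_1,\eta_2)$ whose axes share a long overlap and showing that an element sending one pair to the other would have to conjugate a cyclic edge stabiliser into a non-commensurable one along the same line, which is impossible after reduction. The fixed-end analysis is standard and short, and the remaining work is bookkeeping to ensure that the degenerate low-complexity GBS graphs (a single vertex; a single loop giving some $BS(m,n)$ already handled by the preceding corollary) are accounted for in the final list $\{\mathbb{Z},\,BS(1,n)\}_{n\in\mathbb{Z}}$.
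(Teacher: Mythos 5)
Your proposal takes a genuinely different route from the paper's, which handles this corollary in one line by importing Button's structure theorem on SQ-universality of GBS groups essentially verbatim, replacing SQ-universality with triviality of the positive theory; the two facts that make the substitution go through are that triviality of the positive theory is inherited along epimorphisms, and that torsion-free hyperbolic groups have trivial positive theory (\cite{sela2009diophantineVII}). Button's argument runs through the modular homomorphism and the unimodular/non-unimodular dichotomy and exhibits large or hyperbolic quotients case by case, so there is no need to verify small-cancellation features on the Bass--Serre tree directly. Your plan instead applies Theorem \ref{thm:characterisation} (via Theorem \ref{thm:1SistoCo}) directly to the Bass--Serre action, which is an appealing idea but needs more care than your sketch supplies.

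The central step of your plan, excluding alternative (i) of Theorem \ref{thm:1SistoCo}, has a genuine gap. You claim that an element taking one pair of ends to another along a long overlap "would have to conjugate a cyclic edge stabiliser into a non-commensurable one," and that after reduction "no edge stabiliser has finite index strictly containing another edge stabiliser along such a line." Neither holds. Translating one bi-infinite line to another that overlaps it merely conjugates edge stabilisers to edge stabilisers, which are of course again infinite cyclic and conjugate; nothing forces non-commensurability, and indeed for any unimodular GBS group all edge stabilisers in $T$ are mutually commensurable. In the amalgam $\mathbb Z*_{k\mathbb Z=m\mathbb Z}\mathbb Z$ with $k,m\geq 2$ (a reduced GBS graph) they are literally all equal to the single normal subgroup $\langle a^k\rangle=\langle b^m\rangle$ of $G$, so your commensurability argument cannot produce any contradiction there. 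The parenthetical claim about reduction is simply false: nested edge stabilisers of finite index are the generic situation along a line in a GBS tree, reduced or not. The argument that does work uses a different mechanism: since $G_v$ is cyclic, $G_e\trianglelefteq G_v$, so $G_e$ acts trivially on the $G_v/G_e$-many edges at $v$ in the orbit of $e$, and a count of $G_e$-orbits on the edges at $v$ other than $e$ shows that there are at least two orbits whenever the valency at $v$ is at least three, contradicting transitivity on 2-segments. Carrying this out uniformly over the several families of edges at $v$ is the content you would need to supply. You would also need to address the valency hypothesis: Theorem \ref{thm:1SistoCo} requires every vertex to have valency at least $3$, but reduced GBS graphs routinely produce valency-$2$ vertices in $T$, so some preliminary collapsing or unsubdivision is needed before the theorem applies. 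The fixed-end branch of your argument is fine: a GBS group fixing an end of its Bass--Serre tree has a locally cyclic Busemann kernel, hence is metabelian, and the finitely generated metabelian GBS groups are exactly $\mathbb Z$ and the $BS(1,n)$.
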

        
        \begin{proof}
        
        	The proof is almost verbatim to that of \cite{button2016nonhyperbolic}[Theorem 3.2] replacing the SQ-universal property by having trivial positive theory. One should also note that if a group $G$ has a quotient with trivial positive theory, then so does $G$, and that by \cite{sela2009diophantineVII} torsion-free hyperbolic groups have trivial positive theory.
        \end{proof}
        
      \paragraph{One-relator groups}
        
        There is a natural conjecture formulated in \cite{neumann1973sq} in 1973 characterising when 1-relator groups are SQ-universal, namely,  a non cyclic
        1-relator group is SQ-universal unless it is isomorphic to $BS(1, n)$, for some $n\in \mathbb Z$. A similar conjecture should hold if one replaces SQ-universal by having trivial positive theory.
        
        \begin{conj}
        	A non cyclic 1-relator group has trivial positive theory unless it is isomorphic to $BS(1, n)$, for some $n\in \mathbb Z$.
        \end{conj}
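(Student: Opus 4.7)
The plan is to combine the Magnus--Moldavanskii hierarchy for one-relator groups with Theorem A (characterization via actions on simplicial trees), so that the exceptional groups $BS(1,n)$ appear precisely as the obstruction to the hierarchy producing a suitable tree action. Recall that the non-cyclic solvable one-relator groups are exactly the $BS(1,n)$ (by classical results of Baumslag), so the conjecture is equivalent to: every non-solvable one-relator group has trivial positive theory.

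First, I would dispose of the torsion case: if the relator is a proper power, then by B.\,B.~Newman's results the group is hyperbolic, hence either virtually cyclic (so solvable, hence $BS(1,n)$ or cyclic) or non-elementary hyperbolic, in which case Corollary~\ref{cor:acylindricallyhyperbolic} (applied through its action on an appropriate tree, e.g.\ via JSJ decomposition) yields trivial positive theory. So assume the relator $r$ is not a proper power and $G = \langle x_1,\dots,x_n \mid r\rangle$ is torsion-free.

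Next, I would apply the Magnus hierarchy. After a Nielsen--Tietze transformation, we may assume some generator $t$ has zero exponent sum in $r$, so that $G$ is an HNN extension
\[
G \;=\; \langle B, t \mid tAt^{-1}=A'\rangle,
\]
where $B$ is a one-relator group of strictly smaller Magnus complexity and $A, A'\leq B$ are Magnus (free) subgroups. Consider the action of $G$ on the associated Bass--Serre tree $T$. The plan is to verify the hypotheses of Theorem~\ref{thm:characterisation}: minimality (automatic for any HNN extension where $A,A'\neq B$), and the boundary action being neither fixed nor 2-transitive. A boundary-fixing action would force the extension to be degenerate in the sense $A=A'=B$, so that $G = B\rtimes\mathbb{Z}$, and 2-transitivity on $\partial T$ can be ruled out by exhibiting at least two distinct double cosets in $B/A$ (or $B/A'$), which follows since $A$ is a proper Magnus subgroup. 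When Theorem~A applies directly, we conclude $G$ has trivial positive theory.

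It remains to handle the degenerate situation $A=A'=B$, i.e.\ $G\cong B\rtimes_\phi \mathbb{Z}$ for some automorphism $\phi$ of $B$. Here I would induct on Magnus complexity: if $B$ is not cyclic and not $BS(1,n)$, then by induction $B$ has trivial positive theory, and in many situations the semidirect product inherits a useful tree action (for instance, $G$ acts on the Bass--Serre tree of $B$ via $\phi$, possibly after passing to a finite-index subgroup). If $B$ is cyclic, then $G$ is literally $BS(1,n)$. The delicate case is when $B\cong BS(1,m)$ and $G$ is an HNN extension of $BS(1,m)$ that is not itself $BS(1,n)$; one must show that such $G$, being non-solvable, admits a different splitting or a hyperbolic stable element reached from the outer $\mathbb{Z}$-action, then invoke Corollary~\ref{cor:uniformWSC} to produce weakly small-cancellation tuples and apply Theorem~D.

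The main obstacle I anticipate is precisely this last step: controlling the degenerate stages of the hierarchy. The Magnus splitting is not unique, and one may need to leverage several splittings simultaneously, or use Moldavanskii's refinement (which always produces a non-ascending HNN in the non-solvable torsion-free case). Proving that every non-solvable one-relator group admits \emph{some} tree action satisfying the hypotheses of Theorem~A---rather than merely the canonical Magnus one---is where the conjecture genuinely bites, and is likely why the authors leave it open rather than treat it among the established corollaries.
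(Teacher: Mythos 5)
The statement you are trying to prove is in fact stated as an open \emph{conjecture} in the paper, not as a theorem. The authors do not claim a proof; what they establish is the partial result of Corollary~\ref{cor:1-relator}, and they reach it by a different route from yours, namely the Minasyan--Osin trichotomy for one-relator groups (acylindrically hyperbolic on a tree, generalised Baumslag--Solitar type, or ascending HNN extension of a finite-rank free group), rather than the Magnus--Moldavanskii hierarchy. Both your route and theirs handle the ``good'' cases by exhibiting a suitable tree action and invoking Theorem~\ref{thm:characterisation} or Corollary~\ref{cor:acylindricallyhyperbolic}, and both stall on the same class of groups.

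The concrete gap in your sketch is in the passage from the Magnus HNN splitting $G=\langle B,t\mid tAt^{-1}=A'\rangle$ to the claim that the boundary-fixing case forces $A=A'=B$. This is not so: the action of $G$ on the Bass--Serre tree fixes an end whenever the HNN extension is \emph{ascending}, i.e.\ whenever \emph{one} of $A$, $A'$ equals $B$ while the other is proper. You have only treated the semidirect-product sub-case $A=A'=B$ and have left the strictly ascending case (say $A\subsetneq B=A'$) entirely unaddressed. That strictly ascending case is precisely where the paper's own partial result stops: in Corollary~\ref{cor:1-relator} the remaining unresolved groups are exactly the strictly ascending HNN extensions $F_k\ast_\theta$ of a free group that are neither hyperbolic nor large, and the invocation of ``Moldavanskii's refinement'' to rule them out does not do so (Moldavanskii's theorem concerns one-relator groups with nontrivial centre, and does not convert a strictly ascending Magnus splitting of a general non-solvable one-relator group into a non-ascending one). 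So your sketch, like the paper, does not close the conjecture; moreover it misidentifies where the difficulty lies, because the strictly ascending case is silently absorbed into a sub-case that does not cover it.
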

        
        In order to address this conjecture, we will follow the strategy used in \cite{button2016nonhyperbolic} to approach the question of SQ-universality.
        
        We first use the trichotomy given by Minsasyian-Osin in \cite{minasyan2015acylindrical} on the structure of one-relator groups.

        \begin{prop}(Proposition 4.20 \cite{minasyan2015acylindrical})
        	Let $G$ be a one-relator group. Then at least one of the following holds:
        	
        	\begin{itemize}
        		\item [(i)] $G$ acts acylindrically hyperbolically on a tree;
        		\item [(ii)]$G$ is 2-generated and contains an infinite cyclic s-normal subgroup. More precisely, either
        		$G$ is infinite cyclic or it is an HNN-extension of the form
        		$$
        		G=\langle a,b,t \mid a^t=b, w=1\rangle
        		$$
        		
        		of a 2-generator 1-relator group $H=\langle a, b \mid w(a,b) \rangle$ with non-trivial center, so that a
        		$r^b=b^s$ for some $r,s \in \mathbb Z\setminus \{0\}$. In the latter case $H$ is (finitely generated free)-by-cyclic and contains a finite index normal
        		subgroup splitting as a direct product of a finitely generated free group with an infinite cyclic group.
        		
        		\item [(iii)] $G$ is 2-generated and isomorphic to an ascending HNN extension of a finite rank free group.
        	\end{itemize}
        	
        \end{prop}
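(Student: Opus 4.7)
The plan is to exploit the Magnus hierarchy for one-relator groups, which is the standard structural tool in this setting. Given $G = \langle X \mid r \rangle$ with $r$ cyclically reduced, if some generator $x_i$ does not occur in $r$, then $G$ splits as a free product and case (i) (acylindrical hyperbolicity on the Bass-Serre tree of the splitting) follows easily whenever the complementary factor is non-trivial. Otherwise, by Moldavanskii's theorem, after possibly a Tietze/Nielsen change of basis, $G$ can be expressed as an HNN extension $G = \langle H, t \mid t A t^{-1} = B \rangle$, where $H$ is a one-relator group on fewer generators (the so-called Magnus subgroup), and $A, B \leq H$ are free Magnus subgroups. My plan is to work with this HNN decomposition and study the induced action on its Bass-Serre tree $T$.

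The main case division would be: first, show that if this HNN action on $T$ is non-ascending and non-elementary, then one can find a loxodromic WPD element, placing $G$ in case (i). Here the key input is that intersections of Magnus subgroups with their conjugates inside a one-relator group are well-controlled (via the Magnus-Karrass-Solitar theorems, Newman's spelling theorem, and results of Collins-Edjvet, or more modern acylindricity results of Wise and Minasyan-Osin), so that edge stabilizers intersect only finitely many conjugates of themselves in finite-index subgroups. This is enough to produce a WPD hyperbolic element, hence acylindrical hyperbolicity on $T$. If the splitting is ascending (i.e.\ $B = H$ or $A = H$), iterating the hierarchy and checking that the tower terminates in a free group would land us in case (iii): an ascending HNN extension of a finitely generated free group.

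The delicate remaining case is when the splitting is not ascending but still fails to give an acylindrical action: this happens precisely when $H$ has non-trivial center or when there is substantial commensurability between $A$ and $B$ inside $H$. The plan is to show that in this situation $G$ must be 2-generated and must contain an infinite cyclic s-normal subgroup, appealing to the classical Murasugi-Newman-Baumslag-Taylor classification of one-relator groups with non-trivial center (they are essentially torus-knot-like groups, (free)-by-cyclic in a controlled way). This gives the precise structure asserted in case (ii). The hardest step will be ruling out intermediate behaviors between (i) and (ii)/(iii): one needs to prove that whenever the natural splitting fails the WPD criterion, the group is forced into one of the two rigid normal forms listed, which requires a detailed combinatorial analysis of the possible configurations of Magnus subgroups and the elements commuting with edge groups, rather than any single clean argument.
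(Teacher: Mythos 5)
The paper does not prove this statement at all: it is quoted verbatim from Minasyan and Osin (their Proposition~4.20), so there is no ``paper's proof'' to compare against, only the original argument in \cite{minasyan2015acylindrical}. Your sketch is broadly in the right spirit --- the genuine proof does start from the Moldavanskii HNN splitting over a free Magnus subgroup and the associated Bass--Serre tree --- but it departs from the actual mechanism in a way that leaves the key step unresolved.

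The engine in Minasyan--Osin is not a direct hunt for a WPD loxodromic: it is a criterion they establish earlier in the same paper, namely that a suitably non-elementary (co)bounded action on a tree whose edge stabilizer is weakly malnormal (equivalently, \emph{not} s-normal) forces acylindrical hyperbolicity. The whole of Proposition~4.20 then reduces to a dichotomy on whether the edge group $A$ of the Moldavanskii splitting is s-normal in $G$: if not, case~(i); if so, one has to show the group is forced into (ii) or (iii). Your plan substitutes this with ``produce a WPD element via Magnus--Karrass--Solitar and spelling theorems,'' which is both harder and not what is needed; and it leaves the s-normal branch as a gesture toward Murasugi--Newman--Baumslag--Taylor without explaining why s-normality (in the non-ascending case) forces $G$ to be $2$-generated, the base $H$ to have non-trivial center, and the precise free-by-cyclic / finite-index direct product structure of item~(ii) to hold. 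That branch is the real content of the proposition and is not addressed. A smaller point: your treatment of case~(iii) invokes ``iterating the hierarchy until it terminates in a free group,'' but no iteration is required --- if the Moldavanskii splitting is ascending, the associated subgroup equals the base, and that associated subgroup is already a (finitely generated) free Magnus subgroup, so the base is free after a single step.
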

        
        Theorem \ref{thm:characterisation} establishes that 1-relator groups in the class $(i)$ have trivial positive theory.
        
        It follows from \cite{button2016nonhyperbolic}[Theorem 3.2] that groups from class $(ii)$ are generalised Baumslag-Solitar groups and so by Corollary \ref{cor:GBS} it follows that they also have trivial positive theory unless they are solvable.
        
        In the third alternative, it follows from \cite{button2008large} that an ascending HHN extension is either large (and so has trivial positive theory), or hyperbolic (and again has trivial positive theory) or it contains a solvable Baumslag-Solitar group, see discussion previous to \cite{button2016nonhyperbolic}[Corollary 3.4] for details. Summarizing, we have the following corollary.

        \begin{cor}\label{cor:1-relator}
        	All 1-relator groups with more than 3 generators have trivial positive theory.
        	
        	If $G$ is a group given by a $2$-generator 1-relator presentation
        	that is not $\mathbb Z$, $\mathbb Z \times \mathbb Z$ or the Klein bottle group then either $G$ has trivial positive theory or $G$ is a strictly ascending HNN extension $F_k \ast_\theta$ of a free group $F_k$
        	which is not word hyperbolic and such that:
        	\begin{itemize}
        		\item either $G$ contains no Baumslag-Solitar subgroup (conjecturally this does not occur) or
        		\item $G$ contains a Baumslag-Solitar group $BS(1, m)$, for $|m| \ne 1$ but does not contain $\mathbb Z \times \mathbb Z$ and the virtual first Betti number of $G$ is 1 (conjecturally this
        		only occurs if $G=BS(1, n)$ for $|n| \ne1$).
        	\end{itemize}
        \end{cor}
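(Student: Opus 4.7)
The plan is to apply the Minasyan--Osin trichotomy (Proposition 4.20 of \cite{minasyan2015acylindrical}) and, in each branch, invoke a previously established triviality result to bring us to one of the stated exceptional cases. First I would dispose of the rank condition: any one-relator group on $n\geq 3$ generators is not 2-generated, so branches (ii) and (iii) of the trichotomy are excluded and only branch (i), i.e.\ an acylindrically hyperbolic action on a tree, remains. By Corollary \ref{cor:acylindricallyhyperbolic} (itself a consequence of Theorem \ref{thm:characterisation}) such a group has trivial positive theory. This settles the first assertion.

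For the 2-generator, 1-relator case, I would split along the same trichotomy. In branch (i), Corollary \ref{cor:acylindricallyhyperbolic} again gives triviality. In branch (ii), the argument of \cite[Theorem~3.2]{button2016nonhyperbolic} shows that such a group is a generalised Baumslag--Solitar group, so Corollary \ref{cor:GBS} applies: either the positive theory is trivial, or the group is isomorphic to $BS(1,n)$ or infinite cyclic; and among 2-generator 1-relator presentations these exceptions reduce precisely to $\mathbb Z$, $\mathbb Z\times\mathbb Z$, $BS(1,n)$ (which in the 1-relator setting includes $BS(1,-1)$, the Klein bottle group, as a distinct exception noted in the statement). All of these have been excluded from the hypothesis or are of the form $BS(1,n)$ and hence are strictly ascending HNN extensions of $\mathbb Z=F_{1}$, covered by the conclusion.

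Branch (iii) is the main case to handle and the source of the remaining subtleties. Here $G$ is an ascending HNN extension $F_{k}\ast_{\theta}$ of a finite-rank free group. Invoking \cite{button2008large} (see the discussion preceding \cite[Corollary~3.4]{button2016nonhyperbolic}), such an HNN extension is either large, or word hyperbolic, or contains a Baumslag--Solitar subgroup. If large, then $G$ maps onto a non-abelian free group after passing to a finite-index subgroup, so it has trivial positive theory (positive sentences are preserved under quotients and $F_{2}$ has trivial positive theory). If word hyperbolic, then by \cite{sela2009diophantineVII} the positive theory is trivial. Otherwise $G$ is strictly ascending, non-hyperbolic, and contains some $BS(m,n)$. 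A further appeal to the Button-style analysis splits this into two subcases: either the Baumslag--Solitar subgroup is of the form $BS(1,m)$ with $|m|\neq 1$ and no $\mathbb Z\times\mathbb Z$ appears (in which case the virtual first Betti number constraint appears, and one expects $G\cong BS(1,n)$), or else $G$ contains no Baumslag--Solitar subgroup at all (conjecturally impossible).

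The hard part of the argument is exactly this last branch, namely extracting triviality of the positive theory from ascending HNN extensions of free groups that are non-hyperbolic and not large. Here one cannot directly produce weak small cancellation elements because the tree associated to the HNN splitting is a line (the action has a fixed end), so Theorem \ref{thm:characterisation} fails to apply; equivalently, alternative (\ref{alternative fixed end}) of Theorem \ref{thm:1SistoCo} occurs. This is why the statement retains the two exceptional subcases rather than asserting full triviality: the obstacle is the same one that blocks the SQ-universality conjecture of \cite{neumann1973sq}, and we can only eliminate the obstruction conditionally (under absence of Baumslag--Solitar subgroups, or under control of the virtual first Betti number via Button's results). Thus the proof reduces to collecting the inputs above and identifying, case by case, precisely which exceptional presentations escape triviality.
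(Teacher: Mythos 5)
Your proposal follows exactly the paper's strategy: apply the Minasyan--Osin trichotomy, use Theorem \ref{thm:characterisation} (via Corollary \ref{cor:acylindricallyhyperbolic}) for branch (i), Corollary \ref{cor:GBS} for branch (ii), and Button's large/hyperbolic/contains-$BS$ trichotomy from \cite{button2008large,button2016nonhyperbolic} for branch (iii). One small imprecision: you claim that a one-relator group on $n\geq 3$ generators is not $2$-generated, but this is false for $n=3$ (e.g.\ $\langle a,b,c\mid c\rangle\cong F_2$ is $2$-generated); the abelianization of a one-relator group on $n$ generators has rank $\geq n-1$, which exceeds $2$ only when $n\geq 4$, so the correct observation is that $n>3$ excludes branches (ii) and (iii) of the trichotomy --- which is precisely what the statement requires and what the paper uses.
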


      \paragraph{Graph products of groups}
        \newcommand{\grpr}{\mathcal G(\Gamma,G_{v})_{v\in V}}
        \begin{thm}
        	Let $G =\grpr$ be the graph product of non-trivial groups with respect
        	to a non-complete finite graph $\Gamma$. Suppose that $H < G$ is a
        	subgroup such that no conjugate of $H$ is contained in a subgroup of $G$ defined by a proper subgraph. Then either $H$ is virtually cylic or it has trivial positive theory.
        \end{thm}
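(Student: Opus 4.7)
The plan is to exhibit a natural tree action of $H$ to which the machinery of Sections~\ref{sec:formal solutions}--\ref{sec: characterisation} applies. Since $\Gamma$ is not complete, there exists a vertex $v_0 \in V(\Gamma)$ whose star is a proper subgraph of $\Gamma$. The graph product splits non-trivially as $G = G_{\Gamma \setminus \{v_0\}} \ast_{G_{\mathrm{lk}(v_0)}} G_{\mathrm{st}(v_0)}$, and we let $T_{v_0}$ be the corresponding Bass--Serre tree. Both vertex stabilizers and the edge stabilizer of this splitting are conjugates of subgroups defined by the proper subgraphs $\Gamma \setminus \{v_0\}$, $\mathrm{st}(v_0)$ and $\mathrm{lk}(v_0)$ respectively; hence by the hypothesis on $H$, the restricted action of $H$ on $T_{v_0}$ fixes neither a vertex nor an edge. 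Passing to the minimal $H$-invariant subtree $T_H \subseteq T_{v_0}$, we may assume $H$ acts minimally on a non-trivial subtree.

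Next, I would separate the possible types of action using the terminology of Section~\ref{s: preliminaries}. If the $H$-action on $T_H$ is irreducible, I claim one can produce a weakly stable hyperbolic element of $H$, together with another element of $H$ not in its elementary closure; by Corollary~\ref{cor:uniformWSC} and Remark~\ref{r: stable_and_sc} this yields weakly $N$-small cancellation tuples in $H$ for every $N$, so by Theorem~\ref{thm: wscitp} the positive theory of $H$ is trivial. The weak stability comes from the standard acylindricity-type features of graph-product Bass--Serre trees: if a hyperbolic $h \in H$ meets a translate $g A(h)$ in a long segment, then $g$ lies in an edge stabilizer of $T_{v_0}$, which is conjugate to $G_{\mathrm{lk}(v_0)}$ and acts on the axis of $h$ through a controlled abelian-like quotient; choosing $h$ sufficiently generic (e.g.\ a commutator of two hyperbolic elements of $H$ with disjoint axes, whose existence is guaranteed by irreducibility) forces $g$ to preserve $A(h)$ set-wise, which is exactly weak stability.

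If the $H$-action on $T_H$ is not irreducible, then by the characterizations in Section~\ref{s: preliminaries} either $T_H$ is a line on which $H$ acts by translations and possibly reflections, or $H$ fixes a unique end of $T_H$. In both cases, the kernel of the induced homomorphism from $H$ onto $\mathbb{Z}$, $D_\infty$, or a group of translations is contained in an edge stabilizer of $T_{v_0}$, hence in a conjugate of $G_{\mathrm{lk}(v_0)}$, a proper subgraph subgroup. The hypothesis forbids $H$ itself (or any conjugate) from sitting inside such a subgroup, so this kernel is a proper subgroup of $H$; to conclude that $H$ is virtually cyclic one iterates the argument over all vertex-splittings of $G$. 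Concretely, if the kernel $K$ of $H \to \mathbb Z$ (or $\to D_\infty$) is non-trivial, then $K$ sits inside a conjugate of the parabolic subgroup $G_{\mathrm{lk}(v_0)}$; running the same splitting analysis with a different vertex of $\Gamma$ either embeds $K$ into a strictly smaller subgraph subgroup or forces contradiction with the hypothesis on $H$. A finite induction on the number of vertices in $\Gamma$ collapses $K$ to the trivial (or a finite) subgroup, leaving $H$ virtually cyclic.

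The main obstacle will be carrying out the induction in the reducible case rigorously: one must track how the kernel of the line/end action fits into various proper subgraph subgroups as the vertex of the splitting varies, and verify that the hypothesis on $H$ genuinely forces either a strictly smaller kernel at each step or virtual cyclicity of $H$. The irreducible case, by contrast, is mostly a matter of extracting a weakly stable hyperbolic element, which should follow uniformly from the Bass--Serre structure of graph products without requiring any hypothesis-specific work.
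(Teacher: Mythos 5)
Your overall plan (act on the Bass--Serre tree of the vertex splitting and split by irreducibility of the $H$-action) is a reasonable attempt to fill in what the paper leaves terse: the paper instead splits by whether $\Gamma$ is bipartite and simply asserts that in the non-bipartite case the hypotheses force an irreducible action containing a $2$-stable element, while its bipartite case is dispatched with a one-line remark about a retraction. So your route is not wrong in spirit, but two of your steps would not go through as written.

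In the irreducible case, the sentence ``if a hyperbolic $h$ meets $gA(h)$ in a long segment, then $g$ lies in an edge stabilizer'' is false. The correct mechanism (compare Lemma \ref{l: forms of stability}, in particular the proof of $FS(\lambda)\Rightarrow WS(\lambda+2)$) is that when $g$ carries a long subsegment of $A(h)$ into $A(h)$, a suitable element built from $g$ and $h$ --- for instance $h^{k}g$ for appropriate $k$, or the commutator $[h,h^{g}]$ --- fixes a long segment of $A(h)$ pointwise and therefore lies in a conjugate of $G_{\mathrm{lk}(v_{0})}$; you must then use the hypothesis that no conjugate of $H$ lies in a proper parabolic to rule out this fixing element being nontrivial, so the stability is not ``hypothesis-free'' as your last paragraph suggests. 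More seriously, in the reducible case the hypothesis applies to $H$ but not to the kernel $K$ of $H\to\mathbb Z$ or $D_{\infty}$: $K$ sits inside a conjugate of $G_{\mathrm{lk}(v_{0})}$, but since $G_{\mathrm{lk}(v_{0})}$ is normal in $G_{\mathrm{st}(v_{0})}=G_{v_{0}}\times G_{\mathrm{lk}(v_{0})}$, arbitrarily long segments of the tree can have pointwise stabilizer equal to a full conjugate of $G_{\mathrm{lk}(v_{0})}$, so a priori $K$ can be infinite and $H$ need not be virtually cyclic. The ``iteration over vertices'' as you sketch it does not resolve this; what is needed is to exploit that $K\triangleleft H$, so $H\le N_{G}(K)$, and to invoke a structure theorem for normalizers of subgroups of parabolics in graph products to conclude that a conjugate of $H$ lies in a proper parabolic when $K\neq 1$, contradicting the hypothesis. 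Also note that the end-fixing sub-case is genuinely different from the line sub-case: the relevant kernel is an increasing union of half-line fixers and need not sit inside a single conjugate of $G_{\mathrm{lk}(v_{0})}$, so it requires a separate argument.
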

        
        \begin{proof}
        	
        	Any graph product $\grpr$ can be seen as a free product with amalgamation
        	$$
        	\grpr = \mathcal G(\Gamma \setminus \{v\}) \ast_{\mathcal G(link(v))} (\mathcal G(star(v)),
        	$$ where $link(v)$ is the set vertices of $\Gamma$ joined to $v$ by an edge and $star(v)=link(v) \cup \{v\}$, and so it admits an action on a tree.
        	
        	If the graph is not bipartite, then any hyperbolic element $g$ such that no conjugate of $g$ belongs to a subgroup defined by a proper subgraph, is 2-stable. Therefore, if $H$ is not virtually cyclic and no conjugate of $H$ is contained in a subgroup defined by a proper subgraph, then $H$ acts irreducible on the tree and contains a $2$-stable elements and so small cancellation tuples. It follows that $H$ has trivial positive theory.
        	
        	If the graph is bipartite, since it is not complete, the graph product has a retraction to a subgroup defined by a proper subgraph which is not bipartite and so its positive theory is trivial.
        \end{proof}
        
        The corollary below covers the particular case when $H = G$.
        \begin{cor}
        	Let $G =\grpr$ be a graph product of non-trivial groups with respect
        	to a finite graph
        	$\Gamma$. If the graph is non-complete, then $G$ has trivial positive theory. If the graph is complete, then $G$ has trivial positive theory if and only if one of its vertex groups has trivial positive theory.
        \end{cor}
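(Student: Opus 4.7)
The plan is to derive the two cases from the immediately preceding theorem and from Theorem \ref{lem:extensionpreservation} respectively, so this is really a corollary-of-corollaries rather than a fresh argument.

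For the non-complete case, I would simply apply the preceding theorem with $H = G$ itself: no conjugate of the whole group can be contained in a subgroup defined by a proper subgraph, so the hypothesis is trivially satisfied. The conclusion says $G$ is either virtually cyclic or has trivial positive theory. Virtual cyclicity is incompatible with $\Gamma$ non-complete in all but the degenerate situation where $\Gamma$ is two non-adjacent vertices and both vertex groups have order two (giving $G = D_\infty$), since otherwise $G$ contains a free product $G_u * G_v$ of non-trivial groups along any pair of non-adjacent vertices $u,v$, and such a free product is not virtually cyclic as soon as one factor has order strictly bigger than two.

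For the complete case, $\Gamma$ being complete means $G = \prod_{v \in V(\Gamma)} G_v$, and the equivalence splits into two implications. The easy direction ($\Leftarrow$) uses only the general fact that positive sentences are preserved under surjective homomorphisms: given a positive $\forall\exists$-sentence valid in $G$, projecting witnesses along the retraction $G \twoheadrightarrow G_v$ yields corresponding witnesses in $G_v$, so $Th^+(G) \subseteq Th^+(G_v)$; if $Th^+(G_v) = Th^+(F_2)$, then combining with the trivial inclusion $Th^+(F_2) \subseteq Th^+(G)$ forces $Th^+(G) = Th^+(F_2)$. The converse direction ($\Rightarrow$) I would handle by contraposition: if no vertex group has trivial positive theory, then each $G_v$ satisfies some non-trivial positive sentence, and iterating Theorem \ref{lem:extensionpreservation} (direct products being split extensions) shows $G$ has non-trivial positive theory.

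The main obstacle is really the converse in the complete case: the quotient argument alone would only tell us that $G$ having trivial positive theory implies $Th^+(F_2) = Th^+(G) \subseteq Th^+(G_v)$ for each $v$, which does not pin down whether any single $G_v$ actually coincides with $Th^+(F_2)$. Producing a single non-trivial positive sentence satisfied simultaneously by all factors from the non-trivial sentences satisfied by each factor separately is awkward to do by hand (a disjunction might become trivial in $F_2$, a conjunction might not hold in each factor). Bypassing this combination problem via the extension-closure result is precisely what makes this step clean.
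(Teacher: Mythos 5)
Your complete-graph argument is exactly the paper's: retract onto a vertex group for the $\Leftarrow$ direction, iterate Theorem \ref{lem:extensionpreservation} on the direct-product decomposition for the $\Rightarrow$ direction, and your closing paragraph correctly explains why passing through the extension-closure result (rather than trying to combine the individual non-trivial sentences by hand) is what makes the converse go through.

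Your handling of the non-complete case, however, has flagged a genuine problem --- not in your argument but in the statement itself. When $H = G$ in the preceding theorem, the alternative ``$H$ is virtually cyclic'' is not vacuous. The graph product $\mathbb{Z}/2 * \mathbb{Z}/2 \cong D_\infty$ comes from a non-complete (edgeless, two-vertex) graph, is virtually $\mathbb{Z}$, hence metabelian, hence satisfies the non-trivial law $[[x_1,x_2],[x_3,x_4]]=1$; so it does \emph{not} have trivial positive theory. The corollary as stated is therefore false for $D_\infty$, and the paper's one-line dismissal ``we are only left to consider the case when the graph is complete'' silently assumes the preceding theorem settles the non-complete case, when in fact it leaves the virtually-cyclic alternative open precisely for $H=G$. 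You noticed the hole; you did not (and with the tools at hand could not) close it, since the claimed conclusion is simply wrong for this group. One small correction on your side: the exceptional family is slightly larger than the two-vertex graph. Coning two non-adjacent $\mathbb{Z}/2$-vertices over any complete subgraph with finite vertex groups produces $K \times D_\infty$ with $K$ finite, which is still a graph product over a non-complete graph and still virtually cyclic and metabelian. So the corollary needs the hypothesis that $G$ is not virtually cyclic (equivalently, that $\Gamma$ is not of this shape), and the blanket claim in Corollary~\ref{cor:summaryex} about graph products over non-complete graphs carries the same implicit restriction.
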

        
        \begin{proof}
        	In order to prove the corollary we are only left to consider the case when the graph is complete and so the graph product is a direct product. If one of the vertex groups has trivial positive theory, then so does the groups since it admits a retraction to the vertex group. If all vertex groups have non-trivial positive theory, then since by Theorem \ref{lem:extensionpreservation} it is a property closed under extensions, we have that $G$ also has non-trivial positive theory.
        \end{proof}
        
        Note that in \cite{diekert2004existential}, Diekert and Lohrey studied the positive theory of graph products in the language of groups with constants. In that case, the authors show that the positive theory of equations with recognizable constraints in graph products of finite and free groups is decidable.
        
      \paragraph{Simple groups}
        
        Using the main result of this paper, the authors show in \cite{simplegrouppreprint} that there are uncountably many finitely generated simple groups with trivial positive theory and so with verbal subgroups of infinite width. These groups are described as free products with amalgamation of finitely generated free groups over (non finitely generated) infinite index subgroups. On the one, this result contrast with the uniform bound on the width of verbal subgroups for the class of finite simple groups proven by Liebeck, O'Brien, Shalev and Tiep in \cite{liebeck2010ore}. On the other one, it also contrasts with the result by Schupp on the SQ-universality of free products with amalgamation of finitely generated free groups over finitely generated infinite index subgroups.

\section{Questions}
  \label{sec:openquestions}
  
  \subsection{Hyperbolic spaces}
    
    In this paper we study the relation between groups acting on trees and their positive theory. As we remarked, the notion of weakly stable element in the context of groups acting on trees coincides with that of WWPD introduced by Bestvina, Bromberg and Fujiwara in \cite{bestvina2013stable}. In that paper, the authors characterise elements with positive stable commutator length and use it to show that the mapping class group has infinite dimensional second bounded cohomology. It is natural to ask if our results can be generalised to groups acting on quasi-trees or hyperbolic spaces, namely,
    \begin{conj}
    	Let $G$ be a group acting on a hyperbolic space with independent WWPD\footnote{for trees, the equivalent condition in our language is weakly stable} (as defined in \cite{bestvina2015constructing}). Then either $G$ is virtually cyclic or the positive theory of $G$ is trivial. In particular, all acylindrically hyperbolic groups have trivial positive theory and so infinite verbal width.
    \end{conj}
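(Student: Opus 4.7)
The plan is to mimic the tree case closely, replacing exact combinatorial statements about axes and segments with their coarse $\delta$-hyperbolic analogues. Let $X$ be a $\delta$-hyperbolic space on which $G$ acts with at least one hyperbolic WWPD element $h$ and a second independent hyperbolic element $g$ (so that $\langle g,h \rangle$ is non-elementary, as in the definition of ``independent WWPD'' from \cite{bestvina2015constructing}). The WWPD condition plays the role of weak stability from Definition \ref{denf:weakly stable}: if a translate of a sufficiently long initial subsegment of the quasi-axis $A(h)$ coarsely lands back inside $A(h)$, then the translating element must coarsely stabilize $A(h)$. First, I would fix a notion of \emph{weakly $N$-small cancellation tuple in $X$}: an $m$-tuple $(a_{1},\dots,a_{m})$ of hyperbolic isometries whose translation lengths are comparable and very large compared to $\delta$, and whose quasi-axes $A(a_{i})$ satisfy the coarse analogue of Item~(\ref{SCC}) of Definition \ref{d: small cancellation}, namely that any translate $fA(a_{j})$ whose overlap with $A(a_{i})$ is longer than $(1/N)\min_{k}tl(a_{k})$ forces $i=j$ and $f$ to coarsely fix that overlap. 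This replaces exact identification of subsegments by $C\delta$-fellow-travelling.

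Next, I would establish the hyperbolic analogues of the combinatorial lemmas of Sections~\ref{s: preliminaries} and \ref{s: section 4}. The key replacement is the coarse product-of-axes calculation: if two hyperbolic $f_{1},f_{2}$ have quasi-axes that project to each other in a bounded portion, then $f_{1}f_{2}$ is hyperbolic with $tl(f_{1}f_{2})$ comparable (up to additive $O(\delta)$) to the predictions of Lemmas \ref{l: chiswell2} and \ref{l: chiswell3}. This is standard $\delta$-hyperbolic fare and allows a verbatim adaptation of Lemma \ref{l: multiple conjugation}, Proposition \ref{l: from good pair to small cancellation}, and Corollary \ref{cor:uniformWSC}. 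The output is a uniform word recipe producing, for each $N$ and $m$, an $m$-tuple of weakly $N$-small cancellation elements out of a WWPD $h$ and any $g \notin E(h)$, provided $N \gg \delta / tl(h)$.

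Then I would reprove the formal-solution theorem, Theorem \ref{l: small cancellation solutions}, in this coarse setting. The band-complex construction of Section \ref{sec:formal solutions} still makes sense, with $Seg(Z_{r})$ now interpreted as quasi-geodesics between the orbit points $v_{r-1},v_{r}$. Lemma \ref{l: main_Merzlyakov_lemma} needs to be re-run with all equalities of intersections weakened to $C\delta$-fellow-travelling, and the ``branch set'' $Br$ enlarged by a $C\delta$-neighbourhood of the actual branching locus in $Hull(\mathcal V)$. The pigeon-hole sieve argument on $\Traj(x_{i})$ then produces an interval $\hat A_{i}$ of length $\gtrsim L/N$ provided $N$ is chosen large compared to the constants depending only on $|S|$ and $\delta$. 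Corollary \ref{cl: main Merzlyakov claim} goes through with ``acts as the identity on $\hat A_{i}$'' replaced by ``coarsely fixes $\hat A_{i}$, hence lies in $E(h)$''; the subsequent graph-theoretic argument yielding the pair $(\Theta,\alpha)$ of a formal solution relative to a diophantine condition is purely formal and applies verbatim. Finally, the quantifier reduction of Theorem \ref{l: quantifier reduction} is a statement about groups, not about their action, so it carries over directly, and one concludes triviality of the full positive theory. For acylindrically hyperbolic $G$, Osin's characterisation gives a non-elementary acylindrical (hence WPD, hence WWPD) action on a hyperbolic space, so the main statement yields the corollary.

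The hard part will be the error analysis in the hyperbolic version of Lemma \ref{l: main_Merzlyakov_lemma}. In trees, a coherent intersection of two images of a sub-segment is actually equal, and ``belongs to $dom(\Push_{\ttraj}^{*})$'' is a crisp dichotomy, which feeds directly into the pigeon-hole counting of how often the sieve can shrink $\tau_{\ell}$ before stabilising. In a hyperbolic space, each iteration of a push introduces a new $O(\delta)$ error, and after $|S|$ iterations these can accumulate to the order of the intervals we are tracking. Controlling this requires either absorbing the accumulated error into a slightly weaker notion of ``coarsely identical subsegments'' at each stage, or (more cleanly) passing to a quasi-tree approximation à la Bestvina--Bromberg--Fujiwara in which the push maps become genuinely tree-like at the relevant scale. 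Either route is plausible but technically delicate, and is where the bulk of the work of the conjectured paper would lie.
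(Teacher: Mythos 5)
The statement you were given is labelled a Conjecture in the paper, and the paper offers no proof of it; it appears in the Questions section and the introduction explicitly defers it to ``a forthcoming work.'' So there is no internal proof to compare your proposal against.

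That said, your outline is exactly the programme the authors signal they intend to pursue, and you have correctly isolated its structural skeleton: replace ``weakly stable'' by WWPD, coarsify the notion of weak small cancellation, port Lemma~\ref{l: multiple conjugation} and Proposition~\ref{l: from good pair to small cancellation} to the $\delta$-hyperbolic setting, then re-run the band-complex/Merzlyakov argument, and finally invoke the quantifier reduction (which, once proven as a fact about all groups, applies as a black box). You are also right that the load-bearing and genuinely unresolved step is the hyperbolic analogue of Lemma~\ref{l: main_Merzlyakov_lemma}. The tree proof leans on two things that do not coarsify for free: (a) the set $Br$ of branch points of $Hull(\mathcal{V})$ is \emph{finite} with cardinality bounded purely by $|S|$, which drives the pigeon-hole count on how many times the sieve may shrink $\tau_\ell$; in a hyperbolic space one must replace this by a $\delta$-dependent ``coarse branching locus'' whose size is only bounded up to $O(\delta)$-ambiguity, and (b) each application of a push is an \emph{exact} isometry of subsegments, so the equality $\Push_{\ttraj_{\ell_1}}(\tau_{\ell_2+1})=\Push_{\ttraj_{\ell_2}}(\tau_{\ell_2+1})$ in the proof becomes only a fellow-travelling statement, whose additive error compounds over $O(|S|)$ steps. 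Whether this error can be made subordinate to $L/N$ for $N$ chosen large enough (independently of the particular solution $(c,a,b)$) is precisely the open issue. Your two suggested escape routes (absorbing $O(|S|\delta)$ error into the small-cancellation constants, or passing to a Bestvina--Bromberg--Fujiwara quasi-tree approximation so that pushes become genuinely tree-like at the relevant scale) are both reasonable, but neither is trivial, and the paper's choice to leave the statement as a conjecture reflects exactly this. In short: you have produced a plausible strategy sketch, not a proof, and you have honestly said so; the paper agrees with you that this is an open problem.
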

    
  \subsection{Non weakly acylindrical actions on trees}
    
    Our main result can be stated as if a group acting minimally on a tree of valency at least 3 in each vertex and with the action on the boundary neither fixing a point nor 2-transitive, then the group has trivial positive theory. It is natural to ask how close this is to be a characterisation. More precisely, we ask whether there are groups with a faithful, minimal action on a tree with valency at least 3 in each vertex and 2-transitive on the boundary (alternative (i) of Theorem \ref{thm:1SistoCo}), and have trivial positive theory. Of special interest is the Burger-Mozes group, since it admits this type of action (see also next section).
    
    \begin{question}
    	Do Burger-Mozes groups have non-trivial positive theory? Are they boundedly simple of have finite commutator width?
    \end{question}
    
    Proving that the positive theory of some of these groups is trivial would involve developing new tools which would be of interest on their own. Furthermore, this would provide examples of finitely presented simple groups with trivial positive theory. On the other hand, if the Burger-Mozes groups have non-trivial positive theory, then we would conclude that this property is not a quasi-isometric invariant, since Burger-Mozes groups are quasi-isometric to the direct product of two free groups, which does have trivial positive theory. This would also indicate that groups which act faithfully and 2-transitively on the  boundary do not have trivial positive theory, bringing us closer to a full criterion for group acting on trees. In this direction, and in views of the remaining alternative in Theorem \ref{thm:1SistoCo}, we also ask:
    
    \begin{question}
    	Can one characterise when finitely generated groups acting on a tree and fixing a boundary point have trivial positive theory?
    \end{question}
    
    Note that there are strict HNN-extensions which are hyperbolic and so with trivial positive theory.

  \subsection{Trivial positive theory and second bounded cohomology}

    All results known to the authors that show that either the group has infinite dimensional second bounded cohomology or trivial positive theory rely in the existence of weak small cancellation tuples, see for instance \cite{iozzi2014characterising}, \cite{bestvina2015constructing}.
    
    It is natural to ask about the relation between these two concepts. More precisely, we ask the following questions.
    
    \begin{question}
    	Are there (finitely generated) groups with trivial positive theory whose second bounded cohomology group is finite dimensional?
    \end{question}
    
    \begin{question}
    	Are there (finitely generated) groups with infinite dimensional second bounded cohomology but with non-trivial positive theory?
    \end{question}
    
    These questions already can be considered for groups acting on trees. Indeed, if a group admits a minimal irreducible action on a tree and contains weak small cancellation elements (which we prove to be equivalent to have a good labeling, see Section \ref{sec: characterisation}),  then the second bounded cohomology is infinite-dimensional, see \cite{iozzi2014characterising} and the positive theory is trivial, see Theorem \ref{thm: wscitp}. 
    
    On the other hand, a cocompact lattice in a product of locally finite trees, only has trivial quasimorphisms if and only if both closures of the projections on the two factors act 2-transitive on the boundary, see \cite{iozzi2014characterising} and \cite{burger2000groups}. Some of this lattices, for instance the Burger-Mozes groups, act on a tree with a 2-transitive action on the boundary.

    \begin{question}
    	Let $\Gamma$ be a cocompact lattice in a product of locally finite trees such that both closures of the projections on the two factors are 2-transitive in the boundary (and so $\Gamma$ has trivial second bounded cohomology). Does $\Gamma$ have non-trivial positive theory? In particular, do Burger-Mozes groups have non-trivial positive theory?
    \end{question}

  \subsection{Weakly stable and weakly small cancellation}
    
    It follows from our results that a group acting irreducibly on a tree has hyperbolic stable elements if and only if it has small cancellation elements (see Remark \ref{r: stable_and_sc}).  
    
    However,  it is not clear if such a characterisation still holds after weakening (as in Definitions \ref{defn:weakly stable} and \ref{d: small cancellation}) these two properties: the existence of weakly stable elements implies the existence of weakly small cancellation elements, but does the converse hold? More concretely, we ask the following questions.
    
    \begin{question}
    	Does the existence of weak-small cancellation elements imply the existence of weak stable ones?
    \end{question}
    One could also formulate a weaker question:
    \begin{question}
    	Does there exist $N_0>0$ such that the existence of a weakly $N_0$-small cancellation tuple implies the existence of a weakly $M$-small cancellation tuple for arbitrary $M$?
    \end{question}
    
  \subsection{Closure under finite extensions}
    
    In Section \ref{sec:preservation extensions} we have shown that the class of groups with non-trivial positive theory is closed under extensions. However, do not know whether or not it is a commensurability invariant: if $H$ is a finite index subgroup of $G$ and $G$ has non-trivial positive theory, does $H$ have also non-trivial positive theory? Note that if the answer is affirmative together with our result on preservation under extension would imply that having a non-trivial positive theory is a commensurability invariant.
    
    Equivalently, one can ask if having trivial positive theory is preserved under finite extensions.
    
    \begin{question}
    	Do finite extensions of groups with trivial positive theory have trivial positive theory?
    \end{question}
    
  \subsection{Trivial positive theory and SQ-universality}
    
    It is an easy observation that if a group $G$ admits an epimorphism to a group $H$, then $Th^+(G) \subset Th^+(H)$. This may naively lead to think that a group rich in quotients would have a trivial positive theory. This intuition is confirmed for large groups so one may wonder about SQ-universal groups. Notice that, as proven in Section \ref{s: applications}, many notorious classes of SQ-universal groups do have trivial positive theory, namely groups acting on trees acylindrically hyperbolically, generalised Baumslag-Solitar groups, non-solvable 1-relator groups, etc. We propose the following question:
    
    \begin{question}
    	Do all SQ-universal groups have trivial positive theory?
    \end{question}
    
    The converse is by far not true. Indeed, in the preprint \cite{simplegrouppreprint} we show that there exist fg (torsion-free) simple groups with trivial positive theory. In particular, these finite simple groups have all verbal subgroups of infinite width, generalising the result of Muranov \cite{muranov2007finitely}, are not boundedly simple (i.e. for all $n$ there exist $g$ and $h$ non-trivial such that $g$ is not a product of $n$ conjugates of $h$), etc.
    
  \subsection{Trivial positive theory and amenability}
    
    All the tools known to the authors  to prove that a group has trivial positive theory rely on having (weak) small cancellation elements. One of its implications is that all of these groups contain non-abelian free groups and so they are non-amenable. It is reasonable to ask for examples of non-amenable groups which do not contain free groups and have trivial positive theory.
    
    \begin{question}
    	Is there a finitely presented (finitely generated) non-amenable group without non-cyclic free subgroups that has trivial positive theory? Do Golod-Shafarevich groups have trivial positive theory?
    \end{question}
    
    On the other extreme of the spectrum, one can consider the class of amenable groups. Chou proved that the class of elementary amenable groups can be characterised as the smallest class of groups containing finite and abelian groups and being closed under extensions and direct limits.
    
    As we show in Section \ref{sec:preservation extensions}, the class of groups with non-trivial positive theory is closed under extensions. However we believe that the class may not be closed under direct limits. More precisely, we ask:
    
    \begin{question}
    	Does the direct limit of $UT_n(\mathbb Z)$ (with the natural embeddings) have trivial positive theory?
    \end{question}
    
    This example would provide an elementary amenable group with trivial positive theory and would show that indeed the class of groups with non-trivial positive theory is not  closed under direct limits.
    
    Although we do not believe that the class is closed under direct limits, we do not know of any finitely generated elementary amenable or amenable group whose positive theory is trivial.
    
    \begin{question}
    	Is there a finitely generated (elementary) amenable group with trivial positive theory?
    \end{question}
    
    In this direction, the following examples are of special interest to study: the Olshanskii-Osin-Sapir example of a lacunary hyperbolic amenable group
    and the Basilica group. 
    
    \medskip
    
    Although we do not believe that containing a non-cyclic free group is going to be a necessary condition in order to have trivial positive theory, we do believe that there may be a strong relation with the growth of the group and the triviality of its positive theory. More specifically, we ask:
    
    \begin{question}
    	Does trivial positive theory imply exponential growth?
    \end{question}
    
  \subsection{Trivial positive theory and verbally parabolic groups}
    
    We say that a group is verbally elliptic for the word $w$ if the verbal subgroup defined by a non-trivial word $w(x)$ has finite width.
    If $G$ is not verbally elliptic for any word, we say that it is verbally parabolic.
    
    Hence if a group $G$ is verbally elliptic for a word $w$, its positive theory is non-trivial or conversely, if a group has trivial positive theory, then it is verbally parabolic.
    
    Note that all groups we know to have non-trivial positive theory are verbally elliptic for some word (note that finitely many conjugacy classes implies finite commutator width). Conversely, all verbally parabolic groups known have trivial positive theory.
    
    \begin{question}
    	Does the class of groups with trivial positive theory coincide with the class of verbally parabolic groups?
    \end{question}
   
    \begin{question}
    	Can one use small-cancellation techniques to construct an infinite simple group verbally parabolic so that the group does not contain free groups?
    \end{question}
    
    If such a group exists, we would get a non-amenable group without free groups and trivial positive theory.
    
    On the other hand, if $G$ has non-trivial positive theory, then we would get a verbally parabolic group with non-trivial positive theory.

  \bibliographystyle{alpha}
  \bibliography{references}
\end{document}